\documentclass[10pt]{amsart}
\usepackage{geometry}                
\geometry{letterpaper}                   
\usepackage{graphicx}

\usepackage{amsfonts}
\usepackage{amssymb}
\usepackage{amsmath}
\usepackage{amsthm}
\usepackage{hyperref}
\usepackage{mathrsfs}
\usepackage{epstopdf}
\usepackage{url}
\usepackage[small,nohug, head=curlyvee]{diagrams}
\diagramstyle[labelstyle=\scriptstyle]
\DeclareGraphicsRule{.tif}{png}{.png}{`convert #1 `dirname #1`/`basename #1 .tif`.png}
\newarrow{Mapsto} |--->
\usepackage{tikz}

\theoremstyle{plain}
\newtheorem{theorem}{Theorem}[section]
\newtheorem{lemma}[theorem]{Lemma}
\newtheorem{proposition}[theorem]{Proposition}
\newtheorem{corollary}[theorem]{Corollary}
\theoremstyle{remark}
\newtheorem{remark}[theorem]{Remark}
\theoremstyle{definition}
\newtheorem{definition}[theorem]{Definition}
\theoremstyle{definition}
\newtheorem{assumption}[theorem]{Assumption}
\theoremstyle{remark}
\newtheorem{example}[theorem]{Example}
\theoremstyle{plain}
\newtheorem{Question}[theorem]{The Basic Question}

\DeclareMathOperator{\spec}{Spec}
\DeclareMathOperator{\proj}{Proj}
\DeclareMathOperator{\spf}{Spf}
\DeclareMathOperator{\sheafproj}{\mathbf{Proj}}

\DeclareMathOperator{\pic}{NS}

\DeclareMathOperator{\NS}{NS}

\DeclareMathOperator{\Div}{Div}
\DeclareMathOperator{\effcurve}{NE}
\DeclareMathOperator{\nef}{\overline{\mathcal{K}}}
\DeclareMathOperator{\effdivisor}{Eff}
\DeclareMathOperator{\Ncurve}{N_1}

\DeclareMathOperator{\K}{\mathcal{K}}
\DeclareMathOperator{\Hom}{Hom}
\DeclareMathOperator{\ann}{ann}
\newcommand{\sat}{\mathrm{sat}}

\DeclareMathOperator{\Tr}{Tr}
\DeclareMathOperator{\Sp}{Sp}
\DeclareMathOperator{\GL}{GL}

\DeclareMathOperator{\M}{M}
\DeclareMathOperator{\Sym}{Sym}

\DeclareMathOperator{\Aut}{Aut}

\DeclareMathOperator{\sheafhom}{\mathscr{H}om}

\newcommand{\ud}{\,\mathrm{d}}

\newcommand{\Id}{\mathrm{Id}}

\DeclareMathOperator{\conv}{Conv}
\DeclareMathOperator{\Four}{Four}
\newcommand{\ppav}{0}
\newcommand{\DEG}{\mathrm{DEG}}
\newcommand{\DD}{\mathrm{DD}}

\DeclareMathOperator{\Isom}{\underline{Isom}}
\newcommand{\ad}{\mathfrak{A}}
\newcommand{\gp}{\mathrm{gp}}

\renewcommand{\H}{\mathrm{H}}

\newcommand{\rc}{\mathrm{rc}}

\makeatletter

\newcommand{\Rmnum}[1]{\expandafter\@slowromancap\romannumeral #1@}
\makeatother

\title{Compactification of the Moduli of Polarized Abelian Varieties and Mirror Symmetry}


\begin{document}
\author{Yuecheng Zhu} \email{\url{yuechengzhu@math.utexas.edu}} \address{Department of Mathematics, the University of Texas at Austin, 2515 Speedway Stop C1200, Austin, TX, 78712, USA}

\begin{abstract}
We show that Martin Olsson's compactification of moduli space of polarized abelian varieties in \cite{ols08} can be interpreted in terms of KSBA stable pairs. We find that any degenerating family of polarized abelic sheme over a local normal base is equipped with a canonical set of divisors $S(K_2)$. Choosing any divisor $\Theta$ from the set $S(K_2)$, we get a KSBA stable pair. Then the limit in the moduli space of KSBA pairs $\overline{\mathscr{AP}}_{g,d}$ agrees with the canonical degeneration given by Martin Olsson's compactification. Moreover, we give an alternative construction of the compactification by using mirror symmetry. We construct a toroidal compactification $\overline{\mathscr{A}}_{g,\delta}^m$ that is isomorphic to Olsson's compactification over characteristic zero. The collection of fans needed for a toroidal compactification is obtained from the Mori fans of the minimal models of the mirror families. 
\end{abstract}

\maketitle

\tableofcontents

\section{Introduction}\label{1}
\subsection{Results and Motivations}
The basic question that concerns us is:
\begin{Question}\label{the basic question}
Given a family of polarized Calabi--Yau manifolds $(\mathcal{X}_\eta,\mathcal{L}_\eta)$ over a punctured disk $\Delta^*$, is there a \emph{canonical} way to fill in the central fiber? If the answer is ``yes", then how is the \emph{canonical} central fiber produced?
\end{Question}
Notice that Question~\ref{the basic question} is already interesting even for dimension $1$. There can be many polarized extensions of a polarized family of genus one curves with only nodal singularities on the central fibers. So there are too many ways of filling in the central fiber. See Example~\ref{Hesse family} for a simple example. 

However, things are different if we add marked points. An actual ample divisor picks out the \emph{canonical} central fiber. That is why we have the Deligne--Mumford's compactification $\overline{\mathscr{M}}_{1,d}$, the moduli of genus one stable curves with marked points. Our key observation is that any degenerating polarized family $(\mathcal{X}_\eta,\mathcal{L}_\eta)$, where $\mathcal{X}_\eta$ is isomorphic to a family of abelian varieties, comes with a \emph{canonical} set of divisors. So they are ``secretly'' pairs. The choice of a divisor from the set is not unique, but it doesn't matter. Using any of the pairs from the set to fill in the central fiber, we will get the same underlying polarized extension. We illustrate this through Example~\ref{Hesse family}.

In \cite{ols08}, Olsson constructs a modular compactification $\overline{\mathscr{A}}_{g,d}$ of $\mathscr{A}_{g,d}$ over $\mathbf{Z}$, and so in particular gives a positive answer to Question~\ref{the basic question} when $(\mathcal{X}_\eta,\mathcal{L}_\eta)$ is isomorphic to a family of polarized abelian varieties. The compactification uses the AN construction, which is a general construction of the degeneration of abelian varieties over a complete normal domain\footnote{It was first invented by Mumford \cite{Mum72}, and later improved in \cite{FC} and \cite{AN}. We use the version in \cite{AN}, and call it the AN construction.}. The AN construction is complicated, and not canonical when the degree of the polarization is $d>1$. From a degenerating family over the generic point, one gets two lattices $Y\subset X$ of index $d$, where $d$ is the degree of $\mathcal{L}_\eta$, and a quadratic function $A: Y\to \mathbf{Z}$. Then one needs to choose a function $\varphi$ over $X$ that extends $A$. Different choices usually give different central fibers.  For example, if one picks different extensions $\varphi$, one can get all central fibers in Figure~\ref{fig1}. Besides the log geometry, one of the observations in \cite{ols08} is that there is a canonical choice of the extension $\varphi$: one simply requires the extension to be also quadratic over $X$. This choice gives $I_3$ for Example~\ref{Hesse family}. However, from our point of view, this answer is unsatisfying for two things. First, the solution is implicit: if you want to know what the limit is, you need to run the machinery of the AN construction. There is no geometric explanation why the limit is canonical. Secondly, it is not obvious how to generalize the constructions to more general Calabi--Yau manifolds because the AN construction is special for abelian varieties.

In this paper, we first use mirror symmetry to give a different construction of Olsson's compactification. More precisely, we construct a compactification $\overline{\mathscr{A}}^m_{g,\delta}$ of the moduli space of polarized abelian varieties $\mathscr{A}_{g,\delta}$, which is isomorphic to the connected component $\overline{\mathscr{A}}_{g,\delta}[1/d]$ of $\overline{\mathscr{A}}_{g,d}$ (Proposition~\ref{relation between two compactifications}) over a field of characteristic zero. We expect our approach to work for more general Calabi--Yau manifolds. Moreover, the new thing is we give an alternative way of constructing the central fiber. We show that the canonical central fiber can be obtained by using KSBA stable pairs. This is a little surprising because a stable pair is a scheme plus a divisor $\Theta_\eta$ from the linear system $|\mathcal{L}_\eta|$. People usually do not expect that there is a natural choice of divisors $\Theta_\eta$ when $\mathcal{L}_\eta$ has higher degree.

\begin{example}\label{Hesse family}
Consider the Hesse family near $0$, $\mathcal{X}_\eta=\{(x,y,z)\in \mathbf{CP}^2; t(x^3+y^3+z^3)+xyz=0\}$ over $0<|t|<1$, with the relatively ample invertible sheaf $\mathcal{L}_\eta=\mathcal{O}_{\mathcal{X}_\eta}(1)$. We can add any polarized variety in Figure~\ref{fig1} as the central fiber over $t=0$. Suspicious readers may check the following. Take $t=0$ in the above embedding in $\mathbf{CP}^2$, we get the central fiber $I_3=\{xyz=0\}$, with the line bundle $\mathcal{L}=\mathcal{O}_{I_3}(1)$. Notice that the three irreducible components $L_1$, $L_2$, $L_3$ of $I_3$ are all $-2$-curves. By twisting $\mathcal{L}$ by $\mathcal{L}\otimes\mathcal{O}(L_1+L_2)$, and contracting $L_1+L_2$, we get $I_1$.  Moreover, blow up the $A_2$ singularity $p$ of $I_1$ to get a nonreduced exceptional divisor $E$\footnote{This is not the ordinary blow up of the surface along $p$. Look at the cone $\sigma$ for the toric picture of an $A_2$ singularity. We add the ray $v+v'$, where $v$ and $v'$ are rays of $\sigma$.}. Then make a base change of degree $2$. We can extend $\mathcal{L}$ such that $\deg\mathcal{L}|_{E}=2$ and $\deg\mathcal{L}|_{\widetilde{L}_3}=1$. Therefore, the three central fibers in Figure~\ref{fig1} are equally good. There is no obvious canonical limit. 

\begin{figure}
\begin{tikzpicture}[yscale=0.866]
\draw[very thick] (0,1) --(4,1);
\node[below] at (2,1) {$L_3$};
\draw[very thick] (0.5,0) --(2.5, 4);
\node[above left] at (1.5, 2) {$L_1$};
\draw[very thick] (1.5, 4) --(3.5,0);
\node[above right] at (2.5, 2) {$L_2$};
\node[below] at (2,0) {$I_3$};
\end{tikzpicture}
\quad
\begin{tikzpicture}[yscale=0.866]
\draw [very thick] (-2,4) -- (-1.9, 3.61) -- (-1.8, 3.24) -- (-1.75, 3.0625) -- (-1.7, 2.89) -- (-1.6, 2.56) -- (-1.5, 2.25) -- (-1.4, 1.96) -- (-1.3, 1.69) -- (-1.25, 1.5625) -- (-1.2, 1.44) --(-1.1, 1.21) -- (-1,1)-- (-0.9, 0.81) -- (-0.8, 0.64) -- (-0.7, 0.49) -- (-0.6, 0.36) -- (-0.5, 0.25) -- (-0.4, 0.16) -- (-0.333, 0.111) -- (-0.3, 0.09) -- (-0.2, 0.04) -- (-0.1667, 0.0278) -- (-0.15, 0.0225) -- (-0.1, 0.01)--(0,0)--(0.1, 0.01)-- (0.15, 0.0225) -- (0.1667, 0.0278) -- (0.2, 0.04) -- (0.3, 0.09) --(0.333, 0.111) --(0.4, 0.16)-- (0.5, 0.25) -- (0.6, 0.36) -- (0.7, 0.49) -- (0.8, 0.64) -- (0.9, 0.81) -- (1,1) -- (1.1, 1.21) -- (1.2, 1.44) -- (1.25, 1.5625) -- (1.3, 1.69)-- (1.4, 1.96)--(1.5, 2.25) --(1.6, 2.56) -- (1.7, 2.89) -- (1.75, 3.0625) -- (1.8, 3.24) -- (1.9, 3.61) -- (2,4);
\node[above] at (0,0) {$E$};
\draw[very thick] (-2,2) -- (2,2);
\node[above] at (0,2) {$\widetilde{L}_3$};
\node[below] at (0,0) {$I_2$};
\end{tikzpicture}
\quad
\begin{tikzpicture}[yscale=0.866]
\draw [very thick] (0,2)-- (0.0417, 2.2827) -- (0.0556, 2.3241) -- (0.083,2.3912) -- (0.167, 2.5292) -- (0.25, 2.6187) -- (0.333,2.68)--(0.5, 2.75)--  (0.667, 2.7698) -- (1,2.7071) -- (1.333, 2.5443) -- (1.667, 2.3042) -- (2,2) --(2.333, 1.64) -- (2.667, 1.2302) -- (3,0.775) --(3.333, 0.2787) -- (3.667, -0.25668);
\draw [very thick] (0,2) -- (0.0417, 1.7173) -- (0.0556, 1.6759) -- (0.083, 1.6088) -- (0.167, 1.4708) -- (0.25, 1.3813) -- (0.333,1.3196) -- (0.5, 1.25) -- (0.667, 1.2302) -- (1, 1.2929) -- (1.333, 1.4557) -- (1.667, 1.6957) -- (2,2) -- (2.333, 2.36) -- (2.667, 2.7698) -- (3, 3.2247) -- (3.333, 3.7213) -- (3.667, 4.2567);
\node[above] at (2,2) {$p$};
\node[below] at (2,0) {$I_1$};
\end{tikzpicture}
\caption{The Possible Central Fibers For the Hasse Family.}
\label{fig1}
\end{figure}
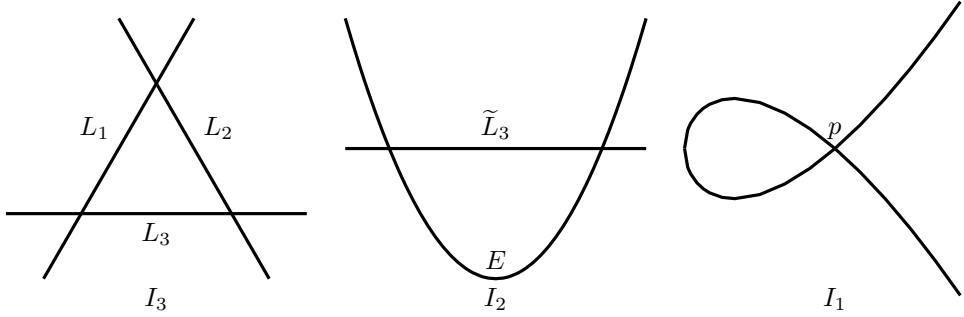
\end{example}

Here is our alternative interpretation of the extension for Example~\ref{Hesse family}. If one considers the monodromy, the family is actually equipped with a canonical set of marked points. Let the set be $S$. We obtain an object $(\mathcal{X}_\eta,\Theta_\eta)$ in $\mathscr{M}_{1,3}$ for any $\Theta_\eta\in S$. Using the stable reduction theorem for $\overline{\mathscr{M}}_{1,3}$, we get the pair $(\mathcal{X},\Theta)$ over $\Delta$. Now if we forget about $\Theta$, the underlying polarized family $(\mathcal{X},\mathcal{O}(\Theta))$ is always $I_3$, independent of the initial choice of $\Theta_\eta$ from $S$. The set $S$ is also a classical thing. The local monodromy determines the space of vanishing cycles, an isotropic subspace $U$ of $\H_1(X_t,\mathbf{R})$, where $X_t$ is a general fiber. According to the classical theory of theta functions, $U$ determines a finite collection of bases of $\H^0(X_t,\mathcal{L}_t)$, and each basis consists of theta functions (\cite{BL} Theorem 3.2.7)\footnote{We will define this set in terms of the representation of the theta group. The set of the divisor will be denoted by $S(K_2)$.}. $S$ is the set of divisors obtained as the sums of the basis vectors. For example, the set $\{X,Y,Z\}$ is one such basis (\cite{BL} Exercise 7.7 (8)). The zero locus of the sum $X+Y+Z$ is a divisor $\Theta_\eta$ in $S$. Figure~\ref{fig2} is the picture of the central fiber $I_3$. Notice that the two dashed lines are different choices of divisors from $S$, but both pairs are stable: each irreducible component has a marked point in the smooth locus. 
\begin{figure}
\begin{tikzpicture}
\draw[very thick] (0,0.866) --(4,0.866);
\draw[very thick] (0.5,0) --(2.5, 3.464);
\draw[very thick] (1.5, 3.464) --(3.5,0);
\draw[dashed, very thick][blue] (0,2) -- (4,0);
\draw[dashed, very thick][orange] (0, 0.55) -- (4,3);
\end{tikzpicture}
\caption{The Canonical Central Fiber With Different Choices of Divisors.}
\label{fig2}
\end{figure}
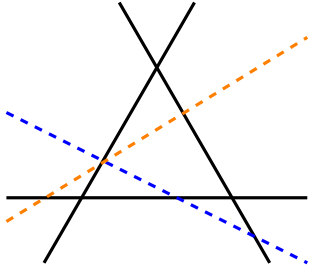

Mirror symmetry predicts that, near the maximal degenerations, the polarized Calabi--Yau varieties $(\mathcal{X}_\eta,\mathcal{L}_\eta)$ are ``secretly" stable pairs: there is a canonical set of divisors $S$ given by the mirror maps. For K3 surfaces, this is Tyurin's conjecture(\cite{Tyu} Page 36. Remark), and is proved in \cite{GHKtheta}. Naturally, one can pick a divisor $\Theta_\eta$ from this canonical set $S$, and take the limit of $(\mathcal{X}_\eta,\Theta_\eta)$ in the proper moduli space of KSBA stable pairs.

Now the generalization of $\overline{\mathscr{M}}_{1,d}$ to higher dimensions is $\overline{\mathscr{AP}}_{g,d}$, the moduli of KSBA stable pairs with actions of semiabelian varieties $(\mathcal{X}, G, \Theta, \varrho)$ (\cite{Alex96}, \cite{Alex02}). Take the extension $(\mathcal{X},\Theta)$ in $\overline{\mathscr{AP}}_{g,d}$. The underlying polarized schemes $(\mathcal{X},\mathcal{O}(\Theta))$ are all isomorphic, and the isomorphism class is the limit produced by Olsson's compactification $\overline{\mathscr{A}}_{g,d}$. Moreover, for abelian varieties, the set $S$ can be defined in terms of the local monodromy and the representation of Mumford's theta group $G(M)$. Thus we can generalize the set $S$ to an arbitrary degeneration, not just a maximal degeneration. We denote it by $S(K_2)$ (Definition~\ref{the balanced set}), and call it a balanced set. Our main result is Theorem~\ref{stable pairs moduli interpretation}:

\begin{theorem}\label{third result}
Suppose $S$ is a normal local integral scheme over $k=\mathbf{Z}[1/d,\zeta_M]$, (more precisely, see Assumption~\ref{assumption for the only if part}). Let $(\mathcal{X},G,\mathcal{L},\varrho)/S$ be a polarized stable semiabelic scheme over $S$, with the generic fiber abelian. Over the generic point $\eta$, the balanced set of divisors $S(K_2)$ is defined by the local monodromy. Then $(\mathcal{X},G,\mathcal{L},\varrho)$ is the pull-back of the AN family along a unique morphism $g:S\to \overline{\mathscr{A}}_{g,\delta}^m$ if and only if the group scheme $G(M)$ can be extended over $S$ (thus $S(K_2)$ is also extended over $S$), and for one (equivalently any) divisor $\Theta$ from $S(K_2)$, $(\mathcal{X},G,\Theta,\varrho)$ is an object in $\overline{\mathscr{AP}}_{g,d}$. 
\end{theorem}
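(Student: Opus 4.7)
The plan is to treat the equivalence as two implications, both tested after passing to the completion of $S$ at its closed point, where the AN construction is available. Since $S$ is normal and local, a morphism to $\overline{\mathscr{A}}_{g,\delta}^m$ is determined by its restriction to $\hat{S}$, and the AN family is classified there by the combinatorial data $(Y \subset X,\ \varphi)$ with $\varphi$ extending the monodromy pairing $A : Y \to \mathbf{Z}$ to a PL function on $X_{\mathbf{R}}$. So throughout, the task is to translate the conditions on $G(M)$ and on $(\mathcal{X},G,\Theta,\varrho)$ into the combinatorial statement that $\varphi$ is the canonical quadratic extension used in Olsson's compactification.

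For the forward direction, I would pull back the universal AN family from $\overline{\mathscr{A}}_{g,\delta}^m$ via $g$. The theta group $G(M)$ is built into the AN construction on the universal family (it acts on the canonical theta sheaf coming from the quadratic $\varphi$), so its extension over $S$ is automatic. Next I would show that every divisor in the balanced set $S(K_2)$ extends to a Cartier divisor $\Theta$ on $\mathcal{X}$ whose restriction to each irreducible component of every fiber is ample and whose support contains no irreducible component. Using $\Theta \sim \mathcal{L}$ plus the semiabelian action data $\varrho$, one verifies the defining axioms of an object of $\overline{\mathscr{AP}}_{g,d}$ by the fiberwise criterion: semi-log-canonical singularities and ampleness of $K_{\mathcal{X}/S}+\Theta$, both of which follow because the quadratic $\varphi$ gives reduced, Gorenstein toric central fibers with the required combinatorial convexity.

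The reverse direction is the heart of the matter. Given the extension of $G(M)$, I would first reconstruct the lattice pair $Y \subset X$ and the quadratic form $A$ from the monodromy data and the representation of the extended theta group on pushforwards of $\mathcal{L}$, essentially by mimicking Mumford's reconstruction. The KSBA stability of $(\mathcal{X},G,\Theta,\varrho)$ for some $\Theta \in S(K_2)$ is then used to pin down $\varphi$: stability forces the restriction of $\Theta$ to every stratum of the central fiber to be ample, which in toric terms means the piecewise-linear $\varphi$ must be strictly convex relative to the paving dual to the stratification and must agree with $A$ on $Y$; combined with $G(M)$-equivariance (coming from the extended theta group), this forces $\varphi$ to be the canonical quadratic extension on the relevant cells, hence the one used by Olsson. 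The resulting combinatorial data is exactly the input to the AN construction, so the family is identified with the pullback of the AN family along a morphism $g : S \to \overline{\mathscr{A}}_{g,\delta}^m$. Uniqueness of $g$ follows because $(Y,X,\varphi,\varrho)$, and hence the image in the moduli stack, is recovered canonically from the above reconstruction. The independence-of-$\Theta$ assertion (``one equivalently any'') drops out of this analysis, since the theta group acts transitively on $S(K_2)$ through its extended action, carrying stable pair structures to stable pair structures.

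The step I expect to be the main obstacle is the implication from KSBA stability of a single pair $(\mathcal{X},G,\Theta,\varrho)$ to the strict convexity and quadratic nature of $\varphi$. One must argue that any non-quadratic convex extension of $A$ produces either a non-ample restriction of $\Theta$ on some boundary stratum or a failure of semi-log-canonicity, and then rule out these cases by combining the local toric computations with the constraint that $\Theta$ must lie in the specific finite set $S(K_2)$ distinguished by the theta-group representation. Once this convexity/quadratic rigidity is established, the rest of the argument is a direct translation between the moduli of KSBA pairs, the AN data, and Olsson's compactification.
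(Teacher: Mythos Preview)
Your forward direction is acceptable in outline, though the paper's Proposition~\ref{the only if direction} argues differently: it does not re-verify semi-log-canonicity or ampleness of $K_{\mathcal{X}/S}+\Theta$ fiberwise. Instead it compares, for every height-one valuation $v$ of $R$, the quadratic function $A_\varphi(v)$ coming from $\varphi_\mathscr{P}$ with the function $A_X(v)$ obtained by writing the lifted $\widetilde{G}(M)$-action as $a^d b(\psi')^d\tau'$ and taking $v\circ a$. These agree because both extend the same quadratic $A(v)$ on $Y_\xi$, hence the $G(M)$-action is defined over $S$; stability of $(\mathcal{X},G,\Theta,\varrho)$ then falls back on Theorem~\ref{alexeev family}, already proved for AN families.

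Your reverse direction, however, diverges from the paper and runs into the very obstacle you flag. You propose to deduce that $\varphi$ is the quadratic extension from stability of a single $\Theta\in S(K_2)$ plus $G(M)$-equivariance, via a strict-convexity/rigidity argument. The paper never attempts this and sidesteps it entirely. Its route is moduli-theoretic: for each $s\in S$ pick a DVR $T\to S$ hitting $\eta$ and $s$; by properness of $\overline{\mathscr{A}}^m_{g,\delta}$ (Corollary~\ref{extension over DVR}) the pullback over $T$ is an AN family, so the paving $\mathscr{P}_s$ read off the fiber $(\mathcal{X}_s,\mathcal{L}_s)$ is $\underline{X}_s$-invariant and $v\circ b$ lies in $C(\mathscr{P}_s)$. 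Specialization forces all height-one $v\circ b$ into a single second Voronoi cone $C(\mathscr{P})$, so Proposition~\ref{the universal property of the moduli space} (the Faltings--Chai universal property) produces a candidate $g:S\to\overline{\mathscr{A}}^m_{g,\delta}$ with $G'\cong G$. Finally, one compares the two resulting objects of $\overline{\mathscr{AP}}_{g,d}$ using that its diagonal is \emph{finite}, together with normality of $R$ (Lemma~\ref{finite to isomorphic}), to conclude $(\mathcal{X},\mathcal{L},G,\Theta,\varrho)\cong(\mathcal{X}',\mathcal{L}',G',\Theta',\varrho')$. Uniqueness of $g$ follows by the same finite-diagonal trick applied to $\overline{\mathscr{A}}^m_{g,\delta}$.

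So the missing idea in your sketch is precisely this use of properness over DVRs to bootstrap the already-proved ``only if'' direction, the Faltings--Chai criterion to globalize, and separatedness of $\overline{\mathscr{AP}}_{g,d}$ to force agreement---none of which requires proving directly that stability rigidifies $\varphi$ to the quadratic extension.
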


Therefore, we are replacing the AN construction by the stable reduction theorem for KSBA stable pairs. We sketch the definition (Definition~\ref{the balanced set}) of $S(K_2)$. For any polarized abelian variety $(X,\mathcal{L})$, the space of global sections $\H^0(X,\mathcal{L})$ is an irreducible representation $S^*$ of a theta group $G(M)$. The local monodromy picks out a subgroup $K_w<G(M)$: the subgroup that preserves the vanishing cycles. Then we can decompose $\H^0(X,\mathcal{L})$ into $K_w$-irreducible representations $\H^0(X,\mathcal{L})=\oplus_{\alpha\in I} V_\alpha$, where the index set $I$ is the quotient of $G(M)$ by $K_w$. Take an arbitrary nonzero divisor $\vartheta_0$ from one of the irreducible representation $V_\alpha$, choose an arbitrary lift $\sigma: I\to G(M)$, and define a divisor $\Theta$ to be the zero locus of the section
\[
\vartheta=\sum_{\alpha\in I}S^*_{\sigma(\alpha)}\vartheta_0.
\]
The set $S(K_2)$ is defined to be the set of all divisors obtained this way. It only depends on the local monodromy. This is why we need the base to be local and normal. The key point is that the tropicalizations of the divisors $\Theta_\eta$ are \emph{canonical}: the piecewise affine function given by the quadratic function $\varphi$ in the AN construction. And the limit polarized variety only depends on this tropicalization. Therefore, Olsson's choice of the quadratic extension is explained by the action of the theta group. 

Notice when the polarization is principal, the linear system $|\mathcal{L}|$ contains only one element $\Theta$, and $S(K_2)$ is simply $\{\Theta\}$. This recovers Alexeev's compactification of $\mathscr{A}_g$ inside $\overline{\mathscr{AP}}_g$, up to a normalization\footnote{Although, in \cite{Alex02}, the main component of the coarse moduli space $\overline{\mathcal{AP}}_{g}$ is claimed to be isomorphic to the second Voronoi compactification, the proof is incomplete. The author thanks Iku Nakamura for pointing this out. See (\cite{Nak14} Subsection 14.3).}.

Our compactification $\overline{\mathscr{A}}^m_{g,\delta}$ is also constructed in a different way. Since the coarse moduli space $\mathcal{A}_{g,\delta}$ over $\mathbf{C}$ is an arithmetic quotient of a Hermitian symmetric domain, we use the toroidal compactification. The problem with the toroidal compactification is that it depends on extra data: an admissible collection of fans $\{\Sigma(F)\}$, and a priori, there are infinitely many choices for the fan $\Sigma(F)$.  However, mirror symmetry provides a candidate. According to the predictions of mirror symmetry in \cite{mor93}, the cone $\mathcal{C}(F)$ associated with each $0$-cusp $F$, on which the fan $\Sigma(F)$ is supported, can be identified with the K\"{a}hler cone of the mirror family associated with $F$\footnote{Here we follow the basic philosophy of mirror symmetry that the modular parameters of a Calabi--Yau space correspond to the K\"{a}hler parameters of the mirror, near each maximal degeneration.}. Our fans are induced from the Mori fans for the minimal models of the mirror families. The Mori fan is canonical for each mirror family (independent of the choice of a minimal model). Thus mirror symmetry plus Mori theory provide a canonical collection of fans. 

\begin{theorem}\label{first result}[Propostion~\ref{admissible collection of fans} and Theorem~\ref{the toroidal compactification over C}]
The collection of fans $\widetilde{\Sigma}=\{{\Sigma}(F)\}$ induced from the Mori fans of the mirror families is an admissible collection. Thus it gives a toroidal compactification $\overline{\mathcal{A}}_{\Sigma}$ of the coarse moduli space $\mathcal{A}_{g,\delta}$ over $\mathbf{C}$. Moreover, this compactification $\overline{\mathcal{A}}_{\Sigma}$ is projective. 
\end{theorem}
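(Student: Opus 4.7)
The plan is to verify the Ash--Mumford--Rapoport--Tai admissibility axioms for $\widetilde{\Sigma}=\{\Sigma(F)\}$, invoke AMRT to produce $\overline{\mathcal{A}}_{\Sigma}$, and then produce the polarization via a mirror-theoretic strictly convex support function.

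First I would set up the dictionary at each $0$-cusp $F$. The associated cone $C(F)$ is identified with the cone of rational positive semidefinite symmetric bilinear forms on a rank-$g$ lattice, and this lattice matches (under mirror symmetry at the maximally unipotent degeneration corresponding to $F$) the Neron--Severi lattice of a minimal model of the mirror family. The movable cone of the mirror sits inside $C(F)$ and is decomposed, by definition, into nef cones of the various minimal models; this decomposition is the Mori fan $\Sigma(F)$. Because each minimal model is projective, each maximal cone of $\Sigma(F)$ is rational polyhedral, and the union of these cones is the closure of the movable cone in $C(F)$.

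Next I would verify the admissibility axioms. Rationality and polyhedrality of each cone have just been noted. The arithmetic stabilizer $\Gamma(F)$ acts on the mirror family by pseudoautomorphisms permuting its minimal models, so $\Gamma(F)$ preserves $\Sigma(F)$; finiteness modulo $\Gamma(F)$ of the maximal cones then follows from finiteness of minimal models modulo pseudoautomorphism. For $\gamma\in\Gamma$ sending $F$ to $F'=\gamma F$, naturality of the mirror construction gives $\gamma\Sigma(F)=\Sigma(F')$, so the collection is $\Gamma$-equivariant. The only nontrivial compatibility is along a boundary inclusion $F''\subset\overline{F}$: here $C(F'')$ embeds as a face of $C(F)$, and I would show that $\Sigma(F)$ restricted to this face agrees with $\Sigma(F'')$ by identifying the mirror at $F''$ with a natural toric degeneration of the mirror at $F$ and verifying that the formation of the Mori fan commutes with this degeneration. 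Together these give admissibility, and AMRT then produces $\overline{\mathcal{A}}_{\Sigma}$.

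For projectivity I would invoke Tai's criterion, which reduces projectivity to the existence of a $\Gamma$-invariant, $\widetilde{\Sigma}$-piecewise linear, strictly convex support function on $\bigsqcup_F C(F)$. Such a function is produced canonically on the mirror side: on $\Sigma(F)$, pair a class $D\in C(F)$ with the first Chern class of the polarization on the minimal model whose nef cone contains $D$; this is linear on each maximal cone, and strict convexity across a wall is Kleiman's criterion applied to the adjacent minimal model. $\Gamma$-invariance is immediate from $\Gamma$-invariance of the mirror polarization. The hardest step is the boundary compatibility in the previous paragraph: matching Mori fans of two different mirror families along a common face requires a precise functoriality statement for minimal models under partial smoothings, and is where most of the work will go; once it is in place, admissibility and projectivity follow as described.
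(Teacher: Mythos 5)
Your plan has a genuine gap before the admissibility check even begins: for a polarization of degree $d>1$ the Mori fan of the mirror does \emph{not} live on $\mathcal{C}(F)$, so the statement that ``the movable cone of the mirror sits inside $C(F)$'' is false. By Lemma~\ref{the picard group is the space of function}, $\pic(\mathcal{Y}_\mathscr{P})$ surjects onto $\NS(Y_t)\cong\mathbb{L}^*$ with kernel $\mathbf{Z}^d/\mathbf{Z}$, so the Mori fan sits in a strictly larger vector space than $\Gamma^2(U)\supset\mathcal{C}(X)\cong\mathcal{C}(F)$, and some device is needed to \emph{induce} a fan on $\mathcal{C}(X)^{\rc}$ from it. This is exactly where the paper's work goes: one constructs the cusp models $\mathcal{Y}_\alpha$, the piecewise-linear sections $\sigma_\alpha$, and proves that the average $\sigma=\frac{1}{d}\sum_\alpha\sigma_\alpha$ is a \emph{linear} section (Proposition~\ref{the section is linear}); only then is $\Sigma(F):=\sigma^{-1}(\text{Mori fan})$ well defined, and it is identified with the second Voronoi fan with respect to $X$ (Theorem~\ref{Two fans are the same}, Definition~\ref{the canonical fan from mirror symmetry}). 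Your proposal never confronts this step, so the object $\Sigma(F)$ whose admissibility you set out to verify is not yet defined for general $\delta$.

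The remaining steps also rest on claims you defer rather than prove. The face compatibility $\Sigma(F')=\Sigma(F)\cap\Gamma^2U'$, which you yourself call the hardest step, is left as an unproved ``functoriality of Mori fans under partial smoothings''; moreover for non-splitting and higher cusps the mirror is only a gerbe (Sect.~\ref{2.3}), so $\Gamma$-equivariance and finiteness via ``pseudoautomorphisms of the mirror'' are not established, and one still needs the group-theoretic input that $\overline{P}(F)$ has finite index in $\GL(X,\check{Y})$ (Proposition~\ref{interpretation of the cone}). The paper avoids all of this by the Voronoi identification: admissibility is then Namikawa's theorem plus that finite-index statement (Proposition~\ref{admissible collection of fans}), and projectivity follows from Tai's criterion using Alexeev's polarization function for the second Voronoi fan (Theorem~\ref{the toroidal compactification over C}). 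Your proposed support function --- pairing $D$ with a chosen polarization class on the minimal model whose nef cone contains $D$ --- is linear on chambers but need not be continuous across walls (the two linear functionals need not agree on the wall), and the models carry no canonical $\Gamma$-invariant polarization, so ``$\Gamma$-invariance is immediate'' does not hold; ``Kleiman's criterion'' alone does not repair this. As written, the proposal is a different and substantially harder route whose key steps are missing.
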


In the case of principal polarization, we get the interesting fact that the second Voronoi fan for the toroidal compactification of $\mathcal{A}_g$ is equal to the Mori fan of the mirror (Theorem~\ref{Two fans are the same}). By using the collection of fans $\{\Sigma(F)\}$ and the AN construction, we construct the compactification $\overline{\mathscr{A}}^m_{g,\delta}$ as an algebraic stack. 
\begin{theorem}\label{second result}[Theorem~\ref{compactification}]
Over $k=\mathbf{Z}[1/d,\zeta_M]$\footnote{For the definiton of the roots of unity $\zeta_M$, see the beginning of Section~\ref{roots of unity}}, we have a proper Deligne-Mumford $\overline{\mathscr{A}}_{g,\delta}^m$ that contains $\mathscr{A}_{g,\delta}$ as a dense open substack. The coarse moduli space over $\mathbf{C}$ is the toroidal compactificaiton $\overline{\mathcal{A}}_{\Sigma}$. There is an AN family $(\mathcal{X},G,\mathcal{L},\varrho)$ extending the universal family over $\mathscr{A}_{g,\delta}$. 
\end{theorem}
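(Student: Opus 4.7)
The plan is to build $\overline{\mathscr{A}}_{g,\delta}^m$ by gluing toroidal local charts, one for each cusp, where each chart is obtained by feeding the fan $\Sigma(F)$ from Theorem~\ref{first result} into the AN construction, and then to establish properness via the valuative criterion. Thanks to Theorem~\ref{first result}, the combinatorial backbone is already in place; what remains is to lift the toric gluing data to families of polarized semiabelic schemes.

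First, I would construct the chart at each $0$-cusp $F$. Let $\Gamma(F)\subset\Sp_{2g}(\mathbf{Z})$ be the arithmetic stabilizer of $F$ and $\Sigma(F)$ the fan on $\mathcal{C}(F)$ provided by Theorem~\ref{first result}. To each cone $\sigma\in\Sigma(F)$ one associates the affine toric scheme $U_\sigma=\spec k[P_\sigma]$ for the dual monoid $P_\sigma$, and these glue along faces into a toric scheme $Z(\Sigma(F))$ over $k$. The combinatorial input required by the AN construction, namely the pair of lattices $Y\subset X$ determined by $F$ together with a piecewise-affine quadratic extension whose domains of linearity form $\sigma$, is exactly what $\sigma$ encodes, so the AN construction produces a canonical polarized stable semiabelic scheme $(\mathcal{X}_\sigma,G_\sigma,\mathcal{L}_\sigma,\varrho_\sigma)/U_\sigma$ extending a degenerating family of polarized abelian varieties over the open torus. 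Passing to the $\Gamma(F)$-quotient (which acts properly discontinuously on $\Sigma(F)$ by admissibility) yields a chart $\mathscr{U}_F$ carrying an AN family whose abelian locus is the corresponding open piece of $\mathscr{A}_{g,\delta}$.

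Next, I would glue the $\mathscr{U}_F$ as $F$ ranges over cusps. The compatibility on overlaps is precisely the content of admissibility of $\widetilde{\Sigma}$ (Proposition~\ref{admissible collection of fans}): adjacency between cusps corresponds to passing to a face, and the fans refine compatibly there, so the local AN families agree on overlaps because the AN output depends only on the monodromy datum. Call the resulting stack $\overline{\mathscr{A}}_{g,\delta}^m$; by construction $\mathscr{A}_{g,\delta}$ is an open substack, dense because each chart has its abelian locus dense. The Deligne--Mumford property follows from the toric local model together with finiteness of automorphisms: after inverting $d$ and adjoining $\zeta_M$, the automorphism groups of AN families are finite, and $\Gamma(F)$ has finite stabilizers on cones. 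The global AN family is produced chart by chart and descends by the same gluing statement.

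The main obstacle is properness. I would verify the valuative criterion: given a DVR $R$ over $k$ with fraction field $K$ and a $K$-point of $\mathscr{A}_{g,\delta}$, after a possible finite base change the monodromy determines a rational quadratic function $A$ on some sublattice $Y\subset X$; Olsson's canonical quadratic extension of $A$ to $X$ lies in the interior of a unique cone of some $\Sigma(F)$ modulo the $\Gamma(F)$-action, and this cone selects a unique $R$-point of the corresponding chart $\mathscr{U}_F$, giving the required extension and its uniqueness. Admissibility from Theorem~\ref{first result} is what guarantees this cone exists globally. For the coarse moduli identification over $\mathbf{C}$, the analytification of $\mathscr{U}_{F,\mathbf{C}}$ is the standard toroidal chart built from $\mathcal{C}(F)$ and $\Sigma(F)$ in the construction of $\overline{\mathcal{A}}_{\Sigma}$, and the gluings coincide, so the coarse space is $\overline{\mathcal{A}}_{\Sigma}$. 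I expect the subtle technical point to be matching the log structure underlying the AN family on the chart with the generic family pulled back from the $K$-point, which is a careful log-geometric compatibility rather than a formal consequence of selecting the cone.
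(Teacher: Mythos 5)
Your overall plan (feed the fans of Theorem~\ref{first result} into the AN construction to build boundary charts, then check properness) is indeed the paper's plan, but the execution you describe breaks down at the points where the paper has to work hardest. First, over $k=\mathbf{Z}[1/d,\zeta_M]$ you cannot form the chart $\mathscr{U}_F$ by ``passing to the $\Gamma(F)$-quotient'' of the infinite toric scheme $Z(\Sigma(F))$: the fan is infinite and the arithmetic group acts with infinitely many cones in each orbit, so this quotient exists only in the complex analytic category. This is precisely why the paper (following \cite{FC}) never glues global cusp charts; it builds good \emph{formal} $\sigma$-models over the completions $\widehat{\Xi}_\mathscr{P}$, algebraizes them by approximation to get good algebraic $\sigma$-models over \'etale neighborhoods (Proposition~\ref{algebraic family}), and presents the stack as an \'etale groupoid $[U/R]$, where $R$ is the \emph{normalization} of the closure of $R_0=U_0\times_{\mathscr{A}_{g,\delta}}U_0$ in $U\times U$. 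Second, your charts are purely toric ($U_\sigma=\spec k[P_\sigma]$), which only covers $0$-cusps; at a cusp $F_\xi$ with nontrivial abelian part the base of the AN construction is the torus bundle $\Xi_\mathscr{P}$ over $\overline{F}_\xi\ltimes\overline{\mathcal{V}}_\xi$ carrying the universal degeneration data $(A, c, c^t,\tau,\psi,\mathcal{M})$, including the $\mu_M$-twists $a',b'$ (this is where $\zeta_M$ enters); your proposal never constructs these charts, and ``glue the $\mathscr{U}_F$ as $F$ ranges over cusps'' has no content without them.

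Third, the claim that the local AN families ``agree on overlaps because the AN output depends only on the monodromy datum'' is not a proof: in the algebraic setting the overlap identifications are only given a priori over the abelian locus, and extending them across the boundary requires an extension-of-isomorphisms statement over normal bases (the paper's Proposition~\ref{proper diagonal}, the analog of \cite{FC} I.2.7, proved by reduction to DVRs and Olsson's results), together with log-\'etaleness and the Kodaira--Spencer isomorphism to see that $(s,t)\colon R\to U\times U$ is an \'etale groupoid — this is also what actually yields the Deligne--Mumford property, not merely finiteness of automorphisms. Your valuative-criterion sketch for properness is in the right spirit, but the engine behind it in the paper is Proposition~\ref{the universal property of the moduli space} (the valuations $v\circ b$ of the monodromy pairing land in a cone of the second Voronoi fan, hence the family is pulled back from a good formal model), after which properness, separatedness and the identification of the coarse space follow the arguments of \cite{FC} IV.5; as written, your proposal leaves all of this machinery — formal models, algebraization, the groupoid presentation, and the mixed-cusp degeneration data — unaccounted for.
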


We expect that this approach of using mirror symmetry can be applied to Question~\ref{the basic question} for more general Calabi--Yau manifolds. One of the projects of Gross--Hacking--Keel (\cite{GHKtheta}) is to compactify the moduli of polarized K3 surfaces by using similar ideas. Therefore, the compactification $\overline{\mathscr{A}}_{g,\delta}^m$ should be regarded as a successful example of this very general approach.


\subsection{Outline of the Paper} 
In Sect.~\ref{2} we construct the toroidal compactification for the coarse moduli space. Sect.~\ref{2.1} is for fixing the notations and introducing the cones $\mathcal{C}(F)$ needed in the toroidal compactification. In order to deal with the complicated group actions for higher degree polarizations, we interpret them as moduli spaces of polarized tropical abelian varieties in Sect.~\ref{2.2}. This is Proposition~\ref{interpretation of the cone}. In Sect.~\ref{2.3}, we study the mirror symmetry for abelian varieties. We construct the mirror family $\mathcal{Y}_\eta/\Delta^*$ and its minimal models $\mathcal{Y}_\mathscr{P}/\Delta$. The goal is to get the collection of fans $\widetilde{\Sigma}=\{\Sigma(F)\}$ (achieved in Definition~\ref{the canonical fan from mirror symmetry}) for the toroidal compactification from the Mori fans of $\mathcal{Y}_\mathscr{P}/\Delta$ through a linear section (Proposition~\ref{the section is linear}). We then prove that the collection of fans $\widetilde{\Sigma}$ gives a toroidal compactification (Proposition~\ref{admissible collection of fans}) in Sect.~\ref{3.1}. Sect.~\ref{3} is devoted to the construction of the compactification as an algebraic stack. Once we get the correct fans, it is actually not difficult to construct the families by using AN constructions. However this is very technical, and Sect.~\ref{3} takes up a big part of the paper. We generalize the AN construction to a complete local base (Sect.~\ref{3.2}), and obtain the standard degeneration data compatible with our toroidal compactification (Sect.~\ref{3.3}). Following the procedures in \cite{FC}, we construct the algebraic stack $\overline{\mathscr{A}}^m_{g,\delta}$ (Theorem~\ref{compactification}) in Sect.~\ref{3.4}. The point of the efforts is to relate this compactification to Olsson's compactification. The relation to Olsson's compactification is stated in Proposition~\ref{relation between two compactifications}. Finally, in Sect.~\ref{4} we explain the geometric characterization of the extended family over the boundary in terms of KSBA stable pairs. The main theorem is Theorem~\ref{stable pairs moduli interpretation}. 

\subsection{Acknowledgements}
This paper is based on the author's thesis. Special thanks go to the advisor, Sean Keel for suggesting this topic, and for all the help and support. The author is grateful for discussions with and help from a lot of people, including Valery Alexeev, Kenji Fukaya, Mark Gross, Travis Mandel, Tim Magee, Iku Nakamura, Andrew Neitzke, Martin Olsson, James Pascaleff, Timothy Perutz, Helge Ruddat, Bernd Siebert, and Yuan Yao. The author thanks the referee for carefully reading the manuscript and for pointing out a wrong lemma in the original Section~\ref{4}. The author also thanks the Fields institute and Institute for Mathematical Sciences for hospitality.


\subsection{Conventions and Notations}\label{Conventions}
If $X$ is a finitely generated free $\mathbf{Z}$-module, $X_k:=X\otimes_\mathbf{Z}k$ for a field $k$. The natural inclusion $X\subset X_k$ is an integral structure on the affine space $X_k$. A polytope $Q\subset X_\mathbf{R}$ is the convex hull of finitely many points in $X_\mathbf{R}$ and is always bounded. If all the points can be chosen from the lattice $X$, then $Q$ is called a lattice polytope. A polyhedron is the intersection of (not necessarily finitely many) closed half spaces. For an arbitrary subset $S$ of an affine space, the cone generated by $S$ is denoted by $C(S)$, and the convex hull is denoted by $\conv(S)$. We denote $\mathbf{Z}\oplus X$ by $\mathbb{X}$, and regard $X\subset \mathbb{X}$ as the hyperplane of height $1$. If $\sigma$ is a polytope in $X_\mathbf{R}$, the cone $C(\sigma)$ is inside $\mathbb{X}_\mathbf{R}$. Any piecewise affine function $\varphi$ over $X_\mathbf{R}$ has a unique piecewise linear extension over $\mathbb{X}_\mathbf{R}$, denoted by $\tilde{\varphi}$.  

A paving $\mathscr{P}$ is a set of polyhedrons of $X_\mathbf{R}$, such that
\begin{enumerate}
\item For any two elements $\sigma, \tau\in \mathscr{P}$, the intersection $\sigma\cap\tau$ is a proper face of both $\sigma$ and $\tau$. 
\item Any face of a polyhedron $\sigma\in \mathscr{P}$ is again an element of $\mathscr{P}$.
\item The union $\cup_{\sigma\in\mathscr{P}}\sigma$ is a polyhedron $Q$ of $X_\mathbf{R}$.
\item For any bounded subset $W\subset X_\mathbf{R}$, there exists only finitely many $\sigma\in \mathscr{P}$ with $W\cap \sigma\neq \emptyset$. 
\end{enumerate}

A paving is called bounded if all cells are polytopes. A paving is called a triangulation if all cells are simplices. A paving or a triangulation is called integral if all cells are lattice polytopes. We also assume that all pavings and triangulations are regular, that is, they are obtained as the affine regions of piecewise affine functions. 

A function $f$ is convex if $f(tx+(1-t)y)\leqslant tf(x)+(1-t)f(y)$. Notice that it is different from the usual convention in the literature of toric geometry. For a Cartier divisor $\{m_\sigma\}$ on a toric variety, we define the associated piecewise linear function to be $\varphi=-m_\sigma$. For a Weil divisor $D=\sum a_\omega D_\omega$, the associated function is $\psi(\omega)=a_\omega$. $Aff$ stands for the vector space of affine functions, $PA$ the space of piecewise affine functions, and $PL$ the space of piecewise linear functions. Usually there are other decorations in the notations. 

If $M$ is a monoid, then $M^\gp$ denotes the group associated with the monoid, $M^\sat$ the saturation, and $M^*$ the set of invertible elements.

The real valued functions are generalized to functions with values in vector spaces in \cite{GHK11}. Let $P$ be a toric monoid, i.e. $P$ is fine and saturated and the associated group $P^{\gp}$ is torsion free. Let $\sigma_P:=\conv(P)$ in $P^{\gp}_\mathbf{R}$. $\sigma_P$ is a polyhedral cone. We introduce a partial order on $P^{\gp}_\mathbf{R}$.

\begin{definition}
 For $u,v \in P^{\gp}_{\mathbf{R}}$, we say $u$ is $P$-above $v$, and denote it by $u\stackrel{P}{\geqslant} v$, if $u-v\in \sigma_P$. We say $u$ is strictly $P$-above $v$ if in addition, $u-v\in \sigma_P\backslash P^*_\mathbf{R}$. 
 \end{definition}
 
 Let $\varphi$ be a piecewise affine function over $X_\mathbf{R}$ with values in $P^{\gp}_\mathbf{R}$. Assume the region where $\varphi$ is affine gives a paving $\mathscr{P}$, i.e. for each $\sigma\in \mathscr{P}_{\max}$, $\varphi\vert_\sigma$ is an element in $\mathbb{X}^*_\mathbf{R}\otimes P^{\gp}_\mathbf{R}$. $\varphi$ is called integral if all $\varphi\vert_\sigma\in \mathbb{X}^*\otimes P^{\gp}$. 

\begin{definition}[bending parameters]\label{definition of bending parameter}
For each codimension $1$ cell $\rho\in\mathscr{P}$ contained in maximal cells $\sigma_+,\sigma_-\in \mathscr{P}$, we can write
\[
\varphi\vert_{\sigma_+}-\varphi\vert_{\sigma_-}=n_\rho\otimes p_\rho,
\]

where $n_\rho$ is the unique primitive element that defines $\rho$ and is positive on $\sigma_+$, and $p_\rho\in P^{\gp}_\mathbf{R}$ is called the bending parameter.
\end{definition} 

\begin{definition}
A piecewise affine function $\varphi$ is $P$-convex if for every codimension one cell $\rho\in \mathscr{P}$, $p_\rho\in P$. It is strictly $P$-convex if all $p_\rho\in P\backslash P^*$. 
\end{definition}

For toric varieties, we use the definitions and notations in \cite{CLS} unless specified otherwise. However, for a fan used in the toroidal compactification, $\sigma$ is defined to be the interior of the cone following the convention of \cite{FC}. Our main example is the second Voronoi fan (Definition~\ref{the second Voronoi fan}), where we define $C(\mathscr{P})$ to be the closure of the cone $\sigma(\mathscr{P})$ (For definition, see Definition~\ref{the second Voronoi fan}) for a Delaunay decomposition $\mathscr{P}$. 

Assume $\mathscr{P}$ is a paving of a lattice polytope $Q\subset X_\mathbf{R}$, and $\varphi: Q\to P^{\gp}_{\mathbf{R}}$ is a $\mathscr{P}$-piecewise affine, $P$-convex, integral function. Define
\[
Q_\varphi:=\Big\{(\alpha,h)\in Q\times P^{\gp}_\mathbf{R}: h\stackrel{P}{\geqslant} \varphi(\alpha)\Big\}.
\]

Take the cone $C(Q_\varphi)$ over the polyhedron $Q_\varphi$. Define the toric monoid $S(Q_\varphi):=C(Q_\varphi)\cap \mathbb{X}\times P^{\gp}$, the graded commutative ring $R_\varphi:=k[S(Q_\varphi)]$ and the base $S:=\spec k[P]$. Notice that $R_{\varphi,0}=k[P]$. $R_\varphi$ is of finite type over $k[P]$. Define $\mathcal{X}_{\varphi}=\proj R_\varphi$ and we have a projective morphism $\pi:\mathcal{X}_{\varphi}\to S$. The line bundle $\mathcal{L}:=\mathcal{O}(1)$ is $\pi$-ample. The $\mathbb{X}$-grading on $k[S(Q_\varphi)]$ induces an action $\varrho$ of $\mathbb{T}$ on $R_\varphi$, where $\mathbb{T}$ is the algebraic torus $\spec k[\mathbb{X}]$. 

For a normal scheme $S$, $\Div S$ is the group of Cartier divisors of $S$. For other notations in birational geometry, we follow \cite{KM98}. 

\begin{definition}\label{the standard symplectic form}
Fix $\delta=(\delta_1,\ldots,\delta_g)$ a sequence of positive integers $\delta_i$ such that $\delta_i\mid \delta_{i+1}$. A polarization $\lambda:X\to X^t$ of an abelian variety is of type $\delta$ if the kernel of $\lambda$ is isomorphic to $K(\delta)=H(\delta)\times\widehat{H(\delta)}$ for $H(\delta)=\oplus_{i=1}^{g}\mathbf{Z}/\delta_i\mathbf{Z}$. $d:=\prod_{i=1}^g\delta_i$ is called the degree of the polarization. Let $M=2\delta_g$.  
\end{definition}

We use the font $\mathscr{A}$ for an algebraic stack, and the font $\mathcal{A}$ for its coarse moduli space, if it exists. 


\section{The Toroidal Compactification and Mirror Symmetry}\label{2}
\subsection{The Toroidal Compactification: The Setup}\label{2.1}
Fix a lattice $\Lambda\cong \mathbf{Z}^{2g}$ in a vector space $V\cong \mathbf{R}^{2g}$. Let $E$ be a non-degenerate skew-symmetric bilinear form over $V$, that takes integral values on $\Lambda$. By the elementary divisor theorem, there exists a symplectic basis $\{\lambda_1,\lambda_2\ldots,\lambda_g,\mu_1,\mu_2,\ldots,\mu_g\}$ of $\Lambda$, such that with respect to this basis,
\begin{equation}\label{standard symplectic form}
E=\begin{pmatrix}
0&\delta\\
-\delta&0
\end{pmatrix},
\text{  where }
\delta=\begin{pmatrix}
\delta_1& & \\
 & \ddots& \\
 & & \delta_g
 \end{pmatrix},
 \end{equation}

for some $\delta_i\in \mathbb{N}$ such that $\delta_i|\delta_{i+1}$. 
Fix a symplectic basis, and identify $\Lambda$ (resp. $V$) with $\mathbf{Z}^{2g}$ (resp. $\mathbf{R}^{2g}$). Define
\[
\Sp(E, \mathbf{R}):=\{R\in \M(2g,\mathbf{R}); \: RER^T=E\}.
\]

The subgroup $\Sp(E,\mathbf{R})$ is conjugate to the symplectic group $\Sp(2g,\mathbf{R})$ in $\GL(2g, \mathbf{R})$ by the following transformation
\begin{equation}
R=\begin{pmatrix}a&b\\c&d\end{pmatrix}\mapsto M=\begin{pmatrix}\alpha&\beta\\ \gamma&\epsilon\end{pmatrix}= \begin{pmatrix}1&0\\0&\delta\end{pmatrix}^{-1}R\begin{pmatrix}1&0\\0&\delta\end{pmatrix}\in\Sp(2g,\mathbf{R}).   \label{map}
\end{equation}

Denote $\Sp(E,\mathbf{Z})$ by $\Gamma(\delta)$. $\Gamma(\delta)$ is an arithmetic subgroup of $\Sp(2g,\mathbf{R})$ through the map (\ref{map}).

The following lemma is (\cite{BL} Lemma 17.2.3)\footnote{Notice that our convention is different from theirs by a transposition.}. We choose the convention that a basis in $\H_1(A, \mathbf{Z})$ is written in a column, and a period matrix is thus a $2g\times g$-matrix. The matrix multiplication on $\mathbf{R}^{2g}$ is from the right. 

\begin{lemma}\label{data in Siegel upper half space}
The following data are equivalent,
\begin{itemize}
\item[a)] a complex structure $J: \mathbf{R}^{2g}\to \mathbf{R}^{2g}, J^2=-1$ such that $E= \Im(H)$, $H$ a positive definite Hermitian form for this complex structure. The existence of $H$ is equivalent to: 
\begin{align*}
E(uJ, vJ)&=E(u,v),  \quad\forall u,v \in \mathbf{R}^{2g},\\
E(vJ, v)&>0, \quad\forall v\in \mathbf{R}^{2g}-\{0\}.
\end{align*}
In this case, $H(u, v)=E(uJ, v)+iE(u,v)$. The set of such $J$ is denoted by $\mathcal{C}_0(\Sp(E,\mathbf{R}))$.
\item[b)] a $g\times g$ complex symmetric matrix $\tau$ with $\Im(\tau)$ positive definite. The set of such $\tau$ is denoted by $\mathfrak{S}_g$. 
\end{itemize}

The bijection is determined by the commutative diagram,
\begin{diagram}
\mathbf{R}^{2g}&\rTo^{\big(\begin{smallmatrix}\tau\\ \delta\end{smallmatrix}\big)}& \mathbf{C}^{g}\\
\dTo_{J}&&\dTo^{iI_g}\\
\mathbf{R}^{2g}&\rTo^{\big(\begin{smallmatrix}\tau\\ \delta\end{smallmatrix}\big)}& \mathbf{C}^{g}.
\end{diagram}

Moreover, the Lie group $\Sp(E,\mathbf{R})$ is acting on $\mathcal{C}_0(\Sp(E,\mathbf{R}))$ from the left by conjugation $J\mapsto RJR^{-1}$. The Lie group $\Sp(2g, \mathbf{R})$ is acting on $\mathfrak{S}_g$ by $\tau\mapsto (\alpha\tau+\beta)(\gamma\tau+\epsilon)^{-1}$. The bijection $\mathcal{C}_0(\Sp(E,\mathbf{R}))\to \mathfrak{S}_g$ is equivariant with respect to the action of Lie groups if we identify $\Sp(E,\mathbf{R})$ and $\Sp(2g,\mathbf{R})$ by the map \eqref{map}. 

\end{lemma}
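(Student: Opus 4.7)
The plan is to prove the bijection constructively and then check the equivariance, with every step being a direct linear-algebra computation once the conventions are fixed.

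First I would dispose of the equivalence inside part~(a). Given a complex structure $J$ with $J^2 = -\Id$, set $H(u,v) := E(uJ,v) + iE(u,v)$. A routine check shows that the $\mathbf{C}$-sesquilinearity and Hermitian symmetry of $H$ are equivalent to $E(uJ,vJ) = E(u,v)$ (i.e.\ $J \in \Sp(E,\mathbf{R})$), while positive definiteness of $H$ is equivalent to $E(vJ,v) > 0$. So we may work with the two displayed conditions on $J$ in place of the existence of $H$.

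Next I would construct the map $J \mapsto \tau$. The structure $J$ turns $\mathbf{R}^{2g}$ into a complex $g$-dimensional vector space, and one checks (using positivity) that $\delta_1^{-1}\mu_1,\ldots,\delta_g^{-1}\mu_g$ form a $\mathbf{C}$-basis; expanding $\lambda_i = \sum_j \tau_{ji}(\delta_j^{-1}\mu_j)$ over $\mathbf{C}$ defines the matrix $\tau$. In these coordinates the $\mathbf{R}$-linear map $\mathbf{R}^{2g} \to \mathbf{C}^g$ given by right multiplication by $\bigl(\begin{smallmatrix}\tau\\\delta\end{smallmatrix}\bigr)$ sends $\mu_i$ to $\delta_i$ times the $i$-th standard basis vector and by construction intertwines $J$ with multiplication by $i$, so the diagram commutes. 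Unwinding $E(uJ,vJ) = E(u,v)$ in this basis forces $\tau^T = \tau$, and $E(vJ,v) > 0$ translates to $\Im\tau$ being positive definite. Conversely, for $\tau \in \mathfrak{S}_g$ the $2g\times g$ matrix $\bigl(\begin{smallmatrix}\tau\\\delta\end{smallmatrix}\bigr)$ is an $\mathbf{R}$-linear isomorphism $\mathbf{R}^{2g} \to \mathbf{C}^g$, and pulling back multiplication by $i$ yields a $J$ satisfying both conditions; the two constructions are manifestly mutually inverse.

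For equivariance I would compute in the period matrix. Acting on the symplectic basis by $R \in \Sp(E,\mathbf{R})$ replaces $\bigl(\begin{smallmatrix}\tau\\\delta\end{smallmatrix}\bigr)$ by $R\bigl(\begin{smallmatrix}\tau\\\delta\end{smallmatrix}\bigr)$. Passing through the conjugation to $M = \bigl(\begin{smallmatrix}\alpha&\beta\\ \gamma&\epsilon\end{smallmatrix}\bigr) \in \Sp(2g,\mathbf{R})$ and then right-multiplying by $(\gamma\tau + \epsilon)^{-1}$ (which only rescales the complex basis on $\mathbf{C}^g$ and hence does not alter $J$) brings the resulting period matrix back into the form $\bigl(\begin{smallmatrix}\tau'\\\delta\end{smallmatrix}\bigr)$ with $\tau' = (\alpha\tau+\beta)(\gamma\tau+\epsilon)^{-1}$, which is the claimed M\"obius action. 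The main obstacle throughout will be bookkeeping rather than content — keeping straight the conjugation by $\bigl(\begin{smallmatrix}1&0\\0&\delta\end{smallmatrix}\bigr)$ that relates $\Sp(E,\mathbf{R})$ and $\Sp(2g,\mathbf{R})$, and aligning the row-vs-column convention so that $\bigl(\begin{smallmatrix}\tau\\\delta\end{smallmatrix}\bigr)$ consistently represents the map in the declared diagram; once those are settled, the verifications are elementary.
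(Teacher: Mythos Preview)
Your outline is correct and is essentially the standard argument one finds in the reference the paper cites. In fact the paper does not supply its own proof of this lemma at all: it simply attributes the result to \cite{BL} Lemma~17.2.3 (with a footnote about a transposition in conventions) and records the row/column and period-matrix conventions being used. So there is nothing to compare against beyond noting that your sketch reproduces the textbook proof the paper is invoking. One minor bookkeeping point: with the paper's convention that vectors are rows acted on from the right, the map $\bigl(\begin{smallmatrix}\tau\\\delta\end{smallmatrix}\bigr)$ sends $\lambda_i$ to the $i$-th \emph{row} of $\tau$, so the expansion should read $\lambda_i=\sum_j\tau_{ij}(\delta_j^{-1}\mu_j)$ rather than $\tau_{ji}$; this is harmless once symmetry is established, but since you are deriving symmetry from $J\in\Sp(E,\mathbf{R})$ you should keep the indices straight at that step.
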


From now on, we identify $\mathcal{C}_0(\Sp(E,\mathbf{R}))$ with the Siegel upper half space $\mathfrak{S}_g$. The point is that the tropicalization is defined in terms of $J$, while the literature on toroidal compactifications uses $\mathfrak{S}_g$. We need to build a connection between these two in Section~\ref{2.2}.

The group $\Gamma(\delta)$ is acting on the set of symplectic bases. Therefore, we have the action on $\mathfrak{S}_g$. For $R=\bigl(\begin{smallmatrix}a& b\\c& d\end{smallmatrix}\bigr)\in \Gamma(\delta)$, the action is
\begin{equation}
R(\tau)=(a\tau+b\delta)(c\tau+d\delta)^{-1}\delta. \label{action}
\end{equation}

For a polarized abelian variety $A$, we get $\Lambda=\H_1(A,\mathbf{Z})$, $V=(\H^{1,0}(A))^*$, and polarization $E$. Over $\mathbf{C}$, the coarse moduli space of abelian varieties with polarization type $\delta$ is the quotient $\Gamma(\delta)\backslash \mathfrak{S}_g$ (\cite{BL} Theorem 8.2.6, and Remark 8.10.4). As an arithmetic quotient of a Hermitian symmetric domain, $\Gamma(\delta)\backslash \mathfrak{S}_g$ admits a type of compactifications called the toroidal compactifications. We won't get into the details of the constructions. The important thing is, in order to construct a toroidal compactification, we need an admissible collection of fans $\{\Sigma(F)\}$ supported on cones $\{\mathcal{C}(F)\}$ for each cusp $F$. We explain the cones $\mathcal{C}(F)$ and various discrete groups acting on it. Our basic reference is \cite{HKW}\footnote{Although the book concerns the moduli of abelian surfaces, the statements of the toroidal compactification is true for all degrees. For general statements, see \cite{AMRT}}. We have a minimal compactification\footnote{Better known as the Baily--Borel--Satake compactification.} by adding rational boundary components to $\mathfrak{S}_g$. The best way to see the rational boundary components is to realize $\mathfrak{S}_g$ as a bounded domain $\mathfrak{D}_g$ by the Cayley transform.
\begin{definition}
\[
\mathfrak{D}_g:=\{Z\in \M(g,\mathbf{C}); Z=Z^T, I_g-Z\overline{Z}>0\}.
\]
\end{definition}

\begin{definition}
The Cayley transformation $\mathfrak{S}_g\to \mathfrak{D}_g$ is the map
\[
\tau\mapsto Z=(\tau-iI_g)(\tau+iI_g)^{-1}. 
\]
\end{definition}

The rational boundary components are all inside $\overline{\mathfrak{D}}_g\backslash \mathfrak{D}_g$. We also call the rational boundary components cusps. If the rational boundary component is of dimension $g'(g'+1)/2$, it is called a $g'$-cusp. Each cusp $F$ corresponds to a rational isotropic subspace $U(F)$ of $V$. The basic result is 

\begin{proposition}[\cite{HKW} Proposition. 3.16, Proposition. 3.19]
$U$ is a bijection between the following objects:
\begin{enumerate}
\item \begin{itemize}
\item[a)] rational boundary components $F$ of dimension $g'(g'+1)/2$,
\item[b)] rational $E$-isotropic subspaces $U(F)\subset\mathbf{R}^{2g}$ of dimension $g-g'$.
\end{itemize}
\item\begin{itemize}
\item[a)] pairs of adjacent boundary components $F'\succ F$ ($F\neq F', F\subset \overline{F}'$),
\item[b)] pairs of $E$-isotropic subspaces $U(F')\subsetneqq U(F)$.
\end{itemize}
\end{enumerate}
\end{proposition}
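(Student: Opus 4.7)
My plan is to exhibit the bijection explicitly via the stabilizer parabolics of $\mathrm{Sp}(E,\mathbf{R})$, and then check rationality and adjacency separately. The key observation is that the rational boundary components of $\overline{\mathfrak{D}}_g$ are exactly the $\mathrm{Sp}(E,\mathbf{Q})$-translates of a standard list indexed by $0\le g'\le g$, and similarly for rational $E$-isotropic subspaces.

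First I would write the standard boundary component at level $g'$ in the bounded realization as
\[
F_{g'}=\left\{\begin{pmatrix} Z' & 0\\ 0 & I_{g-g'}\end{pmatrix}\in \overline{\mathfrak{D}}_g : Z'\in \mathfrak{D}_{g'}\right\},
\]
which is holomorphically isomorphic to $\mathfrak{D}_{g'}$ and hence of dimension $g'(g'+1)/2$. The group $\mathrm{Sp}(E,\mathbf{R})$ acts transitively on the set of boundary components of this fixed dimension, so every boundary component is an $\mathrm{Sp}(E,\mathbf{R})$-translate of some $F_{g'}$, and the rational ones are by definition the $\mathrm{Sp}(E,\mathbf{Q})$-translates. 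To $F_{g'}$ I would associate the isotropic subspace $U(F_{g'})=\mathrm{span}_{\mathbf{R}}(\mu_{g'+1},\ldots,\mu_g)$ of dimension $g-g'$; isotropy is immediate from the standard form of $E$. Transporting by $\mathrm{Sp}(E,\mathbf{Q})$ then defines $U(F)$ for any rational $F$.

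For the inverse, given a rational $E$-isotropic subspace $U\subset V$ of dimension $g-g'$, the elementary divisor theorem applied to the pair $(\Lambda,E)$ produces a symplectic basis whose last $g-g'$ vectors $\mu_i$ span $U$; this carries $U$ to $U(F_{g'})$ via an element of $\mathrm{Sp}(E,\mathbf{Q})$, producing the boundary component $F$. Well-definedness and the fact that these two constructions are mutually inverse both follow from one assertion: the $\mathrm{Sp}(E,\mathbf{Q})$-stabilizer of $F_{g'}$ equals the $\mathrm{Sp}(E,\mathbf{Q})$-stabilizer of $U(F_{g'})$, both being the $\mathbf{Q}$-points of the standard maximal parabolic attached to this isotropic subspace. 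The adjacency statement (2) then reduces to the standard case: by choosing compatible bases one reduces $F\subset\overline{F}'$ to $F_{g'}\subset\overline{F}_{g''}$ with $g'\le g''$, and from the explicit description $U(F_{g''})=\mathrm{span}(\mu_{g''+1},\ldots,\mu_g)\subsetneq\mathrm{span}(\mu_{g'+1},\ldots,\mu_g)=U(F_{g'})$, with the converse inclusion by the same calculation run backwards.

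The main obstacle I anticipate is the rationality correspondence: one must show that a boundary component $F$ is rational (i.e.\ lies in an $\mathrm{Sp}(E,\mathbf{Q})$-orbit of some $F_{g'}$) if and only if $U(F)$ is defined over $\mathbf{Q}$. This amounts to identifying the normalizer of $F$ inside $\mathrm{Sp}(E,\mathbf{R})$ with the normalizer of $U(F)$ and checking that rationality of one parabolic subgroup forces rationality of the other. A secondary subtlety is matching the dimension $g-g'$ of the isotropic subspace with the dimension $g'(g'+1)/2$ of the cusp; this follows from analyzing the Levi decomposition of the stabilizer parabolic $P_{U}$, whose semisimple part is $\mathrm{Sp}(2g',\mathbf{R})$ acting on $\mathfrak{S}_{g'}$, but the bookkeeping has to be done carefully to account for the non-principal polarization type $\delta$ used to define $\Gamma(\delta)$.
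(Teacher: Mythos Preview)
The paper does not give its own proof of this proposition: it is stated with an explicit citation to \cite{HKW} Proposition~3.16 and Proposition~3.19 and is used as input, not proved. So there is no proof in the paper to compare your proposal against.

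That said, your sketch is the standard argument and is essentially what one finds in \cite{HKW}: reduce everything to the standard boundary components $F^{(g')}$ and the standard isotropic subspaces $U^{(g')}$ using the transitivity of the $\mathrm{Sp}(E,\mathbf{Q})$-action, and identify the stabilizer of $F^{(g')}$ with the stabilizer of $U^{(g')}$ as a maximal parabolic. One small point: your worry about the polarization type $\delta$ entering the dimension count is unfounded. The dimension of the boundary component and of the isotropic subspace are purely real-Lie-theoretic data about $\mathrm{Sp}(E,\mathbf{R})\cong\mathrm{Sp}(2g,\mathbf{R})$; the type $\delta$ only affects which arithmetic subgroup $\Gamma(\delta)$ acts and hence how many $\Gamma(\delta)$-orbits of rational boundary components there are, not the bijection itself.
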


Moreover, the correspondence is $\Sp(2g, \mathbf{R})$-equivariant. We have $U(R(F))=U(F)\cdot R^{-1}$, for $R\in \Gamma(\delta)$. Each rational boundary component is equivalent under $\Sp(E,\mathbf{Q})$ (left action) to one of the following boundary components,
\[
F^{(g')}=\left\{\begin{pmatrix}Z'&0\\0&I_{g-g'}\end{pmatrix}; Z'\in \mathfrak{D}_{g'}\right\}, \  0\leqslant g'\leqslant g. 
\]
\[
U^{(g')}=U(F^{(g')})=\left\{(0\ x); 0\in \mathbf{R}^{g+g'}, x\in\mathbf{R}^{g-g'}\right\}.
\]

\begin{remark}
Notice that the map $U$ depends on the choice of $\delta$. If we denote the corresponding isotropic subspace in the principally polarized case by $U^\ppav$, then $U\bigl(\begin{smallmatrix} I & 0\\ 0 &\delta\end{smallmatrix}\bigr)=U^\ppav$. 
\end{remark}

For any cusp $F$, the stabilizer $\mathcal{P}(F)\subset \Sp(2g,\mathbf{R})$ is a parabolic subgroup. If $F'=M(F)$, for $M\in \Sp(2g,\mathbf{R})$, we have $U(F')=U(F)\cdot M^{-1}$ and $\mathcal{P}(F')=M\mathcal{P}(F)M^{-1}$. The center $\mathcal{P}'(F)$ of the unipotent radical is a real vector space. The normalizer of $\mathcal{P}'(F)$ in $\Sp(2g,\mathbf{R})$ is $\mathcal{P}(F)$, and $\mathcal{P}(F)$ is acting on $\mathcal{P}'(F)$ by conjugation. Let the centralizer of $\mathcal{P}'(F)$ in $\mathcal{P}(F)$ be $Z(\mathcal{P}')$ and $\mathcal{P}(F)=\mathcal{G}_l(F)\ltimes Z(\mathcal{P}')$, where $\mathcal{G}_l(F)$ is the quotient $\mathcal{P}(F)/Z(\mathcal{P}')$. Since the conjugation action of $Z(\mathcal{P}')$ is trivial, it factors through an action of $\mathcal{G}_l(F)$ on $\mathcal{P}'(F)$. Denote $\Gamma(\delta)\cap\mathcal{G}_l(F)$ by $G_l(F)$ and the image of $G_l(F)$ in $\GL(\mathcal{P}')$ by $\overline{P}(F)$.
 
The cone $\mathcal{C}(F)$ that supports the fan is a special orbit of the $\mathcal{P}$-action in the vector space $\mathcal{P}'$, and is given by the Harish--Chandra map. While it is hard to describe the Harish--Chandra map in general, it is easy to write down everything for the special case $F^{(g')}$. 

\begin{example}\label{groups in the principal cusp}
Over $F^{(g')}$, we have, 
\begin{enumerate}
\item $U^{(g')}=\left\{(0\ x); 0\in\mathbf{R}^{g+g'}, \ x\in \mathbf{R}^{g-g'}\right\}$,
\item $\mathcal{P}'^{(g')}=\left\{[Q]:=\begin{pmatrix} I_{g'} & 0 & 0 &0\\0 & I_{g-g'} &0 & Q\\ 0& 0 & I_{g'} &0\\ 0& 0& 0 & I_{g-g'}\end{pmatrix}; Q\in \Sym_{g-g'}(\mathbf{R})\right\}\cong \Sym_{g-g'}(\mathbf{R})$,
\item $\mathcal{C}^{(g')}=\left\{[Q] ; 0<Q\in\Sym_{g-g'}(\mathbf{R})\right\}$,
\item $\mathcal{G}_l^{(g')}=\left\{\begin{pmatrix} I_{g'}& 0& 0& 0\\ 0& (u^T)^{-1}& 0 &0\\ 0 & 0& I_{g'}& 0\\0& 0 & 0& u\end{pmatrix}; u\in \GL(g-g',\mathbf{R})\right\}\cong\GL(g-g',\mathbf{R})$,
\item $\delta_{g'}=\begin{pmatrix} \delta_{g'+1} & & \\ & \ddots &\\ & & \delta_g\end{pmatrix}$,
\item $\overline{P}^{(g')}\cong\GL(\delta_{g'}):=\left\{ u\in \GL(g-g',\mathbf{Z}); \delta_{g'} u\delta_{g'}^{-1}\in \GL(g-g',\mathbf{Z})\right\}$.
 
\end{enumerate}

The action of $\overline{P}^{(g')}$ on the vector space $\mathcal{P}'^{(g')}$ is 
\[
Q\mapsto (u^T)^{-1}Qu^{-1}.
\]
\end{example}

\begin{definition}
For an arbitrary boundary component $F$, let $M\in \Sp(2g,\mathbf{R})$ be such that $F=M(F^{(g')})$, and define
\[
\mathcal{C}(F):=M\mathcal{C}^{(g')}M^{-1}.
\]
\end{definition}

\begin{remark}
The definition of $\mathcal{C}(F)$ is independent of the choice of $M$.
\end{remark}

For each cone $\mathcal{C}(F)$, we need a fan $\Sigma(F)$ whose support is the rational closure of $\mathcal{C}(F)$. If the collection of fans is admissible (\cite{HKW} Definition 3.61, Definition 3.66), then we can construct a proper algebraic space, called the toroidal compactification of $\Gamma(\delta)\backslash\mathfrak{D}_g$. If $g=2$, this is (\cite{HKW} Theorem 3.82). The general case is the main theorem (\cite{AMRT} Theorem 5.2). 

Since it is difficult to deal with the cones and various discrete groups in a Lie group, we interpret the cones $\mathcal{C}(F)$ as the moduli of tropical abelian varieties.


\subsection{A Tropical Interpretation of the Cones}\label{2.2}
In this section, we identify the cone $\mathcal{C}(F)$ with the moduli of polarized tropical abelian varieties. Fix a rational isotropic subspace $U$ of dimension $r=g-g'$ in $V$. We define the tropicalization in the direction of $U$ first. 
\begin{lemma}\label{splitting of V}
$V=UJ\oplus U^{\perp}$.  
\end{lemma}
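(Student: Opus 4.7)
The plan is to prove $V = UJ \oplus U^{\perp}$ by combining a dimension count with a trivial-intersection argument. First I would observe that $J$ is an $\mathbf{R}$-linear automorphism of $V$, so $\dim UJ = \dim U =: r$. Since $E$ is a nondegenerate bilinear form on $V$, the $E$-orthogonal complement $U^{\perp}$ has dimension $2g - r$. The two dimensions therefore sum to $\dim V$, and it remains only to verify that $UJ \cap U^{\perp} = 0$.

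For the intersection, suppose $v = uJ$ with $u \in U$ and $v \in U^{\perp}$. By definition of $U^{\perp}$, $E(u', v) = 0$ for every $u' \in U$; specializing to $u' = u$ yields $E(u, uJ) = 0$. On the other hand, the polarization compatibility recorded in Lemma~\ref{data in Siegel upper half space} gives the positivity $E(wJ, w) > 0$ for every nonzero $w \in V$, and the skew-symmetry of $E$ then implies $E(u, uJ) = -E(uJ, u)$. Hence $E(u, uJ) = 0$ forces $u = 0$ and so $v = 0$.

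I expect no real obstacle here: the argument is essentially one dimension count plus a two-line computation, with the only nontrivial input being the positivity built into the Hermitian compatibility $E = \Im H$. Notably, the isotropy of $U$ is not used at this stage — any subspace would do — so I anticipate that the isotropy hypothesis enters only in later constructions (the tropicalization in the direction of $U$ and the identification of $\mathcal{C}(F)$ with a moduli of tropical abelian varieties).
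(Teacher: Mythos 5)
Your proof is correct and follows essentially the same route as the paper: show $UJ\cap U^{\perp}=\{0\}$ by pairing the intersection element with itself and invoking the positivity $E(vJ,v)>0$ from Lemma~\ref{data in Siegel upper half space}, then conclude by the dimension count $\dim UJ+\dim U^{\perp}=2g$. Your side remark that the isotropy of $U$ is not needed here is also accurate.
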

\begin{proof}
If $u\in UJ\cap U^{\perp}$, then $uJ\in U$. Since $u\in U^{\perp}$, $E(uJ,u)=0$. $E(\cdot J,\cdot)$ is nondegenerate, so $u=0$, and $UJ\cap U^{\perp}=\{0\}$. Since $\dim UJ+\dim U^{\perp}=2g$, $V=UJ\oplus U^{\perp}$. 
 \end{proof}
 
\begin{definition}
For $J\in \mathcal{C}_0(\Sp(E,\mathbf{R}))$, define $g_J$ to be the isomorphism $UJ\to V/U^{\perp}$ from Lemma~\ref{splitting of V}. Define the map $f_J$, 
\begin{align*}
f_J:U&\to V/U^{\perp},\\
\lambda&\mapsto g_J(\lambda J).
\end{align*}
\end{definition}

By Lemma~\ref{splitting of V}, $f_J$ is an isomorphism. Denote the inverse by $\check{\phi}:V/U^{\perp}\to U$. Define $Y:=\Lambda/\Lambda\cap U^{\perp}$. It is a full rank lattice in $V/U^{\perp}$. Consider the real torus $\check{B}:=U/\check{\phi}(Y)$. Denote the lattice $U\cap \Lambda$ by $X^*$, and $\Hom(\Lambda\cap U, \mathbf{Z})$ by $X$. $\check{B}$ is a tropical torus with the lattice $X^*$ in the tangent bundle of $\check{B}$. The non-degenerate pairing $E(\cdot, \cdot)$ induces an injection $\phi: Y \to X$ as follows. For each $\lambda\in \Lambda, v\in X^*$ define $\phi(\lambda)(v)=E(\lambda, v)$. So $\phi(\lambda)$ is an element in $X$. Since the kernel is $U^\perp\cap \Lambda$, the morphism factors through $Y$. The data $\phi$ is equivalent to the positive symmetric pairing $\check{g}=E(\cdot J, \cdot)$ on $U$. Therefore $\phi$ is a polarization for the tropical torus $\check{B}$, and $(\check{B},\check{\phi},\phi)$ is a polarized tropical abelian variety (\cite{MZ} p.15). Denote the image $\phi(Y)$ of $Y$ by $\check{Y}$. Assume $X/\check{Y}\cong \mathbf{Z}/d_1\times\mathbf{Z}/d_2\times\ldots\times\mathbf{Z}/d_r$ for $d_i\mid d_{i+1}$. Let $\mathfrak{d}$ be the diagonal matrix
\begin{equation}\label{dmatrix}
\mathfrak{d}:=\begin{pmatrix}
d_1&&\\
& \ddots &\\
& & d_r 
\end{pmatrix}.
\end{equation}

$\mathfrak{d}$ is called the type of $\phi$. \\

\begin{corollary}
$(\check{B}, \check{\phi}, \phi)$ is a $r$-dimensional polarized tropical abelian variety of type $\mathfrak{d}$. 
\end{corollary}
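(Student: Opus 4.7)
The statement is essentially a verification that the data constructed in the preceding paragraphs satisfies the definition of a polarized tropical abelian variety in the sense of \cite{MZ}. The plan is to check the three requirements in turn: $\check{B}$ is a tropical torus of dimension $r$, the pairing induced by $J$ is symmetric and positive definite, and the type of the polarization has the Smith normal form $\mathfrak{d}$.

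First I would do the dimension count. Since $U$ is $E$-isotropic and $E$ is non-degenerate, $\dim_\mathbf{R} U^{\perp} = 2g - r$, so $Y = \Lambda/(\Lambda\cap U^{\perp})$ is a free abelian group of rank $r$. By Lemma~\ref{splitting of V}, $\check{\phi}\colon V/U^{\perp}\to U$ is an isomorphism of real vector spaces, so $\check{\phi}(Y)\subset U$ is a full-rank lattice, and $\check{B} = U/\check{\phi}(Y)$ is a real torus of dimension $r$. The tangent lattice $X^* = U\cap \Lambda$ has rank $r$ in $U$ (because $U$ is rational), giving $\check{B}$ its integral affine structure.

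Next I would verify that $\check{g}(u,v) = E(uJ,v)$ is a symmetric positive definite form on $U$. For symmetry, the identity $E(wJ,zJ) = E(w,z)$ together with $J^2 = -1$ yields
\[
E(uJ,v) = E(uJ\cdot J, vJ) = -E(u, vJ) = E(vJ, u),
\]
so $\check{g}$ is symmetric. Positivity $\check{g}(u,u) = E(uJ,u) > 0$ for $u\neq 0$ is immediate from the defining property of $J\in\mathcal{C}_0(\Sp(E,\mathbf{R}))$ in Lemma~\ref{data in Siegel upper half space}. Under the isomorphism $\check{\phi}$, the form $\check{g}$ on $U$ is identified with the form on $Y$ given by $\lambda\mapsto E(\lambda,\cdot)\vert_{U}$: if $\mu = \check{\phi}(\lambda)$ then $\lambda\equiv \mu J \pmod{U^{\perp}}$, so for $v\in U\cap\Lambda$ one has $\phi(\lambda)(v) = E(\lambda,v) = E(\mu J,v) = \check{g}(\check{\phi}(\lambda),v)$.

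Finally I would address injectivity of $\phi$ and the type. If $\phi(\lambda) = 0$ then $E(\lambda,v) = 0$ for all $v\in X^* = U\cap \Lambda$; since $X^*$ spans $U$ over $\mathbf{R}$, this forces $\lambda\in U^{\perp}\cap \Lambda$, so $\phi$ descends to an injection $Y\hookrightarrow X$ and $\check{Y} := \phi(Y)$ is a full-rank sublattice of $X$. The elementary divisor theorem then gives unique positive integers $d_1\mid d_2\mid\cdots\mid d_r$ with $X/\check{Y}\cong\bigoplus_{i=1}^r \mathbf{Z}/d_i\mathbf{Z}$, which is the matrix $\mathfrak{d}$ of \eqref{dmatrix}. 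Together with the positivity of $\check{g}$, this is exactly the data of a polarization of type $\mathfrak{d}$ on the $r$-dimensional tropical torus $\check{B}$ in the sense of \cite{MZ}. The only step with any content is the symmetry calculation; everything else is assembly, so I do not expect a genuine obstacle.
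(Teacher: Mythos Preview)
Your proposal is correct and matches the paper's approach. In fact the paper gives no separate proof of this Corollary at all: the preceding paragraph already asserts that $\check{B}$ is a tropical torus, that $\phi$ factors through $Y$ with kernel $U^\perp\cap\Lambda$, that it corresponds to the positive symmetric pairing $\check g=E(\cdot J,\cdot)$, and that the type $\mathfrak d$ is by definition the Smith normal form of $X/\check Y$; the Corollary just packages these observations, and your write-up supplies exactly the verifications the paper leaves implicit.
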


\begin{definition}\label{definition of tropicalization}
Fix a rational isotropic subspace $U\subset V$, the tropical abelian variety $(\check{B}, \check{\phi}, \phi)$ constructed from $f_J$ for $J\in \mathcal{C}_0(\Sp(E,\mathbf{R}))$ is called the tropical abelian variety associated to $J$. 
\end{definition}

\begin{definition}
Let $\check{Y}\subset X$ be two lattices of the same rank $r$. A basis of $X$ $\{\alpha_1,\ldots,\alpha_r\}$ is called a compatible basis if there exists $\{d_1,\ldots,d_r\}$ such that $d_i|d_{i+1}$ and $\{d_1\alpha_1,\dots,d_r\alpha_r\}$ is a basis of $\check{Y}$. In other words, there exists an isomorphism $X/\check{Y}\cong \mathbf{Z}/d_1\times\mathbf{Z}/d_2\times\cdots\times\mathbf{Z}/d_r$, such that $\alpha_i$ is sent to the generator of $\mathbf{Z}/d_i$.
\end{definition}

\begin{proposition}\label{0-cusps}
Up to the action of $\Gamma(\delta)$, there is a unique rational boundary component that corresponds to the polarized tropical abelian varieties of type $\delta$, and this is the orbit of $F^{(0)}$. For any $0$-cusp, we have $d_1d_2\cdots d_g=\delta_1\delta_2\cdots \delta_g$.
\end{proposition}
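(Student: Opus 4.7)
The proposition splits into two assertions, the orbit identification and the product identity. Both begin with the observation that a $g'$-cusp gives a $(g-g')$-dimensional polarized tropical abelian variety, so tropical type $\delta$ (which has $g$ entries) forces $g'=0$. The easy half of the orbit statement, that $F^{(0)}$ itself has tropical type $\delta$, is a direct calculation from the fixed symplectic basis: $U^{(0)}=\operatorname{span}_{\mathbf{R}}(\mu_1,\ldots,\mu_g)$ gives $X^*=\mathbf{Z}\langle\mu_i\rangle$, $Y$ is generated by the images of the $\lambda_i$, and $\phi([\lambda_i])=\delta_i\mu_i^*$ via $E(\lambda_i,\mu_j)=\delta_i\delta_{ij}$, so $X/\phi(Y)\cong\bigoplus_i\mathbf{Z}/\delta_i$.

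For the product identity at an arbitrary $0$-cusp $U$, the plan is a Pfaffian argument. Choose bases $\{e_i\}$ of $X^*=\Lambda\cap U$ and $\{f_i\}$ of $\Lambda$ lifting a basis of $Y$; in the ordered basis $(e_1,\ldots,e_g,f_1,\ldots,f_g)$ the Gram matrix of $E$ takes the block form $\bigl(\begin{smallmatrix}0&A\\-A^T&B\end{smallmatrix}\bigr)$ with $A_{ij}=E(e_i,f_j)$ and $B$ skew, the upper-left block vanishing by isotropy of $X^*$. Any perfect matching of $\{1,\ldots,2g\}$ contributing nonzero to the Pfaffian must avoid pairs inside $\{1,\ldots,g\}$, hence must pair each such index with an index in $\{g+1,\ldots,2g\}$, giving $|\mathrm{Pf}(E)|=|\det A|$. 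The left side equals $\delta_1\cdots\delta_g$ because $E$ is of global type $\delta$, and $\phi\colon Y\to X$ is injective (since $X^*$ is maximal isotropic) and represented by $A$ up to sign and transpose, so $|\det A|=|X/\phi(Y)|=d_1\cdots d_g$, yielding the identity.

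For the orbit uniqueness, given $U$ with tropical type exactly $\delta$, the goal is to exhibit a symplectic basis of $\Lambda$ in the standard form~\eqref{standard symplectic form} whose last $g$ vectors span $X^*$; the corresponding change-of-basis matrix then gives an element of $\Gamma(\delta)$ carrying $U^{(0)}$ to $U$. Using the Smith normal form of $\phi$ together with the tropical-type hypothesis, choose $\{e_i\},\{f_i\}$ with $E(f_j,e_i)=\delta_i\delta_{ij}$, so that in the ordered basis $(f_1,\ldots,f_g,e_1,\ldots,e_g)$ the Gram matrix becomes $\bigl(\begin{smallmatrix}B&\delta\\-\delta&0\end{smallmatrix}\bigr)$ for some skew $B$. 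The substitution $f_i\mapsto f_i+\sum_k c_{ik}e_k$ preserves the off-diagonal blocks and alters $B_{ij}$ by $c_{ji}\delta_i-c_{ij}\delta_j$, so reaching $B=0$ amounts to solving this in integers, which is possible precisely when $\delta_i\mid B_{ij}$ for all $i<j$.

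The main obstacle is to verify these divisibilities from the hypothesis that $E$ has global symplectic type $\delta$. The plan is to analyze $\operatorname{coker}(E\colon\Lambda\to\Lambda^*)$ directly: in the constructed basis, it is presented by generators $e_i^*,f_i^*$ with relations $\delta_if_i^*=0$ and $\delta_ie_i^*=-\sum_j B_{ij}f_j^*$, exhibiting it as an extension of $\bigoplus_i\mathbf{Z}/\delta_i$ (the images of the $e_i^*$) by $\bigoplus_i\mathbf{Z}/\delta_i$ (the span of the $f_i^*$). The hypothesis on $E$ forces this cokernel to have invariant factors $(\delta_1,\delta_1,\ldots,\delta_g,\delta_g)$, and hence to be abstractly isomorphic to $\bigoplus_i(\mathbf{Z}/\delta_i)^2$; a case analysis using uniqueness of the invariant-factor decomposition of finite abelian groups shows that this forces the extension to split. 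A splitting is exactly a choice of integers $c_{ij}$ with $\delta_ic_{ij}\equiv B_{ij}\pmod{\delta_j}$, which exist iff $\delta_i\mid B_{ij}$ for $i<j$, giving the required divisibilities and completing the reduction.
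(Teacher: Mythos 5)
Your proposal is essentially correct in outline but takes a genuinely different route from the paper, and one step is asserted rather than proved. For the product identity, the paper writes $E$ in a compatible basis as $\bigl(\begin{smallmatrix}S&\mathfrak{d}\\ -\mathfrak{d}&0\end{smallmatrix}\bigr)$ and compares determinants; your Pfaffian computation $|\mathrm{Pf}(E)|=|\det A|=[X:\phi(Y)]$ in an arbitrary adapted basis is a clean equivalent and needs no compatible basis. For uniqueness, the paper argues inductively: it lifts $x_1=\delta_1y_1$ to $\lambda_1\in\Lambda$ with $E(\lambda_1,\mu_j)=\delta_1\delta_{1j}$, uses that $\delta_1$ generates the ideal $E(\Lambda,\Lambda)$ to split off $\{\lambda_1,\mu_1\}$ as a symplectic direct summand, and repeats. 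Your route instead reduces to the divisibilities $\delta_i\mid B_{ij}$ ($i<j$), and your bookkeeping there is correct: the Smith-normal-form basis exists, the substitution $f_i\mapsto f_i+\sum_k c_{ik}e_k$ changes $B_{ij}$ by $c_{ji}\delta_i-c_{ij}\delta_j$, the cokernel presentation is right, the subgroup $W$ generated by the $f_i^*$ and the quotient are both isomorphic to $H(\delta)$, and splitting of $0\to W\to C\to C/W\to 0$ is indeed equivalent to the congruences $\delta_ic_{ij}\equiv B_{ij}\pmod{\delta_j}$, hence to the divisibilities.

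The gap is the pivotal claim that $C\cong\oplus_i(\mathbf{Z}/\delta_i)^2$ forces this extension to split. "A case analysis using uniqueness of the invariant-factor decomposition" is not a proof: a subgroup abstractly isomorphic to a factor of $C$ need not be a summand, so you must relate the specific subgroup $W$ to a decomposition of $C$. The statement you need is nevertheless true, and it is a genuine lemma: if the middle term of an extension of finite abelian groups is abstractly isomorphic to the direct sum of its two ends, the extension splits. The standard argument is via purity: for each prime $p$ and each $n$, exactness of $0\to p^nC\cap W\to p^nC\to p^n(C/W)\to 0$ gives $|p^nC|\geqslant|p^nW|\cdot|p^n(C/W)|$, with equality if and only if $p^nC\cap W=p^nW$; since $C\cong W\oplus C/W$, equality holds for all $n$, so $W$ is pure in $C$, and a pure subgroup of a finite abelian group is a direct summand. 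With this lemma supplied, your proof closes. Note that cheaper invariants do not suffice once $g\geqslant 3$: for $\delta=(p,p,p^2)$, $B_{12}=1$, $B_{13}=B_{23}=0$, the cokernel is $(\mathbf{Z}/p^2)^4$, which has the right order and exponent and is ruled out only by the full invariant factors. The paper's inductive splitting argument avoids this group theory entirely, at the cost of constructing the symplectic basis step by step.
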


\begin{proof}
For the first statement, it suffices to find a symplectic basis $\{\lambda_i,\mu_j\}$ such that $\{\mu_j\}$ is a basis of $\Lambda\cap U$.  Choose a compatible basis $\{y_j\}_{j\in 1,\ldots,g}$ of $X$ such that $\{x_i=\delta_iy_i\}$ is a basis of $\check{Y}$. Let's denote the sublattice generated by $\{x_{i+1},\ldots,x_g\}$ by $Y_i$. Let $\{\mu_j\}$ be the dual basis of $\{y_j\}$. Consider the map $E(\cdot, \cdot\vert_U): \Lambda\to X$, and lift $x_1$ to an element $\lambda_1\in \Lambda$. So we have $E(\lambda_1,\mu_1)=\delta_1$ and $E(\lambda_1,\mu_j)=0$ for $j\neq1$. Now recall how $\delta_i$ is defined (\cite{GH} pp. 304-305.). The set of values $E(\Lambda,\Lambda)$ is an ideal in $\mathbf{Z}$ generated by $\delta_1$. So $\delta_1$ divides $E(\lambda,\mu)$ for any $\lambda,\mu\in \Lambda$. And we have a splitting $\Lambda=\{\lambda_1,\mu_1\}\oplus \{\lambda_1,\mu_1\}^{\perp}$. We denote $\{\lambda_1,\mu_1\}^{\perp}$ by $\Lambda_1$. The restriction of $E$ to $\Lambda_1$ is still non-degenerate, thus induces an isomorphism $\Lambda_1/\Lambda_1\cap U\cong Y_1$. By the definition of $\delta_i$ we can do induction, and find all $\{\lambda_1,\lambda_2\cdots,\lambda_g\}$ such that $\{\lambda_i,\mu_j\}$ is a symplectic basis of $\Lambda$.

For the second statement, choose a compatible basis $\{y_1,\ldots,y_g\}$ of $X$. Then there exists $\{u'_1,\ldots, u'_g\}\subset \Lambda$ that are lifts of $d_iy_i$. Let $\{v_1,\ldots,v_g\}\subset U$ be the dual basis of $\{y_1,\ldots,y_g\}$. Then $\{u'_1,\ldots,u'_g,v_1,\ldots,v_g\}$ is a basis of $\Lambda$. With respect to this new basis, $E$ is the matrix
\[
\begin{pmatrix}
S& \mathfrak{d}\\
-\mathfrak{d}&0
\end{pmatrix},
\]

where $\mathfrak{d}$ is the matrix \eqref{dmatrix}, and $S$ is some skew symmetric integral matrix (nonzero if $d\neq\delta$). Because the transformation matrix between two basis $\{u'_1,\ldots,u'_g,v_1,\ldots,v_g\}$ and $\{\lambda_1,\ldots,\lambda_g,\mu_1,\ldots, \mu_g\}$ is in $\GL(2g,\mathbf{Z})$, and has determinant $\pm 1$, we have $d_1d_2\cdots d_g=\delta_1\delta_2\cdots \delta_g$ by computing the determinant. 
\end{proof}

\begin{corollary}\label{splitting}
For any maximal isotropic subspace $U\subset \mathbf{R}^{2g}$, if $U\cap\Lambda$ has an isotropic complement, i.e. there is an isotropic subspace $U'\subset \mathbf{R}^{2g}$ such that $(U\cap\Lambda)\oplus (U'\cap\Lambda)=\Lambda$, then $U=U^{(0)}M^{-1}$ for some $M\in \Gamma(\delta)$.
\end{corollary}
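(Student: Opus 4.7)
The plan is to reduce to the first part of Proposition~\ref{0-cusps}: it suffices to show that, under the splitting hypothesis, the polarized tropical abelian variety associated with $U$ is of type exactly $\delta$, and then the orbit statement in Proposition~\ref{0-cusps} yields the conclusion $U = U^{(0)} M^{-1}$ via $U(R(F)) = U(F)\cdot R^{-1}$.

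First I would set $X^* = U \cap \Lambda$ and $L = U'\cap\Lambda$. Because $U$ is maximal isotropic, $U^{\perp} = U$, so $\Lambda \cap U^{\perp} = X^*$ and $Y = \Lambda/X^* \cong L$ via the splitting. The polarization $\phi\colon Y\to X$ then identifies with the map $L\to X=\Hom(X^*,\mathbf{Z})$, $l\mapsto E(l,\cdot)|_{X^*}$, whose Gram matrix $A$ (in any $\mathbf{Z}$-bases of $L$ and $X^*$) has $|\det A|=\delta_1\cdots\delta_g$. The cokernel $X/\check{Y}$ is therefore $\mathbf{Z}^g/A^T\mathbf{Z}^g$, whose invariant factors $(d_1,\dots,d_g)$ are the elementary divisors of $A$.

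Next I would upgrade the weaker conclusion $\prod d_i = \prod \delta_i$ (which is all one gets from part (2) of Proposition~\ref{0-cusps}) to the full equality $(d_1,\dots,d_g) = (\delta_1,\dots,\delta_g)$, and this is where the isotropic complement hypothesis is used. By Smith normal form applied to $A$, choose $\mathbf{Z}$-bases $\{\lambda_1,\dots,\lambda_g\}$ of $L$ and $\{\mu_1,\dots,\mu_g\}$ of $X^*$ with $E(\lambda_i,\mu_j) = d_i\,\delta_{ij}$ and $d_1\mid d_2\mid\cdots\mid d_g$. Using $\Lambda = L\oplus X^*$, the concatenated basis $(\lambda_1,\dots,\lambda_g,\mu_1,\dots,\mu_g)$ presents $E$ by the block matrix $\bigl(\begin{smallmatrix}0 & D\\ -D & 0\end{smallmatrix}\bigr)$ with $D = \mathrm{diag}(d_1,\dots,d_g)$. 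This is already in the standard symplectic shape, so by the uniqueness statement in the elementary divisor theorem for nondegenerate alternating forms on $\mathbf{Z}^{2g}$, the tuple $(d_i)$ must coincide with the invariant tuple $(\delta_i)$ of $(\Lambda,E)$.

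Consequently $\{\lambda_i,\mu_j\}$ is a symplectic basis for $E$ in the standard form \eqref{standard symplectic form} whose ``second half'' $\{\mu_j\}$ is a $\mathbf{Z}$-basis of $U\cap\Lambda$. Equivalently, the polarized tropical abelian variety $(\check B,\check\phi,\phi)$ associated with $U$ has type $\delta$, so by the first part of Proposition~\ref{0-cusps} the boundary component corresponding to $U$ lies in the $\Gamma(\delta)$-orbit of $F^{(0)}$. Writing the change-of-basis matrix from the fixed symplectic basis of $\Lambda$ to $\{\lambda_i,\mu_j\}$ as $M\in\Gamma(\delta)$ and applying $U(R(F))=U(F)\cdot R^{-1}$ gives $U = U^{(0)}M^{-1}$, as desired. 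The main obstacle is the step $(d_i)=(\delta_i)$; without an isotropic complement the Gram matrix of $E$ in a general $\mathbf{Z}$-basis of $\Lambda$ is not block off-diagonal, and one only controls the product of the $d_i$, so the Lagrangian splitting hypothesis is precisely what promotes the equality of products to equality of full elementary divisor tuples.
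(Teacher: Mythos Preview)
Your proof is correct and is essentially a detailed unpacking of the paper's one-line argument. The paper's proof simply says ``choose bases from $U\cap\Lambda$ and $U'\cap\Lambda$, we get a symplectic basis''; you supply the underlying mechanism (Smith normal form on the pairing $E\colon L\times X^*\to\mathbf{Z}$, then the block off-diagonal shape from isotropy of both pieces forcing $(d_i)=(\delta_i)$), which is exactly what makes that sentence true.
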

\begin{proof}
If $U\cap \Lambda$ has an isotropic complement $U'\cap\Lambda$ in $\Lambda$, then choose bases from $U\cap\Lambda$ and $U'\cap\Lambda$, we get a symplectic basis. 
\end{proof}

\begin{definition}
A $0$-cusp is called splitting if it is congruent to $F^{(0)}$. Otherwise it is called non-splitting. 
\end{definition}

\begin{lemma}
With $X$ and $\check{Y}\subset X$ fixed, the set of tropical abelian varieties is identified with the set of positive definite quadratic forms on $X_\mathbf{R}$, and is denoted by $\mathcal{C}(X)$. 
\end{lemma}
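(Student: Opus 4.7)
The plan is to unpack the definition of a polarized tropical abelian variety in this context and observe that, once the lattice data $X \supset \check{Y}$ is fixed, the only remaining continuous piece of data is the positive symmetric bilinear form $\check{g} = E(\cdot J, \cdot)$ on $U$, which via duality is equivalent to a positive definite quadratic form on $X_\mathbf{R}$.

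First, I would pin down the discrete data. The vector space $U$ carries the fixed lattice $X^* = \Lambda \cap U$, and its dual $U^* = X_\mathbf{R}$ contains $X$ as a lattice. The polarization $\phi \colon Y \to X$, defined by $\phi(\lambda)(v) = E(\lambda, v)$, depends only on the ambient data $(\Lambda, E, U)$ and not on $J$; under our assumptions its image is precisely $\check{Y} \subset X$. Thus identifying $Y$ with $\check{Y}$ via $\phi$, the map $\phi$ is just the inclusion. The remaining piece of the tropical abelian variety, namely the lattice $\check{\phi}(Y) \subset U$, is recovered from the pairing as $\check{\phi}(y) = (\check{g}^\flat)^{-1}(\phi(y))$, where $\check{g}^\flat \colon U \to U^*$ is the isomorphism induced by the bilinear form.

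Second, I would observe that specifying a positive symmetric bilinear form $\check{g}$ on $U$ is equivalent, via the canonical duality $\check{g}^\flat \colon U \xrightarrow{\sim} U^*$, to specifying its inverse form on $U^* = X_\mathbf{R}$, which is precisely a positive definite quadratic form on $X_\mathbf{R}$. Conversely, any positive definite quadratic form $Q$ on $X_\mathbf{R}$ dualizes back to a positive symmetric bilinear form $\check{g}$ on $U$; setting $\check{\phi}(Y) = (\check{g}^\flat)^{-1}(\check{Y}) \subset U$ and $\check{B} = U / \check{\phi}(Y)$, one reconstructs a polarized tropical abelian variety $(\check{B}, \check{\phi}, \phi)$ whose polarization type is automatically $\mathfrak{d}$ (the type of $\check{Y} \subset X$), since $\check{Y}$ is preserved throughout the construction.

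Third, I would check that these two constructions are mutually inverse, giving a bijection between the set of polarized tropical abelian varieties with the fixed lattice data $(X, \check{Y})$ and the cone of positive definite quadratic forms on $X_\mathbf{R}$, which we denote $\mathcal{C}(X)$. The only subtlety — and essentially the only content beyond unpacking definitions — is the careful bookkeeping of the duality between $U$ and $U^* = X_\mathbf{R}$; once that is set up correctly, the identification is immediate and no substantive obstacle arises.
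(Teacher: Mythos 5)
Your proposal is correct and follows essentially the same route as the paper: with the discrete data fixed, the only remaining datum is $\check{\phi}$ (equivalently the positive symmetric form $\check{g}=E(\cdot J,\cdot)$ on $U$), which the paper identifies with the quadratic form $\langle\cdot,\check{\phi}\phi^{-1}(\cdot)\rangle$ on $X_\mathbf{R}$ — exactly the inverse-form duality you describe, since $\phi\circ f_J=\check{g}^\flat$. The extra bookkeeping you include (reconstructing $\check{B}$ and checking the two directions are inverse) is fine but adds nothing beyond the paper's one-line argument.
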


\begin{proof}
Since $\check{B}$ and $\phi:Y\to X$ are fixed, a polarized tropical abelian variety is equivalent to the data $\check{\phi}: Y\to U$, which is equivalent to the positive symmetric bilinear form $\langle\cdot, \check{\phi}{\phi}^{-1}(\cdot)\rangle$ on $X_\mathbf{R}$. 
\end{proof}
  
For a rational boundary component $U$, the map from $\mathcal{C}_0(\Sp(E,\mathbf{R}))$ to $\mathcal{C}(X)$ in Definition~\ref{definition of tropicalization}  is called the tropicalization, and is denoted by $\Tr(U)$ or $\Tr$. 

\begin{definition}
As in (\cite{Alex02} 5.5.1), we define 
\[
\GL(X,Y):=\{u\in \GL(X); u\check{Y}\subset \check{Y}\}.
\]
\end{definition}

Consider $X$ as a lattice in $U^*$. We identify $(S^2(U^*))^*=\Gamma^2(U)\subset U\otimes U$ with the vector space of quadratic forms on $U^*$. $\mathcal{C}(X)$ is an open cone in $\Gamma^2(U)$. $\GL(X_\mathbf{R})$ has a natural representation $\bar{\rho}$ on $\Gamma^2(U)$. $\bar{\rho}$ is injective. $\mathcal{C}(X)$ is invariant under the action of $\bar{\rho}(\GL(X_\mathbf{R}))$. Thus $\GL(X,Y)\subset\GL(X_\mathbf{R})$ is acting on $\mathcal{C}(X)$. We have 

\begin{proposition}
Let $(\check{B},\check{\phi}, \phi)$ and $(\check{B},\check{\phi}', \phi)$ be tropical abelian varieties corresponding to $Q,Q'\in \mathcal{C}(X)$ respectively. $(\check{B},\check{\phi},\phi)\cong (\check{B},\check{\phi}',\phi)$ as polarized tropical abelian varieties if and only if there is an element $u\in \GL(X,Y)$ such that $Q'=\bar{\rho}(u)(Q)$. 
\end{proposition}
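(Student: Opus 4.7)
Plan: An isomorphism of polarized tropical abelian varieties $(\check{B},\check{\phi},\phi)\cong(\check{B},\check{\phi}',\phi)$, with $X$, $Y$, and the polarization map $\phi:Y\to X$ held fixed, amounts to a real-linear automorphism $\tilde f:U\to U$ carrying $\check\phi(Y)$ onto $\check\phi'(Y)$, such that the transpose $\tilde f^{\ast}:U^{\ast}\to U^{\ast}$ preserves the lattice $X\subset U^{\ast}$ and intertwines the two polarization maps $B(\check\phi):=\phi\circ\check\phi^{-1}:U\to U^{\ast}$ and $B(\check\phi')$ in the sense that $\tilde f^{\ast}\circ B(\check\phi')\circ\tilde f=B(\check\phi)$. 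First I would write this definition out explicitly; the point is that once $X$ and $\phi$ are held fixed, the only remaining data is $\check\phi$, so any morphism must respect the polarization on the nose.

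For the only-if direction, given such a $\tilde f$, set $u:=(\tilde f^{\ast})^{-1}\vert_X\in\GL(X)$. Since $B(\check\phi)$ sends $\check\phi(Y)$ to $\phi(Y)=\check Y$, and similarly for $\check\phi'$, the intertwining condition forces $u(\check Y)=\check Y$, so $u\in\GL(X,Y)$. Now unwind the identification from the preceding lemma: the quadratic form $Q$ on $X_{\mathbf{R}}=U^{\ast}$ attached to $(\check B,\check\phi,\phi)$ is the one whose associated map $X_{\mathbf{R}}\to U$ is $\check\phi\circ\phi^{-1}$, i.e.\ the inverse of the polarization form $B(\check\phi)$ viewed as a form on $U^{\ast}$. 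Dualizing the relation $\tilde f^{\ast}\circ B(\check\phi')\circ \tilde f=B(\check\phi)$ yields exactly $Q'=\bar\rho(u)(Q)$, with the sign convention for $\bar\rho$ fixed by the left-action conventions set up before the proposition.

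For the converse, given $u\in\GL(X,Y)$ with $Q'=\bar\rho(u)(Q)$, define $\tilde f:=(u^{\ast})^{-1}:U\to U$ via the identification $U=X^{\ast}_{\mathbf{R}}$. Reversing the computation above shows that $Q'=\bar\rho(u)(Q)$ is equivalent to $\tilde f^{\ast}\circ B(\check\phi')\circ\tilde f=B(\check\phi)$, so $\tilde f$ intertwines the polarizations; and since each $\check\phi(Y)$ can be recovered as $B(\check\phi)^{-1}(\check Y)$, the condition $u(\check Y)=\check Y$ translates into $\tilde f(\check\phi(Y))=\check\phi'(Y)$. Hence $\tilde f$ defines an isomorphism of polarized tropical abelian varieties.

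The main obstacle is keeping the several dualities in play straight: $Y\hookrightarrow X$ is the dual of an isogeny between real tori; $U^{\ast}=X_{\mathbf{R}}$ identifies the domain of the quadratic form with the dual of the Lie algebra of $\check B$; the polarization itself is realized as the map $U\to U^{\ast}$ given by $\phi\check\phi^{-1}$; and $\mathcal C(X)\subset\Gamma^{2}(U)$ records the corresponding inverse bilinear form on $U^{\ast}$. Verifying that the sublattice condition $u\check Y\subset\check Y$ is precisely what cuts out the subgroup of $\GL(X)$ that lifts to isomorphisms respecting the actual lattice $\check\phi(Y)\subset U$ (not merely its real span) is the only subtle point; everything else is formal duality and bookkeeping.
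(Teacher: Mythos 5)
The paper states this proposition without proof, treating it as immediate from the identification of $\mathcal{C}(X)$ in the preceding lemma, and your argument is precisely that intended unwinding: the characterization of isomorphisms as linear maps $\tilde f$ on $U$ preserving $X^*$ and carrying $\check\phi(Y)$ to $\check\phi'(Y)$, the correspondence $u=(\tilde f^*)^{-1}$, and the equivalence of the intertwining relation $\tilde f^*\circ(\phi\check\phi'^{-1})\circ\tilde f=\phi\check\phi^{-1}$ with $Q'=\bar\rho(u)(Q)$ all check out. The only points worth recording are that $u\in\GL(X,Y)$ together with $u\in\GL(X)$ forces $u\check Y=\check Y$ (not merely $u\check Y\subset\check Y$) by the index argument, which is what you need for equality of period lattices in the converse, and that any $u$-versus-$u^{-1}$ ambiguity in the convention for $\bar\rho$ is harmless since $\GL(X,Y)$ is a group.
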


\begin{definition}\label{definition of integral tropical abelian varieties}
A tropical abelian variety $(\check{B},\check{\phi}, \phi)$ is called integral if $\check{\phi}(Y)\subset X^*$. A quadratic form $Q\in \Gamma^2(U)$ is called integral if the associated symmetric bilinear form $B$ satisfies $B(\check{Y},X)\in\mathbf{Z}$. The set of integral elements is a lattice, denoted by $\mathbb{L}^*$, in $\Gamma^2(U)$. The intersection of this lattice and $\mathcal{C}(X)$ is the set of integral tropical abelian varieties. 
\end{definition}

Note that $\Gamma^2X\subset \mathbb{L}^*$. It is convenient to write down everything in terms of a fixed basis. Define 
\[
\GL(\mathfrak{d}):=\{u\in \GL(r,\mathbf{Z}): \mathfrak{d}u\mathfrak{d}^{-1}\in\GL(r,\mathbf{Z})\}.
\]

With respect to a compatible basis of $X$, $\GL(X,Y)= \GL(\mathfrak{d})^T$. $\mathcal{C}(X)\cong \{Q\in \Sym_r(\mathbf{R}); Q>0\}$. The action is $Q\mapsto (u^T)^{-1}Qu^{-1}$. The set of integral tropical abelian varieties corresponds to $\{Q\in \mathcal{C}(X); Q\mathfrak{d}\in \M(r, \mathbf{Z})\}$. With respect to the corresponding basis of $\check{Y}$, $\GL(X,Y)\cong \GL(\mathfrak{d})$. 

Given two cusps $F$ and $F'$, and an element $R\in\Sp(E,\mathbf{Q})$ such that $F'=R(F)$, $R$ gives a map $\mathcal{C}(X)\to \mathcal{C}(X')$, still denoted by $R$, as follows: Since $U'=UR^{-1}$, $R^{-1}\vert_U: U\to U'$ is a linear isomorphism. It induces an isomorphism $R^{-1}\vert_U\otimes R^{-1}\vert_U: \Gamma^2(U) \to \Gamma^2(U')$. This linear isomorphism sends $\mathcal{C}(X)$ to $\mathcal{C}(X')$. 
\begin{lemma}
We have the following commutative diagram
\[
\begin{diagram}
\mathcal{C}_0(\Sp(E,\mathbf{R}))&\rTo^R&\mathcal{C}_0(\Sp(E,\mathbf{R}))\\
\dTo^\Tr&&\dTo^\Tr\\
\mathcal{C}(X)&\rTo^R&\mathcal{C}(X')
\end{diagram}.
\]
The first line is the conjugation by $R$. Furthermore, if $R\in \Gamma(\delta)$, then $\Tr(J)$ is isomorphic to $\Tr(RJ)$ as polarized tropical abelian varieties. 
\end{lemma}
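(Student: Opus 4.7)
The plan is to verify both assertions by unpacking the tropicalization in terms of the symmetric bilinear form $\check{g}_J(u,v):=E(uJ,v)$ on $U$, reducing everything to the $\Sp(E,\mathbf{R})$-invariance of $E$. Setting $J':=RJR^{-1}$ and $U':=UR^{-1}$, the target $\Tr(J')$ is by definition the pairing $\check{g}_{J'}(u',v')=E(u'J',v')$ on $U'$, while the horizontal map $R\colon\mathcal{C}(X)\to\mathcal{C}(X')$, being induced by the linear isomorphism $R^{-1}|_U\colon U\to U'$, transports $\check{g}_J$ to the pairing on $U'$ pulled back along $R|_{U'}\colon U'\to U$, namely $(u',v')\mapsto \check{g}_J(u'R,v'R)$.

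The key step will be to apply the defining identity $RER^T=E$, equivalently $E(aR,bR)=E(a,b)$, to compute
\[
\check{g}_{J'}(u',v')=E\bigl(u'RJR^{-1},v'\bigr)=E(u'RJ,v'R)=\check{g}_J(u'R,v'R),
\]
which is exactly the transported pairing. This will establish the commutative diagram. For the second statement, I would additionally use that $R\in\Gamma(\delta)$ preserves the lattice $\Lambda$; hence $R^{-1}|_U$ restricts to a lattice isomorphism $X^*=\Lambda\cap U\xrightarrow{\sim}\Lambda\cap U'=X'^*$, and dually induces an isomorphism $X'\xrightarrow{\sim} X$ carrying $\check{Y}'$ to $\check{Y}$. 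This lattice isomorphism yields an isomorphism of real tori $\check{B}\cong\check{B}'$ compatible with the polarization maps $\phi,\phi'$; combined with the diagram just proved, the underlying quadratic forms also match, so the result is an isomorphism of polarized tropical abelian varieties.

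The only real obstacle is bookkeeping: tracking pullbacks versus pushforwards between $\Gamma^2(U)$ and $\Gamma^2(U')$, and keeping the right-action-on-row-vectors convention consistent throughout. Once the $E$-invariance of $R$ is in hand, both statements collapse to short algebraic identities.
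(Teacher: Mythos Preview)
Your proposal is correct and follows essentially the same approach as the paper. For the commutativity of the diagram, your computation $E(u'RJR^{-1},v')=E(u'RJ,v'R)$ is exactly the paper's one-line argument. For the second statement, the paper is slightly more direct: rather than reassembling the tropical data $(X,\check{Y},\phi,Q)$ piece by piece, it writes down the commutative square
\[
\begin{diagram}
\Lambda\cap U &\rTo^{\cdot J}&UJ\oplus U^{\perp}&\rTo&V/U^{\perp}\\
\dTo^{\cdot R^{-1}}&&\dTo^{\cdot R^{-1}}&&\dTo^{\cdot R^{-1}}\\
\Lambda\cap U'&\rTo^{\cdot J'}&U'J'\oplus U'^{\perp}&\rTo &V/(U')^{\perp}
\end{diagram}
\]
to conclude immediately that $f_{J'}=R^{-1}\circ f_J\circ R$, hence $\check{\phi}'(Y')=\check{\phi}(Y)R^{-1}$ and $\check{B}\cong\check{B}'$ as tropical tori. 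Your route via the dual lattice map $X'\to X$ and the matching of $Q$ recovers the same conclusion; just be sure to note explicitly (as the paper does) that $R\in\Gamma(\delta)$ also gives $(\Lambda\cap U^{\perp})R^{-1}=\Lambda\cap U'^{\perp}$, so that the induced map $Y\to Y'$ is well-defined before you invoke compatibility with $\phi,\phi'$.
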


\begin{proof}
First $R\in \Sp(E,\mathbf{Q})$. Notice that $E(\cdot J,\cdot)\vert_{U}=\langle \phi\check{\phi}^{-1}(\cdot),\cdot\rangle$ on $U$. For any $J\in \mathcal{C}_0(\Sp(E,\mathbf{R}))$, $R(J)=RJR^{-1}$. $\Tr(J)$ is represented by $E(\cdot J,\cdot)\vert_U$. The bilinear form $\Tr(R(J))$ is  
\[
E(\cdot R(J),\cdot)\vert_{R(U)}=E(\cdot RJR^{-1}, \cdot)\vert_{UR^{-1}}=E(\cdot RJ,\cdot R)\vert_{UR^{-1}}
\]

This is the same as the bilinear form induced by $R^{-1}$ in the definition.

Now $R\in \Gamma(\delta)$. Because $\cdot R^{-1}$ is a bijection $\mathbf{R}^{2g}\to \mathbf{R}^{2g}$, $(S\cap U)R^{-1}=SR^{-1}\cap UR^{-1}$ for any subset $S\subset\mathbf{R}^{2g}$. Since $R\in \Gamma(\delta)$, $(\Lambda\cap U)R^{-1}=\Lambda R^{-1}\cap UR^{-1}=\Lambda\cap U'$ and $(\Lambda\cap U^{\perp})R^{-1}=\Lambda\cap U'^{\perp}$. Consider the following commutative diagram for $J'=RJR^{-1}$, 
\[
\begin{diagram}
\Lambda\cap U &\rTo^{\cdot J}&UJ\oplus U^{\perp}&\rTo&V/U^{\perp}\\
\dTo^{\cdot R^{-1}}&&\dTo^{\cdot R^{-1}}&&\dTo^{\cdot R^{-1}}\\
\Lambda\cap U'&\rTo^{\cdot J'}&UJ'\oplus U'^{\perp}&\rTo &V/(U')^{\perp}
\end{diagram}.
\]

It shows that $f_{J'}(\Lambda\cap U')=(f_{J}(\Lambda\cap U))R^{-1}$ and $\check{B}\cong \check{B'}$ as tropical tori. The polarization is preserved because of the first part.
\end{proof}

If $F=F'$, we can apply the above proof to $M\in \mathcal{P}(F)$, and get

\begin{corollary}
Fix a cusp $F$. The action of $\mathcal{P}(F)$ on $\mathcal{C}(X)$ is denoted by $\rho_X$. Then $\Tr:\mathcal{C}_0(\Sp(E,\mathbf{R}))\to \mathcal{C}(X)$ is $\mathcal{P}(F)$-equivariant. The image $\rho_X(P(F))$ is inside $\GL(X,Y)$. 
\end{corollary}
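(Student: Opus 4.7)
The plan is to deduce the corollary directly from the preceding lemma by specializing it to elements that fix the cusp $F$. Applying the lemma with $F'=F$ and $R\in\mathcal{P}(F)\subset\Sp(2g,\mathbf{R})$, we automatically have $U'=U$, $X'=X$ and $\mathcal{C}(X')=\mathcal{C}(X)$, so the commutative diagram there collapses to the assertion that $\Tr$ intertwines the conjugation action of $R$ on $\mathcal{C}_0(\Sp(E,\mathbf{R}))$ with $\rho_X(R)$ on $\mathcal{C}(X)$. That is exactly the $\mathcal{P}(F)$-equivariance of $\Tr$.

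For the second assertion I read $P(F)$ as the arithmetic subgroup $\Gamma(\delta)\cap\mathcal{P}(F)$, consistent with the earlier convention $G_l(F)=\Gamma(\delta)\cap\mathcal{G}_l(F)$. Unwinding the definition of $\rho_X$ from the paragraph just before the lemma, $\rho_X(R)$ is the automorphism of $\Gamma^2(U)$ induced by the linear map $R^{-1}|_U\colon U\to U$. To conclude that $\rho_X(R)\in\GL(X,Y)$ I must check two things: first, that $\rho_X(R)$ preserves the lattice $X\subset U^*$; second, that it preserves the sublattice $\check{Y}=\phi(Y)\subset X$.

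Both follow from the fact that $R$ preserves every piece of the integral polarized data cut out from $(\Lambda,E)$ by $U$. Concretely, $R\in\Gamma(\delta)$ preserves $\Lambda$ and $R\in\mathcal{P}(F)$ preserves $U$, hence $R^{-1}$ preserves $X^*=\Lambda\cap U$; dualizing gives an element of $\GL(X)$, settling (i). For (ii), since $R$ preserves the symplectic form $E$ and preserves $U$, it also preserves $U^{\perp}$, and therefore descends to an automorphism of $Y=\Lambda/(\Lambda\cap U^{\perp})$. Because the polarization $\phi\colon Y\to X$ is defined by $\phi(\lambda)(v)=E(\lambda,v)$ and $E$ is $R$-invariant, the descended action on $Y$ is intertwined with the action on $X$ via $\phi$; consequently $\rho_X(R)(\check{Y})=\check{Y}$.

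The only real work is bookkeeping the dualities between $U$, $U^*$, $X$, $X^*$, $Y$, and $\check{Y}$; there is no genuine obstacle, because the content amounts to the tautology that $\Gamma(\delta)\cap\mathcal{P}(F)$ preserves precisely the data $(\Lambda, E, U, U^{\perp})$ from which the tropical polarization $(\check{B},\check{\phi},\phi)$ was constructed.
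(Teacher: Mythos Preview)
Your argument is correct and is essentially the same as the paper's: the paper simply says ``If $F=F'$, we can apply the above proof to $M\in \mathcal{P}(F)$,'' and you have spelled out exactly that specialization. One small remark: the lemma as stated is for $R\in\Sp(E,\mathbf{Q})$, while $\mathcal{P}(F)$ is a real group, so for the equivariance claim you are really invoking the \emph{proof} of the lemma (whose computation $E(\cdot R(J),\cdot)|_{UR^{-1}}=E(\cdot RJ,\cdot R)|_{UR^{-1}}$ uses only $R\in\Sp(E,\mathbf{R})$), just as the paper does.
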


Let's consider the map $\Tr: \mathfrak{S}_g\to \mathcal{C}(X)$ for the cusps $F^{(g')}$. For $F^{(g')}$, $U^{(g')}=\{(0,x); 0\in\mathbf{R}^{g+g'},x\in\mathbf{R}^{g-g'}\}$. We have the natural basis for every lattice. Using the basis of $X$, that is $\{y_i\}_{i>g'}$, we regard $\mathcal{C}(X)$ as an open cone in $\Sym_{g-g'}(\mathbf{R})$. Assume $\tau\in\mathfrak{S}_g$ corresponds to $J\in\mathcal{C}_0(\Sp(E,\mathbf{R}))$. Since $E(\cdot J,\cdot)=\Re{H}$, and with respect to the coordinates $z_i=(x\tau+y\delta)_i$, $H$ is $(\Im{\tau})^{-1}$, the restriction of $E(\cdot J, \cdot)$ to $U^{(g')}$ is 
\[
E(\cdot J,\cdot)\vert_{U^{(g')}}=H\vert_{U^{(g')}}=\sum_{g'<i,j\leqslant g}\delta_iy_i((\Im{\tau})^{-1})_{ij}\delta_jy_j.
\]

It follows that the matrix for $\Tr(\tau)$ is the inverse matrix of $T=((\Im{\tau})^{-1})_{g'<i,j\leqslant g}$ with respect to the basis of $\check{Y}$. We write $\tau$ in blocks, where $\tau_1$ is a $g'\times g'$ matrix,
\[
\tau=
\begin{pmatrix}
\tau_1 &\tau_3\\
\tau_3^T& \tau_2\\
\end{pmatrix},
\]

\begin{lemma}
Assume given an invertible matrix and its inverse, both in blocks,
\[
\begin{pmatrix}
A&B\\
C&D
\end{pmatrix}
=\begin{pmatrix}
a&b\\
c&d
\end{pmatrix}^{-1}.
\]

If $A$ and $D$ are both invertible, then 
\[
D^{-1}=d-ca^{-1}b.
\]
\end{lemma}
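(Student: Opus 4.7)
The plan is to derive the identity directly from the block form of the equation $MN = I$, where $M = \bigl(\begin{smallmatrix}a & b \\ c & d\end{smallmatrix}\bigr)$ and $N = \bigl(\begin{smallmatrix}A & B \\ C & D\end{smallmatrix}\bigr)$. Multiplying out in blocks gives the four relations
\[
aA + bC = I, \qquad aB + bD = 0, \qquad cA + dC = 0, \qquad cB + dD = I.
\]

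First I would observe that $a$ is invertible. Indeed, using the companion relation $NM = I$ with $A$ invertible, the equation $Aa + Bc = I$ and $Ab + Bd = 0$ allow one to solve $b = -A^{-1}Bd$, and substituting back shows that $a$ is right-invertible; the same argument on the other side makes it two-sided invertible. (Equivalently, one can invoke the fact that the Schur complement of an invertible block in an invertible matrix is invertible.) Once $a^{-1}$ exists, the second relation $aB + bD = 0$ gives $B = -a^{-1}bD$. Substituting this into the fourth relation $cB + dD = I$ yields
\[
(d - ca^{-1}b)\,D = I,
\]
so that $d - ca^{-1}b$ is a left inverse of $D$. Since $D$ is assumed invertible, this left inverse equals $D^{-1}$, which is the desired identity.

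The only nontrivial step is the invertibility of $a$; the rest is a one-line manipulation of the block equations. In the intended application the inverse matrix arises from $(\Im\tau)^{-1}$ with $\tau$ in the Siegel upper half space, where the positive definiteness of $\Im\tau$ guarantees the invertibility of all principal blocks, so this subtlety is automatic in context.
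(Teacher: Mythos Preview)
Your argument is correct and follows essentially the same route as the paper: expand one of the two block identities between the matrix and its inverse and do a one-line Schur-complement manipulation. The paper uses $NM=I$ (with $N=\bigl(\begin{smallmatrix}A&B\\C&D\end{smallmatrix}\bigr)$) to obtain $D(d-ca^{-1}b)=I$, while you use $MN=I$ to obtain $(d-ca^{-1}b)D=I$; both yield the same conclusion once $D$ is invertible. Your added remark justifying the invertibility of $a$ is a point the paper leaves implicit (it simply writes $a^{-1}$ in the formula), and, as you note, in the intended application $a$ is a principal block of the positive-definite matrix $\Im\tau$, so this is automatic.
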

\begin{proof}
We have 
\begin{align*}
Aa+Bc&=Id,\\
Ab+Bd&=0,\\
Ca+Dc&=0,\\
Cb+Dd&=Id.
\end{align*}

Therefore
\[
D(d-ca^{-1}b)=Dd-Dca^{-1}b=Id-Cb+Caa^{-1}b=Id-Cb+Cb=Id.
\]
\end{proof}

\begin{proposition}
With respect to the basis $\{\delta_iy_i\}_{i>g'}$ of $\check{Y}$, 
\[
\Tr(\tau)=\Im{\tau_2}-(\Im{\tau_3^T})(\Im{\tau_1})^{-1}\Im{\tau_3}.
\]
\end{proposition}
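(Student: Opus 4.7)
The plan is to invoke the block matrix inversion lemma stated immediately above, applied to $(\Im \tau)^{-1}$, using the identification of $\Tr(\tau)$ with the lower-right block of $(\Im \tau)^{-1}$ that was already established in the paragraph preceding the proposition.

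More precisely, the preceding discussion shows that under the natural identifications, the matrix of the quadratic form $\Tr(\tau)$ in the basis $\{\delta_i y_i\}_{i>g'}$ of $\check{Y}$ equals $T^{-1}$, where
\[
T = \bigl((\Im\tau)^{-1}\bigr)_{g'<i,j\leqslant g}
\]
is the lower-right $(g-g')\times(g-g')$ block of $(\Im\tau)^{-1}$. Writing $\Im\tau$ and its inverse in compatible block form
\[
\Im\tau = \begin{pmatrix} \Im\tau_1 & \Im\tau_3 \\ \Im\tau_3^T & \Im\tau_2\end{pmatrix}, \qquad (\Im\tau)^{-1} = \begin{pmatrix} A & B \\ C & T\end{pmatrix},
\]
one applies the lemma with $a=\Im\tau_1$, $b=\Im\tau_3$, $c=\Im\tau_3^T$, $d=\Im\tau_2$, and $D=T$. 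The hypotheses that $a$ and $D$ be invertible are automatic: $\Im\tau$ is positive definite, so its principal block $\Im\tau_1$ is positive definite; dually $(\Im\tau)^{-1}$ is positive definite, so its principal block $T$ is positive definite. The lemma then gives
\[
T^{-1} \;=\; d - c a^{-1} b \;=\; \Im\tau_2 - (\Im\tau_3^T)(\Im\tau_1)^{-1}\Im\tau_3,
\]
which is the claimed formula.

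There is essentially no obstacle; the only minor point requiring care is to match the block index conventions (the $g'\times g'$ upper-left block of $\Im\tau$ corresponds to the directions transverse to the cusp $F^{(g')}$, while the cusp-tangent block $\Im\tau_2$ is the one whose Schur complement appears) and to check that the positive-definiteness of $\Im\tau$ indeed guarantees invertibility of the blocks needed by the lemma. Both are immediate, so the proposition follows.
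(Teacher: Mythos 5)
Your proposal is correct and follows essentially the same route as the paper: identify $\Tr(\tau)$ with the inverse of the lower-right block $T$ of $(\Im\tau)^{-1}$ (as set up in the preceding paragraph) and apply the block-inversion lemma, with positive definiteness of $\Im\tau$ supplying the invertibility of the relevant blocks. The only cosmetic difference is which blocks you explicitly check to be invertible ($\Im\tau_1$ and $T$) versus the paper's ($\Im\tau_1$ and $\Im\tau_2$); either choice suffices since positive definiteness of $\Im\tau$ and of $(\Im\tau)^{-1}$ makes all principal blocks invertible.
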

\begin{proof}
Since $\tau$ is positive definite, $\tau_1$ and $\tau_2$ are positive definite. So we can apply the lemma.
\end{proof}

Now we can compare $\mathcal{C}(X)$ with $\mathcal{C}(F)$ for $F=F^{(g')}$. 

\begin{proposition}\label{splitting component}
For $F=F^{(g')}$, there exists an isomorphism $h(F):\mathcal{P}'(F)\to \Gamma^2(U)$ restricting to a bijection $h(F):\mathcal{C}(F)\to \mathcal{C}(X)$, such that the following diagram commutes,
\[
\begin{diagram}
\mathfrak{S}_g&\rTo^{=}&\mathcal{C}_0(\Sp(E,\mathbf{R}))\\
\dTo^{\Phi(F)} &&\dTo^{\Tr(F)} \\
\mathcal{C}(F)&\rTo^{h(F)}&\mathcal{C}(X)
\end{diagram},
\]

where the map $\Phi(F)$ is defined in (\cite{Nam80}, p. 31). 
Moreover, the following statements are true.
\begin{itemize}
\item[a)] Every map in the diagram is $\mathcal{P}(F)$-equivariant.
\item[b)] $\Tr$ is surjective. 
\item[c)] Denote the induced isomorphism $\Aut(\Gamma^2(U))\to \Aut(\mathcal{P}'(F))$ by $\rho_h$, then $\rho_h(\GL(X,Y))=\overline{P}(F)$.
\item[d)] Thus $\rho_X(P(F))=\GL(X,Y)$. 
\item[e)] $P'(F)\cap \mathcal{C}(F)$ is identified with the integral tropical abelian varieties by $h(F)$.
\end{itemize}
\end{proposition}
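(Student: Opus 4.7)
My plan is to prove Proposition~\ref{splitting component} by exploiting the explicit matrix descriptions already compiled in Example~\ref{groups in the principal cusp} and the explicit tropicalization formula computed just before the proposition. Both $\mathcal{P}'(F^{(g')})$ and $\Gamma^2(U^{(g')})$ have already been identified with $\Sym_{g-g'}(\mathbf{R})$: the former by sending $[Q]\mapsto Q$, and the latter by writing a quadratic form on $U^{(g')*}$ in the compatible basis $\{\delta_i y_i\}_{i>g'}$ of $\check{Y}$ (equivalently, expressing the bilinear form as a matrix in this basis). I would define $h(F)$ to be the identity on $\Sym_{g-g'}(\mathbf{R})$ under these two identifications. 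It is then a linear isomorphism $\mathcal{P}'(F)\to \Gamma^2(U)$ and restricts to a bijection $\mathcal{C}(F)=\{[Q]:Q>0\}\to \mathcal{C}(X)=\{Q>0\}$ automatically.

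The first nontrivial task is commutativity of the diagram. Recall that the Harish--Chandra realization $\Phi(F)$ in Namikawa sends $\tau$ to the imaginary part in the cone $\mathcal{C}(F)$; for $F=F^{(g')}$ this is explicitly $[\,\Im\tau_2-(\Im\tau_3^T)(\Im\tau_1)^{-1}\Im\tau_3\,]$. This is exactly the expression we computed for $\Tr(\tau)$ with respect to the basis $\{\delta_i y_i\}_{i>g'}$ of $\check{Y}$, so $h(F)\circ\Phi(F)=\Tr(F)$ on the nose. For (b), surjectivity of $\Tr$, given any positive definite $Q\in\Sym_{g-g'}(\mathbf{R})$ take $\tau=\left(\begin{smallmatrix}iI_{g'}&0\\0&iQ\end{smallmatrix}\right)\in\mathfrak{S}_g$; the formula gives $\Tr(\tau)=Q$.

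For (a) and (c), (d): since $\mathcal{P}(F)=\mathcal{G}_l(F)\ltimes Z(\mathcal{P}')$ and $Z(\mathcal{P}')$ acts trivially on $\mathcal{P}'(F)$, it is enough to check the action of $\mathcal{G}_l(F)\cong\GL(g-g',\mathbf{R})$. The action on the matrix side is $Q\mapsto (u^T)^{-1}Qu^{-1}$ by the description in Example~\ref{groups in the principal cusp}. On the other side, an element of $\mathcal{G}_l(F)$ acts on $U$ via its quotient, and the induced representation $\bar\rho$ on $\Gamma^2(U)$, written in the basis $\{\delta_i y_i\}$, is again $Q\mapsto (u^T)^{-1}Qu^{-1}$. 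Hence $\Phi(F)$, $\Tr(F)$, and $h(F)$ are all $\mathcal{P}(F)$-equivariant. Intersecting $\mathcal{P}(F)$ with $\Gamma(\delta)$ picks out the arithmetic subgroup $\overline{P}(F)\cong\GL(\delta_{g'})$; in the compatible basis, the lattice preservation condition $u\check{Y}\subset\check{Y}$ translates to $\mathfrak{d}u\mathfrak{d}^{-1}\in\GL(r,\mathbf{Z})$, i.e.\ $u\in\GL(\mathfrak{d})$, so $\rho_h(\GL(X,Y))=\overline{P}(F)$ and $\rho_X(\mathcal{P}(F))=\GL(X,Y)$.

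Finally (e) is a lattice computation: an element $Q\in\mathcal{C}(X)$ is integral in the sense of Definition~\ref{definition of integral tropical abelian varieties} iff the associated bilinear form pairs $\check{Y}$ with $X$ into $\mathbf{Z}$, which in the basis $\{y_i\}$ of $X$ and $\{\delta_i y_i\}$ of $\check{Y}$ says exactly $Q\mathfrak{d}\in\M(r,\mathbf{Z})$. Under $h(F)$ these are precisely the integral points $[Q]$ of $\mathcal{P}'(F)$, i.e.\ $\mathcal{P}'(F)\cap\Gamma(\delta)$ viewed as a lattice in $\mathcal{P}'(F)$. I expect the main obstacle will be keeping the various bases straight and verifying that the Harish--Chandra realization of Namikawa, written in the normalization adopted there, agrees on the nose (not up to conjugation by $\delta_{g'}$) with our $\Tr$ once we use the basis $\{\delta_i y_i\}_{i>g'}$ rather than $\{y_i\}_{i>g'}$; this is essentially the reason the formula $\Tr(\tau)=\Im\tau_2-\Im\tau_3^T(\Im\tau_1)^{-1}\Im\tau_3$ was computed in precisely that basis just above.
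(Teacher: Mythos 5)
Your proposal is correct and follows essentially the same route as the paper: $h(F)$ is defined as the composition of the two identifications of $\mathcal{P}'(F^{(g')})$ and $\Gamma^2(U)$ with $\Sym_{g-g'}(\mathbf{R})$ (the latter via the basis $\{\delta_i y_i\}_{i>g'}$ of $\check{Y}$), commutativity comes from matching Namikawa's formula for $\Phi(\tau)$ with the computed $\Tr(\tau)=\Im\tau_2-\Im\tau_3^T(\Im\tau_1)^{-1}\Im\tau_3$, and c)--e) are the same lattice/basis bookkeeping (with $\overline{P}(F)=\GL(\delta_{g'})=\GL(X,Y)$ and $\delta_{g'}^{-1}Q\in\M(g-g',\mathbf{Z})$ as the integrality condition in the $\check{Y}$ basis). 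The only cosmetic difference is that the paper deduces equivariance of $h$ from the equivariance and surjectivity of $\Phi$ and $\Tr$ rather than by your direct check of the $\mathcal{G}_l(F)$-action on both sides, which is an equally valid variant.
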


\begin{proof}
We identify $\Gamma^2(U)$ with $\Sym_{r=g-g'}(\mathbf{R})$ by the above basis $\{\delta_iy_i\}_{i>g'}$. We have also identified $\mathcal{P}'(F^{(g')})$ with $\Sym_{r}(\mathbf{R})$ by $[\cdot]$ in Example~\ref{groups in the principal cusp}. $h(F)$ is defined to be the composition of these two isomorphisms. By (\cite{Nam80}, p. 31 ii), $\Phi(\tau)=\Im{\tau_2}-(\Im{\tau_3^T})(\Im{\tau_1})^{-1}\Im{\tau_3}=\Tr(\tau)$. Thus the diagram commutes. 

Since $\Phi$ and $\Tr$ are both $\mathcal{P}(F)$-equivariant, and are both surjective, $h$ is also $\mathcal{P}(F)$-equivariant, and we get a). For c), we have computed $\overline{P}(F)=\GL(\delta_{g'})=\GL(X,Y)$ under the above identification. For e), $Q\in P'(F)$ if and only if $\delta_{g'}^{-1}Q\in \M(g-g',\mathbf{Z})$. Remember we are using the basis of $\check{Y}$. 
\end{proof}

Assume $F'=M(F)$, for $M\in \Sp(2g,\mathbf{Q})$. 

\begin{lemma}
The following diagram
\[
\begin{diagram}
\mathfrak{S}_g &\rTo^{M}& \mathfrak{S}_g\\
\dTo^{\Phi}&& \dTo^{\Phi} \\
\mathcal{C}(F)&\rTo^{M}&\mathcal{C}(F')
\end{diagram}
\]

commutes. 
\end{lemma}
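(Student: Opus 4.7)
The plan is to reduce the statement to a tautology by unwinding how $\Phi(F)$ and the $M$-action on $\mathcal{C}(F)$ are defined for a non-standard cusp. Recall that $\mathcal{C}(F)$ is defined by transport: if $F = N(F^{(g')})$ for some $N \in \Sp(2g,\mathbf{Q})$, then $\mathcal{C}(F) = N\mathcal{C}^{(g')}N^{-1}$, and the bottom arrow $M:\mathcal{C}(F)\to \mathcal{C}(F')$ is conjugation by $M$. The Harish--Chandra map $\Phi(F)$ in \cite{Nam80} is defined by the same transport, namely $\Phi(F)(\tau) = N\,\Phi(F^{(g')})(N^{-1}\tau)\,N^{-1}$. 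My first step would be to verify that this is independent of the choice of $N$: two such choices differ by an element $P\in \mathrm{Stab}(F^{(g')}) = \mathcal{P}(F^{(g')})$, and the $\mathcal{P}(F^{(g')})$-equivariance of $\Phi(F^{(g')})$ established in Proposition~\ref{splitting component}(a) absorbs it.

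With this transport formula in hand, commutativity is immediate. Writing $F' = M(F) = MN(F^{(g')})$ and applying the formula to both $\Phi(F)$ and $\Phi(F')$,
\[
\Phi(F')(M\tau) = MN\,\Phi(F^{(g')})\bigl((MN)^{-1}M\tau\bigr)\,(MN)^{-1} = M\bigl(N\,\Phi(F^{(g')})(N^{-1}\tau)\,N^{-1}\bigr)M^{-1} = M\,\Phi(F)(\tau)\,M^{-1},
\]
which is precisely the desired identity, since the bottom $M$ is conjugation. As a sanity check, the same argument transports the analogous $\Tr$-commutativity established in the preceding lemma: via the $\mathcal{P}(F)$-equivariant isomorphism $h(F)$ of Proposition~\ref{splitting component}, the two squares are identified, so one can equally deduce the $\Phi$-square from the $\Tr$-square.

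The only real obstacle is bookkeeping: confirming that the $M$-action on $\mathcal{C}(F)$ used here (conjugation inside $\mathcal{P}'$) agrees under $h$ with the $M$-action on $\mathcal{C}(X)$ defined earlier via $M^{-1}|_U \otimes M^{-1}|_U$ on $\Gamma^2(U)$. For standard cusps this follows directly from the explicit description in Example~\ref{groups in the principal cusp} together with the concrete form of $h(F^{(g')})$; the general case then follows by transport from $N$. Once these conventions are matched, the lemma requires no further calculation.
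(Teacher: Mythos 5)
Your argument rests on the premise that, for a non-standard cusp, $\Phi(F)$ is \emph{defined} by transport from the standard cusp, i.e. $\Phi(F)(\tau)=N\,\Phi(F^{(g')})(N^{-1}\tau)\,N^{-1}$ for any $N$ with $F=N(F^{(g')})$. That is not the definition in use: in \cite{Nam80} (and this is what the lemma refers to), $\Phi(F)$ is defined intrinsically for every rational boundary component via the embedding $\mathfrak{S}_g\cong\mathcal{P}(F)/(\mathcal{P}(F)\cap K)\hookrightarrow \mathfrak{S}(F)\cong\mathcal{P}'(F)_\mathbf{C}\mathcal{P}(F)/(\mathcal{P}(F)\cap K)$, followed by projection $u\cdot p\mapsto u\mapsto \Im u$. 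With the intrinsic definition, the content of the lemma is precisely the transport formula you take as a definition (already the case $F=F^{(g')}$, $F'=M(F^{(g')})$ with $M$ an arbitrary rational symplectic matrix is non-tautological), so your reduction is circular. The missing step is the actual computation: writing $\tau=u\cdot p$ with $u\in\mathcal{P}'(F)_\mathbf{C}$, one has $M\tau=(MuM^{-1})\cdot(MpM^{-1})$ with $MuM^{-1}\in\mathcal{P}'(F')_\mathbf{C}$, and since $M$ is a \emph{real} matrix, $\Im(MuM^{-1})=M(\Im u)M^{-1}$; this one-line identity, not equivariance at the standard cusp, is what makes the square commute. Your well-definedness check (absorbing the ambiguity in $N$ by the $\mathcal{P}(F^{(g')})$-equivariance of Proposition~\ref{splitting component}(a)) is fine as far as it goes, but it only shows your transported map is well defined; it does not identify it with Namikawa's $\Phi(F)$, which is what the lemma is about.

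The ``sanity check'' of deducing the $\Phi$-square from the $\Tr$-square via $h$ is also circular in the paper's logical order: $h(F')$ for a general cusp is only defined \emph{after} this lemma, and the statement that $h(F')$ intertwines $\Phi(F')$ with $\Tr(F')$ (and is compatible with the $M$-maps) is part of Proposition~\ref{interpretation of the cone}, whose proof uses the present lemma as one face of the pyramid diagram. So neither route in your proposal, as written, establishes the statement; what is needed is the short intrinsic computation with $\Im$ and conjugation by a real matrix.
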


\begin{proof}
Recall the definition of $\Phi$ in \cite{Nam80}. Embed $\mathfrak{S}_g\cong \mathcal{P}(F)/(\mathcal{P}(F)\cap K)$ into $\mathfrak{S}(F)\cong \mathcal{P}'(F)_\mathbf{C}\mathcal{P}(F)/(\mathcal{P}(F)\cap K)$. The map $\Phi: \mathfrak{S}(F)\to \mathcal{P}'(F)$ is the composition

\[
\begin{array}{ccccc}
 \mathcal{P}'(F)_\mathbf{C}\mathcal{P}(F)/(\mathcal{P}(F)\cap K)&\longrightarrow&  \mathcal{P}'(F)_\mathbf{C}\mathcal{P}(F)/\mathcal{P}(F)&\longrightarrow& \mathcal{P}'(F)\\
 u\cdot p \mod \mathcal{P}(F)\cap K &\longmapsto& u\cdot p \mod \mathcal{P}(F)=u &\longmapsto& \Im{u}
\end{array}
\]

Assume $\tau=u\cdot p \mod \mathcal{P}(F)\cap K \in \mathfrak{S}_g$, 
\[
M(\tau)=M(u\cdot p \mod \mathcal{P}(F)\cap K)=MuM^{-1}\cdot MpM^{-1} \mod M(\mathcal{P}(F)\cap K)M^{-1} .
\]

Because $M$ is a real matrix, 
\[
\Phi(M\tau)=\Im{MuM^{-1}}=M\Im{u}M^{-1}=M\Phi(\tau).
\]
\end{proof}

For any cusp $F$, there exists $M\in\Sp(2g,\mathbf{Q})$ such that $F=M(F^{(g')})$ for some $g'$. By definition, $\mathcal{P}'(F)=M\mathcal{P}'(F^{(g')})M^{-1}$. We can now define $h(F):\mathcal{P}'(F)\to\Gamma^2(U)$ to be the unique map that makes the following diagram commute,
\[
\begin{diagram}
\mathcal{P}'(F^{(g')})&\rTo^M&\mathcal{P}'(F)\\
\dTo^{h(F^{(g')})} &&\dTo^{h(F)}\\
\Gamma^2(U^{(g')})&\rTo^R&\Gamma^2(U).
\end{diagram}
\]

The definition of $h(F)$ is independent of the choice of $M$, because $h(F^{(g')})$ is $\mathcal{P}(F^{(0)})$-equivariant. $h(F)$ is an isomorphism. Most of the statements for $h(F^{(g')})$ can be generalized to a general $h(F)$.

\begin{proposition}\label{interpretation of the cone}
The following diagram commutes for any cusp $F$.

\[
\begin{diagram}
\mathfrak{S}_g&\rTo^{=}&\mathcal{C}_0(\Sp(E, \mathbf{R}))\\
\dTo^{\Phi(F)}&&\dTo^{\Tr(F)} \\
\mathcal{C}(F)&\rTo^{h(F)}&\mathcal{C}(X)
\end{diagram}
\]

Moreover, the following statements are true.
\begin{itemize}
\item[a)] Every map in the diagram is $\mathcal{P}(F)$-equivariant.
\item[b)] $\Tr$ is surjective. 
\item[c)] $\rho_X(F): \mathcal{P}(F)\to \GL(X_\mathbf{R})$ is surjective. 
\item[d)] Denote the induced isomorphism $\Aut(\Gamma^2(U))\to \Aut(\mathcal{P}'(F))$ by $\rho_h$, then $\overline{P}(F)\subset \rho_h(\GL(X,\check{Y}))$ is a subgroup of finite index.
\end{itemize}
\end{proposition}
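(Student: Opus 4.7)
The plan is to bootstrap Proposition~\ref{splitting component} from the splitting cusp $F^{(g')}$ to an arbitrary cusp $F$ by conjugation. Pick $M \in \Sp(2g, \mathbf{Q})$ with $F = M(F^{(g')})$, write $F' := F^{(g')}$, and let $R := M$, viewed interchangeably as a symplectic transformation and as the induced linear isomorphism $\Gamma^2(U^{(g')}) \to \Gamma^2(U)$. Three commuting squares are available: the lemma just above supplies $\Phi(F) \circ M = M \circ \Phi(F')$ on the left; the lemma asserting $\mathcal{P}(F)$-equivariance of $\Tr$ applied to $R$ supplies $\Tr(F) \circ M = R \circ \Tr(F')$ on the right; and the definition of $h(F)$ makes $h(F) \circ M = R \circ h(F')$ tautological. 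Pasting these onto the commutative diagram for $F'$ already proved in Proposition~\ref{splitting component} yields the commuting diagram for $F$. The independence of $h(F)$ from the choice of $M$ drops out the same way: two choices differ by an element of $\mathcal{P}(F')$, whose action is already absorbed by the $\mathcal{P}(F')$-equivariance of the diagram at the splitting cusp.

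The formal consequences are then quick. For (a), since $\mathcal{P}(F) = M\mathcal{P}(F')M^{-1}$ and each arrow in the diagram for $F$ is the $M$-conjugate of the corresponding arrow for $F'$, the $\mathcal{P}(F)$-equivariance is inherited from the $\mathcal{P}(F')$-equivariance. Claim (b) follows because $\Tr(F) = R \circ \Tr(F') \circ M^{-1}$ is a composition of surjections. For (c), the map $\rho_X(F)$ factors through the Levi quotient $\mathcal{G}_l(F) = M\mathcal{G}_l(F')M^{-1}$, which in the splitting case was identified in Example~\ref{groups in the principal cusp} with $\GL(r,\mathbf{R}) \cong \GL(X_\mathbf{R})$ acting by $Q \mapsto (u^T)^{-1} Q u^{-1}$; conjugation by the linear isomorphism $R$ transports this isomorphism and hence the surjectivity onto $\GL(X_\mathbf{R})$ to the cusp $F$.

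The main obstacle is (d), where the two integral subgroups $\overline{P}(F) \subset \rho_h(\GL(X, \check{Y}))$ of $\Aut(\mathcal{P}'(F))$ must be compared. In the splitting case Proposition~\ref{splitting component}(c) gave equality, but for a non-splitting cusp $\Lambda \cap U$ need not admit an isotropic complement in $\Lambda$ (by Corollary~\ref{splitting}), so automorphisms of $X$ preserving $\check{Y}$ need not lift to the \emph{arithmetic} group $\Gamma(\delta) \cap \mathcal{P}(F)$, only to $\Sp(E, \mathbf{Q}) \cap \mathcal{P}(F)$. The plan is to invoke commensurability of arithmetic subgroups: because $M \in \Sp(2g, \mathbf{Q})$, the groups $\Gamma(\delta)$ and $M\Gamma(\delta)M^{-1}$ are commensurable in $\Sp(2g, \mathbf{Q})$, hence so are $\Gamma(\delta) \cap \mathcal{G}_l(F)$ and $M(\Gamma(\delta) \cap \mathcal{G}_l(F'))M^{-1}$, hence so are their projections $\overline{P}(F)$ and $M\overline{P}(F')M^{-1}$ in $\Aut(\mathcal{P}'(F))$. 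Combining with the fact that, under the isomorphism $\rho_h$ and using the $R$-equivariance built into the definition of $h(F)$, the conjugate $M\overline{P}(F')M^{-1}$ corresponds to $\rho_h(\GL(X, \check{Y}))$ (by the splitting case of statement (c) applied at $F'$), one concludes that $\overline{P}(F)$ has finite index in $\rho_h(\GL(X, \check{Y}))$, which is exactly (d).
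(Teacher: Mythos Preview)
Your transport-by-conjugation argument for the commutative diagram and for parts (a), (b), (c) is essentially the paper's own proof, just phrased without the ``pyramid'' picture. That part is fine.

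The gap is in (d). Your key claim that, under $\rho_h$, the conjugate $M\,\overline{P}(F')\,M^{-1}$ ``corresponds to $\rho_h(\GL(X,\check{Y}))$'' is false as stated. At the splitting cusp $F'$ you have $\overline{P}(F') = \rho_{h(F')}(\GL(X',\check{Y}'))$ for the lattices $X',\check{Y}'$ attached to $F'$. Conjugating by $M$ transports this to $\rho_{h(F)}$ of $\GL$ of the \emph{images} of $X',\check{Y}'$ under the rational linear isomorphism $(R^{-1}|_{U'})^*$. But $M\in\Sp(2g,\mathbf{Q})$, not $\Sp(2g,\mathbf{Z})$, so these transported lattices are only \emph{commensurable} with $X,\check{Y}$, not equal to them. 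Thus the most you get from transport is that $M\,\overline{P}(F')\,M^{-1}$ is commensurable with $\rho_h(\GL(X,\check{Y}))$, and establishing that commensurability is itself a nontrivial step about arithmetic subgroups of $\GL(X_\mathbf{R})$. You also never verify the containment $\overline{P}(F)\subset \rho_h(\GL(X,\check{Y}))$, which is needed to upgrade ``commensurable'' to ``finite-index subgroup''; this follows from the earlier corollary that $\rho_X(P(F))\subset\GL(X,Y)$, but it should be said.

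The paper sidesteps all of this by applying directly a theorem of Borel (\cite{Bor69} 8.11): since $\rho_X:\mathcal{P}(F)\to\GL(X_\mathbf{R})$ is a surjective morphism defined over $\mathbf{Q}$, the image of the arithmetic subgroup $P(F)=\Gamma(\delta)\cap\mathcal{P}(F)$ is an arithmetic subgroup of $\GL(X_\mathbf{R})$, hence commensurable with $\GL(X,Y)$; together with the containment $\rho_X(P(F))\subset\GL(X,Y)$ this gives finite index in one line. Your commensurability-under-rational-conjugation idea can be completed, but only by essentially reproving this Borel-type statement for the particular case at hand.
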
 
\begin{proof}
We have the following diagram

\begin{diagram}
\mathcal{C}(F^{(g')})  &                  &\rTo^M&                  & \mathcal{C}(F)  \\
\dTo(0,4)^{h(F^{(g')})}&\luTo^\Phi&            &\ruTo^\Phi& \dTo(0,4)_{h(F)}\\
                                       &           &\mathfrak{S}_g&           &                             \\
                                       & \ldTo_{\Tr}&            &\rdTo_{\Tr}&                              \\
\mathcal{C}(X^{(g')})   &                  &\rTo^R&                  &\mathcal{C}(X).
\end{diagram} 

Imagine the diagram as a pyramid with five faces. The diagram concerning us is the triangular face on the right. It commutes because all the other faces commute and the arrows in the bottom square are all bijections. Then a), b) and c) follow.

For d). Since $\rho_X$ is surjective and defined over $\mathbf{Q}$, by (\cite{Bor69} 8.11), the image of the arithmetic subgroup $P(F)$ is an arithmetic subgroup of $\GL(X_\mathbf{R})$. So $\rho_X(P(F))$ is commensurable with $\GL(X,Y)$. Because $\rho_X(P(F))$ is a subgroup of $\GL(X,Y)$, it is a subgroup of finite index. $\overline{P}(F)=\text{Ad}(\mathcal{P}(F))=\rho_h\rho_X(P(F))$. 
\end{proof}

Let $F$ be a $0$-cusp that corresponds to a maximal isotropic space $U$. Fix a basis $\{v_1,\ldots,v_g\}$ of $U\cap\Lambda=X^*$. Take any $R\in \Sp(E,\mathbf{Q})$, such that $U=R\cdot U^{(0)}=U^{(0)}R^{-1}$, and $R^{-1}$ maps $\{\mu_1,\ldots,\mu_g\}$ in $U^{(0)}$ to $\{v_1,\ldots,v_g\}$. Such $R$ always exists. It suffices to find a rational Lagrangian complement $L$ of $U$. Extend the basis $\{v_1,\ldots,v_g\}$ of $U\cap\Lambda$ to a basis $\{u_1',\ldots,u_g', v_1,\ldots,v_g\}$ of $\Lambda$. Assume under this basis, 
\[
E=\begin{pmatrix}
S&\mathfrak{d}\\
-\mathfrak{d}&0
\end{pmatrix}.
\]

Here $S$ is an integral skew-symmetric matrix. Denote the transformation matrix of the bases by $M'^{-1}$. $M'\in\GL(2g,\mathbf{Z})$. Assume
\[
\begin{pmatrix}
A & B\\
0 & I_g
\end{pmatrix}
M'^{-1}=R^{-1}
\]

Since 
\[
M'^{-1}\begin{pmatrix}
0 &\delta \\
-\delta & 0
\end{pmatrix}
(M'^{-1})^{T}=
\begin{pmatrix}
S & \mathfrak{d}\\
-\mathfrak{d} & 0
\end{pmatrix},
\]

we have 
\begin{align}
S&=A^{-1}B\mathfrak{d}-(A^{-1}B\mathfrak{d})^T\\
A&=\delta \mathfrak{d}^{-1}
\end{align}

We can always change $R$ by
\[
\begin{pmatrix}
I_g & \delta Q'\\
0 & I_g
\end{pmatrix}R,
\]

where $Q'$ is a symmetric rational matrix. Therefore we can always choose $R$ such that $A^{-1}B\mathfrak{d}$ is antisymmetric, that is
\begin{align}
A^{-1}B\mathfrak{d}&=\frac{1}{2}S\\
A&=\delta\mathfrak{d}^{-1}
\end{align}

\begin{corollary}\label{the lattice in tropical cone}
The lattices $P'(F)$ and $\mathbb{L}^*$ agree. In particular, the set $\mathcal{C}(F)\cap P'(F)$ is the set of integral tropical abelian varieties for every $F$.
\end{corollary}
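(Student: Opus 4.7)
The plan is to reduce to the splitting cusps $F^{(g')}$, where Proposition~\ref{splitting component}(e) already identifies $P'(F^{(g')})\cap\mathcal{C}(F^{(g')})$ with the set of integral tropical abelian varieties, and then to extend to a general cusp $F$ using the rational element $R\in\Sp(E,\mathbf{Q})$ constructed immediately above. The passage from the cone statement to the full lattice statement in the splitting case is immediate, since both integrality conditions ($\delta_{g'}^{-1}Q\in\M(g-g',\mathbf{Z})$ and $Q\mathfrak{d}\in\M(g-g',\mathbf{Z})$) are linear in $Q$ and $h(F^{(g')})$ is a linear isomorphism; hence the work concentrates on transferring the identification through conjugation by $R$.

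For a general $0$-cusp $F$ with $U=U^{(0)}R^{-1}$, I would carry out the following block computation. Writing $R^{-1}=\bigl(\begin{smallmatrix}A&B\\ 0&I_g\end{smallmatrix}\bigr)M'^{-1}$ with $M'\in\GL(2g,\mathbf{Z})$, a direct multiplication yields
$$
R\begin{pmatrix}I&Q\\ 0&I\end{pmatrix}R^{-1}=M'\begin{pmatrix}I&A^{-1}Q\\ 0&I\end{pmatrix}M'^{-1}.
$$
Since $M'$ is unimodular, membership of this matrix in $\Gamma(\delta)$ is equivalent, in the basis $\{u'_i,v_j\}$ in which $E$ has matrix $E'=\bigl(\begin{smallmatrix}S&\mathfrak{d}\\ -\mathfrak{d}&0\end{smallmatrix}\bigr)$, to the conditions that $\bigl(\begin{smallmatrix}I&A^{-1}Q\\ 0&I\end{smallmatrix}\bigr)$ be integer-valued and preserve $E'$. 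With the choice $A=\delta\mathfrak{d}^{-1}$, the first condition is $\mathfrak{d}\delta^{-1}Q\in\M(g,\mathbf{Z})$, and the second is symmetry of $\mathfrak{d}\delta^{-1}Q\mathfrak{d}$.

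Next I would translate these conditions via $h(F)$ to the tropical side. The map $h(F)$ sends $R[Q]R^{-1}$ to the quadratic form on $U$ obtained by pushing $Q$ forward through $R^{-1}|_{U^{(0)}}\colon U^{(0)}\to U$, which carries $\{\mu_j\}$ to $\{v_j\}$. Reading the resulting bilinear form $B$ on $X_{\mathbf{R}}$ in the basis of $\check{Y}$ (related to $\{v_j\}$-coordinates by the matrix $\mathfrak{d}$), the defining condition $B(\check{Y},X)\subset\mathbf{Z}$ of $\mathbb{L}^*$ should translate precisely to $\mathfrak{d}\delta^{-1}Q\in\M(g,\mathbf{Z})$. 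The antisymmetry of $A^{-1}B\mathfrak{d}$ imposed on $R$ absorbs the off-diagonal $S$-contribution to $E'$, so that the symplectic compatibility matches the symmetry constraint already built into $Q$. For higher cusps $F$ the same argument applies with $F^{(g')}$ in place of $F^{(0)}$, $\mathfrak{d}=\delta_{g'}$, and the block form from Example~\ref{groups in the principal cusp}.

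The main obstacle I anticipate is exactly that $R\notin\Gamma(\delta)$ for non-splitting cusps, so conjugation by $R$ does not preserve the integral lattice in an obvious way; the nontrivial content of the corollary is that the carefully chosen normalizations $A=\delta\mathfrak{d}^{-1}$ and $A^{-1}B\mathfrak{d}$ antisymmetric are precisely what make the integrality condition coming from $\Gamma(\delta)\subset\GL(\Lambda_\mathbf{R})$ agree with the twisted condition $B(\check{Y},X)\subset\mathbf{Z}$ involving the two distinct lattices $\check{Y}\subset X$.
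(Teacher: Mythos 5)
Your proposal is correct and follows essentially the same route as the paper's proof: reduce to the $0$-cusps, factor $R^{-1}=\bigl(\begin{smallmatrix}A&B\\ 0&I_g\end{smallmatrix}\bigr)M'^{-1}$, use unimodularity of $M'$ so that integrality of the conjugated unipotent becomes integrality of $A^{-1}Q=\mathfrak{d}\delta^{-1}Q$ (together with the symmetry constraint), and match this with the condition $B(\check{Y},X)\subset\mathbf{Z}$ defining $\mathbb{L}^*$ in the chosen bases. One small correction to your closing remark: as your own displayed formula shows, the $B$-block cancels identically under conjugation, so the normalization making $A^{-1}B\mathfrak{d}$ antisymmetric plays no role in this corollary (it is used elsewhere, for the twist data); the only input is $A=\delta\mathfrak{d}^{-1}$, which is forced by the symplectic relation rather than being a choice.
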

\begin{proof}
It suffices to prove it for maximal corank boundary components. Let $U$ be a rational maximal isotropic subspace. $Q$ corresponds to an element in $\mathcal{C}(F)\cap P'(F)$ if and only if 
\[
R\begin{pmatrix}
I_g & \delta Q\\
0& I_g
\end{pmatrix} R^{-1}\in\GL(2g,\mathbf{Z}).
\]

Since $M'\in\GL(2g,\mathbf{Z})$, it is equivalent to 
\[
\begin{pmatrix}
A^{-1}&-A^{-1}B\\
0 & I_g
\end{pmatrix}
\begin{pmatrix}
I_g &\delta Q\\
0 & I_g
\end{pmatrix}
\begin{pmatrix}
A&B\\ 0&I_g
\end{pmatrix}
\in\GL(2g,\mathbf{Z}).
\]

This is true if and only if $\mathfrak{d}Q\in \mathbf{Z}$, which means $Q$ represents an integral tropical abelian variety in $\mathcal{C}(X)$. 
\end{proof}


\subsection{Mirror Symmetry for Abelian Varieties}\label{2.3}
\textbf{Warning}: We use $X$ (resp. $Y$) to denote both the complex torus and the lattice. We hope there is no confusion. Fix a complex number $t$ in the upper half plane $\mathbf{H}$, and the polarization $E$ on an abelian variety $X$, we get a symplectic manifold $(X,\Omega)$, with $\Omega=tE$ a complexified K\"ahler form. Let $V=\H_1(X,\mathbf{R})$, and $\Lambda=\H_1(X,\mathbf{Z})$. Any maximal rational isotropic subspace $U$ of $V$ gives a linear Lagrangian for $(X,\Omega)$. $Y:=\Lambda/\Lambda\cap U$ is a lattice in $V/U$. Define the affine torus $B:=(V/U)/Y$. A choice of $U$ gives a Lagrangian fiberation $f:X\to B$. We follow the construction in (\cite{Fu2002},\cite{Po} Sec. 6.2) to define the mirror torus $(Y_t,J_t)$. For $t=a+bi$, define $\Omega'=(ai-b)E$. Define $\phi: V\to V^*$ by $E(v,w)=\phi(v)(w)$. Define a complex structure $J_\Omega$ on $V\oplus V^*$ by requiring that the function
\[
I_x(v,f):=\Omega'(x,v)+if(x)
\]

is complex linear for each $x\in V$ (\cite{Fu2002} p. 11). It can be checked that 
\[
J_\Omega=
\begin{pmatrix}
b^{-1}a & -b^{-1}\phi^{-1}\\
(b+a^2b^{-1})\phi & -ab^{-1}
\end{pmatrix}
\]

satisfies the condition. The subspace of $V^*$ that consists of linear functions vanishing on $U$ is denoted by $\ann(U)$. $J_\Omega$ preserves $U\oplus \ann(U)$, and thus descends to a complex structure $J_t$ on 
\[
\check{V}:=(V\oplus V^*)/(U\oplus \ann(U))=V/U\oplus U^*.
\]
 
Recall lattices $Y=\Lambda/\Lambda\cap U^\perp$ and $X=\Hom(\Lambda\cap U,\mathbf{Z})$. Define the lattice $\Gamma:=Y\oplus X\subset V/U\oplus U^*$. The mirror torus $Y_t$ is defined to be the complex torus $(\check{V}/\Gamma, J_t)$ (\cite{Fu2002} Definition 1.17  \& 1.18). There is a natural dual fibration $\check{f}: Y_t\to B$. We continue to use the notations in the proof of Proposition~\ref{0-cusps}. Fix a compatible basis $\{y_1,\ldots,y_g\}$ of $X$. $\{u'_1,\ldots, u'_g\}\subset \Lambda$ are lifts of $d_iy_i\in X$, and $\{v_1,\ldots,v_g\}\subset X^*$ is the dual basis of $\{y_1,\ldots,y_g\}$. Let $\check{u}'_i\in Y$ be the image of $-u'_i$, and $\check{v}_i=y_i\in X$ be the dual of $v_i\in X^*$. Then 
\[
J_t \check{u}'_i=b^{-1}a\check{u}'_i -(b+a^2b^{-1})d_i\check{v}_i.
\]

Define $\check{e}_i=d_i\check{v}_i$, then $\check{u}'_i=(a+bJ_t)\check{e}_i$. With respect to the bases $\{\check{e}_i\}$ and $\{\check{u}'_i,\check{v}_j\}$, the period matrix is
\begin{equation*}
\begin{pmatrix}
tI_g\\
\mathfrak{d}^{-1}
\end{pmatrix}.
\end{equation*}

Therefore $Y_t=E_1\times E_2\times \ldots\times E_g$, where $E_i:=\mathbf{C}/d_i^{-1}\mathbf{Z}+t\mathbf{Z}$ is an elliptic curve. We write $E_i$ as $\mathbf{C}^*/(q^{d_i})^{\mathbf{Z}}$, for $q=e^{2\pi i t}$.  Take $\Delta^*$ small enough, so that $E_i$ has no complex multiplication for $q\in \Delta^*$. The fiber product of elliptic curves $\mathcal{Y}_\eta=\mathcal{E}_1\times \mathcal{E}_2\times\cdots \mathcal{E}_g\to \Delta^*$ is defined to be the mirror family for the maximal degeneration in the direction $U$.

While we can define such a mirror family for any $0$-cusp $F$, for general $F$, the mirror is not a space, but a gerbe. If the mirror is a space, then there should exist a Lagrangian section which corresponds to the structure sheaf $\mathcal{O}_{Y}$. This is equivalent to (\cite{Fu2002} Assumption 1.1), which says that there is an isotropic subspace $L\subset V$ such that $(U\cap\Lambda)\oplus(L\cap\Lambda)=\Lambda$. By Corollary \ref{splitting}, then $U$ is in the orbit of $F^{(0)}$. 

\subsubsection{the Splitting boundary component}
We consider the mirror symmetry for the splitting cusp $F^{(0)}$ first. In this case, use the standard symplectic basis $\{\lambda_1,\ldots, \lambda_g,\mu_1,\ldots\mu_g\}$ of $\Lambda$ instead of $\{u_i',v_j\}$ and use $\{\check{\lambda}_i,\check{\mu}_j\}$ instead of $\{\check{u}_i',\check{v}_j\}$. Let $L$ be the subspace generated by $\{\lambda_1,\ldots,\lambda_g\}$. Therefore $L$ is a Lagrangian section for the fibration $X\to B$, and we can identify $V$ with $U\oplus V/U$. 
\begin{remark}
By (\cite{GLO} proposition 9.6.1), in this case, for each $\tau\in \mathfrak{S}_g$, there exists $\omega_\tau$, such that $(Y_t,\omega_\tau)$ is mirror symmetric to the algebraic pairs $(X_\tau,tE)$ as defined in loc. cit. So this definition of the mirror agrees with the mirror in \cite{GLO}.  
 \end{remark}
  
A Chern class of a line bundle over an abelian variety is represented by a skew symmetric integral form of $\omega$ on $\Gamma$. Assume, with respect to the basis $\{\check{\lambda}_1,\ldots,\check{\lambda}_{g},\check{\mu}_1,\ldots,\check{\mu}_g\}$ of the lattice $\Gamma$, 
\begin{equation*}
\omega=
\begin{pmatrix}
Q_1&Q_2\\
Q_3&Q_4
\end{pmatrix}
\in \M(2g, \mathbf{Z}).
\end{equation*}

By the Riemann Conditions  $\omega(J_tx,J_ty)=\omega(x,y)$ and $\omega(J_tx,x)>0, \forall x\neq 0$, we have 
\[
Q_1=Q_4=0;\  Q_2\delta=-\delta Q_3; \ Q_2=-Q_3^T; \ Q_3>0.
\]

Let $Q=Q_3\delta^{-1}$ be a positive definite symmetric matrix. 
\[
\omega=
\begin{pmatrix}
0&-\delta Q\\
Q\delta &0
\end{pmatrix},
\]
\[
\omega=\sum_{i,j=1}^{g}-\delta_i Q_{ij} \ud\check{x}_i\wedge \ud\check{y}_j,
\]
where $\{\check{x}_i, \check{y}_j\}$ are the dual of $\{\check{\lambda}_i,\check{\mu}_j\}$, and $\ud\check{x}_i\wedge \ud\check{y}_j=\ud\check{x}_i\otimes \ud\check{y}_j-\ud\check{y}_j\otimes \ud\check{x}_i$.

It follows that $X=\oplus\mathbf{Z}\check{\mu}_i$ is a maximally isotropic subgroup with respect to any $\omega$. Define $\phi:Y\to X$ by $\check{\lambda}_i\mapsto \delta_i\check{\mu}_i$ and identify $Y$ with $\phi(Y)$. $\omega$ descends to a bilinear form $X\times Y \to \mathbf{Z}$. This bilinear form is symmetric and positive definite over $U^*_\mathbf{R}$, and thus corresponds to a positive quadratic form $Q$.  We see that the ample cone $\K(Y_t)$ is the set of all positive quadratic forms on $X$. Therefore, we have the mirror map $\mathcal{C}^{(0)}\cong \mathcal{C}(X)\cong \K(Y_t)$. Under this isomorphism, integral tropical abelian varieties are identified with the integral polarizations, i.e. $\NS(Y_t)=\mathbb{L}^*$ in Definition~\ref{definition of integral tropical abelian varieties}. They are both represented by positive symmetric maps $\check{\phi}\phi^{-1}: X\to X^*$. 
\begin{remark}
It is straightforward to check that the mirror map $\mathcal{C}(X)\to \mathcal{K}(Y_t)$, identifying $Q$, is given by the Fourier transform $\Four(L_{\check{\phi}},\mathcal{L})$ defined in (\cite{Po} Section 6.3 (6.3.1)), where $L_{\check{\phi}}$ is the Lagrangian determined by the graph of $\check{\phi}$. 
\end{remark}

Let $u\in \GL(g,\mathbf{C})$ be the representation of an automorphism under the basis $\{\check{e}_i\}$. Under the basis $\{\check{\lambda}_i, \check{\mu}_j\}$, the matrix is 
\[
\begin{pmatrix}
 u&0\\
0& \delta u\delta^{-1}\\
\end{pmatrix}.
\]

Therefore $u\in \GL(\delta)$ under the basis of $Y$. The group of automorphisms of $Y_t$ is  $\GL(X,Y)$. The action on the quadratic form $Q$ is $Q'=(u^T)^{-1}Q u^{-1}$. It is identified with the action of $\GL(X,Y)$ on $\mathcal{C}(X)$.

We construct the relative minimal models $\mathcal{Y}/\Delta$ of the mirror family $\mathcal{Y}_\eta/\Delta^*$. A projective morphism $\check{\pi}:\mathcal{Y}\to\Delta$ is a relative minimal model if $\mathcal{Y}$ is $\mathbf{Q}$-factorial, terminal, and $K_\mathcal{Y}$ is $\check{\pi}$-nef. We use the construction in (\cite{Mum72} Sect. 6). Replace $\Delta^*$ by a complete discrete valuation ring $(R,\mathfrak{m})$. Let $K$ be the quotient field, and $\kappa=R/\mathfrak{m}$ the residue field. The base is $S=\spec R$. The closed point is $s=\spec \kappa$, and the generic point is $\eta=\spec K$. Consider $X_\mathbf{R}=U^*$ as an affine plane of height $1$ in the vector space $\mathbb{X}_\mathbf{R}$. 

\begin{definition}
An integral paving $\mathscr{P}$ of $U^*$ is called $Y$-invariant if $\mathscr{P}$ is invariant under the translation action of $\phi(Y)$ on $U^*$. Assume $\mathscr{P}$ is further a triangulation. It is called minimal if each point of $X$ is contained in $\mathscr{P}$, i.e. $X\subset \mathscr{P}$. 
\end{definition}

By (\cite{CLS} Exercise 8.2.14. \& Proposition 11.4.12.), the minimal condition is necessary and sufficient for the relative complete model to be normal, $\mathbf{Q}$-factorial, $\mathbf{Q}$-Gorenstein, and terminal. Fix a $Y$-invariant integral paving $\mathscr{P}$. For each cell $\sigma\in\mathscr{P}$, construct the cone $C(\sigma)$ in $\mathbb{X}_\mathbf{R}$. The collection $\{C(\sigma)\}_{\sigma\in\mathscr{P}}$ forms a fan denoted by $\Sigma_\mathscr{P}$. Denote the infinite toric embedding $X_{\Sigma_\mathscr{P}}$ by $\widetilde{\mathcal{Y}}_\mathscr{P}$. Since the fan $\Sigma_\mathscr{P}$ contains a basis of $\mathbb{X}$, $\widetilde{\mathcal{Y}}_\mathscr{P}$ is simply connected as a complex analytic space. An embedding of $\check{\mathbb{T}}:=\mathbb{G}_m^g\times S$ into $\widetilde{\mathcal{Y}}_\mathscr{P}$ is given by the vertical ray $C(\{0\})$ for $0\in X$. To define a relatively ample line bundle over $\widetilde{\mathcal{Y}}_\mathscr{P}$, with actions of $Y$ and $\check{\mathbb{T}}$, we need additional data: a piecewise affine function $\varphi$ on $U^*$ such that
\begin{itemize}
\item[1)] For each cell $\sigma\in\mathscr{P}$, $\varphi\vert_\sigma$ is affine. We say that $\varphi$ is compatible with $\mathscr{P}$.
\item[2)] $\varphi$ is strictly convex. That means the lower boundary of the graph of $\varphi$ gives $\mathscr{P}$.
\item[3)] $\varphi$ is $Y$-quasiperiodic.
\item[4)] $\varphi$ is integral. That means for each $\sigma\in \mathscr{P}$, $\varphi\vert_\sigma$ is an element in $Aff(X,\mathbf{Z})$. 
\end{itemize}

The set of functions which satisfy 1), 2) and 3) is denoted by $CPA^Y(\mathscr{P},\mathbf{R})$. The set of functions which satisfy all 4 conditions is denoted by $CPA^Y(\mathscr{P},\mathbf{Z})$. The definition of quasiperiodic functions is Definition~\ref{definition of quasiperiodic functions} in Appendix~\ref{A}. The associated group of $CPA^Y(\mathscr{P},\mathbf{R})$ is $\Gamma(B,\mathcal{P}A/\mathcal{A}ff)$, where $\mathcal{P}A$ is the sheaf of piecewise affine functions.

By Lemma~\ref{quasiperiodic and quadratic} in Appendix~\ref{A}, $\varphi$ should satisfy 
\[
\varphi(y+\phi(\lambda))=\varphi(y)+A(\lambda)+\langle y,\check{\phi}(\lambda)\rangle,
\]

for some quadratic function $A$ on $Y$ and some linear function $\check{\phi}$, such that
\[
\langle\phi(\lambda), \check{\phi}(\mu)\rangle=A(\lambda+\mu)-A(\lambda)-A(\mu).
\]

If $\varphi\in CPA^Y(\mathscr{P},\mathbf{Z})$, it defines an ample line bundle $\widetilde{\mathcal{L}}$ on $X_{\Sigma_\mathscr{P}}$, with actions of $Y$ and $\check{\mathbb{T}}$. The data $(\mathscr{P}, \varphi, \phi,\check{\phi},A)$ gives rise to a relative complete model for the family $\mathcal{Y}_\eta\to \spec K$. Thus we can take the quotient of $(\widetilde{\mathcal{Y}}_\mathscr{P},\widetilde{\mathcal{L}})$ by $Y$ as in (\cite{Mum72} Sect. 3). The quotient $(\mathcal{Y}_\mathscr{P},\mathcal{L})$ is a relative minimal model over $S$, which contains a semiabelian group scheme as a dense open subscheme, because $K_{\mathcal{Y}_\mathscr{P}}$ is equivalent to the fiber, and is $\check{\pi}$-trivial. We use the discrete Legendre transform to write down the action of $Y$ on $\widetilde{\mathcal{L}}$.
\begin{remark}
According to Gross--Siebert program, the mirror symmetry on the tropical level is the discrete Legendre transform. The discrete Legendre transform of $(B:=U^*/\phi(Y),\mathscr{P}, \varphi)$ is the tropical affine abelian variety with a divisor $(\check{B},\check{\mathscr{P}},\check{\varphi})$.
\end{remark}

\begin{definition}
Define the function over $U$,
\begin{equation}
\check{\varphi}(\mu):=-\inf_{y\in U^*}\{\varphi(y)+\langle y, \mu\rangle\}.
\end{equation}
\end{definition}

\begin{lemma}\label{tropical divisor}
\[
\check{\varphi}(\mu+\check{\phi}(\lambda))=\check{\varphi}(\mu)+\langle \phi(\lambda),\mu\rangle+A(\lambda),\ \forall \lambda\in Y. 
\]
\end{lemma}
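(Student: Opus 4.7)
The plan is to prove the quasi-periodicity of $\check{\varphi}$ by a direct change of variable in the infimum defining $\check{\varphi}$, together with the quasi-periodicity of $\varphi$ itself. Since $\phi(Y)\subset X\subset U^{*}$, translation by $-\phi(\lambda)$ is a bijection $U^{*}\to U^{*}$, so we may replace the infimum variable $y$ by $y'-\phi(\lambda)$ without changing the value.

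Concretely, I would start from
\[
\check{\varphi}(\mu+\check{\phi}(\lambda)) = -\inf_{y\in U^{*}}\bigl\{\varphi(y)+\langle y,\mu+\check{\phi}(\lambda)\rangle\bigr\},
\]
substitute $y=y'-\phi(\lambda)$, and then apply the relation $\varphi(y+\phi(\lambda))=\varphi(y)+A(\lambda)+\langle y,\check{\phi}(\lambda)\rangle$ (solved for $\varphi(y'-\phi(\lambda))$) to eliminate the translated $\varphi$. This produces the telescope
\[
\varphi(y'-\phi(\lambda))=\varphi(y')-A(\lambda)-\langle y',\check{\phi}(\lambda)\rangle+\langle \phi(\lambda),\check{\phi}(\lambda)\rangle.
\]
Adding the translated linear term $\langle y'-\phi(\lambda),\mu+\check{\phi}(\lambda)\rangle$ and collecting, the two $\langle y',\check{\phi}(\lambda)\rangle$ contributions cancel and the two $\langle\phi(\lambda),\check{\phi}(\lambda)\rangle$ contributions cancel, leaving
\[
\varphi(y')+\langle y',\mu\rangle - A(\lambda)-\langle \phi(\lambda),\mu\rangle
\]
inside the infimum. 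Pulling the $y'$-independent constants out and negating gives the claimed identity.

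The only subtle point to watch is sign-bookkeeping and the fact that $A$ is a quadratic function, not necessarily a quadratic form; however, the identity is only invoked through the combination $\varphi(y+\phi(\lambda))-\varphi(y)$, so this is harmless. I expect no serious obstacle: the statement is really just the discrete Legendre transform of the quasi-periodicity of $\varphi$, with the roles of $A$ and of the pairings through $\phi$, $\check{\phi}$ interchanged, and this symmetry is built into the compatibility $\langle\phi(\lambda),\check{\phi}(\mu)\rangle=A(\lambda+\mu)-A(\lambda)-A(\mu)$.
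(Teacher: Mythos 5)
Your proof is correct and is essentially the same argument as the paper's: both start from the defining infimum, use the quasi-periodicity relation $\varphi(y+\phi(\lambda))=\varphi(y)+A(\lambda)+\langle y,\check{\phi}(\lambda)\rangle$ together with translation-invariance of the infimum over $U^{*}$, and collect terms (the paper adds and subtracts $A(\lambda)$ and $\langle\phi(\lambda),\mu\rangle$ to recognize the shifted argument, while you substitute $y=y'-\phi(\lambda)$ explicitly — the same computation read in the opposite direction). The sign-bookkeeping in your telescope checks out, so there is no gap.
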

\begin{proof}
\begin{align*}
\check{\varphi}(\mu+\check{\phi}(\lambda))&=-\inf_{y\in U^*}\{\varphi(y)+\langle y,\mu+\check{\phi}(\lambda)\rangle\}\\
&=-\inf_{y\in U^*}\{\varphi(y)+\langle y,\check{\phi}(\lambda)\rangle+A(\lambda)+\langle y,\mu\rangle-A(\lambda)\}\\
&=-\inf_{y\in U^*}\{\varphi(y+\phi(\lambda))+\langle y+\phi(\lambda),\mu\rangle\}+\langle \phi(\lambda),\mu\rangle+A(\lambda)\\
&=\check{\varphi}(\mu)+\langle \phi(\lambda),\mu\rangle+A(\lambda).
\end{align*}
\end{proof}

By toric geometry, the line bundle $\widetilde{\mathcal{L}}$ corresponds to a polyhedron in $\mathbb{X}^*_\mathbf{R}$. The vertices of the polyhedron are $(\mu,\check{\varphi}(\mu))$. Therefore, we have

\begin{corollary}\label{the action on the homogeneous coordinate ring}
The $Y$-action on the homogeneous coordinate ring is given as follows. Define $\zeta_\mu:=\mathrm{X}^\mu q^{\check{\varphi}(\mu)}\theta$ for $\mu\in X^*$ to be a monomial section of $\widetilde{\mathcal{L}}$. Then 
\begin{equation}
S^*_{\lambda}(\zeta_\mu)=\zeta_{\mu+\check{\phi}(\lambda)}=\mathrm{X}^{\mu+\check{\phi}(\lambda)}q^{\check{\varphi}(\mu+\check{\phi}(\lambda))}\theta=\mathrm{X}^\mu(\lambda)q^{A(\lambda)}\mathrm{X}^{\check{\phi}(\lambda)}\zeta_\mu.
\end{equation}
\end{corollary}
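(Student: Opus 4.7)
The plan is to split the claim into two parts: first, identify $S^*_\lambda(\zeta_\mu)$ with the monomial $\zeta_{\mu+\check{\phi}(\lambda)}$ attached to the translated vertex; second, verify the multiplicative rewriting
\[
\zeta_{\mu+\check{\phi}(\lambda)}=\mathrm{X}^\mu(\lambda)\,q^{A(\lambda)}\,\mathrm{X}^{\check{\phi}(\lambda)}\,\zeta_\mu
\]
by a direct substitution of Lemma~\ref{tropical divisor}.

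For the first part, I would recall the toric description set up just above the statement. The line bundle $\widetilde{\mathcal{L}}$ on $\widetilde{\mathcal{Y}}_\mathscr{P}$ is the one associated to the piecewise linear homogenization $\tilde{\varphi}$ of $\varphi$; its torus-equivariant global sections are spanned by monomials of the form $\mathrm{X}^\mu q^{\check{\varphi}(\mu)}\theta=\zeta_\mu$, indexed by the lattice points $\mu\in X^*$ of the Newton polyhedron, whose vertices are precisely the points $(\mu,\check{\varphi}(\mu))$. The translation $T_\lambda:y\mapsto y+\phi(\lambda)$ on $U^*$ preserves the paving $\mathscr{P}$ (by its $Y$-invariance) and lifts to an automorphism $S_\lambda$ of $\widetilde{\mathcal{Y}}_\mathscr{P}$. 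The $Y$-quasi-periodicity relation
\[
\varphi(y+\phi(\lambda))=\varphi(y)+A(\lambda)+\langle y,\check{\phi}(\lambda)\rangle
\]
is precisely what produces a canonical $Y$-linearization of $\widetilde{\mathcal{L}}$; under this linearization (the one used in \cite{Mum72} Sect.~3), $S_\lambda$ sends the Newton vertex $(\mu,\check{\varphi}(\mu))$ to $(\mu+\check{\phi}(\lambda),\check{\varphi}(\mu+\check{\phi}(\lambda)))$, so $S^*_\lambda(\zeta_\mu)=\zeta_{\mu+\check{\phi}(\lambda)}$.

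For the second part, expand
\[
\zeta_{\mu+\check{\phi}(\lambda)}=\mathrm{X}^{\mu+\check{\phi}(\lambda)}\,q^{\check{\varphi}(\mu+\check{\phi}(\lambda))}\,\theta,
\]
split $\mathrm{X}^{\mu+\check{\phi}(\lambda)}=\mathrm{X}^\mu\cdot\mathrm{X}^{\check{\phi}(\lambda)}$, and apply Lemma~\ref{tropical divisor} to substitute $\check{\varphi}(\mu+\check{\phi}(\lambda))=\check{\varphi}(\mu)+\langle\phi(\lambda),\mu\rangle+A(\lambda)$. Regrouping gives
\[
\zeta_{\mu+\check{\phi}(\lambda)}=\mathrm{X}^{\check{\phi}(\lambda)}\,q^{A(\lambda)}\,q^{\langle\phi(\lambda),\mu\rangle}\,\mathrm{X}^\mu q^{\check{\varphi}(\mu)}\,\theta,
\]
and the factor $q^{\langle\phi(\lambda),\mu\rangle}$ is exactly the value $\mathrm{X}^\mu(\lambda)$ of the character $\mathrm{X}^\mu$ evaluated at $\lambda\in Y$ through the polarization pairing.

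The only nontrivial point is the first part: one must be sure the canonical $Y$-linearization on $\widetilde{\mathcal{L}}$ attached to the data $(A,\check{\phi})$ really sends $(\mu,\check{\varphi}(\mu))$ to the translated vertex and not to some character twist of it. This is built into Mumford's quotient construction and is the very reason that the quadratic function $A$ and the linear correction $\check{\phi}$ were included in the quasi-periodicity data; once that identification is made, the remaining content of the corollary is a routine manipulation of the discrete Legendre transform.
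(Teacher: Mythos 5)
Your proposal is correct and follows essentially the same route as the paper: identify the monomial sections $\zeta_\mu$ with the vertices $(\mu,\check{\varphi}(\mu))$ of the polyhedron defining $\widetilde{\mathcal{L}}$, so that the $Y$-action translates $\zeta_\mu$ to $\zeta_{\mu+\check{\phi}(\lambda)}$, and then rewrite the exponent using the quasi-periodicity of $\check{\varphi}$ from Lemma~\ref{tropical divisor}, with $q^{\langle\phi(\lambda),\mu\rangle}$ read as $\mathrm{X}^\mu(\lambda)$. This is exactly the (largely implicit) argument in the paper, so nothing further is needed.
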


\begin{corollary}\label{Affine means trivial}
If $\psi=\varphi-\varphi'$ is an integral affine function, then the induced line bundles $\mathcal{L}$ and $\mathcal{L}'$ by $\varphi$ and $\varphi'$ are isomorphic on $\mathcal{Y}_\mathscr{P}$. 
\end{corollary}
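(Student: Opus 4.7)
The plan is to lift the trivialization from the toric level and check that it intertwines the two $Y$-actions given by Corollary~\ref{the action on the homogeneous coordinate ring}, so that it descends to an isomorphism on the quotient.

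First I would pin down what the hypothesis forces on the quasiperiodicity data. Write $\psi(y)=\langle y,v\rangle + c$ with $v\in X$ and $c\in\mathbf{Z}$. Comparing
\[
\varphi(y+\phi(\lambda)) - \varphi(y) = A(\lambda) + \langle y,\check{\phi}(\lambda)\rangle
\]
with the analogous identity for $\varphi'$, and using that $\psi(y+\phi(\lambda))-\psi(y)=\langle\phi(\lambda),v\rangle$ is independent of $y$, I conclude $\check{\phi}=\check{\phi}'$ and
\[
A(\lambda)-A'(\lambda) = \langle\phi(\lambda),v\rangle.
\]
Thus both functions yield the same polarization datum and the quadratic forms differ by a linear term of $\phi(Y)$-type.

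Next, compute the discrete Legendre transforms. From $\varphi=\varphi'+\psi$ and the definition of $\check{\varphi}$, one gets
\[
\check{\varphi}(\mu) = \check{\varphi}'(\mu+v)-c
\]
(the infimum in $\check{\varphi}(\mu)$ is taken over $y\in U^\ast$, and the linear term in $y$ is simply absorbed into a shift of $\mu$ by $v$). Accordingly, on the infinite model $\widetilde{\mathcal{Y}}_{\mathscr{P}}$, the monomial sections $\zeta_\mu=\mathrm{X}^{\mu}q^{\check{\varphi}(\mu)}\theta$ of $\widetilde{\mathcal{L}}$ and $\zeta'_\mu=\mathrm{X}^{\mu}q^{\check{\varphi}'(\mu)}\theta'$ of $\widetilde{\mathcal{L}}'$ are related by the assignment
\[
\Phi\colon \zeta_\mu \longmapsto \mathrm{X}^{-v}q^{c}\,\zeta'_{\mu+v}.
\]
A direct check on exponents shows $\Phi$ extends to a toric-equivariant isomorphism of line bundles $\widetilde{\mathcal{L}}\xrightarrow{\sim}\widetilde{\mathcal{L}}'$ on $\widetilde{\mathcal{Y}}_{\mathscr{P}}$; this is just the standard fact that piecewise-affine functions differing by an affine function determine isomorphic line bundles on a toric variety.

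The main point—and where I expect the only real work—is verifying that $\Phi$ intertwines the two $Y$-actions. Using Corollary~\ref{the action on the homogeneous coordinate ring} for both $\varphi$ and $\varphi'$ (noting $\check{\phi}=\check{\phi}'$), I compute
\[
\Phi\bigl(S^{\ast}_\lambda\zeta_\mu\bigr)
 = \Phi\bigl(\mathrm{X}^{\mu}(\lambda)\,q^{A(\lambda)}\,\mathrm{X}^{\check{\phi}(\lambda)}\zeta_\mu\bigr)
 = \mathrm{X}^{\mu}(\lambda)\,q^{A(\lambda)}\,\mathrm{X}^{\check{\phi}(\lambda)-v}q^{c}\zeta'_{\mu+v},
\]
while
\[
S^{\ast}_\lambda\bigl(\Phi(\zeta_\mu)\bigr)
 = \mathrm{X}^{-v}q^{c}\,S^{\ast}_\lambda\zeta'_{\mu+v}
 = \mathrm{X}^{-v}q^{c}\,\mathrm{X}^{\mu+v}(\lambda)\,q^{A'(\lambda)}\,\mathrm{X}^{\check{\phi}(\lambda)}\zeta'_{\mu+v}.
\]
These agree precisely because $A(\lambda)-A'(\lambda)=\langle\phi(\lambda),v\rangle=\mathrm{X}^{v}(\lambda)$ in exponential notation. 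Hence $\Phi$ is $Y$-equivariant, descends to an isomorphism $\mathcal{L}\cong\mathcal{L}'$ on the quotient $\mathcal{Y}_{\mathscr{P}}$, and the corollary follows.
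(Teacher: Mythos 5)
Your strategy is the paper's: extract $\check{\phi}=\check{\phi}'$ and $A-A'=\langle\phi(\cdot),v\rangle$ from affineness of $\psi$, record the Legendre-transform shift $\check{\varphi}(\mu)=\check{\varphi}'(\mu+v)-c$, and descend a $Y$-equivariant translation isomorphism from $\widetilde{\mathcal{Y}}_\mathscr{P}$. The gap is in the formula for the map and in the equivariance check, and the two slips cancel each other on paper. Since $\zeta'_{\mu+v}=\mathrm{X}^{\mu+v}q^{\check{\varphi}(\mu)+c}\theta'$, your map gives $\Phi(\zeta_\mu)=\mathrm{X}^{-v}q^{c}\zeta'_{\mu+v}=\mathrm{X}^{\mu}q^{\check{\varphi}(\mu)+2c}\theta'$, i.e.\ the correct assignment $\zeta_\mu\mapsto\zeta'_{\mu+v}$ multiplied by the nonconstant rational function $\mathrm{X}^{-v}q^{c}$. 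That function has zeros and poles along boundary strata, so $\Phi$ is only a rational map of line bundles: for suitable $\mu$ one has $\check{\varphi}(\mu)+2c<\check{\varphi}'(\mu)$, so the image monomial is not a regular section of $\widetilde{\mathcal{L}}'$. Correspondingly, in the line $S^{\ast}_\lambda(\Phi(\zeta_\mu))=\mathrm{X}^{-v}q^{c}\,S^{\ast}_\lambda\zeta'_{\mu+v}$ you treat $\mathrm{X}^{-v}$ as a scalar; but $S^{\ast}_\lambda$ pulls functions back along translation by the torus point attached to $\lambda$, so it produces an extra factor $\mathrm{X}^{-v}(\lambda)=q^{-\langle\phi(\lambda),v\rangle}$. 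With that factor restored, your $\Phi$ fails to intertwine the two actions by exactly $q^{-\langle\phi(\lambda),v\rangle}$, which is why the apparent agreement in your computation is an artifact of the two errors compensating.

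The repair is to drop the extra factor entirely: the translation isomorphism is just $\zeta_\mu\mapsto\zeta'_{\mu+v}$ (equivalently, multiplication by $\mathrm{X}^{v}q^{c}$ inside the ambient ring, the paper's ``translation by $-(v,c)$''). It is $\mathcal{O}$-linear precisely because $\check{\varphi}'(\mu+v)-\check{\varphi}(\mu)=c$ is independent of $\mu$ — this is where affineness and integrality of $\psi$ enter, matching lattice points of the two polyhedra — and $Y$-equivariance is then immediate from $S^{\ast}_\lambda\zeta_\mu=\zeta_{\mu+\check{\phi}(\lambda)}$, $S^{\ast}_\lambda\zeta'_\nu=\zeta'_{\nu+\check{\phi}(\lambda)}$ and $\check{\phi}=\check{\phi}'$, with your identity $A-A'=\langle\phi(\cdot),v\rangle$ sitting behind the scenes. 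With that map the argument descends to $\mathcal{L}\cong\mathcal{L}'$ on $\mathcal{Y}_\mathscr{P}$ and coincides with the paper's proof.
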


\begin{proof}
$\psi$ defines an isomorphism between the homogeneous coordinate rings of $X_{\Sigma_\mathscr{P}}$. If $\psi(y)=\langle y,a\rangle +h$, then $\check{\varphi}(\mu)=\check{\varphi}'(\mu+a)-h$. The isomorphism is a translations by $-(a,h)$. It is $Y$-equivariant, and induces an isomorphism between $\mathcal{L}$ and $\mathcal{L}'$.
\end{proof}

It follows from Corollary~\ref{Affine means trivial} that we have a map $p:CPA^Y(\mathscr{P},\mathbf{R})/Aff\to \K(\mathcal{Y}_\mathscr{P})$, where $\K(\mathcal{Y}_\mathscr{P})$ is the relative ample cone of $\mathcal{Y}_\mathscr{P}$ over $S$. Since mirror symmetry only concerns the geometry over complex numbers, we can switch to the complex analytic category and use the universal covering map $\Upsilon: \widetilde{\mathcal{Y}}_\mathscr{P}\to \mathcal{Y}_\mathscr{P}$\footnote{In general, since $\mathcal{Y}_\mathscr{P}$ is the algebraization of a formal scheme, we make the argument order by order.}. Since the Cartier divisors on $\widetilde{\mathcal{Y}}_\mathscr{P}$ are described by piecewise affine functions, the pull back of line bundles from $\mathcal{Y}_\mathscr{P}$ to $\widetilde{\mathcal{Y}}_\mathscr{P}$ defines $\Upsilon^*:\K(\mathcal{Y}_\mathscr{P})\to CPA^Y(\mathscr{P},\mathbf{R})/Aff$, such that $ \Upsilon^*\circ p=\Id$. Therefore, the map $p:CPA^Y(\mathscr{P},\mathbf{R})/Aff\to \K(\mathcal{Y}_\mathscr{P})$ is an injection. 

\begin{corollary}\label{the restriction map for NS}
Regard $CPA^Y(\mathscr{P},\mathbf{R})/Aff$ as a subset of $\K(\mathcal{Y}_\mathscr{P})$. The restriction map $r: CPA^Y(\mathscr{P},\mathbf{R})/Aff\to \NS(Y_\eta)$ is given by, 
\begin{equation}
\varphi \longmapsto \omega=Q=\check{\phi}\circ\phi^{-1}.
\end{equation}
\end{corollary}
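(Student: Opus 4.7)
The plan is to extract the first Chern class of $\mathcal{L}|_{Y_\eta}$ from the $Y$-automorphy factor provided by Corollary~\ref{the action on the homogeneous coordinate ring}, and to match it with the symmetric bilinear form $Q = \check{\phi}\circ\phi^{-1}$ under the identification of $\NS(Y_t)$ with integral tropical polarizations established at the opening of this subsection. Because $r$ factors through $CPA^Y(\mathscr{P},\mathbf{R})/\Aff$ by Corollary~\ref{Affine means trivial} and both sides of the claimed equality are $\mathbf{R}$-linear in $\varphi$, I may assume $\varphi \in CPA^Y(\mathscr{P},\mathbf{Z})$, so that $\mathcal{L}$ is an honest line bundle.

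First I would trivialize $\widetilde{\mathcal{L}}$ over the open subscheme $\check{\mathbb{T}}_\eta \subset \widetilde{\mathcal{Y}}_\mathscr{P}|_\eta$ by the section $\theta = q^{-\check{\varphi}(0)}\zeta_0$ and pull everything back analytically to the universal cover $\check{V}$ of $Y_\eta$, with log coordinate $z \in X_\mathbf{C} \cong \check{V}$. Under this identification the $Y$-action is the translation $z \mapsto z + t\phi(\lambda)$, and specializing Corollary~\ref{the action on the homogeneous coordinate ring} to $\mu = 0$ produces the automorphy factor
\[
a_\lambda(z) = q^{A(\lambda)} \mathrm{X}^{\check{\phi}(\lambda)} = \exp\bigl(2\pi i\bigl(tA(\lambda) + \langle\check{\phi}(\lambda), z\rangle\bigr)\bigr),
\]
while the $X$-deck transformations have trivial automorphy on $\theta$, which is already defined on $\check{\mathbb{T}}_\eta$.

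Next I would put this cocycle into Appell--Humbert normal form by the Gaussian coboundary $h(z+\gamma)/h(z)$ with $h(z) = \exp(\pi\alpha(z,z))$, where the symmetric $\mathbf{C}$-bilinear form $\alpha$ on $\check{V}$ is determined by requiring the resulting Hermitian form $H$ to be $\mathbf{C}$-antilinear in the second slot with respect to $J_t$. A direct computation on the lattice basis $\{\check{u}'_i, y_j\}$ of $\Gamma = Y\oplus X \subset \check{V}$ from the opening of this subsection then yields
\[
\omega(y_j, \check{u}'_i) = \Im H(y_j, \check{u}'_i) = \langle\check{\phi}(u'_i), y_j\rangle, \qquad \omega(y_i, y_j) = \omega(\check{u}'_i, \check{u}'_j) = 0,
\]
where the vanishing of the pure $Y\times Y$ block follows from the $(i,j)$-symmetry of $\langle\phi(u'_i), \check{\phi}(u'_j)\rangle = A(u'_i + u'_j) - A(u'_i) - A(u'_j)$, itself a consequence of the $Y$-quasiperiodicity of $\varphi$.

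Finally, comparing with the normal form $\omega = \sum_{i,j} -\delta_i Q_{ij}\,\ud\check{x}_i\wedge\ud\check{y}_j$ recorded at the opening of the subsection, I read off $\delta_i Q_{ij} = \langle\check{\phi}(u'_i), y_j\rangle$, which, using $\phi(u'_j) = \delta_j y_j$, is exactly the matrix of the composition $X \xrightarrow{\phi^{-1}} Y_\mathbf{Q} \xrightarrow{\check{\phi}} U$ in the chosen compatible bases, giving $\omega = \check{\phi}\circ\phi^{-1}$ as claimed. The symmetry and positivity of $Q$ then fall out automatically (the former from the same quasiperiodicity compatibility, the latter from relative ampleness). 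The main obstacle I anticipate is the Gaussian coboundary adjustment together with the bookkeeping between the algebraic log coordinates on $\check{\mathbb{T}}_\eta$ (standard complex structure on $X_\mathbf{C}$) and the mirror complex structure $J_t$ on $\check{V}$, which differ by the rescaling $\check{e}_j = d_j y_j$ encoded in the diagonal matrix $\mathfrak{d}$; once these identifications are carried through carefully, the formula $Q = \check{\phi}\circ\phi^{-1}$ emerges automatically.
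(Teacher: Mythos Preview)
Your approach is correct and follows the same route as the paper: both extract the class $\omega$ from the automorphy factor of the $Y$-action on $\widetilde{\mathcal{L}}_\eta$ computed in Corollary~\ref{the action on the homogeneous coordinate ring}. The paper's proof is a single sentence (``It follows from the computation in Corollary~\ref{the action on the homogeneous coordinate ring}''), leaving the passage from the factor $q^{A(\lambda)}\mathrm{X}^{\check{\phi}(\lambda)}$ to the Chern class implicit; your proposal spells out that passage via the Appell--Humbert normalization, which is exactly what is being suppressed. The bookkeeping you flag at the end (matching the log coordinate on $\check{\mathbb{T}}_\eta$ with the basis $\check{e}_i = d_i y_i$ for $J_t$) is real but routine, and once done your computation $\delta_i Q_{ij} = \langle\check{\phi}(u'_i), y_j\rangle$ is precisely the identification the paper established earlier in the subsection.
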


\begin{proof}
It follows from the computation in Corollary~\ref{the action on the homogeneous coordinate ring}. 
\end{proof}

Assume that the set of the irreducible components of $Y_s$ is indexed by $I=B(\mathbf{Z})$ and $\vert I\vert=\prod_i \delta_i=d$. Each irreducible component is a prime divisor $D_i, i\in I$. 

\begin{lemma}\label{the picard group is the space of function}
Let $\mathscr{P}$ be minimal. We have the following exact sequence,
\begin{equation}\label{the exact sequence for the picard group}
\begin{diagram}
0& \rTo &\mathbf{Z}^d/\mathbf{Z}&\rTo &\pic(\mathcal{Y}_\mathscr{P})&\rTo^r &\NS(Y_\eta)=\NS(Y_t) &\rTo &0.
\end{diagram}
\end{equation}

The mod $\mathbf{Z}$ is because $\sum_{i\in I} D_i=\check{\pi}^{-1}(0)\equiv 0$.\\
 
In particular, $p:CPA^Y(\mathscr{P},\mathbf{R})/Aff\to \K(\mathcal{Y}_\mathscr{P})$ is a bijection, and $\pic(\mathcal{Y}_\mathscr{P})$ is identified with the space $\Gamma(B,\mathcal{PA}/\mathcal{A}ff)$. 
\end{lemma}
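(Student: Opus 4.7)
The plan is to derive both the exact sequence \eqref{the exact sequence for the picard group} and the bijection $p$ from a single structural identification, $\pic(\mathcal{Y}_\mathscr{P})\cong\Gamma(B,\mathcal{PA}/\mathcal{A}ff)$, obtained by pulling back to the toric universal cover $\Upsilon:\widetilde{\mathcal{Y}}_\mathscr{P}\to\mathcal{Y}_\mathscr{P}$.

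First I would establish the identification. A class $[\mathcal{L}]\in\pic(\mathcal{Y}_\mathscr{P})$ pulls back to a toric line bundle on $\widetilde{\mathcal{Y}}_\mathscr{P}=X_{\Sigma_\mathscr{P}}$, which by toric geometry is given by a piecewise affine function $\varphi$ compatible with $\mathscr{P}$. Because $Y$ acts freely on $\widetilde{\mathcal{Y}}_\mathscr{P}$ with quotient $\mathcal{Y}_\mathscr{P}$, the pullback carries a $Y$-linearization; the existence of one forces $\varphi$ to be $Y$-quasiperiodic in the sense of Lemma~\ref{quasiperiodic and quadratic}, and conversely every such $\varphi$ produces a $Y$-equivariant toric line bundle that descends to $\mathcal{Y}_\mathscr{P}$. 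The linearization is unique up to a character of $Y$, and translating this freedom to $\varphi$ matches exactly the $\mathcal{A}ff$ quotient. This yields $\pic(\mathcal{Y}_\mathscr{P})\cong\Gamma(B,\mathcal{PA}/\mathcal{A}ff)$.

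With this identification in hand, Corollary~\ref{the restriction map for NS} shows that $r$ reads off the quadratic part $Q=\check{\phi}\circ\phi^{-1}$ of $\varphi$. Surjectivity follows because any symmetric $Q\in\NS(Y_t)$ is realized as the quadratic part of some $Y$-quasiperiodic PA $\varphi$. The kernel consists of $\varphi$ with $Q=0$, which by Lemma~\ref{quasiperiodic and quadratic} are exactly the $Y$-invariant PA functions modulo $\mathcal{A}ff$. Because $\mathscr{P}$ is a minimal triangulation, its vertex set is $X$, so a $Y$-invariant PA function is determined by its values on $I=X/\phi(Y)$ of cardinality $d$, giving the tent-function basis $\{\psi_i\}_{i\in I}$. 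The only $Y$-invariant affine functions on $U^*$ are constants, so modding by $\mathcal{A}ff$ imposes the single relation $\sum_i\psi_i=1$; under the correspondence $\psi_i\leftrightarrow D_i$ this matches the geometric relation $\sum_i D_i=\check{\pi}^{-1}(0)\equiv 0$, giving the kernel $\mathbf{Z}^d/\mathbf{Z}$ as asserted.

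Finally, for the bijection $p:CPA^Y(\mathscr{P},\mathbf{R})/Aff\to\K(\mathcal{Y}_\mathscr{P})$, injectivity was already recorded above the lemma. For surjectivity, given a $\check{\pi}$-ample $\mathcal{L}$, the associated $Y$-quasiperiodic PA function $\varphi$ is strictly convex: $\check{\pi}$-ampleness restricts to ampleness on each finite-type toric open subscheme of $\widetilde{\mathcal{Y}}_\mathscr{P}$, which by the toric dictionary is equivalent to strict convexity of $\varphi$ at each vertex of $\mathscr{P}$, hence the class lies in $CPA^Y(\mathscr{P},\mathbf{R})/Aff$. The main delicate step is the cocycle argument in the first paragraph: one must verify that the indeterminacy of the $Y$-linearization on $\Upsilon^*\mathcal{L}$ is exactly a character of $Y$ and that this character corresponds precisely to modifying $\varphi$ by an affine function. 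This uses that the pulled-back units on $\widetilde{\mathcal{Y}}_\mathscr{P}$ are constants from $S$ on each formal neighborhood, so the cocycle defining the $Y$-action on $\Upsilon^*\mathcal{L}$ must be an affine function of the lattice variable.
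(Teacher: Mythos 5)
Your strategy inverts the paper's: you want to prove the identification $\pic(\mathcal{Y}_\mathscr{P})\cong\Gamma(B,\mathcal{PA}/\mathcal{A}ff)$ first, by descent along $\Upsilon$, and then read off both the exact sequence and the bijectivity of $p$, whereas the paper proves the exact sequence first (the hard point being that $\sum_{i}D_i\equiv 0$ is the \emph{only} relation among the boundary divisors, established via Shafarevich's theorem on elliptic surfaces and an induction by hyperplane sections) and only then deduces surjectivity of $p$ and the identification. The inversion would be fine if your descent argument actually delivered the identification, but its key accounting step is wrong. The indeterminacy of a $Y$-linearization on a fixed pullback $\Upsilon^*\mathcal{L}$ is $\Hom(Y,\Gamma(\mathcal{O}^*))\cong\Hom(Y,R^*)$, i.e.\ characters valued in the units of the base, and twisting by such a character does \emph{not} correspond to changing $\varphi$ by an affine function: it changes the descended bundle by an algebraically trivial ($\mathrm{Pic}^0$-type) class -- on the abelian generic fiber it translates the line bundle by a point of the dual abelian variety. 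So if you read $\pic$ as the full Picard group, your claimed isomorphism is false (the total space carries the whole continuous family of character twists); if you read it, as the paper's notation intends, as the N\'eron--Severi group, then your argument neither quotients by these twists nor shows that distinct classes of $\Gamma(B,\mathcal{PA}/\mathcal{A}ff)$ remain distinct modulo algebraic equivalence. That injectivity on the $Y$-periodic part is precisely the assertion that no combination $\sum a_iD_i$, other than multiples of $\sum_i D_i$, is trivial in $\pic(\mathcal{Y}_\mathscr{P})$ -- exactly the step the paper proves with the elliptic-surface theorem and the hyperplane-section induction, and which your descent formalism does not touch; it is hidden inside the unproven sentence ``translating this freedom to $\varphi$ matches exactly the $\mathcal{A}ff$ quotient.''

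A second omission: the lemma asserts $\NS(Y_\eta)=\NS(Y_t)$, and exactness at the third term means $r$ surjects onto $\NS(Y_\eta)$, not merely that its image contains $\NS(Y_t)$. Your argument only produces the quadratic classes $\mathbb{L}^*=\NS(Y_t)$; one still needs $\NS(Y_\eta)\subset\mathbb{L}^*$, i.e.\ that the generic fiber, a product of mutually isogenous elliptic curves, has no further classes. This is where the paper uses the no-CM choice of $\Delta^*$ and the Kawamata-type computation, and it cannot be skipped. (Your remaining points -- surjectivity of $p$ via strict convexity of the associated function, and the order-by-order formal/analytic treatment of $\Upsilon$ -- are reasonable in spirit, but they sit downstream of the two gaps above.)
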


\begin{proof}
Since the generic fiber $Y_\eta$ is $\mathcal{E}_{1,\eta}\times\ldots\times \mathcal{E}_{g,\eta}$ for $\mathcal{E}_{i,\eta}$ with no complex multiplication, a similar computation as that in (\cite{kawa97} Proposition 2.10) shows that $\NS(Y_\eta)\cong\mathbb{L}^*=\NS(Y_t)$. From now on, we identify $\NS(Y_\eta)$ with $\NS(Y_t)$, $\mathcal{K}(Y_\eta)$ with $\mathcal{K}(Y_t)$. 

For the exactness of the sequence \eqref{the exact sequence for the picard group}, the only thing left to check is that $\sum_{i\in I}D_i=0$ is the only relation in $\pic(\mathcal{Y}_\mathscr{P})$ for $\{D_i\}$. If $\mathcal{Y}_\mathscr{P}$ is a surface, this follows from (\cite{Sha} Theorem 4.14). For the general case, assume we have a relation $D_r$. Intersect $D_r$ with a generic hypersurface $\mathcal{S}'$ that is flat over $\Delta$. Then, by induction, $D_r\cdot\mathcal{S}'$ is a multiple of $\sum_{i\in I}D_i\cdot\mathcal{S}'$. It follows that $D_r$ is a multiple of $\sum_{i\in I} D_i$. 

The map $r$ restricted to $CPA^Y(\mathscr{P},\mathbf{R})/Aff$ is already onto $\mathcal{K}(Y_t)=\mathcal{K}(Y_\eta)$ by Corollary~\ref{the restriction map for NS}. Moreover $\mathbf{Z}^d/\mathbf{Z}$ is contained in the group $PA^Y(\mathscr{P},\mathbf{R})/Aff$. Since the sequence\eqref{the exact sequence for the picard group} is exact, $p$ is also surjective, and $\pic(\mathcal{Y}_\mathscr{P})\cong\Gamma(B,\mathcal{PA}/\mathcal{A}ff)$. 
\end{proof} 

The relative minimal model $\check{\pi}: \mathcal{Y}\to \Delta$ is not unique, but any two models are isomorphic up to codimension $1$, and are connected by a sequence of flops. See \cite{kaw}. Therefore the pseudo-effective cone $\overline{\effdivisor}(\mathcal{Y})$ in the N\'eron-Severi group is independent of the choice of the relative minimal model $\mathcal{Y}$. Recall (\cite{HK} Definition 1.3) that the types of Stein factorizations induce a fan structure, called the Mori fan, on $\overline{\effdivisor}(\mathcal{Y})$. The Mori fan is also independent of the choice of the relative minimal model $\mathcal{Y}$. The support is the rational closure of the big cone. 

\begin{lemma}\label{the pseudo-effective cone}
The pseudo-effective cone $\overline{\effdivisor}(\mathcal{Y}_\mathscr{P})$ is the closure of $r^{-1}(\K(Y_t))$. 
\end{lemma}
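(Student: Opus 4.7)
The plan is to combine the exact sequence of Lemma~\ref{the picard group is the space of function} with the observation that every real vertical class on $\mathcal{Y}_\mathscr{P}$ is effective, thanks to the linear equivalence $\sum_{i \in I}[D_i] \sim 0$. I will prove the two inclusions separately.

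For $\overline{\effdivisor}(\mathcal{Y}_\mathscr{P}) \subseteq \overline{r^{-1}(\K(Y_t))}$, I observe that any effective prime divisor either maps to $0$ under $r$ (if vertical) or restricts to an effective divisor on the generic fiber $Y_\eta$. Since $Y_t$ is an abelian variety, the effective, pseudo-effective, and nef cones in $\NS(Y_t)_\mathbf{R}$ all coincide with $\overline{\K(Y_t)}$. Hence $r(\overline{\effdivisor}(\mathcal{Y}_\mathscr{P})) \subseteq \overline{\K(Y_t)}$. Because $r$ is a surjective linear map between finite-dimensional real vector spaces and $\K(Y_t)$ is open with nonempty interior, one has $r^{-1}(\overline{\K(Y_t)}) = \overline{r^{-1}(\K(Y_t))}$, which gives the inclusion.

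For $r^{-1}(\K(Y_t)) \subseteq \overline{\effdivisor}(\mathcal{Y}_\mathscr{P})$, take any real class $L \in \pic(\mathcal{Y}_\mathscr{P})_\mathbf{R}$ with $r(L) \in \K(Y_t)$. By Corollary~\ref{the restriction map for NS} combined with Lemma~\ref{the picard group is the space of function}, the restriction of $r$ to $CPA^Y(\mathscr{P},\mathbf{R})/\mathrm{Aff}$ surjects onto $\K(Y_t)$, so I can pick a strictly convex function $\varphi$ with $r(\varphi) = r(L)$; the class $[\varphi]$ is then ample on $\mathcal{Y}_\mathscr{P}$. The difference $L - [\varphi]$ lies in $\ker r$, which by the exact sequence in Lemma~\ref{the picard group is the space of function} is the image of $\mathbf{R}^d/\mathbf{R}$, spanned by the central fiber components $[D_i]$. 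Writing $L - [\varphi] = \sum_i c_i [D_i]$ and using the relation $\sum_i [D_i] = 0$ to add $M\sum_i [D_i]$ for $M$ large enough that $c_i + M \geq 0$ for all $i$, I obtain $L - [\varphi] = \sum_i (c_i + M)[D_i]$ as a genuinely effective $\mathbf{R}$-divisor. Therefore $L = [\varphi] + (\text{effective})$ is the sum of an ample class and an effective class, hence big, and in particular pseudo-effective.

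The main conceptual point, though not really an obstacle, is the effectivity of every real vertical class, which rests entirely on the relation $\sum_i [D_i] \sim 0$; once this is in hand the lemma reduces to the standard identification of pseudo-effective with nef on the generic abelian fiber and to the explicit surjective parametrization of $\K(\mathcal{Y}_\mathscr{P})$ by $CPA^Y(\mathscr{P},\mathbf{R})/\mathrm{Aff}$ already established.
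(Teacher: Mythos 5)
Your proof is correct and takes essentially the same route as the paper: both inclusions rest on the same two facts, namely that effective divisors restrict to nef classes on the abelian generic fiber $Y_\eta$ (so land in $\overline{\K(Y_t)}$), and that $\ker(r)$ consists of effective classes because $\sum_{i\in I}D_i\sim 0$. The only (harmless) variations are that you lift $r(L)$ to a relatively ample class $[\varphi]$ coming from a strictly convex function and treat all real classes at once, whereas the paper lifts rational classes $Q\in\K(Y_\eta)$ to closures of effective divisors on the generic fiber and concludes by density, and that you spell out the point $r^{-1}\bigl(\overline{\K(Y_t)}\bigr)=\overline{r^{-1}(\K(Y_t))}$, which the paper leaves implicit.
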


\begin{proof}
The restriction $r(D)$ of an effective divisor $D$ is effective. Since $Y_\eta$ is an abelian variety, $r(D)$ is in the closure of $\K(Y_\eta)$. Therefore, $D$ is in the closure of $r^{-1}(\K(Y_\eta))$. 

On the other hand, $\ker(r)$ are all effective, by adding $\sum_{i\in I}D_i$. Moreover, for any rational class $Q\in \K(Y_\eta)$, it is easy to construct an effective divisor $\varphi_Q$ such that $r(\varphi_Q)$ is $Q$. For example, make $Q$ an integral divisor $D_\eta$ by multiplying by a positive integer. Then the closure of $D_\eta$ is an element in $\overline{\effdivisor}(\mathcal{Y}_\mathscr{P})$. 
\end{proof}

By Lemma~\ref{quasiperiodic and quadratic}, a real valued function $\psi$ on $X$ is called $Y$-quasiperiodic for $\phi(Y)\subset X$ if it is a sum of a quadratic function and a $Y$-periodic function. The set of $Y$-quasiperiodic functions over $X$ is a vector space of finite dimension. Fix a $Y$-invariant integral triangulation $\mathscr{T}$ of $X_\mathbf{R}$. For any function $\psi$ over $X$, we can define a piecewise affine function $g_{\psi,\mathscr{T}}$. For each vertex $\alpha$ of $\mathscr{T}$, define $g_{\psi,\mathscr{T}}(\alpha)=\psi(\alpha)$. Then $g_{\psi,\mathscr{T}}$ is obtained by affine interpolation over each simplex of $\mathscr{T}$. 

\begin{definition}\label{definition of the cone of triangulations for av}
Let $\mathscr{T}$ be a $Y$-invariant integral triangulation. We shall denote by $\widetilde{C}^Y(\mathscr{T})$ the cone consisting of $Y$-quasiperiodic functions $\psi$ over $X$ with the following two properties: 
\begin{itemize}
\item[a)] The function $g_{\psi, \mathscr{T}}$ is convex.
\item[b)] For any $\alpha\in X$ but not a vertex of any simplex from $\mathscr{T}$, we have $g_{\psi, \mathscr{T}}(\alpha)\leqslant \psi(\alpha)$. 
\end{itemize}
\end{definition}

Define $C^Y(\mathscr{T})$ to be $\widetilde{C}^Y(\mathscr{T})/Aff$.  If $\mathscr{P}$ is minimal, $C^Y(\mathscr{P})=CPA^Y(\mathscr{P},\mathbf{R})/Aff$.  

\begin{proposition}\label{the mori fan of the mirror of av}
Every Mori chamber is of the form $C^Y(\mathscr{T})$ for some triangulation $\mathscr{T}$. Every relative minimal model is isomorphic to $\mathcal{Y}_{\mathscr{P}'}$ for some minimal triangulation $\mathscr{P}'$. 
\end{proposition}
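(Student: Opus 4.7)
The plan is to exploit the dictionary from Lemma~\ref{the picard group is the space of function} between divisor classes and $Y$-quasiperiodic piecewise-affine functions, and then to read off both the relative minimal models and the Mori fan from the combinatorics of $Y$-invariant integral triangulations of $X_\mathbf{R}$.

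For the second assertion, I would fix a reference minimal model $\mathcal{Y}_{\mathscr{P}_0}$ and let $\mathcal{Y}'/\Delta$ be any other relative minimal model of $\mathcal{Y}_\eta$. By Kawamata's result \cite{kaw}, $\mathcal{Y}'$ is connected to $\mathcal{Y}_{\mathscr{P}_0}$ by a sequence of flops over $S$. Each flop pulls back along the universal cover $\Upsilon$ to a $Y$-equivariant toric flop of $\widetilde{\mathcal{Y}}_{\mathscr{P}_0}$, which is encoded by a bistellar modification of the underlying triangulation; since the minimality condition (every lattice point of $X$ being a vertex) is preserved by such flips, induction on the length of the flop chain produces a minimal $Y$-invariant integral triangulation $\mathscr{P}'$ with $\mathcal{Y}' \cong \mathcal{Y}_{\mathscr{P}'}$.

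For the first assertion, Lemma~\ref{the picard group is the space of function} identifies the relatively ample cone $\K(\mathcal{Y}_\mathscr{P})$ with $CPA^Y(\mathscr{P},\mathbf{R})/Aff$ for any minimal $\mathscr{P}$; since every lattice point of $X$ is then a vertex of $\mathscr{P}$, condition (b) of Definition~\ref{definition of the cone of triangulations for av} is vacuous, and passing to closures yields $C^Y(\mathscr{P})$ as the nef cone. This accounts for the top-dimensional Mori chambers. For an arbitrary $Y$-invariant integral triangulation $\mathscr{T}$, I would choose a minimal $\mathscr{P}$ refining $\mathscr{T}$; the toric morphism $\widetilde{\mathcal{Y}}_\mathscr{P} \to \widetilde{\mathcal{Y}}_\mathscr{T}$ descends to a birational contraction $\mathcal{Y}_\mathscr{P} \to \mathcal{Y}_\mathscr{T}$, and the classes expressible as the sum of a pullback of an ample class from $\mathcal{Y}_\mathscr{T}$ with a non-negative combination of the contracted prime divisors constitute exactly the interior of $C^Y(\mathscr{T})$: condition (a) encodes nefness downstairs while condition (b) encodes effectivity of the exceptional coefficients. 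Conversely, given any big class $\psi$, the maximal domains of linearity of the upper convex envelope of $\psi$ on $X_\mathbf{R}$ cut out a $Y$-invariant integral triangulation $\mathscr{T}_\psi$, and $\proj$ of the section ring of $\psi$ is $\mathcal{Y}_{\mathscr{T}_\psi}$, placing the Mori chamber through $\psi$ equal to $C^Y(\mathscr{T}_\psi)^\circ$.

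The main obstacle will be this final identification: verifying rigorously that two classes sharing the interior of $C^Y(\mathscr{T})$ induce isomorphic Stein factorizations over $S$, and that the section ring of any big class $\psi$ is generated exactly by the lattice points lying on the graph of its upper convex envelope. This combinatorial generation statement should follow from the toric description of sections on $\widetilde{\mathcal{Y}}_\mathscr{P}$ combined with averaging over $Y$, but some care is needed to avoid subtleties involving stable base loci and higher-order multiplicities along the exceptional divisors of $\mathcal{Y}_\mathscr{P} \to \mathcal{Y}_\mathscr{T}$.
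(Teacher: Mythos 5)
Your treatment of the first assertion is essentially the paper's own argument: the identification of $C^Y(\mathscr{T})^\circ$ with the Hu--Keel chamber $f_{\mathscr{T}}^*(\text{ample cone of }\mathcal{Y}_\mathscr{T})+(\text{effective }f_{\mathscr{T}}\text{-exceptional classes})$ is carried out in the paper by exactly the decomposition you describe, $\psi_D=g_{\psi_D,\mathscr{T}}+(\psi_D-g_{\psi_D,\mathscr{T}})$, with condition (a) of Definition~\ref{definition of the cone of triangulations for av} giving ampleness of the interpolation on $\mathcal{Y}_\mathscr{T}$ and condition (b) giving effectivity of the exceptional part. Where you diverge is the exhaustion step: you propose to convexify an arbitrary big class $\psi$ and to identify $\proj$ of its section ring with $\mathcal{Y}_{\mathscr{T}_\psi}$ --- precisely the ``main obstacle'' you flag. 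The paper sidesteps section rings entirely: using Lemmas~\ref{the picard group is the space of function} and~\ref{the pseudo-effective cone} it identifies the support of the Mori fan with the rational closure of the cone of semi-positive quadratic classes, and then quotes the secondary-fan argument (\cite{GKZ} Chap.~7, Prop.~1.5) to see that the cones $C^Y(\mathscr{T})$ already form a fan on that support; since each open cone lies in a single Mori chamber and the two fans have the same support, every Mori chamber is some $C^Y(\mathscr{T})$. This is cheaper than your converse and avoids the base-locus and generation subtleties you worry about.

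For the second assertion your route is genuinely different from the paper's, and as written it has a gap: the claim that each flop in the chain provided by \cite{kaw} ``pulls back to a $Y$-equivariant toric flop encoded by a bistellar modification'' with minimality preserved is exactly what needs to be proved, not an input. A priori a flop of $\mathcal{Y}_{\mathscr{P}_0}$ is just some small modification; to see it is again of the form $\mathcal{Y}_{\mathscr{P}_1}$ you must first identify the target of the flopping contraction as $\mathcal{Y}_\mathscr{T}$ for a coarser minimal paving $\mathscr{T}$ (i.e.\ describe the faces of the nef cone --- which is the chamber analysis of the first part) and then compute the flop as $\proj$ of the corresponding section algebra, or produce it from an adjacent small chamber over the same face. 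Moreover ``bistellar modifications preserve minimality'' is false in general, since bistellar flips may delete vertices; what saves the statement is smallness (deleting a lattice vertex would contract the associated divisor), an argument you do not give. The paper instead reads the second assertion off the fan description it has just established: the chambers are the $C^Y(\mathscr{T})$, the contraction $\mathcal{Y}_\mathscr{P}\dashrightarrow\mathcal{Y}_\mathscr{T}$ is small iff $\mathscr{T}$ is minimal, and since any relative minimal model agrees with $\mathcal{Y}_\mathscr{P}$ in codimension one by \cite{kaw}, its nef cone is one of these small chambers, forcing it to be $\mathcal{Y}_{\mathscr{P}'}$ for a minimal triangulation $\mathscr{P}'$. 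So your approach can be repaired, but once the missing justifications are supplied it essentially collapses back into the paper's argument.
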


\begin{proof}
Fix $\mathscr{P}$ a minimal triangulation. For each $Y$-invariant triangulation $\mathscr{T}$, the rational map $\mathcal{Y}_\mathscr{P}\dashrightarrow \mathcal{Y}_{\mathscr{T}}$ is a contraction. It is a small contraction if and only if $\mathscr{T}$ is also minimal. We claim that each $C^Y(\mathscr{T})^\circ$ is contained in one Mori chamber. 

For a $\mathbf{Q}$-Cartier $D\in C^Y(\mathscr{T})^\circ$ corresponding to a function $\psi_D$ over $X$, decompose $\psi_D=\psi_A+\psi_E$, where $\psi_A=g_{\psi,\mathscr{T}}$ and $\psi_E=\psi_D-g_{\psi,\mathscr{T}}$. Since $g_{\psi,\mathscr{T}}$ is strictly convex, $A$ is ample on $\mathcal{Y}_{\mathscr{T}}$. Then $D$ defines the rational map $f_D:\mathcal{Y}_\mathscr{P}\dashrightarrow \mathcal{Y}_\mathscr{T}$ and $D=f_D^*A+E$ for $E$ $f_D$-exceptional. It proves the claim. 

By Lemma~\ref{the picard group is the space of function} and Lemma~\ref{the pseudo-effective cone}, we can identify $\overline{\effdivisor}(\mathcal{Y}_\mathscr{P})$ with the closed cone $\mathcal{C}^+$ in $\Gamma(B,\mathcal{PA}/\mathcal{A}ff)$, where the associated quadratic form $Q$ is semi-positive definite. Denote the interior of $\mathcal{C}^+$ by $\mathcal{C}^\circ$, which corresponds to the big cone. The support of the Mori fan is the rational closure of $\mathcal{C}^\circ$, denoted by $\mathcal{C}^{\text{rc}}$. However, by the same argument as in (\cite{GKZ} Chap.7, Proposition 1.5), the cones $C^Y(\mathscr{T})$ already form a fan supported on $\mathcal{C}^{\text{rc}}$. Therefore, each cone in the Mori fan is of the form $C^Y(\mathscr{T})$, and the contraction is small if and only if $\mathscr{T}$ is minimal. 
\end{proof}

In the case of principally polarized abelian varieties, we have $\pic(\mathcal{Y})\cong \NS(Y_t)$ for any minimal model $\mathcal{Y}$. Furthermore, $Y=X$, each maximal dimensional cone $C^Y(\mathscr{T})$ is the cone of quadratic forms whose paving is coarser than $\mathscr{T}$. By (\cite{AN} Lemma 1.8), the paving $\mathscr{T}$ is the Delaunay decomposition of the corresponding quadratic form with respect to the lattice $Y$. The collection $\{C^Y(\mathscr{T})\}$ with their faces is the second Voronoi fan of $\mathcal{C}(X)^\rc$ with respect to the lattice $Y=X$. For the convenience of the reader, we recall the definition of the second Voronoi fan (\cite{Nam80} Definition 9.8, Theorem 9.9).
\begin{definition}[the second Voronoi fan]\label{the second Voronoi fan}
Let $\mathcal{C}(X)^\rc$ be the rational closure of the cone of positive definite quadratic forms $\mathcal{C}(X)$. For any paving $\mathscr{P}$ of $X_\mathbf{R}$, set
\[
\sigma(\mathscr{P}):=\{Q\in \mathcal{C}(X)^\rc; \text{the Delaunay decomposition of $Q$ with respect to $X$ is $\mathscr{P}$}\}. 
\]

The collection of cones $\{\sigma(\mathscr{P})\}$ is a fan, and is called \emph{the second Voronoi fan} with respect to $X$. 
\end{definition}

We have proved

\begin{theorem}\label{Two fans are the same}
Let $\mathcal{Y}^{\ppav,*}/\Delta^*$ be the mirror family for the principally polarized abelian varieties. For any relative minimal model $\mathcal{Y}/\Delta$, we have $\overline{\effdivisor}(\mathcal{Y})= \overline{\mathcal{C}}(X)$. Moreover, the Mori fan of $\mathcal{Y}$ agrees with the second Voronoi fan. 
\end{theorem}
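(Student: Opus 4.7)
The proof decomposes into two parts: the equality of cones $\overline{\effdivisor}(\mathcal{Y}) = \overline{\mathcal{C}}(X)$, and the equality of the Mori and Voronoi fans on this common support.

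For the first equality, the plan is to exploit the collapse of the exact sequence in the principal case. Since $\delta=(1,\ldots,1)$ forces $d = |I| = 1$, the term $\mathbf{Z}^d/\mathbf{Z}$ of the exact sequence \eqref{the exact sequence for the picard group} of Lemma~\ref{the picard group is the space of function} vanishes, and the restriction map $r : \pic(\mathcal{Y}) \to \NS(Y_\eta)$ is an isomorphism (as already noted in the paragraph preceding the theorem). By Corollary~\ref{the restriction map for NS} combined with the mirror identification $\NS(Y_\eta) = \NS(Y_t) = \mathbb{L}^*$, this isomorphism sends the class of a quasiperiodic function $\varphi$ to its associated positive quadratic form $Q = \check{\phi}\circ \phi^{-1}$, and so carries $\K(Y_t)$ onto $\mathcal{C}(X)$. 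Lemma~\ref{the pseudo-effective cone} then yields $\overline{\effdivisor}(\mathcal{Y}) = \overline{r^{-1}(\K(Y_t))} = \overline{\mathcal{C}}(X)$.

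For the second equality, Proposition~\ref{the mori fan of the mirror of av} represents the Mori fan as the collection $\{C^Y(\mathscr{T})\}$ indexed by $Y$-invariant integral triangulations $\mathscr{T}$, with maximal cones corresponding to the minimal triangulations (those giving small modifications). In the principal case $Y = X$, so minimality of $\mathscr{T}$ means every point of $X$ is a vertex of $\mathscr{T}$, and then condition (b) of Definition~\ref{definition of the cone of triangulations for av} is vacuous. Hence $Q \in C^Y(\mathscr{T})$ reduces to convexity of the interpolation $g_{\psi,\mathscr{T}}$ of the associated quadratic function $\psi$. By \cite{AN} Lemma~1.8, the maximal domains of affineness of $\psi$ are precisely the cells of the Delaunay decomposition $\mathscr{P}_Q$ relative to $X$. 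Therefore the defining condition of $C^Y(\mathscr{T})$ is equivalent to $\mathscr{T}$ refining $\mathscr{P}_Q$, which is exactly the defining condition of the closed Voronoi cone $\overline{\sigma(\mathscr{T})}$ in Definition~\ref{the second Voronoi fan}. The face relations on both sides are governed by the same combinatorics: a face of $C^Y(\mathscr{T})$ arises by merging adjacent simplices of $\mathscr{T}$ into a cell on which $g_{\psi,\mathscr{T}}$ remains affine, and a face $\sigma(\mathscr{P}) \subset \overline{\sigma(\mathscr{T})}$ is parameterized by a coarser decomposition $\mathscr{P}$ each of whose cells is a union of simplices of $\mathscr{T}$.

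The one step requiring genuine verification is the equivalence between convexity of $g_{\psi,\mathscr{T}}$ and $\mathscr{T}$ refining $\mathscr{P}_Q$. The easy direction uses that $\psi$ is itself convex and coincides with $g_{\psi,\mathscr{T}}$ whenever $\mathscr{T}$ refines $\mathscr{P}_Q$. For the converse, I expect to argue that if some simplex $\sigma \in \mathscr{T}$ strictly crosses a wall of $\mathscr{P}_Q$, then $\psi|_\sigma$ is strictly convex across that wall while $g_{\psi,\mathscr{T}}|_\sigma$ is affine, so that comparison with adjacent simplices — using $Y$-invariance and minimality of $\mathscr{T}$ to control the global behaviour — produces a codimension-one face of $\mathscr{T}$ across which $g_{\psi,\mathscr{T}}$ bends the wrong way, contradicting convexity. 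This combinatorial analysis around a single mismatched wall is the principal obstacle in the argument.
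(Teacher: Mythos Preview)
Your approach is essentially the same as the paper's, which dispatches the theorem in the paragraph immediately preceding it by invoking the collapse of the exact sequence \eqref{the exact sequence for the picard group} and \cite{AN} Lemma 1.8. Two remarks on your execution.

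First, the step you flag as the ``principal obstacle'' is not one. Once $\mathscr{T}$ is minimal (so every point of $X$ is a vertex), the function $g_{\psi,\mathscr{T}}$ is a piecewise affine interpolation of $Q|_X$. If it is convex, then its maximal domains of affineness constitute \emph{by definition} the regular subdivision of $X_\mathbf{R}$ induced by the height function $Q|_X$; this regular subdivision is exactly the Delaunay decomposition $\mathscr{P}_Q$ (this is the content of \cite{AN} Lemma 1.8). Since every cell of $\mathscr{T}$ is contained in a maximal affine domain of $g_{\psi,\mathscr{T}}$, the refinement $\mathscr{T} \succ \mathscr{P}_Q$ is immediate. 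No local wall-crossing analysis is needed; the paper simply cites \cite{AN} Lemma 1.8 and moves on.

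Second, your phrasing of the easy direction is slightly off: the quadratic function $\psi$ does not coincide with $g_{\psi,\mathscr{T}}$ anywhere except at lattice points. What you want to say is that when $\mathscr{T}$ refines $\mathscr{P}_Q$, the interpolation $g_{\psi,\mathscr{T}}$ coincides with the lower convex hull of the graph of $Q|_X$, which is convex.
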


In general, $\overline{\effdivisor}(\mathcal{Y})$ is much bigger than $\overline{\mathcal{C}}(X)$. We need a section to get a canonical fan on $\mathcal{C}(X)^{\rc}$. 

\begin{proposition}
For any irreducible component $D_\alpha$ (for $\alpha\in I$) of $Y_s$, the complement $Y_s\backslash D_\alpha$ is contractible in $\mathcal{Y}$, and the contraction is denoted by $p_\alpha:\mathcal{Y}\dashrightarrow \mathcal{Y}_\alpha$. We call $\mathcal{Y}_\alpha$ a cusp model. It is not unique, but they are all isomorphic up to codimension $1$. They are $\mathbf{Q}$-factorial, normal, and Gorenstein.
\end{proposition}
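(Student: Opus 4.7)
The plan is to realize the cusp model as the relative model $\mathcal{Y}_\alpha=\mathcal{Y}_{\mathscr{T}_\alpha}$ attached, via the construction of Section~\ref{2.3}, to a specific $Y$-invariant integral triangulation $\mathscr{T}_\alpha$ of $X_\mathbf{R}$ whose vertex set is exactly the coset $\alpha+\phi(Y)$; all other lattice points of $I$ should sit in the interior of maximal simplices. First I would build such a $\mathscr{T}_\alpha$ together with a strictly convex function supporting it. Fix a positive definite integral quadratic form on $Y$ (for example, the one coming from a chosen polarization of $\mathcal{Y}_\mathscr{P}$) and use it to define an integral $Y$-quasiperiodic function $\psi_\alpha$ on the sublattice $\alpha+\phi(Y)$. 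Its lower convex envelope in $X_\mathbf{R}\times\mathbf{R}$ gives a $Y$-invariant integral polyhedral decomposition of $X_\mathbf{R}$ with the prescribed vertex set; for a generic choice this is a triangulation $\mathscr{T}_\alpha$, and by Lemma~\ref{quasiperiodic and quadratic} together with Definition~\ref{definition of the cone of triangulations for av} we have $\psi_\alpha\in CPA^{Y}(\mathscr{T}_\alpha,\mathbf{Z})$. Feeding $(\mathscr{T}_\alpha,\psi_\alpha)$ into the construction of Section~\ref{2.3} yields the projective model $\mathcal{Y}_{\mathscr{T}_\alpha}\to S$.

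Second, I would produce the rational contraction $p_\alpha$. Although $\mathscr{P}$ need not refine $\mathscr{T}_\alpha$, the common refinement $\mathscr{P}\vee\mathscr{T}_\alpha$ is itself a $Y$-invariant integral triangulation refining both, and the induced projective birational morphism $\mathcal{Y}_{\mathscr{P}\vee\mathscr{T}_\alpha}\to\mathcal{Y}_{\mathscr{T}_\alpha}$ contracts precisely those prime divisors whose corresponding vertex is absent from $\mathscr{T}_\alpha$. Composing with the sequence of flops $\mathcal{Y}_\mathscr{P}\dashrightarrow\mathcal{Y}_{\mathscr{P}\vee\mathscr{T}_\alpha}$ supplied by Proposition~\ref{the mori fan of the mirror of av} gives the desired rational contraction $p_\alpha$, whose associated face of the Mori fan is $C^{Y}(\mathscr{T}_\alpha)$ and whose exceptional divisors are the $D_\beta$ for $\beta\neq\alpha$.

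Third, the listed properties of $\mathcal{Y}_\alpha$ are toric in nature. Because $\mathscr{T}_\alpha$ is a triangulation, the toric cover $\widetilde{\mathcal{Y}}_{\mathscr{T}_\alpha}$ is built from simplicial cones and is therefore $\mathbf{Q}$-factorial; normality is automatic for toric varieties; and since every vertex of $\mathscr{T}_\alpha$ lies on the affine slice $X\subset\mathbb{X}$ of height one, the canonical divisor is cut out on each maximal cone by the global piecewise linear height function and is thus Cartier, so $\widetilde{\mathcal{Y}}_{\mathscr{T}_\alpha}$ is Gorenstein. All three properties descend to the quotient $\mathcal{Y}_\alpha=\widetilde{\mathcal{Y}}_{\mathscr{T}_\alpha}/Y$. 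Uniqueness up to codimension one follows because any two triangulations of $X_\mathbf{R}$ with the same vertex set $\alpha+\phi(Y)$ are related by a sequence of bistellar flips, corresponding on the geometric side to flops between the resulting cusp models.

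The main obstacle I anticipate lies in showing that the divisors contracted by $p_\alpha$ are \emph{exactly} the $\{D_\beta\}_{\beta\neq\alpha}$, with nothing spurious happening in higher codimension, so that the contraction is purely divisorial rather than a mixture of divisorial and flipping contractions. This reduces to confirming that the face $C^{Y}(\mathscr{T}_\alpha)$ of the Mori fan has the expected codimension, which in turn requires verifying that, for a sufficiently generic choice of the quadratic form defining $\psi_\alpha$, every missing lattice point of $I\setminus\{\alpha\}$ lands in the relative interior of a maximal simplex of $\mathscr{T}_\alpha$, so that each such point contributes a genuine exceptional divisor $D_\beta$ to $p_\alpha$.
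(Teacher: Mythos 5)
Your step 1 and step 3 are essentially the paper's own route: the paper constructs $\mathcal{Y}_\alpha$ directly by Mumford's construction from the Delaunay decomposition of $X_\mathbf{R}$ with respect to the translated lattice $\alpha+\check{Y}$ together with a $Y$-quasiperiodic convex piecewise affine function, and your lower-convex-envelope construction of $\mathscr{T}_\alpha$ (with a generic quadratic form so that the Delaunay decomposition is a triangulation) is exactly that. Your toric verification of normality, $\mathbf{Q}$-factoriality (simplicial cones) and Gorensteinness (all primitive ray generators $(1,v)$ sit at height one, and the globally linear height function cuts out the canonical class) fills in details the paper leaves implicit, and is correct.

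The gap is in your step 2. The common refinement $\mathscr{P}\vee\mathscr{T}_\alpha$ of two integral triangulations is in general neither a triangulation nor integral: its cells are intersections of simplices, and the vertices created by such intersections need not lie in $X$. So $\mathcal{Y}_{\mathscr{P}\vee\mathscr{T}_\alpha}$ is not even defined within the framework of Section~\ref{2.3}, and in any case $\mathcal{Y}_\mathscr{P}\dashrightarrow\mathcal{Y}_{\mathscr{P}\vee\mathscr{T}_\alpha}$ would not be ``a sequence of flops supplied by Proposition~\ref{the mori fan of the mirror of av}'': passing to a refinement extracts divisors (it is the inverse of a contraction), and that proposition only concerns models attached to triangulations whose vertex set is all of $X$. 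None of this machinery is needed. Both $\mathcal{Y}_\mathscr{P}$ and $\mathcal{Y}_{\mathscr{T}_\alpha}$ compactify the same semiabelian scheme over $S$, which gives the birational map $p_\alpha$; since the only vertex class of $\mathscr{T}_\alpha$ modulo $\check{Y}$ is $\alpha$, and $\alpha$ is also a vertex of the minimal triangulation $\mathscr{P}$, no divisor is extracted by $p_\alpha$, so it is a birational contraction, and it contracts exactly the $D_\beta$ with $\beta\neq\alpha$: such a $\beta$ is not a vertex of $\mathscr{T}_\alpha$, so the ray through $(1,\beta)$ lies in the relative interior of a cone of $\{C(\sigma)\}_{\sigma\in\mathscr{T}_\alpha}$ of dimension at least $2$, and the center of $D_\beta$ on $\mathcal{Y}_\alpha$ has codimension at least $2$. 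For the same reason your closing worry is unnecessary: the proposition does not ask $p_\alpha$ to be an elementary divisorial contraction, and a missing lattice point lying on a lower-dimensional face of $\mathscr{T}_\alpha$ still yields a contracted divisor. Likewise, isomorphism in codimension one between any two cusp models follows at once from the fact that they have the same rays (vertex set $\alpha+\check{Y}$), so neither direction of the induced birational map contracts or extracts a divisor; no appeal to connectivity of the bistellar flip graph is required.
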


\begin{proof}
We construct the cusp model $\mathcal{Y}_\alpha$ directly by Mumford's construction. Fix $\alpha$, consider the lattice $\alpha+\check{Y}$ in $U^*$. Construct a fan from the cones over a Delaunay decomposition $\mathscr{P}_D$ with respect to $\alpha+\check{Y}$. By taking a $Y$-quasiperiodic, convex, piecewise affine function $\varphi$, we can get a relative complete model as before. By Mumford's construction, we get one of the models $\mathcal{Y}_\alpha$. 
\end{proof} 

By a similar argument as Proposition~\ref{the mori fan of the mirror of av}, the Mori fan of $\mathcal{Y}_\alpha$ is identified with the second Voronoi fan with respect to the lattice $\alpha+\check{Y}$. Furthermore, since the central fiber is irreducible, the restriction map  $\pic(\mathcal{Y}_\alpha)\to \NS(Y_t)$ is an isomorphism. The Mori fans are identified in $\NS(Y_t)$ because the second Voronoi fan doesn't depend on the translation of the lattice on $U^*$.

Now fix a cusp $\alpha$. Consider the common Mori fan in $\pic(\mathcal{Y}_\alpha)$. Each cone is an ample cone $\K(\mathcal{Y}'_\alpha)$ for some cusp model $\mathcal{Y}'_\alpha$. Define a section on this cone 
\begin{align*}
\sigma_\alpha: \K(\mathcal{Y}'_\alpha) &\longrightarrow \pic(\mathcal{Y})\\
\psi&\longmapsto (p_\alpha)^*(\psi).\\
\end{align*}

That means, on each ample cone, we just pull back the Cartier divisor. $\sigma_\alpha$ is not linear, but piecewise linear and convex. Regard the function $\sigma_\alpha$ as a piecewise linear section $\NS(Y_t)\to \pic(\mathcal{Y})$. Take the average on $\NS(Y_t)$, and define
\begin{equation}\label{the linear section is the average}
\sigma=\frac{1}{d}\sum_\alpha \sigma_\alpha.
\end{equation}

\begin{proposition}\label{the section is linear}
The section $\sigma$ is linear. 
\end{proposition}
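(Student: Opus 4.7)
The plan is to identify $\sigma$ with the inverse of a canonical isomorphism, so that linearity is automatic. The finite group $H := X/\check{Y}$, of order $d$, acts on $B = X_\mathbf{R}/\phi(Y)$ by translation, and hence on $\pic(\mathcal{Y}_\mathscr{P})_\mathbf{Q} = \Gamma(B, \mathcal{P}A/\mathcal{A}ff)_\mathbf{Q}$ via $\beta \cdot [f] := [f(\cdot - \beta)]$. The exact sequence
\[
0 \to \mathbf{Z}^d/\mathbf{Z} \to \pic(\mathcal{Y}_\mathscr{P}) \xrightarrow{r} \NS(Y_t) \to 0
\]
from Lemma~\ref{the picard group is the space of function} is $H$-equivariant: on the kernel $\mathbf{Z}^d/\mathbf{Z}$ the action is the regular permutation of the components $\{D_\alpha\}_{\alpha \in H}$, while on $\NS(Y_t)$ it is trivial because translating the argument of a quadratic form changes it only by an affine function.

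The central step is to show $\sigma(\psi) \in \pic(\mathcal{Y}_\mathscr{P})^H_\mathbf{Q}$. I would represent $\sigma_\alpha(\psi)$ by the Delaunay PA function $\varphi_{\alpha, \psi}$ taking the value $\tfrac{1}{2}\psi(v - \alpha, v - \alpha)$ at lattice points $v \in \alpha + \check{Y}$ and affinely interpolated on each Delaunay cell of $\alpha + \check{Y}$ with respect to $\psi$. Because Delaunay decompositions commute with lattice translations, the substitution $v \mapsto v + \beta$ converts this description verbatim into that of $\varphi_{\alpha + \beta, \psi}$, giving the identity $\varphi_{\alpha + \beta, \psi}(x) = \varphi_{\alpha, \psi}(x - \beta)$ of actual PA functions. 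Passing to classes, $\sigma_{\alpha + \beta}(\psi) = \beta \cdot \sigma_\alpha(\psi)$; averaging over $H$ then yields $\beta \cdot \sigma(\psi) = \sigma(\psi)$.

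To finish, observe that over $\mathbf{Q}$ the regular representation $\mathbf{Q}[H]$ splits as the trivial line $\mathbf{Q} \cdot (1, \ldots, 1)$ plus an augmentation ideal with no trivial isotypic component; hence $(\mathbf{Q}^d/\mathbf{Q})^H = 0$. Taking $H$-invariants of the displayed exact sequence (exact on invariants because $H$ is finite and we work over $\mathbf{Q}$), $r$ restricts to an isomorphism $\pic(\mathcal{Y}_\mathscr{P})^H_\mathbf{Q} \xrightarrow{\sim} \NS(Y_t)_\mathbf{Q}$. Each $\sigma_\alpha$ is by construction a section of $r$ on its chamber, so $r \circ \sigma = \mathrm{id}$ on all of $\NS(Y_t)_\mathbf{Q}$; together with $H$-invariance this forces $\sigma$ to coincide with the $\mathbf{Q}$-linear inverse of the above isomorphism.

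The main obstacle is verifying the translation identity $\varphi_{\alpha + \beta, \psi}(x) = \varphi_{\alpha, \psi}(x - \beta)$ carefully. Although it is geometrically immediate from the translation-equivariance of the Delaunay construction, one must check that the normalization of the PA representative provided by the cusp-model pullback $p_\alpha^*$ is consistent across all Mori chambers and for all cusps simultaneously, so that the averaging in the definition of $\sigma$ is unambiguous modulo $\mathcal{A}ff$. Once this compatibility is established, the representation-theoretic vanishing $(\mathbf{Q}^d/\mathbf{Q})^H = 0$ closes the argument at once.
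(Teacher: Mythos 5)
Your proof is correct and follows essentially the same route as the paper: both arguments rest on the $X/\check{Y}$-invariance of $\sigma$ (the translation action permutes the cusp sections $\sigma_\alpha$, so the average is invariant) together with the fact that the only invariant element of $\ker(r)=\mathbf{Z}^d/\mathbf{Z}$ under the regular permutation action is zero. The paper phrases the last step via the bending parameters of the piecewise linear section $\sigma$ (invariant bending parameters lying in $\mathbf{Z}^d/\mathbf{Z}$ must vanish), while you take invariants of the exact sequence of Lemma~\ref{the picard group is the space of function} and identify $\sigma$ with the inverse of $r$ on the invariant subspace --- a repackaging of the same idea rather than a genuinely different argument.
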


\begin{proof}
Consider the bending parameters of $\sigma$. Since all different linear pieces of $\sigma$ are sections of $r$, all bending parameters live in the kernel $\ker(r)=\mathbf{Z}^d/\mathbf{Z}$. On the other hand, $X$ is acting on $X$ by translation. It induces an action of $X/\check{Y}$ on $\pic(\mathcal{Y})$ since elements of $\pic(\mathcal{Y})$ are functions on $X$. The induced action of $X/\check{Y}$ on $\NS(Y_\eta)$ is trivial. But $\beta\in X/\check{Y}$ maps the section $\sigma_\alpha$ to $\sigma_{\alpha-\beta}$. Therefore the section $\sigma$ is invariant under $X/\check{Y}$, and the bending parameters are also $X/\check{Y}$-invariant. This implies that all the coefficients are the same, and the bending parameters are $0$ in $\mathbf{Z}^d/\mathbf{Z}$. 
\end{proof}

Since $\sigma$ is a linear section, it is easy to compute that $\sigma: \NS(Y_t)\to \pic(\mathcal{Y})$ is 
\[
\sigma: Q\mapsto \varphi=\text{affine interpolation of } 1/2Q\vert _{X}.
\]

The image of $\sigma$ is characterized by being invariant under the action of $X/\check{Y}$. We denote it by $\pic^X(\mathcal{Y})$.
\begin{remark}
This choice of degeneration data for higher polarizations has been considered in \cite{ABH}, \cite{ols08}, and \cite{Nak10}. 
\end{remark}

Identify $\pic^X(\mathcal{Y})$ with the subspace of $X$-quasiperiodic functions in $\pic(\mathcal{Y})$, i.e., the sections are the pull-backs from $U^*/X$. Consider the Mori fan of $\pic(\mathcal{Y})$, and pull it back through $\sigma$. Each cone is of the form $\sigma^{-1}(C^Y(\mathscr{P}))=\pic^X(\mathcal{Y})\cap C^Y(\mathscr{P})$, and is denoted by $C(\mathscr{P})$. Since every element in $C(\mathscr{P})$ is $X$-quasiperiodic, it can be identified with the associated quadratic form. Therefore, each $C(\mathscr{P})$ is the cone of quadratic forms whose paving is coarser than $\mathscr{P}$, and the pull back fan is the second Voronoi fan with respect to $X$ for $\mathcal{C}(X)$. This is the fan we are going to use for the toroidal compactification, and it is denoted by $\Sigma(X)$. 

\subsubsection{Non-splitting boundary components}
Now we consider the general case. Assume $\phi: Y\to X$ is of type $\mathfrak{d}$, the mirror family is $Y_t=E_1\times E_2\times \ldots \times E_g$, where $E_i=\mathbf{C}/(d_i^{-1}\mathbf{Z}+t\mathbf{Z})$. There is no Lagrangian section for the fibration $X\to B$ but local Lagrangian sections. Therefore, if we use the Fourier--Mukai transform to define the mirror map, the images are twisted sheaves twisted by a gerbe. However, on the level of Chern classes, the only difference here is that $\overline{P}(X)$ is not the whole automorphism group of $\NS(\mathcal{Y}_\eta/\Delta^*)$. The rest goes the same as in the splitting case.

In sum, if $F$ is a $0$-cusp, we get a canonical fan supported on $\mathcal{C}(X)^\rc$, which is the second Voronoi fan with respect to the lattice $X$. If $F_\xi$ is an arbitrary cusp, and $F_\xi\succ F$ for $F$ a $0$-cusp, we need $\Sigma(F_\xi)=\Sigma(F)\cap \mathcal{P}'(F)$ because of c) in (\cite{HKW} Definition 3.66). Therefore, $\Sigma(F)_\xi$ is the second Voronoi fan for $X_{\xi,\mathbf{R}}$ with respect to $X_\xi$\footnote{For the proof, see part c) of the proof of Theorem~\ref{admissible collection of fans} below.}. 
\begin{definition}\label{the canonical fan from mirror symmetry}
For each cusp $F$, we define the fan $\Sigma(F)$ (or $\Sigma(X)$) to be second Voronoi fan with respect to $X$ (Definition~\ref{the second Voronoi fan}), supported on $\mathcal{C}(X)^\rc$. Define $\widetilde{\Sigma}:=\{\Sigma(F)\}$. 
\end{definition}


\subsection{Toroidal Compactifications: The Construction}\label{3.1}
\begin{proposition}\label{admissible collection of fans}
The collection of fans $\widetilde{\Sigma}=\{\Sigma(F)\}$ in Definition~\ref{the canonical fan from mirror symmetry} is an admissible collection.
\end{proposition}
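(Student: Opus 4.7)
The plan is to verify, in turn, each of the conditions of an admissible collection from \cite{HKW} Definition 3.66: (a) each $\Sigma(F)$ is a rational polyhedral fan whose support is the rational closure of $\mathcal{C}(F)$; (b) $\Sigma(F)$ is $\overline{P}(F)$-invariant; (c) for any pair of cusps $F_\xi\succ F$, one has $\Sigma(F_\xi)=\Sigma(F)\cap \mathcal{P}'(F_\xi)$; (d) if $R\in\Gamma(\delta)$ satisfies $R(F)=F'$, then $R$ sends $\Sigma(F)$ to $\Sigma(F')$; and (e) $\Sigma(F)$ has only finitely many cones modulo $\overline{P}(F)$. The translation between $\mathcal{C}(F)$ and $\mathcal{C}(X)$ provided by Proposition~\ref{interpretation of the cone} lets me carry out each verification on the tropical side, where everything is phrased in terms of Delaunay decompositions of the lattice $X$.

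For (a), the second Voronoi fan of Definition~\ref{the second Voronoi fan} is a rational polyhedral fan with support $\mathcal{C}(X)^{\rc}$, and $h(F)$ identifies this with $\mathcal{C}(F)^{\rc}$. For (b), the action of $\GL(X,\check{Y})$ on positive semidefinite forms permutes Delaunay decompositions of $X$ (it just relabels the lattice), so the collection $\{\sigma(\mathscr{P})\}$ is stable under $\GL(X,\check{Y})$, and hence under the finite-index subgroup $\overline{P}(F)\subset \rho_h(\GL(X,\check{Y}))$ by Proposition~\ref{interpretation of the cone}(d). For (d), an element $R\in\Gamma(\delta)$ with $R(F)=F'$ restricts to a $\mathbf{Z}$-linear isomorphism $U\to U'$ carrying $X\cap\Lambda$ to $X'\cap\Lambda$; second Voronoi fans are functorial with respect to such isomorphisms, so $R$ intertwines $\Sigma(F)$ with $\Sigma(F')$. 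Condition (e) is the classical Voronoi reduction theory: there are finitely many Delaunay types modulo $\GL(X)$, and passing to the finite-index subgroup $\GL(X,\check{Y})$ preserves this finiteness; pushing down through $\rho_h$ gives finiteness modulo $\overline{P}(F)$.

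The main obstacle is the compatibility condition (c) under adjacency. Suppose $F_\xi\succ F$ with $F$ a $0$-cusp, so $U(F_\xi)\subsetneqq U(F)$ and dually one has a surjection $X\twoheadrightarrow X_\xi$ onto the lattice $X_\xi=\Hom(\Lambda\cap U(F_\xi),\mathbf{Z})$. Under the identification $h(F)$, the subspace $\mathcal{P}'(F_\xi)\subset\mathcal{P}'(F)$ corresponds precisely to those positive semidefinite quadratic forms on $X_\mathbf{R}$ whose radical contains $\ker(X_\mathbf{R}\to X_{\xi,\mathbf{R}})$, i.e.\ to forms pulled back from $X_{\xi,\mathbf{R}}$; the rational closure condition then matches $\mathcal{C}(F_\xi)^{\rc}$ with $\mathcal{C}(X_\xi)^{\rc}$. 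The key geometric input is the following fact about Delaunay decompositions: for a form $Q$ on $X_\mathbf{R}$ pulled back from a positive definite form $\bar{Q}$ on $X_{\xi,\mathbf{R}}$, the Delaunay decomposition of $Q$ with respect to $X$ is the preimage, under $X_\mathbf{R}\to X_{\xi,\mathbf{R}}$, of the Delaunay decomposition of $\bar{Q}$ with respect to $X_\xi$, up to a canonical refinement along the fibers which does not affect the $\sigma(\cdot)$-strata. Consequently $\sigma(\mathscr{P})\cap\mathcal{P}'(F_\xi)=\sigma(\bar{\mathscr{P}})$, where $\bar{\mathscr{P}}$ is the induced Delaunay decomposition on $X_\xi$, giving the required equality $\Sigma(F_\xi)=\Sigma(F)\cap\mathcal{P}'(F_\xi)$. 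The general case, where $F$ is not a $0$-cusp, reduces to this by first embedding $F$ into the closure of a $0$-cusp and applying the two-step compatibility.
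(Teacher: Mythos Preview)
Your proof is correct and follows essentially the same route as the paper: translate everything to the cones $\mathcal{C}(X)$ via Proposition~\ref{interpretation of the cone}, invoke that the second Voronoi fan is admissible for $\GL(X)$ (Namikawa), pass to the finite-index subgroup $\overline{P}(F)$, check $\Gamma(\delta)$-equivariance via the induced lattice isomorphism $X\cong X'$, and for adjacency pull back Delaunay decompositions along the quotient $X_\mathbf{R}\to X_{\xi,\mathbf{R}}$.

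One point where the paper is sharper than your argument for (c): your hedge ``up to a canonical refinement along the fibers which does not affect the $\sigma(\cdot)$-strata'' is unnecessary and slightly misleading. For a degenerate form $Q$ pulled back from $\bar Q$, the Delaunay decomposition of $Q$ with respect to $X$ is \emph{exactly} the preimage $q_\mathbf{R}^{-1}(\mathscr{P}_{D'})$; its cells are simply unbounded in the direction of $\ker(q_\mathbf{R})$, with no further subdivision. The paper then argues directly: the cone $\sigma$ for this pulled-back decomposition contains $\sigma'$, and since the supports of $\Sigma(F_\xi)$ and $\Sigma(F)\cap\mathcal{P}'(F_\xi)$ agree, the two fans coincide. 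Dropping the ``refinement'' clause makes your argument match this cleanly.
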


\begin{proof}
It suffices to check the conditions in (\cite{HKW} Definition 3.66). For a), we check the conditions in (\cite{HKW} Definition 3.61), and prove that $\Sigma(F)$ is an admissible fan for every $F$. By (\cite{Nam80} Theorem 9.9), $\Sigma(F)$ is an admissible fan with respect to $\GL(X)$. Since $\overline{P}(F)\subset\GL(X,Y)\subset \GL(X)$ is of finite index (Proposition~\ref{interpretation of the cone}), $\Sigma(F)$ is also admissible with respect to $\overline{P}(F)$.

For b). If $M\in\Gamma(\delta)$, $M:\Gamma^2U\to \Gamma^2U'$ is induced from the isomorphism $M^{-1}\vert_{U}:U\to U'$. And $M(\Lambda\cap U)=\Lambda\cap U'$. So the dual map $(M^{-1}\vert_{U})^*:X'_\mathbf{R}\to X_\mathbf{R}$ maps $X$ onto $X'$. Therefore $M:\mathcal{C}(X)\to \mathcal{C}(X')$ maps the second Voronoi fan $\Sigma(X)$ to $\Sigma(X')$. 

For c). If $F'\succ F$, we have the quotient $q_\mathbf{R}:X_\mathbf{R}\to X'_\mathbf{R}$ induced from a quotient map of the lattices $q: X\to X'$. The pull back $\Gamma^2U'\to \Gamma^2U$ identifies $\mathcal{C}(X')$ with the positive semi-definite quadratic forms with nullspaces $\ker (q_\mathbf{R})$. Let $\mathscr{P}_{D'}$ be a Delaunay decomposition of $X'_\mathbf{R}$ and $\sigma'\in \Sigma(F')$ be the cone associated with $\mathscr{P}_{D'}$. The pull back $\mathscr{P}=q_\mathbf{R}^{-1}(\mathscr{P}_{D'})$ is a Delaunay decomposition of $X_\mathbf{R}$. ( The cells are infinite in the direction of $\ker(q_{\mathbf{R}})$. ) The cone $\sigma$ associated with $\mathscr{P}$ is a cone in $\Sigma(F)$, and it contains $\sigma'$. On the other hand, the supports of $\Sigma(F')$ and $\Sigma(F)\cap\Gamma^2U'$ are the same. Therefore $\Sigma(F')=\Sigma(F)\cap\Gamma^2U'$.  
\end{proof}

\begin{theorem}\label{the toroidal compactification over C}
Over $\mathbf{C}$, we have a toroidal compactification $\overline{\mathcal{A}}_\Sigma$ of $\mathcal{A}_{g,\delta}$. Furthermore, $\overline{\mathcal{A}}_\Sigma$ is projective.
\end{theorem}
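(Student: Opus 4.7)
The first assertion, existence of $\overline{\mathcal{A}}_\Sigma$ as a proper algebraic space, is immediate once Proposition~\ref{admissible collection of fans} is in hand: plug the admissible collection $\widetilde{\Sigma}$ into the general machinery of toroidal compactifications of arithmetic quotients of Hermitian symmetric domains (\cite{AMRT} Theorem 5.2, which reduces to \cite{HKW} Theorem 3.82 in the case $g=2$). So the real content is projectivity, and this is what my plan addresses.

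The plan is to apply the polarization criterion of Ash--Mumford--Rapoport--Tai (\cite{AMRT} Ch.~IV, §2; cf. \cite{HKW} Theorem 3.82): it suffices to exhibit a continuous $\Gamma(\delta)$-invariant function
\[
\ell \colon \bigsqcup_F \mathcal{C}(F)^{\mathrm{rc}} \longrightarrow \mathbf{R}_{\geqslant 0}
\]
whose restriction to each cone $\mathcal{C}(F)^{\mathrm{rc}}$ is piecewise linear with respect to $\Sigma(F)$, is strictly convex on that fan, and is compatible with the stratification $F' \succ F$ via the identifications of Proposition~\ref{admissible collection of fans}(c). Given such an $\ell$, the associated line bundle on $\overline{\mathcal{A}}_\Sigma$ is relatively ample over the Baily--Borel compactification, and projectivity of the latter implies projectivity of $\overline{\mathcal{A}}_\Sigma$.

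To build $\ell$, I would use the classical Voronoi polarization. For each $0$-cusp $F$, the fan $\Sigma(F)$ on $\mathcal{C}(X)^{\mathrm{rc}}$ is the second Voronoi fan with respect to the lattice $X$ (Definition~\ref{the canonical fan from mirror symmetry}), so for $Q \in \mathcal{C}(X)^{\mathrm{rc}}$ with Delaunay decomposition $\mathscr{P}_D(Q)$ one defines
\[
\ell_X(Q) \;=\; \sum_{v \in X/\text{translation}} \max_{\sigma \in \mathscr{P}_D(Q)_{\max}}\bigl(\text{primitive Voronoi contribution of } v \text{ to } \sigma\bigr),
\]
i.e. the degree-one part of the standard Voronoi quadratic functional; strict convexity with respect to $\Sigma(F)$ is the defining property of the second Voronoi fan, and $\GL(X,Y)$-invariance is automatic because the construction depends only on $(X,Y) \subset X$. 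For a cusp $F_\xi \succ F$, the function $\ell_{X(F_\xi)}$ is just the restriction $\ell_X \vert_{\mathcal{C}(X_\xi)}$ under the quotient map $X \twoheadrightarrow X_\xi$, which is the compatibility condition in Proposition~\ref{admissible collection of fans}(c). Summing over the finitely many $\Gamma(\delta)$-orbits of $0$-cusps and averaging produces the desired global $\ell$.

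The main obstacle is verifying the strict convexity and the cross-cusp compatibility simultaneously in the non-principally polarized case, where $\overline{P}(F)$ is only a finite-index subgroup of $\GL(X,Y)$ (Proposition~\ref{interpretation of the cone}(d)) and the cusps are not all splitting. The non-splitting cusps introduce a choice of lift that must not break $\Gamma(\delta)$-equivariance; this is handled by averaging over the torsor $X/\check{Y}$ exactly as in the proof of Proposition~\ref{the section is linear}, which shows that the naive mirror-symmetric section descends canonically. Once this is done, the remainder is a routine translation of Alexeev's and Nakamura's arguments in the principally polarized setting (\cite{Alex02}, \cite{Nam80} Theorem~9.9) to the lattice $X$ of an arbitrary polarization type $\delta$.
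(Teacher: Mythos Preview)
Your approach is essentially the same as the paper's: existence via \cite{AMRT}/\cite{HKW} from Proposition~\ref{admissible collection of fans}, and projectivity via Tai's polarization criterion applied to a strictly convex support function on the second Voronoi fan. The paper's proof is much terser---it simply cites the polarization function $\Sigma(X_\mathbf{R}/X)$ already constructed in \cite{Alex02} and is done in one sentence.

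Where you diverge is in the elaboration on ``obstacles.'' The averaging over $X/\check{Y}$ from Proposition~\ref{the section is linear} is not relevant here: that proposition produced a linear section $\sigma:\NS(Y_t)\to\pic(\mathcal{Y})$ of the Mori-fan picture, which is a different object from the polarization function required by Tai's criterion. The polarization function lives on $\mathcal{C}(X)^{\rc}$ itself and need only be $\GL(X)$-invariant and strictly convex for $\Sigma(X)$; Alexeev's function already has these properties, and since $\overline{P}(F)\subset\GL(X,Y)\subset\GL(X)$, the required $\overline{P}(F)$-invariance is automatic. Cross-cusp compatibility is inherited from part~(c) of Proposition~\ref{admissible collection of fans} (the fan restriction $\Sigma(F')=\Sigma(F)\cap\Gamma^2 U'$), so no separate averaging or descent argument is needed. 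Your plan is correct, but you are manufacturing difficulties that the direct citation avoids.
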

\begin{proof}
The construction of $\overline{\mathcal{A}}_\Sigma$ for the admissible collection of fans $\widetilde{\Sigma}$ is (\cite{HKW} Theorem 3.82), or (\cite{AMRT} Theorem 5.2). To prove that it is projective, we apply Tai's criterion (\cite{FC} Definition 2.4) or (\cite{AMRT} Chap. \Rmnum{4} Definition 2.1, Corollary 2.3). For each cone $\mathcal{C}(F)$, the fan $\Sigma(F)$ is the second Voronoi fan with respect to $X$. We can use the polarization function provided by $\Sigma(X_\mathbf{R}/X)$ in \cite{Alex02}. 
\end{proof}

Following \cite{FC}, we are going to construct an arithmetic toroidal compactification from the admissible collection of fans $\widetilde{\Sigma}$. In order to avoid the reduction of bad primes, we work over $k=\mathbf{Z}[1/d]$. For some technical reasons, we have to work over $k=\mathbf{Z}[1/d,\zeta_{M}]$. Over $k$, the polarizations are separable and the stack $\mathscr{A}_{g,\delta}$ is a connected component of $\mathscr{A}_{g,d}$.



\section{Extending the Universal Family}\label{3}

\subsection{AN Families} \label{3.2}
\subsubsection{The AN Construction}
For this section, $k=\mathbf{Z}$. The purpose of this section is to recall the modified Mumford's construction in \cite{AN} for an arbitrary complete normal base. Fix a base scheme $S=\spec R$, for $R$ a Noetherian, excellent, and normal integral domain, complete with respect to an ideal $I=\sqrt{I}$. Denote the residue ring $R/I$ by $\kappa$, and the field of fractions by $K$.  The closed subscheme is denoted by $S_0=\spec \kappa$, and the generic point by $\eta=\spec K$. Recall the categories $\DEG$ and $\DD$ from \cite{FC}. The objects of the category $\DEG_{\text{ample}}$ are pairs $(G, \mathcal{L})$, where $G$ is semiabelian over $S$, with $G_\eta$ abelian over $K$, and $\mathcal{L}$ is an invertible sheaf on $G$, with $\mathcal{L}_\eta$ ample. The morphisms are group morphisms that respect the invertible sheaves. An object of $\DD_{\text{ample}}$ is the following degeneration data:

\begin{enumerate}
\item An abelian scheme $A/S$ of relative dimension $g'$, a split torus $T/S$ defined by the character group $X\cong \mathbf{Z}^r$, with $r=g-g'$, and a semiabelian group scheme $\widetilde{G}$ defined by $c:X\to A^t$. We use the same letter to denote the group $X$ and the corresponding constant sheaf $X$ on $S$. 
\[
\begin{diagram}
1&\rTo &T&\rTo&\widetilde{G}&\rTo^\pi&A&\rTo&0.
\end{diagram}
\]
\item A rank $r$ free abelian group $Y\cong\mathbf{Z}^r$ and the constant sheaf $Y$ over $S$.  
\item A homomorphism between group schemes $c^t: Y\to A$. This is equivalent to an extension
\[
\begin{diagram}
1&\rTo &T^t&\rTo&\widetilde{G}^t&\rTo^{\pi^t}&A^t&\rTo&0.
\end{diagram}
\]
\item  An injection $\phi: Y\to X$ of type $\mathfrak{d}_1$. 
\item A homomorphism $\iota: Y_\eta\to \widetilde{G}_\eta$ over $S_\eta$ lying over $c_\eta^t$. This is equivalent to a trivialization of the biextension $\tau: 1_{Y\times X} \to (c^t\times c)^*\mathcal{P}^{-1}_{A,\eta}$. Here $\mathcal{P}_A$ is the Poincar\'{e} sheaf on $A\times A^t$ which has a canonical biextension structure. We require that the induced trivialization $\tau\circ(\Id\times \phi)$ of $(c^t\times c\circ\phi)^*\mathcal{P}^{-1}_{A,\eta}$ over $Y\times Y$ is symmetric. The trivialization $\tau$ is required to satisfy the following positivity condition: $\tau(\lambda,\phi(\lambda))$ for all $\lambda$ extends to a section of $\mathcal{P}_A^{-1}$ on $A\times_SA^t$, and is $0$ modulo $I$ if $\lambda\neq 0$. 
\item An ample sheaf $\mathcal{M}$ on $A$ inducing a polarization $\lambda_A: A\to A^t$ of type $\delta'$ such that $\lambda_A c^t=c\phi$. This is equivalent to giving a $T$-linearized sheaf $\widetilde{\mathcal{L}}=\pi^*\mathcal{M}$ on $\widetilde{G}$.
\item An action of $Y$ on $\widetilde{\mathcal{L}}_\eta$ compatible with $\phi$. This is equivalent to a cubical trivialization $\psi: 1_Y\to (c^t)^*\mathcal{M}_\eta^{-1}$, which is compatible with the trivialization $\tau\circ(\Id\times \phi)$. 
\end{enumerate}



Denote the rigidified line bundle $c(\alpha)$ by $\mathcal{O}_\alpha$, and the rigidified line bundle $\mathcal{M}\otimes \mathcal{O}_\alpha$ by $\mathcal{M}_\alpha$. For any $S$-point $a: S\to A$, and any line bundle $\mathcal{L}$ on A, the pull back $a^*\mathcal{L}$ is denoted by $\mathcal{L}(a)$. For any $\lambda\in Y$, $\alpha\in X$, $\tau(\lambda,\alpha)\in \mathcal{O}_\alpha(c^t(\lambda))^{-1}_\eta=\mathcal{P}_A^{-1}(c^t(\lambda),c(\alpha))_\eta$, and $\psi(\lambda)\in \mathcal{M}(c^t(\lambda))^{-1}_\eta$ are $K$-sections. Therefore the data $\tau$ and $\psi$ gives trivializations $\psi(\lambda)^d\tau(\lambda,\alpha): (\mathcal{M}^d\otimes \mathcal{O}_\alpha)^{-1}_\eta(c^t(\lambda))\cong \mathcal{O}_K$ for every $\lambda\in Y$, $\alpha\in X$. By (\cite{ols08} Lemma 5.2.2.), the data $\psi(\lambda)^d\tau(\lambda,\alpha)$ is equivalent to an isomorphism of line bundles 
\[
\psi(\lambda)^d\tau(\lambda,\alpha): T^*_{c^t(\lambda)}(\mathcal{M}^d\otimes \mathcal{O}_\alpha)_\eta\to \mathcal{M}^d\otimes \mathcal{O}_{\alpha+d\phi(\lambda),\eta}. 
\]

By the property of biextensions, (\cite{ols08} Proposition 2.2.13),  it defines an action of the group $Y$ on the graded algebra
\[
\mathcal{S}=\Bigg(\prod_{d\geqslant 0}\big(\bigoplus_{\alpha\in X}\mathcal{O}_\alpha\big)\otimes \mathcal{M}^d\theta^d\Bigg)\otimes_{R}K,
\]
denoted by $S_\lambda^*$. 

Assume $P$ is a toric monoid, $P=\sigma_P^\vee\cap P^{gp}$, and $\alpha: P\to R$ is a prelog structure which maps the toric maximal ideal $P\backslash\{0\}$ to $I$. Fix data: an integral, $Y$-quasiperiodic, $P$-convex piecewise affine function $\varphi: X_\mathbf{R}\to P^{\gp}_\mathbf{R}$ determined by a collection of bending parameters $\{p_\rho\in P\backslash\{0\}\}$. For simplicity, the composition $\alpha\circ \varphi: X_\mathbf{R}\to R$ is also denoted by $\varphi$. Suppose the associated quadratic form $Q$ is positive definite, i.e. $Q(x)\in \sigma_P^\vee\backslash\{0\}$ for $x\neq 0$, and the associated paving $\mathscr{P}$ is bounded. Construct the following functions $a_t: Y\to \mathbb{G}_m(K)$ and $b_t:Y\times X\to \mathbb{G}_m(K)$ by 
\begin{align}
a_t(\lambda)&=\mathrm{X}^{\varphi(\phi(\lambda))}, \quad \forall \lambda\in Y, \\
\mathrm{X}^{\varphi(\alpha+\phi(\lambda))}&=a_t(\lambda)b_t(\lambda,\alpha)\mathrm{X}^{\varphi(\alpha)}, \quad \forall \alpha\in X.
\end{align}

By the properties of $\varphi$, $a_t$ is quadratic, $b_t$ is bilinear and $b_t(\lambda, \phi(\mu))$ is symmetric on $Y\times Y$. By (\cite{ols08} 2.2.8), for any biextension over $Y\times X$, the automorphism is classified by $\Hom(Y\otimes X, \mathbb{G}_m)$. If we require that the extension is symmetric on $Y\times \phi(Y)$, then the bilinear form $b\in \Hom (Y\otimes X,\mathbb{G}_m)$ should satisfy that $b(\lambda,\phi(\mu))$ be symmetric. Furthermore, the automorphism of a central extension over $Y$ is classified by quadratic functions over $Y$. Fix a biextension (resp, central extension), regard the set of trivializations as a torsor over $\Hom(Y\times X, \mathbb{G}_m)$ (resp, quadratic forms). For any trivialization $\tau$ of a biextension, (resp, any trivialization $\psi$ of a central extension),  denote the trivialization obtained by the action of $b_t^{-1}$ (resp, $a_t^{-1}$) by $b_t^{-1}\tau$ (resp, $a_t^{-1}\psi$). 

\begin{definition}
Fix the data $b_t, a_t$ obtained from $\varphi$. We say the combinatorial data $\varphi$ is compatible with the trivializations $\tau$ and $\psi$, if $b_t^{-1}\tau$ can be extended to a trivialization of the biextension $(c^t\times c)^*\mathcal{P}_A^{-1}$ over $S$, and $a_t^{-1}\psi$ can be extended to a trivialization of the central extension $(c^t)^*\mathcal{M}^{-1}$ over $S$. 
\end{definition}
The data $\varphi$ gives the choice of embeddings $\mathcal{M}_\alpha\to \mathcal{M}_{\alpha,\eta}$, therefore we can use constructions in \cite{AN} to construct the family of varieties over $S$. More explicitly, consider the piecewise linear function $\tilde{\varphi}: \mathbb{X}_\mathbf{R}\to P^{\gp}_\mathbf{R}$ and its graph in $\mathbb{X}_\mathbf{R}\times P^{\gp}_\mathbf{R}$. The piecewise linear function $\tilde{\varphi}$ is integral because the bending parameters $p_\rho$ are all in $P$. Denote the convex polyhedron $P$-above the graph by $Q_{\tilde{\varphi}}$. Consider the graded $\mathcal{O}_A$-algebra
\[
\mathcal{R}:=\bigoplus_{(\alpha,d,p)\in Q_{\tilde{\varphi}}}\mathrm{X}^p\otimes\mathcal{O}_{\alpha}\otimes \mathcal{M}^d\theta^d.
\]

Since $\varphi$ and $\tau,\psi$ are compatible, the action $S_\lambda^*,(\lambda\in Y)$ is 
\begin{equation}\label{explicit action}
\psi'(\lambda)^d\tau'(\lambda,\alpha):T^*_{c^t(\lambda)}(\mathrm{X}^p\otimes\mathcal{O}_{\alpha}\otimes\mathcal{M}^d\theta^d)\cong\mathrm{X}^p a_t(\lambda)^db_t(\lambda,\alpha)\otimes \mathcal{O}_{\alpha+d\phi(\lambda)}\otimes\mathcal{M}^d\theta^d,
\end{equation}

for $\psi'(\lambda)^d\tau'(\lambda,\alpha)$ $R$-sections of $\mathcal{M}^d\otimes\mathcal{O}_\alpha(c^t(\lambda))$. Therefore the $Y$-action preserves the subalgebra $\mathcal{R}$. Let $\widetilde{\mathcal{X}}$ denote the scheme $\sheafproj\mathcal{R}$. Construct the infinite toric embedding $X_\varphi$ from the convex polyhedron $Q_{\tilde{\varphi}}$. This is a toric degeneration over the toric variety $\spec k[P]$. Denote the pull back of $X_\varphi$ along $k[P]\to R$ by $\widetilde{P}^r$. As in (\cite{AN}, 3B, 3.22), the total space $\widetilde{\mathcal{X}}$ is isomorphic to contracted product $\widetilde{P}^r\times ^T\widetilde{G}$ over $S$. 

\begin{lemma}\label{convergence}
Every irreducible component of $\widetilde{X}_0$ is reduced, and proper over $S_0$. 
\end{lemma}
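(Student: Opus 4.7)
The plan is to reduce the statement to a combinatorial analysis of the infinite toric embedding $\widetilde{P}^r$, using the product description $\widetilde{\mathcal{X}} \cong \widetilde{P}^r \times^T \widetilde{G}$ recalled right before the lemma, and then transport the conclusion to $\widetilde{\mathcal{X}}$ via the contracted product.

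First I would analyze the central fiber $\widetilde{P}^r_0$ of $\widetilde{P}^r \to S$. By construction, $\widetilde{P}^r$ is the base change to $R$ of the toric variety $X_\varphi$ over $k[P]$ associated to the fan obtained from cones over the cells of the paving $\mathscr{P}$ lifted into $\mathbb{X}_{\mathbf{R}} \times P^{\gp}_{\mathbf{R}}$ via $Q_{\tilde{\varphi}}$. Under the map $P \to R \to \kappa$ sending $P \setminus \{0\}$ into $I$, the central fiber is cut out by the ideal generated by $\{\mathrm{X}^p : p \in P \setminus \{0\}\}$. Since $\varphi$ is strictly $P$-convex with bending parameters $p_\rho \in P \setminus \{0\}$, the monomials that survive modulo this ideal correspond exactly to the cells of $\mathscr{P}$, and the irreducible components of $\widetilde{P}^r_0$ are in bijection with the maximal cells $\sigma \in \mathscr{P}_{\max}$.

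Next I would identify each component explicitly. For a maximal cell $\sigma \in \mathscr{P}_{\max}$, the corresponding component $V_\sigma$ is the toric variety attached to the normal fan of $\sigma$ (viewed inside $X_{\mathbf{R}}$). Since $\mathscr{P}$ is assumed bounded, each $\sigma$ is a (lattice) polytope; hence its normal fan is complete, and $V_\sigma$ is a proper toric variety over $\kappa$. Reducedness follows from integrality: because $\varphi$ is integral and $\sigma$ is a lattice polytope, the toric ring attached to $\sigma$ is $\kappa[\sigma \cap X]$, which has no nilpotents. This handles both claims for $\widetilde{P}^r_0$.

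Finally I would descend to $\widetilde{X}_0$ via $\widetilde{\mathcal{X}} \cong \widetilde{P}^r \times^T \widetilde{G}$. The torus $T$ acts freely on $\widetilde{G}$ with quotient $A$, so the contracted product is a Zariski-locally trivial $\widetilde{P}^r$-bundle over $A$. On fibers, this turns each component $V_\sigma$ of $\widetilde{P}^r_0$ into a Zariski-locally trivial $V_\sigma / T_\sigma$-bundle over $A_0$ (where $T_\sigma$ is the stabilizer of the open $T$-orbit of $V_\sigma$), so the irreducible components of $\widetilde{X}_0$ are fiber bundles over the abelian scheme $A_0/S_0$ with proper, reduced toric fibers. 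Properness over $S_0$ follows from properness of $A_0 \to S_0$ combined with properness of the fiber, and reducedness is preserved under the \'etale-local trivialization. The one step that requires real care is the bookkeeping that identifies the components of $\widetilde{P}^r_0$ with normal-fan toric varieties of the maximal cells of $\mathscr{P}$, and the verification that the $T$-action on each such component is through a quotient torus making the contracted product well-defined; this is the genuine content, while the rest is formal.
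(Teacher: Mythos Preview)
Your proposal is correct and follows essentially the same route as the paper: reduce via the contracted product $\widetilde{\mathcal{X}}\cong\widetilde{P}^r\times^T\widetilde{G}$, identify the irreducible components of $\widetilde{P}^r_0$ with the projective toric varieties $X_\sigma$ for maximal cells $\sigma\in\mathscr{P}$ (proper because $\mathscr{P}$ is bounded, reduced because $\varphi$ is integral), and then observe that each component of $\widetilde{X}_0$ is a fiber bundle over $A_0$ with fiber $X_\sigma$. One cosmetic slip: the fiber of $V_\sigma\times^T\widetilde{G}_0\to A_0$ is $V_\sigma$ itself, not $V_\sigma/T_\sigma$; since $\sigma$ is full-dimensional the stabilizer $T_\sigma$ is trivial anyway, so the statement is unaffected.
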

\begin{proof}
Use the isomorphism $\widetilde{\mathcal{X}}\cong \widetilde{P}^r\times ^T\widetilde{G}$ over $S$. The total space $\widetilde{\mathcal{X}}$ is covered by $U(\omega)\times^T\widetilde{G}$, where $U(\omega)$ is the affine toric variety corresponding to the vertex $\omega$ of $Q_{\tilde{\varphi}}$. The action $S_\lambda^*$ maps $U(\omega)\times^T\widetilde{G}$ to $U(\omega+\phi(\lambda))\times^T\widetilde{G}$. Reduce to $\kappa$, $\widetilde{X}_0$ is the contracted product of $\widetilde{P}^r_0:=\widetilde{P}^r\times_{k[P]}S_0$ with $\widetilde{G}_0$. Since $\varphi$ is integral, $\widetilde{P}^r_0$ is reduced. Since $P\backslash\{0\}$ is mapped into $I$, $\widetilde{P}^r_0$ is $\varinjlim X_\sigma$. The inductive limit is over $\sigma\in \mathscr{P}$. Each irreducible component of $\widetilde{P}^r_0$ is the toric variety $X_\sigma$ associated to the maximal cell $\sigma\in \mathscr{P}$. As a result, each irreducible component of $\widetilde{X}_0$ is a fiber bundle over $A_0$ with the fiber $X_\sigma$, and is thus reduced and proper over $S_0$. 
\end{proof}

\begin{lemma}
 The data $(\widetilde{\mathcal{X}},\mathcal{O}(1), S_\lambda^*, \widetilde{G})$ is a relative complete model as defined in (\cite{FC}, \Rmnum{3}. Definition 3.1). 
 \end{lemma}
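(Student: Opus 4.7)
The plan is to verify the defining conditions of a relative complete model from (\cite{FC}, \Rmnum{3}.3.1) one by one: $\widetilde{\mathcal{X}}$ should be an integral scheme locally of finite type over $S$, containing $\widetilde{G}$ as a dense open, equipped with commuting actions of $\widetilde{G}$ and $Y$ (extending $\iota$ generically) and a $\widetilde{G}$-linearized relatively ample sheaf $\mathcal{O}(1)$; each irreducible component of $\widetilde{\mathcal{X}}_0$ must be proper over $S_0$; and the $Y$-action must be properly discontinuous in the sense that a quasi-compact fundamental domain exists whose $Y$-translates cover $\widetilde{\mathcal{X}}$ and whose pairwise overlaps vanish modulo $I^n$ for all but finitely many $\lambda \in Y$.

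The scheme-theoretic and linear-algebra items I would dispatch by direct inspection. Because $\mathcal{R}$ is an integral graded $\mathcal{O}_A$-algebra, finitely generated in each bounded range of degrees, $\widetilde{\mathcal{X}}=\sheafproj\mathcal{R}$ is integral, separated, and locally of finite type over $S$, and $\mathcal{O}(1)$ is relatively ample tautologically. The identification $\widetilde{\mathcal{X}}\cong\widetilde{P}^r\times^T\widetilde{G}$ exhibits the dense open $\widetilde{G}\subset\widetilde{\mathcal{X}}$ (corresponding to the ray $C(\{0\})$) and extends the translation of $\widetilde{G}$ on itself to all of $\widetilde{\mathcal{X}}$; the $\mathbb{X}$-grading of $\mathcal{R}$ supplies the $T$-linearization of $\mathcal{O}(1)$, which together with $A$-equivariance yields the full $\widetilde{G}$-linearization. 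For the $Y$-action, formula \eqref{explicit action} shifts the $X$-grading by $\phi(\lambda)$, and the compatibility hypothesis between $\varphi$ and $(\tau,\psi)$ is exactly what makes the rescaling factor $\mathrm{X}^{\varphi(\alpha+d\phi(\lambda))-\varphi(\alpha)}=a_t(\lambda)^{d}b_t(\lambda,\alpha)\mathrm{X}^{\varphi(\alpha)}$ absorb the generic trivializations $\psi(\lambda)^d\tau(\lambda,\alpha)$ into genuine $R$-sections, yielding an endomorphism of $\mathcal{R}$. The cocycle identities for $\tau$ and $\psi$ upgrade $\{S_\lambda^*\}$ to a group action; $\lambda_A c^t=c\phi$ and the bilinearity of $b_t$ force it to commute with the $\widetilde{G}$-action.

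The substantive point, and the main obstacle, is the proper discontinuity of the $Y$-action. I would construct a fundamental domain $U=\bigcup_{\omega\in D}U(\omega)\times^T\widetilde{G}$, where $D$ is a finite set of vertices of $\mathscr{P}$ meeting every $\phi(Y)$-orbit; such a $D$ exists because $\mathscr{P}$ is $Y$-quasiperiodic (Lemma~\ref{quasiperiodic and quadratic} in Appendix~\ref{A}) and each of its cells is bounded, and the $Y$-translates of $U$ then cover $\widetilde{\mathcal{X}}$. To control the overlaps, I would use the strict $P$-convexity of $\varphi$ together with the positivity of its associated quadratic form $Q$: these force the increment $\tilde{\varphi}(\alpha+\phi(\lambda))-\tilde{\varphi}(\alpha)$ to lie in $P\setminus\{0\}$ and to grow quadratically as $|\lambda|\to\infty$ on any fixed bounded range of $\alpha$. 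Consequently, for each $n$, all but finitely many $\lambda\in Y$ send the monomials appearing in $U$ into $I^n\mathcal{R}$, so $S_\lambda^*(U)\cap U$ becomes empty modulo $I^n$ outside a finite set of $\lambda$. Combining this with Lemma~\ref{convergence} (properness of each irreducible component of $\widetilde{\mathcal{X}}_0$) and the $Y$-equivariant relative ampleness of $\mathcal{O}(1)$ gives properness of the formal quotient $\widetilde{\mathcal{X}}/Y$ over $S$ and thereby verifies the remaining axioms.
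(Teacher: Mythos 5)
The one condition of (\cite{FC} \Rmnum{3}. Definition 3.1) that does not follow formally from the construction is the open immersion of $\widetilde{G}$ into $\widetilde{\mathcal{X}}$ over \emph{all} of $S$, and this is precisely the point you assume rather than prove. Your parenthetical ``corresponding to the ray $C(\{0\})$'' imports the picture from the mirror-family setting of Section~\ref{2.3}, where the fan is $\{C(\sigma)\}_{\sigma\in\mathscr{P}}$ in $\mathbb{X}_\mathbf{R}$ and the base is one-dimensional. In the AN construction the base is $\spec k[P]$ for a toric monoid $P$ of arbitrary rank, the toric data is the polyhedron $Q_{\tilde{\varphi}}\subset \mathbb{X}_\mathbf{R}\times P^{\gp}_\mathbf{R}$, and what has to be produced is an open immersion of the \emph{trivial} torus bundle $T\times S$ into $\widetilde{P}^r$ compatibly over every stratum of the base; only then does the contracted product give $\widetilde{G}\cong T\times^T\widetilde{G}\hookrightarrow \widetilde{P}^r\times^T\widetilde{G}\cong\widetilde{\mathcal{X}}$. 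The actual argument (and essentially the whole content of the paper's proof) is that the cone $\sigma_P$ lifts to a subfan of the dual fan $\Sigma(\tilde{\varphi})$ of $Q_{\tilde{\varphi}}$ --- possible because $\sigma_P$ is the fan of faces of a single rational polyhedral cone --- and that this subfan cuts out exactly $T\times\spec k[P]$ inside $\widetilde{P}^r$, after which one invokes (\cite{AN} 3.22). Without some version of this, ``the identification exhibits the dense open $\widetilde{G}$'' is an assertion, not a proof.

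Two secondary points. First, the proper-discontinuity statement you treat as the main obstacle (overlaps of the fundamental domain vanishing modulo $I^n$ for all but finitely many $\lambda$) is not an axiom of \Rmnum{3}.\ Definition 3.1; it belongs to the subsequent quotient construction, where it follows from the positivity of the degeneration data. What the definition does require is only a finite-type open $U$ whose $Y$-translates cover $\widetilde{\mathcal{X}}$, and your finite union of $U(\omega)\times^T\widetilde{G}$ over vertices modulo $\phi(Y)$ does give that. Second, the valuative completeness condition in the definition is neither checked in your argument nor implied by properness of the formal quotient, so the closing claim that properness ``thereby verifies the remaining axioms'' does not hold as reasoning. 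The paper's (and \cite{AN}'s) way out is different: the completeness condition need not be checked at all, because its only use downstream is to guarantee what Lemma~\ref{convergence} already provides, namely that every irreducible component of $\widetilde{X}_0$ is proper over $S_0$. You should either state this explicitly or verify the valuative criterion; deducing it from properness of the quotient gets the logical direction backwards.
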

 \begin{proof}
 The proof is similar to (\cite{AN}, Lemma 3.24). It is not necessary to check the complete condition, since we have shown that every irreducible component of $\widetilde{X}_0$ is proper over $S_0$. To construct an embedding $\widetilde{G}\to \widetilde{\mathcal{X}}$, consider the dual fan $\Sigma(\tilde{\varphi})$ of $Q_{\tilde{\varphi}}$ in $\mathbb{X}_\mathbf{R}^*\times (P_\mathbf{R}^{\gp})^*$. It defines a toric degeneration $\widetilde{P}^r$ over the affine toric variety $\spec k[P]$. It follows that we can lift the fan $\sigma_P$ as a subfan in $\Sigma(\tilde{\varphi})$. Since $\sigma_P$ is a fan defined by faces of one rational polyhedral cone, the subfan induces an embedding of the trivial torus bundle $T$ into $\widetilde{P}^r$. By (\cite{AN}, B 3.22), the embedding $T\to \widetilde{P}^r$ induces the embedding $\widetilde{G}\cong T\times ^T\widetilde{G}\to \widetilde{P}^r\times ^T\widetilde{G}\cong \widetilde{\mathcal{X}}$. 
 \end{proof}
 
 As in \cite{AN}, reduce to $I^n$ for various $n\in\mathbf{N}$, and take the quotient by $Y$, then by Grothendieck's existence theorem (\cite{EGA3} EGA \Rmnum{3}$_{1}$ 5.4.5), we get an algebraic family $\pi: (\mathcal{X},\mathcal{L})\to S$. 
 
 
\subsubsection{convergence problem}
Notice that in the construction above, the only place we use the condition that $P\backslash\{0\}$ is mapped to $I$ is in the proof of Lemma \ref{convergence}. In this case, closed subscheme $S_0$ is sent to the $0$-stratum in the affine toric variety $\spec k[P]$. However, we may also consider other closed strata. 

Suppose $P=\sigma_P^\vee\cap P^{\gp}$ is a toric monoid obtained from a rational polyhedral cone $\sigma_P$, and $\varphi: X_\mathbf{R}\to P^{\gp}_\mathbf{R}$ is a function as above. Let $\mathfrak{m}$ be the maximal toric ideal $P\backslash\{0\}$, $J$ be a prime toric ideal of $P$, and $F$ be the face $P\backslash\ J$. Assume $F=\tau^\perp\cap \sigma_P^\vee$ for $\tau$ a face of $\sigma_P$. Consider the monoid $P'=P/F$ and the piecewise affine function $\varphi'$ defined as the composition of $\varphi$ with $P_\mathbf{R}^{\gp}\to (P')_\mathbf{R}^{\gp}$. The associated paving $\mathscr{P}'$ of $\varphi'$ is coarser than the associated paving $\mathscr{P}$ of $\varphi$. Suppose that $\mathscr{P}'$ is still a bounded paving. Let $P_F$ be the localization of $P$ with respect to the face $F$, and $J_F$ be the toric ideal of $P_F$ generated by $J$. 

\begin{definition}\label{data associated with a face}
All the notations as above. We call the data $(P', \varphi', \mathscr{P}')$, the data associated with the face $F$. 
\end{definition}

\begin{lemma}\label{toric prime}
Every irreducible component of $\widetilde{P}^r\times_{k[P]}\times\spec k[P]/J$ is proper over $\spec k[P]/J$.
\end{lemma}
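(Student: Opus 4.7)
The plan is to extend the argument of Lemma~\ref{convergence} from the special case $J=\mathfrak{m}$ (where base change hits the $0$-stratum of $\spec k[P]$) to a general prime toric ideal $J$. In Lemma~\ref{convergence} one observes that the irreducible components of the central fiber of $\widetilde{P}^r$ are toric varieties $X_\sigma$ indexed by $\sigma \in \mathscr{P}_{\max}$, and these are projective because $\mathscr{P}$ is a bounded paving. Here the role of $\mathscr{P}$ will be played by the coarser paving $\mathscr{P}'$ attached to the push-forward $\varphi'$, and the role of the $0$-stratum will be played by $\spec k[P]/J \cong \spec k[F]$, the toric stratum dual to $\tau \subset \sigma_P$.

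First, I would identify the base change scheme-theoretically. The relative toric variety $\widetilde{P}^r$ is obtained from the normal fan of $Q_{\tilde\varphi}$ in $\mathbb{X}^*_\mathbf{R} \times (P^\gp_\mathbf{R})^*$, and the structure morphism $\widetilde{P}^r \to \spec k[P]$ is induced by the projection of this fan onto $\sigma_P$. Hence $\widetilde{P}^r \times_{k[P]} \spec k[P]/J$ is the preimage of the stratum of $\spec k[P]$ determined by $\tau$, and on fan level this amounts to restricting to those cones of the normal fan of $Q_{\tilde\varphi}$ whose image in $\sigma_P$ is contained in $\tau$.

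Second, I would translate this restriction back into the language of pavings of $X_\mathbf{R}$. By construction $\varphi' = \pi_F \circ \varphi$ for the quotient $\pi_F \colon P^\gp_\mathbf{R} \to (P')^\gp_\mathbf{R}$, so the bending parameters $p_\rho \in F$ of $\varphi$ are killed in $\varphi'$; consequently the maximal cells of $\mathscr{P}'$ are exactly the unions of maximal cells of $\mathscr{P}$ separated only by codimension-one walls with $F$-valued bending parameters. Dually, the maximal cones of the normal fan of $Q_{\tilde\varphi}$ that project into $\tau$ are precisely the cones normal to the cells $\sigma' \in \mathscr{P}'_{\max}$. This gives a bijection between the irreducible components of $\widetilde{P}^r \times_{k[P]} \spec k[P]/J$ and $\mathscr{P}'_{\max}$, and identifies the component attached to $\sigma'$ with (the relative version over $\spec k[F]$ of) the toric variety $X_{\sigma'}$ associated to the lattice polytope $\sigma'$.

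Finally, properness follows because $\mathscr{P}'$ is a \emph{bounded} paving by hypothesis, so each $\sigma'$ is an honest lattice polytope and $X_{\sigma'}$ is a projective toric variety; after base change to the affine scheme $\spec k[F]$ it remains proper. The main technical obstacle will be step two: making the combinatorial bijection between cones of the normal fan projecting into $\tau$ and cells of $\mathscr{P}'_{\max}$ rigorous, so that one really recovers the component $X_{\sigma'}$ rather than some non-reduced thickening, and checking that the isomorphism is compatible with the structure morphism to $\spec k[F]$. This is essentially a bookkeeping exercise with $Q_{\tilde\varphi}$, $\sigma_P$, and $F = \tau^\perp \cap \sigma_P^\vee$, but it is the step where the assumption $F$ is a face (not just a submonoid) is used.
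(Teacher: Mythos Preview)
Your proposal is on the right track and would work, but it takes a different route from the paper, and your ``main technical obstacle'' is precisely the step the paper sidesteps.

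The paper does not attempt a direct fan-theoretic description of the fiber over the closed stratum $\spec k[P]/J = \spec k[F]$. Instead it first \emph{localizes}: since $\spec k[P_F]/J_F$ is open and dense in $\spec k[P]/J$, every irreducible component of the total fiber is the closure of a component over this open piece, and it suffices to analyze the family over $\spec k[P_F]/J_F$. Then it uses the splitting $P_F \cong P' \times F^{\gp}$ (available because $P$ is a toric monoid coming from a rational polyhedral cone) to factor the localized family as $\widetilde{P}' \times \spec k[F^{\gp}]$ over $\spec k[P'] \times \spec k[F^{\gp}]$. Since $J$ corresponds to the maximal ideal $\mathfrak{m}' = P' \setminus \{0\}$ under this splitting, the problem reduces literally to the case already handled in Lemma~\ref{convergence}, with $(P',\varphi',\mathscr{P}')$ in place of $(P,\varphi,\mathscr{P})$.

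Your approach instead tries to read off the components directly from the normal fan of $Q_{\tilde\varphi}$. This is feasible, but your description in step one is imprecise: ``cones whose image in $\sigma_P$ is contained in $\tau$'' picks out the preimage of the \emph{open} orbit $O(\tau)$, not of the closed stratum $V(\tau) = \spec k[F]$, which also contains deeper orbits. You would still need to argue that the closures of the components over $O(\tau)$ give exactly the components over $V(\tau)$ and that these closures are proper over $\spec k[F]$, which is essentially the paper's localization step done implicitly. The paper's splitting trick buys you a clean reduction to the known case without having to chase the combinatorics of which cones sit over which faces of $\sigma_P$.
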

\begin{proof}
The family $\widetilde{P}^r\to \spec k[P]$ is an infinite toric degeneration constructed from the infinite piecewise affine function $\varphi$. The restriction to $\spec k[P]/J$ is still a toric degeneration. Since $\spec k[P_F]/J_F$ is open and dense in $\spec k[P]/J$, and the closure of an irreducible space is irreducible, each irreducible component of $\widetilde{P}^r\times_{k[P]}\times\spec k[P]/J$ is a toric degeneration of an irreducible component of $\widetilde{P}^r\times_{k[P]}\times\spec k[P_F]/J_F$. Therefore, it suffices to prove the statement for $\widetilde{P}^r\times_{k[P]}\times\spec k[P_F]/J_F$. 

Since $P$ is toric monoid from a rational polyhedral cone $\sigma_P^\vee$, we can choose a splitting $P_F=P'\times F^{\gp}$. Consider the function $\tilde{\varphi}': \mathbb{X}_\mathbf{R}\to (P')_\mathbf{R}^{\gp}$ and the convex polyhedron $Q_{\tilde{\varphi}'}$ over the graph. Let $\widetilde{P}'$ be the toric embedding assoicated to the convex polyhedron $Q_{\tilde{\varphi}'}$. The localization $\widetilde{P}^r_F:=\widetilde{P}^r\times_{k[P]}\spec k[P_F]$ is isomorphic to $\widetilde{P}'\times \spec k[F^{\gp}]$ over $\spec k[P']\times\spec k[F^{\gp}]$. Since $J$ corresponds to the maximal ideal $\mathfrak{m}'=P'\backslash \{0\}$, it reduces to the case $\widetilde{P}'\times_{k[P']}\spec k[P']/\mathfrak{m}'$. Again this is $\varinjlim_{\sigma'}X_{\sigma'}$ for $\sigma'\in\mathscr{P}'$, with each irreducible component the proper toric variety $X_{\sigma'}$ for a maximal cell $\sigma'\in \mathscr{P}'$. 
\end{proof}

\begin{definition}
For any prime toric ideal $J$, associate the face $F$, the quotient monoid $P'$, the function $\varphi': X_\mathbf{R}\to (P')^{\gp}_\mathbf{R}$, and the paving $\mathscr{P}'$ as above. A toric prime ideal $J$ is called admissible for $\varphi$, if the paving $\mathscr{P}'$ is bounded.  
\end{definition}

\begin{lemma}\label{admissible ideal}
Given a toric ideal $J$ such that $J=\cap_{i} J_i$ for each $J_i$ an admissible prime toric ideal. Then every irreducible component of $\widetilde{P}^r\times_{k[P]}\times\spec k[P]/J$ is proper over $\spec k[P]/J$.
\end{lemma}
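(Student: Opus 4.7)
The plan is to reduce the statement to Lemma~\ref{toric prime} by showing that each irreducible component of $\widetilde{P}^r \times_{k[P]} \spec k[P]/J$ is in fact already an irreducible component of $\widetilde{P}^r \times_{k[P]} \spec k[P]/J_i$ for some $i$, after which properness follows from the previous lemma together with the fact that closed immersions are proper.

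First I would unpack the hypothesis topologically. Since $J = \bigcap_i J_i$ (which we may take to be a finite intersection, since any toric ideal in a finitely generated toric monoid has a finite primary decomposition into toric prime ideals), we have
\[
V(J) = \bigcup_i V(J_i) \subset \spec k[P],
\]
and pulling back along $\widetilde{P}^r \to \spec k[P]$ yields
\[
\widetilde{P}^r \times_{k[P]} \spec k[P]/J \;=\; \bigcup_i \bigl(\widetilde{P}^r \times_{k[P]} \spec k[P]/J_i\bigr)
\]
as a finite union of closed subschemes.

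Next, let $Z$ be an irreducible component of $\widetilde{P}^r \times_{k[P]} \spec k[P]/J$, endowed with its reduced scheme structure. Since $Z$ is irreducible and covered by the finitely many closed subsets $Z \cap (\widetilde{P}^r \times_{k[P]} \spec k[P]/J_i)$, irreducibility forces $Z$ to lie entirely inside one such piece, say $Z \subseteq \widetilde{P}^r \times_{k[P]} \spec k[P]/J_i$. Because $Z$ is a maximal irreducible closed subset of the ambient space and the latter contains $\widetilde{P}^r \times_{k[P]} \spec k[P]/J_i$ as a closed subspace, $Z$ is also maximal inside $\widetilde{P}^r \times_{k[P]} \spec k[P]/J_i$, hence is one of its irreducible components. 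Since $Z$ is reduced and its image in $\spec k[P]$ lies set-theoretically in $V(J_i)$, the morphism $Z \to \spec k[P]/J$ factors through the closed immersion $\spec k[P]/J_i \hookrightarrow \spec k[P]/J$.

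Finally, Lemma~\ref{toric prime} applied to the admissible prime $J_i$ shows that $Z \to \spec k[P]/J_i$ is proper; composing with the proper closed immersion $\spec k[P]/J_i \hookrightarrow \spec k[P]/J$ gives the required properness of $Z \to \spec k[P]/J$. The only subtle point is checking that an irreducible component of the big fiber really sits inside just one of the $\widetilde{P}^r \times_{k[P]} \spec k[P]/J_i$ and occurs there as a component; once the finite primary decomposition of $J$ is invoked, this is purely topological and is the main (and only) conceptual step.
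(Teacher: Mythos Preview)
Your proposal is correct and follows essentially the same approach as the paper's proof: both reduce to Lemma~\ref{toric prime} by observing that each irreducible component of the total fiber lies in the fiber over some $\spec k[P]/J_i$, and then use properness of the closed immersion $\spec k[P]/J_i \hookrightarrow \spec k[P]/J$. Your version simply fills in more of the topological details (finiteness of the decomposition, why $Z$ is again a component of the smaller fiber, and the factorization through the reduced closed subscheme) that the paper leaves implicit.
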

\begin{proof}
The irreducible components of $\spec k[P]/J$ are $\spec k[P]/J_i$. Each irreducible component of $\widetilde{P}^r\times_{k[P]}\times\spec k[P]/J$ is contained in the family over some $\spec k[P]/J_i$ for some $J_i$. It reduces to Lemma~\ref{toric prime}. 
\end{proof}

The set of all admissible prime toric ideals for $\varphi$ is finite. Take the intersection of all these admissible prime ideals, we get a toric ideal denoted by $I_\varphi$. The corresponding ideal in the ring $k[P]$ is also denoted by $I_\varphi$. 

\begin{definition}
Let $F$ be a face of the toric monoid $P$,  and $J=P\backslash F$ the associated prime toric ideal. Let $\mathscr{P}'$ be the paving associated with the face $F$. A prime ideal $\mathfrak{p}\subset R$ is said to be in the interior of the $\mathscr{P}'$-strata, if $J$ is the maximal prime toric ideal such that the corresponding ideal $J\subset \alpha^{-1}(\mathfrak{p})$. 
\end{definition}

\begin{definition}
An ideal $I\subset R$ is called admissible for $\varphi$, if its radical $\sqrt{I}$ contains $\alpha(I_\varphi)$. The set of admissible ideal is denoted by $\ad_\varphi$. 
\end{definition}

\begin{remark}
Both of the definitions depend only on the log structure not the chart $\alpha: P\to R$, because the ideals $\alpha^{-1}(I)$ and $\alpha^{-1}(\mathfrak{p})$ do not depend on the choice of the chart $\alpha$. 
\end{remark}

\begin{lemma}\label{ideal condition}
If $I\subset I'$ and $I\in \ad_\varphi$, then $I'\in \ad_\varphi$. If $\sqrt{I}\in \ad_\varphi$, then $I\in \ad_\varphi$. The collection $\ad_\varphi$ is closed under finite intersection.
\end{lemma}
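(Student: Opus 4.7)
The lemma is essentially a set of elementary closure properties of the condition ``$\sqrt{I} \supset \alpha(I_\varphi)$'' under natural operations on ideals, so the plan is to reduce each of the three assertions to a standard identity about radicals and then invoke the definition of $\ad_\varphi$ directly.

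For the first assertion, I would observe that the containment $I \subset I'$ immediately yields $\sqrt{I} \subset \sqrt{I'}$; combining this with the hypothesis $\sqrt{I} \supset \alpha(I_\varphi)$ gives $\sqrt{I'} \supset \alpha(I_\varphi)$, which is precisely the condition for $I' \in \ad_\varphi$.

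For the second assertion, the key input is the idempotence of the radical operation: $\sqrt{\sqrt{I}} = \sqrt{I}$. Thus the hypothesis $\sqrt{I} \in \ad_\varphi$ unwinds to $\sqrt{I} = \sqrt{\sqrt{I}} \supset \alpha(I_\varphi)$, so $I \in \ad_\varphi$ by definition.

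For the third assertion, I would take $I_1, I_2 \in \ad_\varphi$ and use the identity $\sqrt{I_1 \cap I_2} = \sqrt{I_1} \cap \sqrt{I_2}$ (valid in any commutative ring). Since both $\sqrt{I_1}$ and $\sqrt{I_2}$ contain $\alpha(I_\varphi)$ by hypothesis, so does their intersection, giving $I_1 \cap I_2 \in \ad_\varphi$; finite intersections follow by induction. None of the three steps presents any serious obstacle — the only subtlety is to remember that admissibility is genuinely a condition on $\sqrt{I}$ rather than on $I$ itself, which is exactly what makes all three closure properties automatic.
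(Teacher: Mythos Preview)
Your proof is correct and follows exactly the same approach as the paper's own argument; the paper simply states that the first two assertions follow directly from the definition and that the third follows from $\sqrt{I\cap J}=\sqrt{I}\cap\sqrt{J}$, which is precisely what you have spelled out in detail.
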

\begin{proof}
The first and the second statements follow directly from the definition. The third statement follows from the fact $\sqrt{I\cap J}=\sqrt{I}\cap \sqrt{J}$. 
\end{proof}

Fix an ideal $I\subset R$, admissible for $\varphi$. Take the $I$-adic completion $\widehat{R}_I$. 

\begin{lemma}
The complete ring $\widehat{R}_I$ is a Noetherian, excellent, normal, integral domain, provided that the closed subscheme $\spec R/I$ is connected.
\end{lemma}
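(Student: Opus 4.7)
The plan is to verify the four properties of $\widehat{R}_I$ — Noetherianness, excellence, normality, and integrality — in sequence, with the connectedness hypothesis entering only at the final step.

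Noetherianness is immediate from the classical fact that the $I$-adic completion of a Noetherian ring is Noetherian. For excellence I would invoke the (nontrivial) theorem that the $I$-adic completion of an excellent ring is again excellent. With excellence in hand, normality ascends automatically: the flat homomorphism $R \to \widehat{R}_I$ has geometrically regular formal fibers by excellence of $R$, and since $R$ itself is normal, $\widehat{R}_I$ is normal by the standard ascent criterion for normality along flat maps with geometrically normal fibers (Matsumura, CRT, Theorem 23.9).

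The integral-domain assertion is where the connectedness hypothesis is essential, and this is the step I would spell out in detail. A Noetherian normal ring decomposes canonically as a finite product of normal integral domains, one factor for each connected component of its spectrum, so it suffices to prove that $\spec \widehat{R}_I$ is connected. Since $\widehat{R}_I$ is Noetherian and $I\widehat{R}_I$-adically complete, for every $i \in I\widehat{R}_I$ the element $1-i$ has inverse $\sum_{n \geqslant 0} i^n$ (convergent in the $I$-adic topology), so $I\widehat{R}_I$ lies in the Jacobson radical of $\widehat{R}_I$. The standard idempotent-lifting argument for $I$-adically complete Noetherian rings then shows that the surjection
\[
\widehat{R}_I \twoheadrightarrow \widehat{R}_I/I\widehat{R}_I = R/I
\]
induces a bijection on Boolean algebras of idempotents, hence a bijection on connected components of spectra. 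The hypothesis that $\spec R/I$ is connected therefore forces $\spec \widehat{R}_I$ to be connected, and we conclude.

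The main obstacle is the excellence step: preservation of excellence under $I$-adic completion for a general (non-maximal) ideal is a deep result which I would cite rather than reprove. All the remaining steps are standard commutative-algebra bookkeeping once excellence is granted, with the only subtle point being the necessity of the connectedness hypothesis to pass from Noetherian normal ring to integral domain via the canonical product decomposition.
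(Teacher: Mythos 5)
Your proof is correct and follows essentially the same route as the paper: normality ascends along the flat completion map because excellence of $R$ gives geometrically regular fibers, excellence of $\widehat{R}_I$ is quoted from the literature, and connectedness of $\spec R/I$ upgrades the Noetherian normal ring to a domain. The only difference is that you spell out the idempotent-lifting argument for that last step, which the paper leaves implicit.
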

\begin{proof}
Since $R$ is excellent, the $I$-adic completion $\widehat{R}_I$ is regular over $R$. (\cite{EGA4} EGA \Rmnum{4}$_2$ 7.8.3 (v)).  In particular, since $R$ is normal, $\widehat{R}_I$ is normal. If $\spec R/I$ is connected, $\widehat{R}_I$ is a normal integral domain. Moreover, $\widehat{R}_I$ remains excellent by (\cite{Val} Theorem 9). 
\end{proof}

\begin{proposition}
Assume there is an object of $\DD_{\text{ample}}$ over $R$ that is compatible with $\varphi$, and $I$ is an admissible ideal for $\varphi$, then there exists a projective family $(\mathcal{X},\mathcal{L})$ over $S=\spec \widehat{R}_I$ such that the generic fiber $\mathcal{X}_\eta\cong \widetilde{G}_\eta$ is abelian with an ample polarization.
\end{proposition}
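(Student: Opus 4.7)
The plan is to imitate the Mumford--Alexeev--Nakamura construction, replacing the hypothesis ``$P\setminus\{0\}$ maps into $\sqrt{I}$'' (used in Lemma~\ref{convergence}) by the admissibility of $I$ (Lemma~\ref{admissible ideal}). First I pass to $\widehat{R}_I$, which is still Noetherian, excellent, and a normal integral domain. Compatibility of $\varphi$ with $(\tau,\psi)$ lets me modify $\tau$ and $\psi$ by $b_t^{-1}$ and $a_t^{-1}$ so that they extend to trivializations over $\widehat{R}_I$, and hence I can form the graded $\mathcal{O}_A$-algebra
\[
\mathcal{R}=\bigoplus_{(\alpha,d,p)\in Q_{\tilde{\varphi}}}\mathrm{X}^p\otimes\mathcal{O}_\alpha\otimes \mathcal{M}^d\theta^d
\]
with the $Y$-action $S^*_\lambda$ given by \eqref{explicit action}. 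Set $\widetilde{\mathcal{X}}:=\sheafproj \mathcal{R}$ and $\widetilde{\mathcal{L}}:=\mathcal{O}(1)$. As in the AN construction, $\widetilde{\mathcal{X}}\cong \widetilde{P}^r\times^T\widetilde{G}$ over $S$, and the embedding $\widetilde{G}\hookrightarrow\widetilde{\mathcal{X}}$ comes from lifting $\sigma_P$ as a subfan of the dual fan $\Sigma(\tilde{\varphi})$ exactly as before.

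The next step is to check that $(\widetilde{\mathcal{X}},\widetilde{\mathcal{L}},S^*_\lambda,\widetilde{G})$ is a relative complete model in the sense of \cite{FC}. The only ingredient that is not formal is the properness of the irreducible components of the reductions. For each prime $\mathfrak{p}\supset I$, let $F\subset P$ be the face corresponding to the maximal toric prime ideal contained in $\alpha^{-1}(\mathfrak{p})$ and let $(\mathscr{P}',\varphi')$ be the associated data of Definition~\ref{data associated with a face}. Admissibility of $I$ means that every such prime toric ideal is admissible, so $\mathscr{P}'$ is bounded and Lemma~\ref{admissible ideal} gives properness of every irreducible component of $\widetilde{P}^r\times_{k[P]}\spec k[P]/J$ for $J=\bigcap J_i$ the intersection of all admissible primes in $\alpha^{-1}(\sqrt{I})$. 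Contracting with $\widetilde{G}$ over $T$, the irreducible components of the corresponding reduction of $\widetilde{\mathcal{X}}$ become fibre bundles over $A$ with proper toric fibre $X_{\sigma'}$, $\sigma'\in\mathscr{P}'$.

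Now I reduce modulo $I^n$ for every $n\geqslant 1$ and take the $Y$-quotient $(\widetilde{\mathcal{X}}_n,\widetilde{\mathcal{L}}_n)/Y$, exactly as in \cite{AN} and \cite{Mum72}. The strict $P$-convexity of $\varphi$ together with the positivity hypothesis on $\tau$ guarantees that on each reduction the $Y$-action permutes the irreducible components of $\widetilde{P}^r\times^T \widetilde{G}$ with only finitely many orbits over each stratum of $\mathscr{P}'$, so that finitely many $U(\omega)\times^T\widetilde{G}$ cover a fundamental domain; in particular the quotient is proper of finite type, and the line bundle descends because $S^*_\lambda$ is defined as an action on $\widetilde{\mathcal{L}}$. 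This produces a compatible system of projective formal schemes over $\spf \widehat{R}_I$, which Grothendieck's existence theorem (\cite{EGA3} EGA III$_1$ 5.4.5) algebraizes to the desired projective $(\mathcal{X},\mathcal{L})/S$. The generic fibre is abelian with ample polarization because, over $\eta$, the paving $\mathscr{P}'$ becomes trivial and the construction recovers $\widetilde{G}_\eta/\iota(Y)$ with its polarization.

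The main obstacle is the middle step: ensuring that the $Y$-action is free with finite-fundamental-domain on each reduction modulo~$I^n$. When the closed subscheme $\spec R/I$ no longer sits in the deepest toric stratum of $\spec k[P]$, the irreducible components of the reductions are indexed by the coarser paving $\mathscr{P}'$ rather than by $\mathscr{P}$, and orbit counting in $\mathscr{P}'/Y$ must be controlled uniformly in $n$. Once this compactness is established via the admissibility of $I$ and boundedness of $\mathscr{P}'$, the remaining descent of the polarization along $S^*_\lambda$ and the algebraization are routine extensions of the arguments in \cite{AN} and \cite{FC}.
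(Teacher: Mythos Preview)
Your proposal is correct and follows the same approach as the paper: rerun the AN construction over $\widehat{R}_I$, with the sole substantive change being that the properness of irreducible components in the reductions (the analogue of Lemma~\ref{convergence}) is now supplied by Lemma~\ref{admissible ideal} via the admissibility hypothesis. The paper's own proof is a one-liner pointing to exactly this; your write-up simply unpacks the construction in more detail, and your closing discussion of the $Y$-quotient is more cautious than necessary---once each $\mathscr{P}'$ is bounded, finiteness of orbits and the descent of $\widetilde{\mathcal{L}}$ go through verbatim from \cite{AN} and \cite{FC}, which is why the paper does not comment on them.
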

\begin{proof}
Check the construction. The only thing we need to prove is a statement similar to Lemma~\ref{convergence}. This follows from Lemma~\ref{admissible ideal}. 
\end{proof}

\begin{lemma}[\cite{FC} \Rmnum{3}. Proposition 4.11]
The total space $\mathcal{X}$ is irreducible.
\end{lemma}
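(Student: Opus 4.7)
The plan is to deduce irreducibility of $\mathcal{X}$ from two facts: the generic fiber $\mathcal{X}_\eta$ is an abelian variety (hence irreducible), and $\pi:\mathcal{X}\to S$ is flat. Once both are established, every irreducible component of $\mathcal{X}$ must dominate the integral base $S=\spec\widehat{R}_I$, so each component meets $\mathcal{X}_\eta$; since $\mathcal{X}_\eta$ is irreducible, only one component contains it, and therefore $\mathcal{X}$ itself is irreducible.

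First I would record that $\mathcal{X}_\eta\cong \widetilde{G}_\eta/Y$ is abelian (already noted in the construction), so irreducible.  Next I would verify flatness of $\pi$.  The relative complete model $\widetilde{\mathcal{X}}\cong \widetilde{P}^r\times^T\widetilde{G}$ is flat over $S$: the cover by pieces $U(\omega)\times^T\widetilde{G}$ exhibits it Zariski-locally as an affine toric scheme over the smooth (hence flat) semiabelian $\widetilde{G}$, and these affine charts are flat over $R$ because $\widetilde{P}^r$ is the pull-back of the flat toric family $X_\varphi/\spec k[P]$.  To transfer flatness to the quotient $\mathcal{X}$, I would use that the $Y$-action on the $I$-adic completion $\widehat{\widetilde{\mathcal{X}}}$ is free and properly discontinuous: the positivity condition on $\tau$ (equivalently, $\varphi$ strictly $P$-above its quadratic part modulo $F$) forces, for every quasi-compact open $U\subset \widehat{\widetilde{\mathcal{X}}}$, that only finitely many $\lambda\in Y$ satisfy $S^*_\lambda(U)\cap U\neq\emptyset$, with no fixed points.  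Therefore the étale formal quotient $\widehat{\mathcal{X}}=\widehat{\widetilde{\mathcal{X}}}/Y$ is flat over $S$, and by Grothendieck's existence theorem used to algebraize, so is $\mathcal{X}\to S$.

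With flatness in hand, the argument concludes: let $Z\subset\mathcal{X}$ be any irreducible component with generic point $z$.  By flatness, $\pi(z)$ is the generic point of $S$ (otherwise the associated prime of $\pi(z)$ in $R$ would be embedded in a proper closed subset of the integral scheme $S$, contradicting flatness via going-down).  Thus $Z\cap \mathcal{X}_\eta\neq\emptyset$, and since $\mathcal{X}_\eta$ is irreducible it is contained in a single component.  Hence $\mathcal{X}$ has a unique component.

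The main obstacle is the flatness claim, and specifically the verification that the $Y$-action on the formal completion is free and properly discontinuous in a way that passes flatness to the quotient.  This is precisely where the positivity hypothesis on $(\tau,\psi)$ (equivalently, the strict $P$-convexity of $\varphi$ combined with the admissibility of $I$ for $\varphi$) is used: it guarantees that, modulo any power $I^n$, translates by nonzero $\lambda\in Y$ move affine charts into disjoint ones, so the quotient is taken in the étale topology and flatness descends.  The remaining steps are formal consequences of irreducibility of the generic fiber and of $S$.
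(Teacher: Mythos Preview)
Your proof is correct and is exactly the argument of Faltings--Chai that the paper cites without reproducing: flatness of $\pi$ over the integral base $S$ forces every generic point of $\mathcal{X}$ into the generic fiber, which is an abelian variety and hence irreducible. One small refinement on the flatness step: rather than asserting outright that the full $Y$-action on $\widehat{\widetilde{\mathcal{X}}}$ is free, it is cleaner (and this is what the paper itself does a few lines later when showing the family lies in $\overline{\mathscr{AP}}_{g,d}$) to pass first to a finite-index subgroup $Y_n\subset Y$ acting freely on $\widetilde{\mathcal{X}}_n$ and then observe that the $Y/Y_n$-invariants are a direct summand, which gives flatness of each $\mathcal{X}_n/S_n$ and hence of $\mathcal{X}/S$.
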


\begin{remark}
We call this construction AN construction because it is introduced in the paper \cite{AN} when the base is a complete discrete valuation ring.
\end{remark}


\subsubsection{Properities}
First we point out the relation between AN construction and the standard construction in \cite{ols08}. Recall $S(X)=\mathbf{N}\oplus X$. Introduce the monoid $S(X)\rtimes P$ on the underlying set $S(X)\times P$ with the addition law
\[
(\alpha,p)+(\beta,q)=(\alpha+\beta, p+q+\tilde{\varphi}_\mathscr{P}(\alpha)+\tilde{\varphi}_\mathscr{P}(\beta)-\tilde{\varphi}_\mathscr{P}(\alpha+\beta)).
\]

\begin{lemma}
\[
S(Q_{\varphi})\cong Q_{\tilde{\varphi}}\cong S(X)\rtimes P.
\] 
\end{lemma}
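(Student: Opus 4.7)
The plan is to prove both isomorphisms separately, with the first being almost a tautology about cones over polyhedra and the second being a shift by $\tilde{\varphi}_{\mathscr{P}}$ that absorbs the non-linearity into the twisted addition law.

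For the first isomorphism $S(Q_{\varphi}) \cong Q_{\tilde{\varphi}}$, I would unwind the definitions. By construction $\tilde{\varphi}$ is the degree-one homogeneous extension of $\varphi$ on $\mathbb{X}_{\mathbf{R}} = \mathbf{R} \oplus X_{\mathbf{R}}$, so $\tilde{\varphi}(d,d\alpha) = d\,\varphi(\alpha)$ for $d > 0$. A point $(d, d\alpha, dh) \in C(Q_{\varphi})$ with $d > 0$ satisfies $dh \stackrel{P}{\geqslant} d\,\varphi(\alpha) = \tilde{\varphi}(d,d\alpha)$, which is exactly the defining inequality of $Q_{\tilde{\varphi}}$; the rays of the recession cone $\{0\} \times \sigma_P$ match the locus $d = 0$ on both sides since $\tilde{\varphi}(0,0) = 0$. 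Intersecting with $\mathbb{X} \times P^{\gp}$ gives the claimed identification of monoids, the monoid structure being the one induced from $\mathbb{X} \times P^{\gp}$ on both sides.

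For the second isomorphism $Q_{\tilde{\varphi}} \cong S(X) \rtimes P$, I would define
\[
\Phi\colon S(X) \rtimes P \longrightarrow Q_{\tilde{\varphi}}, \qquad (d,x,p) \longmapsto (d,x,\, p + \tilde{\varphi}_{\mathscr{P}}(d,x)),
\]
with inverse $(d,x,h) \mapsto (d,x,\, h - \tilde{\varphi}_{\mathscr{P}}(d,x))$. Well-definedness rests on the hypothesis that $\varphi$ is integral so that $\tilde{\varphi}_{\mathscr{P}}(d,x) \in P^{\gp}$ for $(d,x) \in \mathbb{X}$, together with the saturation identity $\sigma_P \cap P^{\gp} = P$, which forces $h - \tilde{\varphi}_{\mathscr{P}}(d,x) \in P$ whenever $h \stackrel{P}{\geqslant} \tilde{\varphi}_{\mathscr{P}}(d,x)$. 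The compatibility with addition is then a direct check: adding $(d_i,x_i,p_i)$ in $S(X) \rtimes P$ using the twisted law picks up exactly the defect $\tilde{\varphi}_{\mathscr{P}}(d_1,x_1)+\tilde{\varphi}_{\mathscr{P}}(d_2,x_2)-\tilde{\varphi}_{\mathscr{P}}(d_1+d_2,x_1+x_2)$, which after applying $\Phi$ cancels against the shift introduced on the third coordinate, yielding ordinary coordinate-wise addition in $\mathbb{X} \times P^{\gp}$.

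The only step requiring any care is checking that the bijection $\Phi$ lands in the correct monoid (and is surjective onto it), i.e.\ that $\tilde{\varphi}_{\mathscr{P}}$ indeed takes values in $P^{\gp}$ on lattice points of $\mathbb{X}$; this follows from the integrality of $\varphi$ (each $\varphi|_{\sigma} \in \mathbb{X}^{*} \otimes P^{\gp}$) and the assumption $P = \sigma_P^{\vee} \cap P^{\gp}$, both of which are in force throughout the paper. Once these hypotheses are invoked the remaining verifications are formal, so the lemma reduces to bookkeeping rather than any genuine obstacle.
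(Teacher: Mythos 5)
Your proof is correct and is essentially the paper's argument: the paper's isomorphism is exactly your shift map, sending the ``lowest'' element $(d,\alpha,\tilde{\varphi}(d,\alpha))$ of each fiber of $S(Q_\varphi)\to S(X)$ to $(d,\alpha,0)$, the only difference being that the paper gets the existence/uniqueness of these minimal elements (equivalently, that $h-\tilde{\varphi}(d,\alpha)\in P$) from the integrality of $P\to S(Q_\varphi)$ and sharpness of $P$ via (\cite{ols08} Lemma 3.1.32), whereas you check it directly from integrality of $\tilde{\varphi}$ and saturation of $P$. The only cosmetic slip is the wobble between $P=\sigma_P\cap P^{\gp}$ and $P=\sigma_P^{\vee}\cap P^{\gp}$ (the paper itself uses both conventions); what you actually use, saturation of $P$, is the right ingredient.
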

\begin{proof}
Notice that $P\to S(Q_\varphi)$ is an integral morphism of integral monoids. $S(Q_\varphi)^{\gp}=\mathbf{R}\times\overline{X}\times P^{\gp}\times\cong \mathbb{X}\times P^{\gp}$. Under this isomorphism, the morphism $P^{\gp}\to S(Q_\varphi)^{\gp}$ is the homomorphism
\begin{align*}
P^{\gp}&\longrightarrow \mathbb{X}\times P^{\gp},\\
p&\longmapsto (0,p).
\end{align*}

Then the cokernel of $f^\flat: P\to S(Q_\varphi)$ is equal to the image of $S(Q_\varphi)$ in $S(Q_\varphi)^{\gp}/P^{\gp}=\mathbb{X}$, which is $S(Q)$. Denote the projection $S(Q_\varphi)\to S(Q)$ by $b$. Since $P$ is sharp, by (\cite{ols08} Lemma 3.1.32), for any $q\in S(Q)$, there exists a unique element $\tilde{q}\in b^{-1}(q)$ such that $\tilde{q}\stackrel{P}{\leqslant}x$ for all $x\in b^{-1}(q)$. For any $q\in S(Q)$, define $\vartheta_q:=\mathrm{X}^{\tilde{q}}$. By definition, for any $x\in b^{-1}(q)$, $x-\tilde{q}\in P$. Therefore $R_{\varphi}$ is a free $k[P]$-module generated by $\{\vartheta_q\}$ for all $q\in S(Q)$. We map the lowest elements $(d,\alpha,p)$ in $Q_{\tilde{\varphi}}$ to $(d,\alpha,0)$ in $S(X)\rtimes P$. This induces an isomorphism of monoids. 
\end{proof}

As a result, $\widetilde{P}^r\cong\proj k[S(X)\rtimes P]\otimes_{k[P]}R$.

Reduction to $I^n$ for each $n$, this is the standard construction in (\cite{ols08} 5.2), except that we have a toric monoid $P$ instead of $H_\mathscr{P}$. Under this isomorphism, the data $\tau,\psi$ in \cite{ols08} corresponds to $b_t^{-1}\tau$ and $a_t^{-1}\psi$ in our case. The constructions of group actions and log structures in \cite{ols08} for the standard constructions can be applied to our case directly. In particular, we have the action $\varrho_n$ of $\widetilde{G}_n$ on $\mathcal{X}_n$ over $S_n:=\spec R/I^n$. If we do the algebraization, we get a $G$-action $\varrho$ on $\mathcal{X}$ over $S$, where $G$ is the semiabelian scheme obtained by Mumford's original construction (\cite{Mum72} pp. 297 or \cite{FC} pp. 66).

The choice of $\mathcal{M}$ gives a $T$-linearization of the line bundle $\widetilde{\mathcal{L}}$ on $\widetilde{\mathcal{X}}$. The action of $T$ gives the Fourier decomposition of $\H^0(\widetilde{\mathcal{X}}, \widetilde{\mathcal{L}})$. By the Heisenberg relation of the action of $Y$ and $\widetilde{G}$, the decomposition is preserved by the action of $Y$. Therefore, we have the Fourier decomposition
\begin{equation}\label{Fourier decomposition}
\H^0(\mathcal{X},\mathcal{L})=\bigoplus_{\alpha\in X/\phi(Y)} \H^0(A,\mathcal{M}_\alpha).
\end{equation}

Let $\mathscr{P}^0\subset X/\phi(Y)$ be a set of  representatives of the vertices of the paving $\mathscr{P}$. For every $\alpha\in \mathscr{P}^0$, choose $\vartheta_{A,\alpha}\in \H^0(A,\mathcal{M}_\alpha)$ such that the residue $\vartheta_{A,\alpha,s}\neq 0$ for any $s\in S$. Define a section $\vartheta_n\in \H^0(\mathcal{X}_n,\mathcal{L}_n)$ as a descent from 
\begin{equation}\label{theta sum}
\tilde{\vartheta}=\sum_{\lambda\in Y} \sum_{\alpha\in \mathscr{P}^0} S_\lambda^*(\vartheta_{A,\alpha}). 
\end{equation}

This is a finite sum for every admissible ideal $I^n$. Let $\Theta_n$ denote the divisor defined by $\vartheta_n$. The collection $\{\Theta_n\}$ gives a divisor $\Theta$ on $\mathcal{X}$ such that $(\mathcal{X},\Theta)$ is a stable pair in $\overline{\mathscr{AP}}_{g,d}$. 

\begin{lemma}\label{stable pairs}
For any admissible ideal $I$, and any geometric point $\bar{x}\to \spec R/I$, the geometric fiber $(X_{\bar{x}},\mathcal{L}_{\bar{x}},\varrho_{\bar{x}},\Theta_{\bar{x}})$ is a stable semiabelic pair as defined in \cite{Alex02}. In particular, if the image $x$ is in the $\mathscr{P}'$-strata, for a bounded paving $\mathscr{P}'$ of $X_\mathbf{R}$, then $(X_{\bar{x}},\mathcal{L}_{\bar{x}}, \varrho_{\bar{x}})$ is an element in $M^{\text{fr}}[\Delta_{\mathscr{P}'}, c, c^t, \mathcal{M}](\kappa(\bar{x}))$ defined by $\psi_0^{(\cdot)}(\cdot)\tau_0(\cdot, \cdot)\in Z^1(\Delta_{\mathscr{P}'},\widehat{\underline{\mathbb{X}}})$, and $(X_{\bar{x}},\mathcal{L}_{\bar{x}},\varrho_{\bar{x}},\Theta_{\bar{x}})$ is an element of $MP^{\text{fr}}[\Delta_{\mathscr{P}'}, c, c^t, \mathcal{M}](\kappa(\bar{x}))$ defined by $(\tilde{\vartheta}_0, \psi_0^{(\cdot)}(\cdot)\tau_0(\cdot, \cdot))\in Z^1(\Delta_{\mathscr{P}'}, \widehat{\mathbb{M}}^*)$. Here $\kappa(\bar{x})$ is the residue field for $\bar{x}$, and $\Delta_{\mathscr{P}'}$ is the complex defined by the paving $\mathscr{P}'$ on $X_\mathbf{R}/\check{Y}$. The notations $\psi_0,\tau_0,\tilde{\vartheta}_0$ is defined in the proof. For other notations, see \cite{Alex02}. 
\end{lemma}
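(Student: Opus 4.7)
The plan is to compute the geometric fiber $(X_{\bar x},\mathcal{L}_{\bar x},\varrho_{\bar x},\Theta_{\bar x})$ explicitly via the AN construction and then match it with Alexeev's description of a stable semiabelic pair. First, since $\bar x$ lies in the interior of the $\mathscr{P}'$-stratum, the corresponding prime $\mathfrak{p}\subset R$ is the image of a toric prime $J$ with associated face $F=P\setminus J$; localizing at $F$ I replace $(P,\varphi,\mathscr{P})$ by the associated data $(P',\varphi',\mathscr{P}')$ of Definition~\ref{data associated with a face}, and base-changing to $\kappa(\bar x)$ kills the remaining toric maximal ideal of $P'$.

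Using the isomorphism $\widetilde{\mathcal{X}}\cong \widetilde{P}^r\times^T\widetilde{G}$ from the construction, after reducing to $\kappa(\bar x)$ (as in the proof of Lemma~\ref{convergence}) the factor $\widetilde{P}^r_0$ becomes $\varinjlim_{\sigma}X_\sigma$ for $\sigma\in\mathscr{P}'$, with each irreducible component the proper toric variety $X_\sigma$ attached to a maximal cell. Taking the contracted product with $\widetilde{G}_{\bar x}$ and then the quotient by the $Y$-action $S^*_\lambda$ yields a variety whose irreducible components are toric bundles over $A_{\bar x}$ indexed by $\mathscr{P}'^{[g]}/\phi(Y)$, glued along the combinatorics of the complex $\Delta_{\mathscr{P}'}$ on $X_\mathbf{R}/\check Y$. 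This is precisely the description of a stable semiabelic variety in \cite{Alex02}, and the polarization $\mathcal{L}_{\bar x}$ with the induced $G_{\bar x}$-action $\varrho_{\bar x}$ identifies the triple with an object of $M^{\fr}[\Delta_{\mathscr{P}'},c,c^t,\mathcal{M}](\kappa(\bar x))$. To extract the cocycle, I reduce the formula~\eqref{explicit action} modulo $\mathfrak p$: because $\varphi$ is compatible with $\tau,\psi$ and $I\in\ad_\varphi$, the factors $a_t(\lambda)^d b_t(\lambda,\alpha)$ vanish exactly on those summands $(\alpha,d,p)$ for which $\tilde\varphi'$ is infinite, and the surviving data assemble into a $1$-cocycle $\psi_0^{(\cdot)}(\cdot)\tau_0(\cdot,\cdot)\in Z^1(\Delta_{\mathscr{P}'},\widehat{\underline{\mathbb{X}}})$ of exactly the shape in \cite{Alex02}.

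For the pair structure, I examine the section $\tilde\vartheta$ of~\eqref{theta sum}. Modulo an admissible ideal $I$ only finitely many $Y$-translates in the sum remain nonzero on a given chart $U(\omega)\times^T\widetilde{G}$, so $\tilde\vartheta_0$ is a well-defined section whose Fourier component along $\alpha$ is either $0$ or $\vartheta_{A,\alpha}$; pulling back to each toric-bundle component of $X_{\bar x}$, the restriction is a translate of $\vartheta_{A,\alpha}$ multiplied by a monomial section of the toric bundle whose support is controlled by the vertex set $\mathscr{P}^{0}$. The pair $(\tilde\vartheta_0,\psi_0^{(\cdot)}(\cdot)\tau_0(\cdot,\cdot))$ then assembles into a class in $Z^1(\Delta_{\mathscr{P}'},\widehat{\mathbb{M}}^*)$ representing $(X_{\bar x},\mathcal{L}_{\bar x},\varrho_{\bar x},\Theta_{\bar x})\in MP^{\fr}[\Delta_{\mathscr{P}'},c,c^t,\mathcal{M}](\kappa(\bar x))$. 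The general stable semiabelic pair statement then follows by taking $\mathscr{P}'=\mathscr{P}$ to be the trivial coarsening at the generic point of each stratum and varying $\bar x$.

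The main obstacle I anticipate is verifying stability of $\Theta_{\bar x}$, that is, showing the restriction of $\tilde\vartheta_0$ to every $G_{\bar x}$-orbit on each irreducible component is nonzero. This amounts to bookkeeping: one must check that the choice $\mathscr{P}^0$ together with the nonvanishing hypothesis on each $\vartheta_{A,\alpha,s}$ supplies at least one surviving monomial on each torus orbit of every maximal cell of $\mathscr{P}'/\phi(Y)$, uniformly in $\bar x$. Once this combinatorial nonvanishing is established, matching with Alexeev's cocycle description is a direct comparison of the formulas produced by~\eqref{explicit action} with the coboundary conventions of \cite{Alex02}.
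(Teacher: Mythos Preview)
Your proposal is correct and follows essentially the same route as the paper: localize at the face $F$ to replace $(P,\varphi,\mathscr{P})$ by $(P',\varphi',\mathscr{P}')$, reduce the contracted product $\widetilde{P}^r\times^T\widetilde{G}$ modulo $\mathfrak{m}_x$ to obtain the gluing of $(P,L)[\sigma,c,\mathcal{M}]$ over cells $\sigma\in\mathscr{P}'$, read off the cocycle $\psi_0\tau_0\in Z^1(\Delta_{\mathscr{P}'},\widehat{\underline{\mathbb{X}}})$ as the residue of~\eqref{explicit action}, and pair it with the residue $\tilde\vartheta_0$ of~\eqref{theta sum}.

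Two small remarks. First, your phrase ``those summands for which $\tilde\varphi'$ is infinite'' is not quite right: $\tilde\varphi'$ is always finite; the mechanism is rather that for $(m,\alpha),(n,\beta)$ not lying in a common cell of $C(\mathscr{P}')$, the element $\tilde\varphi'(m,\alpha)+\tilde\varphi'(n,\beta)-\tilde\varphi'(m+n,\alpha+\beta)$ lies in $P'\setminus\{0\}$ and hence maps to $0$ in $\kappa(\bar x)$. Second, the stability check you flag as the ``main obstacle'' is lighter than you suggest. Since $\mathscr{P}^0$ is a set of representatives of \emph{all} vertices of $\mathscr{P}$, and $\mathscr{P}'$ is coarser than $\mathscr{P}$, every cell of $\mathscr{P}'$ contains a vertex from $\mathscr{P}^0$; together with the hypothesis that each $\vartheta_{A,\alpha,s}\neq 0$, this places $\tilde\vartheta_0$ in $C^0(\Delta_{\mathscr{P}'},\widehat{\underline{\mathrm{Fun}}}_{\geqslant 0})$ with a nonzero value on every vertex, so $\Theta_{\bar x}$ avoids every $G_{\bar x}$-stratum. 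The $Y$-invariance of $\tilde\vartheta$ is then exactly the cocycle condition placing $(\tilde\vartheta_0,\psi_0\tau_0)$ in $Z^1(\Delta_{\mathscr{P}'},\widehat{\mathbb{M}}^*)$.
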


\begin{proof}
Let $J$ and $F$ be the prime toric ideal and the face associated with $\mathscr{P}'$. We can localize with respect to $F$ first, and replace $\varphi$ by $\varphi'$, which gives the paving $\mathscr{P}'$. Consider the geometric fiber $\widetilde{X}_{\bar{x}}$. Since $\alpha(J)\subset \mathfrak{m}_x$, if $(m,\alpha)$ and $(n,\beta)$ is not in the same cell in $C(\mathscr{P}')$, $\tilde{\varphi}'(m,\alpha)+\tilde{\varphi}'(n,\beta)-\tilde{\varphi}'(m+n,\alpha+\beta)$ is mapped to zero in $\kappa(\bar{x})$. Therefore, denote $\mathrm{X}^{(m,\alpha),\tilde{\varphi}'(m,\alpha)}\otimes\mathcal{O}_\alpha\otimes\mathcal{M}^m\theta^m$ by $\mathcal{M}_{(m,\alpha)}$, $(\widetilde{X}_{\bar{x}},\widetilde{\mathcal{L}}_{\bar{x}})$ is the gluing of $(P,L)[\delta, c, \mathcal{M}]$ for each $\delta\in \mathscr{P}'$. Here $(P,L)[\delta, c, \mathcal{M}]$ is defined in (\cite{Alex02} Definition 5.2.5). The action of $Y$ is induced by $\psi^{(\cdot)}(\cdot)\tau(\cdot, \cdot)$, i.e. the residue of $\psi',\tau'$ in $\kappa(\bar{x})$ for $\psi',\tau'$ in Equation~\eqref{explicit action}. Let the residues be denoted by $\psi_0, \tau_0$. It is an element in $Z^1(\Delta_{\mathscr{P}'},\widehat{\underline{\mathbb{X}}})$.

The residue of $\tilde{\vartheta}$ in $\kappa(\bar{x})$ is denoted by $\tilde{\vartheta}_0$. By the definition of $\vartheta_\alpha$, the residue $\tilde{\vartheta}_0$ is in $C^0(\Delta_{\mathscr{P}'}, \widehat{\underline{\text{Fun}}}_{\geqslant 0})=\oplus_i \widehat{\text{Fun}}_{\geqslant 0,i}$, and $\Theta_{\bar{x}}$ does not contain any $G_{\bar{x}}$-strata entirely. The condition that $\tilde{\vartheta}$ is $Y$-invariant means that $(\tilde{\vartheta}_0, \psi_0^{(\cdot)}(\cdot)\tau_0(\cdot, \cdot))$ is an element in $Z^1(\Delta_{\mathscr{P}'}, \widehat{\mathbb{M}}^*)$.  
\end{proof}

\begin{lemma}\label{the stable pair is in AP}
For any admissible ideal $I$ and any $n$, the family $(\mathcal{X}_n,\mathcal{L}_n,\varrho_n, \Theta_n)$ over $\spec R/I^n$ is an object in $\overline{\mathscr{AP}}_{g,d}(\spec R/I^n)$. 
\end{lemma}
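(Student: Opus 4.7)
The plan is to verify each of the conditions defining an object of $\overline{\mathscr{AP}}_{g,d}(S_n)$ for $S_n = \spec R/I^n$, noting that fiberwise stability is precisely the content of Lemma~\ref{stable pairs}. Recall that such an object consists of a flat proper morphism $\mathcal{X}_n \to S_n$ equipped with a relatively ample line bundle $\mathcal{L}_n$ of polarization degree $d$, an action $\varrho_n$ of a relative-dimension-$g$ semiabelian group scheme $G_n/S_n$, and a relative effective Cartier divisor $\Theta_n$ with $\mathcal{L}_n \cong \mathcal{O}_{\mathcal{X}_n}(\Theta_n)$, such that every geometric fiber is a stable semiabelic pair in the sense of \cite{Alex02}. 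Everything except the first, third, and fourth of these items has already been packaged into the AN construction, so the proof is largely an assembly.

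For flatness and properness of $\mathcal{X}_n/S_n$, I would use the presentation as an étale-local contracted product $\widetilde{P}^r_n \times^T \widetilde{G}_n$. The factor $\widetilde{G}_n$ is flat over $S_n$ from the AN construction, while $\widetilde{P}^r_n \to \spec k[P]/I^n$ is flat with its irreducible components proper by Lemma~\ref{toric prime} and Lemma~\ref{admissible ideal}. Ampleness of $\mathcal{L}_n = \mathcal{O}_{\mathcal{X}_n}(1)$ is built into the $\sheafproj$-construction of $\mathcal{R}$, and the polarization degree is $d$ by the Fourier decomposition \eqref{Fourier decomposition}, since $|X/\phi(Y)| = d$. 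The action $\varrho_n$ is obtained exactly as in \cite{ols08}~Section~5 once one invokes the isomorphism $S(Q_\varphi) \cong S(X) \rtimes P$ proved above; the argument there is independent of whether the base monoid is $H_\mathscr{P}$ or a general toric monoid $P$.

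The divisor $\Theta_n$ demands the most care. Since $I$ is admissible for $\varphi$, so is $I^n$ (Lemma~\ref{ideal condition}), and therefore only finitely many terms of the formal sum \eqref{theta sum} defining $\tilde{\vartheta}$ contribute non-trivially modulo $I^n$; hence $\vartheta_n$ is a genuine global section of $\mathcal{L}_n$. Because each $\vartheta_{A,\alpha}$ has non-vanishing residue at every point of $S$, the restriction of $\vartheta_n$ to any geometric fiber $\bar{x}$ is precisely the cocycle $(\tilde{\vartheta}_0, \psi_0^{(\cdot)}(\cdot)\tau_0(\cdot,\cdot)) \in Z^1(\Delta_{\mathscr{P}'}, \widehat{\mathbb{M}}^*)$ described in Lemma~\ref{stable pairs}, and in particular is non-zero on every irreducible component. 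Combined with flatness of $\mathcal{X}_n/S_n$, this shows $\Theta_n$ is a relative effective Cartier divisor whose support contains no $G_n$-orbit on any geometric fiber.

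The main obstacle is the bookkeeping needed to translate the cocycle data fiber-by-fiber into a family-level statement: one must match the stratification of $S_n$ by admissible prime toric ideals with the cell structure of $\mathcal{X}_n$ coming from the pavings $\mathscr{P}'$, and verify that as $\bar{x}$ varies across strata the cocycle representatives assemble into the single global section $\vartheta_n$. Once admissibility is used to make the sum \eqref{theta sum} finite uniformly in the fiber, this compatibility is automatic from the construction, and the remaining moduli-theoretic conditions reduce entirely to the fiberwise assertions of Lemma~\ref{stable pairs}.
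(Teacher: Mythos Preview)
Your overall plan—verify the moduli-theoretic conditions and reduce the fiberwise checks to Lemma~\ref{stable pairs}—is the right one, and in fact the paper collapses almost all of your discussion into the single sentence ``By Lemma~\ref{stable pairs}, it suffices to prove that $(\mathcal{X}_n,\mathcal{L}_n)$ is flat over $S_n$.'' Properness, ampleness, the $G_n$-action, and the stability of $\Theta_n$ are already packaged into the construction and the fiberwise lemma; the only missing global ingredient is flatness.

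However, your flatness argument has a genuine gap. The contracted product $\widetilde{P}^r_n\times^T\widetilde{G}_n$ is the presentation of the \emph{cover} $\widetilde{\mathcal{X}}_n$, not of $\mathcal{X}_n$ itself; $\mathcal{X}_n$ is only obtained after the $Y$-quotient, and you give no reason why flatness survives that quotient. This is precisely the substance of the paper's proof: one first observes that $\mathcal{R}$ is locally free as an $\mathcal{O}_A$-module (hence $\widetilde{\mathcal{X}}_n$ is flat over $A$, and therefore over $S_n$), and then runs the $Y$-quotient in two steps—first by a finite-index subgroup $Y_n\subset Y$ acting freely on $\widetilde{\mathcal{X}}_n$ (so the quotient is an \'etale-local isomorphism and preserves flatness), and then by the finite group $Y/Y_n$, whose invariants form a direct summand and hence remain flat. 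Without this argument, flatness of $\mathcal{X}_n$ over $S_n$ is not established. (A minor point: $\widetilde{P}^r_n$ is a scheme over $S_n$, not over $\spec k[P]/I^n$; the flatness you want there is that of $X_\varphi$ over $\spec k[P]$, which does hold since $k[S(Q_\varphi)]$ is free over $k[P]$.)
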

\begin{proof}
By Lemma~\ref{stable pairs}, it suffices to prove that $(\mathcal{X}_n, \mathcal{L}_n)$ is flat over $S_n:=\spec R/I^n$. First, $\widetilde{\mathcal{X}}$ is flat over $A$, since $\mathcal{R}$ is locally free as $\mathcal{O}_A$-module. Secondly, when we take the quotient by $Y$, we first take a quotient by $Y_n$, such that $Y_n\subset Y$ is of finite index, and the action of $Y_n$ on $\widetilde{\mathcal{X}}_n$ is free. So this quotient preserves the flatness. Finally, we take the quotient by the finite group $Y/Y_n$. The $Y/Y_n$-invariant part is a direct summand. Therefore $(\mathcal{X}_n,\mathcal{L}_n)$ is flat over $S_n$. \end{proof}

By Grothendieck's existence theorem (\cite{EGA3} EGA \Rmnum{3}$_{1}$ 5.4.5), we get a family $(\mathcal{X}, \mathcal{L}, G, \varrho,\Theta)$ in $\overline{\mathscr{AP}}_{g,d}(S)$.

\begin{corollary}\label{locally free}
The coherent sheaves $\pi_*\mathcal{L}^m$ are locally free of rank $dm^g$ for all $m\geqslant 0$. The coherent sheaves $R\pi_*^i\mathcal{L}^m=0$ for all $m>0, i>0$. In particular, we have $\pi_*\mathcal{L}^m\otimes \kappa(x)\cong \H^0(X_x,\mathcal{L}^m_x)$. 
\end{corollary}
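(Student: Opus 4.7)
The plan is to deduce everything from fiberwise cohomological vanishing via cohomology and base change. Since $\pi$ is flat and projective (Lemma~\ref{the stable pair is in AP}), the Euler characteristic $\chi(\mathcal{X}_s,\mathcal{L}_s^m)$ is locally constant on $S$. At the generic point $\eta$, $(\mathcal{X}_\eta,\mathcal{L}_\eta)$ is a polarized abelian variety of degree $d$, so by Riemann--Roch and Mumford's vanishing one gets $\chi(\mathcal{L}_\eta^m)=dm^g$ and $H^i(\mathcal{X}_\eta,\mathcal{L}_\eta^m)=0$ for $i>0$, $m\geqslant 1$. Hence the whole corollary will follow once we prove that $H^i(\mathcal{X}_{\bar s},\mathcal{L}^m_{\bar s})=0$ for every geometric point $\bar s$ of $S$ and every $i>0$, $m\geqslant 1$: Grauert's theorem, applied by descending induction on $i$, then forces $R^i\pi_*\mathcal{L}^m=0$ for $i>0$ and makes $\pi_*\mathcal{L}^m$ locally free of rank equal to the Euler characteristic, with formation compatible with arbitrary base change.

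For the fiberwise vanishing my plan is to exploit Lemma~\ref{stable pairs}: every geometric fiber $(\mathcal{X}_{\bar s},\mathcal{L}_{\bar s},\varrho_{\bar s})$ is a stable semiabelic variety in the sense of Alexeev, and the desired vanishing $H^i(\mathcal{X}_{\bar s},\mathcal{L}_{\bar s}^m)=0$ for $i>0$ is covered by the Mumford-type vanishing for stable semiabelic pairs established in \cite{Alex02}. A self-contained alternative is to use the $Y$-equivariant covering $\Upsilon:\widetilde{\mathcal{X}}_{\bar s}\to \mathcal{X}_{\bar s}$ together with the Fourier decomposition \eqref{Fourier decomposition}: after restricting to a sublattice $Y_n\subset Y$ of finite index acting freely, the pull-back of $\mathcal{L}^m$ to $\widetilde{\mathcal{X}}_{\bar s}/Y_n$ splits, via the torus action and the Fourier grading, as a finite direct sum of line bundles of the form $\mathcal{M}^m_\alpha$ on the abelian part $A_{\bar s}$, each ample. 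Standard Mumford vanishing on $A_{\bar s}$ kills the positive-degree cohomology summand by summand, and the finite \'etale push-forward plus averaging over $Y/Y_n$ isolates the $Y$-invariant part without creating higher cohomology.

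The main obstacle is controlling the cohomology on the reducible central fibers, whose components are toric fibrations over $A_{\bar s}$ (cf.\ the proof of Lemma~\ref{convergence}). Passing from $\widetilde{\mathcal{X}}$ down to $\mathcal{X}$ must not introduce higher cohomology, and this is exactly where the $\pi$-ampleness of $\mathcal{L}$ built into the AN construction---through the strict $P$-convexity and $Y$-quasi-periodicity of $\varphi$---is essential. Concretely, covering $\widetilde{\mathcal{X}}_{\bar s}$ by the affine toric opens $U(\omega)\times^T\widetilde{G}_{\bar s}$ of Lemma~\ref{convergence}, the line bundle is trivial along the toric fibers, and a \v{C}ech computation reduces the question to a direct sum of computations on $A_{\bar s}$, where the required vanishing is immediate. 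With fiberwise vanishing in hand, the rank statement $\operatorname{rk}\pi_*\mathcal{L}^m=dm^g$ is then just the constancy of the Euler characteristic.
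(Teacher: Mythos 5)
Your proposal is correct and follows essentially the same route as the paper: the key input is Alexeev's vanishing theorem for stable semiabelic varieties (via Lemma~\ref{stable pairs}) applied to each geometric fiber, followed by cohomology and base change over the Noetherian base to get $R^i\pi_*\mathcal{L}^m=0$, local freeness, and base-change compatibility, with the rank $dm^g$ read off from the abelian generic fiber. The paper phrases the base-change step via Hartshorne III.12.11 rather than Grauert/Euler-characteristic constancy, and does not need your alternative Fourier-decomposition argument, but these are cosmetic differences.
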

\begin{proof}
For any $x\in S$, consider $\bar{x}\to S$ over the image $x$, with $\kappa(\overline{x})$ the algebraic closure of $\kappa(x)$. Consider the fiber $(X_{\bar{x}},\mathcal{L}_{\bar{x}})$. By (\cite{Alex02} Theorem 5.4.1), $H^i(X_{\bar{x}}, \mathcal{L}_{\bar{x}}^m)=0$ for any $i, m\geqslant 1$. Since $\kappa(x)\to \kappa(\overline{x})$ is flat, $H^i(X_x,\mathcal{L}_x^m)=0$ for any $i,m\geqslant 1$ by (\cite{Har77} Chap. \Rmnum{3} Proposition 9.3). Since the base $S$ is still Noetherian (\cite{AM} Theorem 10. 26) and $\mathcal{L}^m$ is flat over $S$, we can apply (\cite{Har77} Chap. \Rmnum{3} Theorem 12.11). We have $R\pi^i_*\mathcal{L}^m=0$ for all $i, m>0$, and $\pi_*\mathcal{L}^m\otimes \kappa(x)\to \H^0(X_x,\mathcal{L}^m_x)$ is surjective. Apply (\cite{Har77} Chap. \Rmnum{3} Theorem 12.11) again, $\pi_*\mathcal{L}^m$ are locally free, and $\pi_*\mathcal{L}^m\otimes \kappa(x)\cong \H^0(X_x,\mathcal{L}^m_x)$. For the generic $\eta$, $X_\eta$ is an abelian variety and $\H^0(X_\eta,\mathcal{L}^m_\eta)=dm^g$, so $\pi_*\mathcal{L}^m$ is of rank $dm^g$. 
\end{proof}

By abusing of notations, the morphism $A\to S$ is also denoted by $\pi$. Then $\pi_*\mathcal{M}$ is also locally free. The Fourier decomposition is a decomposition of locally free sheaves over $S$
\begin{equation}\label{Fourier decomposition equation}
\pi_*\mathcal{L}\cong \bigoplus_{\alpha\in X/\phi(Y)}\pi_*\mathcal{M}_\alpha. 
\end{equation}

For any coherent sheaf $\mathcal{F}$ over $S$, define $\mathcal{F}^*$ to be the subsheaf 
\[
\mathcal{F}^*(U):=\{f\in \mathcal{F}(U): f\vert_{\kappa(s)}\neq 0,\forall s\in S\} \quad \forall U\subset S.
\]

If we choose a section $\vartheta \in \oplus_{\alpha\in \mathscr{P}^0} \pi_*\mathcal{M}_\alpha^*$, then $(\mathcal{X},\mathcal{L},G,\varrho, \Theta)$ is an object in $\overline{\mathscr{AP}}_{g,d}(\widehat{R}_I)$.

The AN construction is functorial.  Assume $S$, $S'$ are as above, with degeneration data, charts $P\to R$ and $P'\to R'$, and compatible functions $\varphi: X_\mathbf{R}\to P^\gp_\mathbf{R}$, $\varphi':X_\mathbf{R}\to (P'_\mathbf{R})^\gp$, with charts $\alpha: P\to R$ and $\alpha':P'\to R'$. Suppose $(f,f^\flat)$ is a strict log morphism $S'\to S$. Moreover, the degeneration data over $S'$ is isomorphic to the pull back of degeneration data along $f$, and $\psi:=f^\sharp\alpha\circ\varphi/\alpha'\varphi'$ takes values in $(R')^*$.

\begin{proposition}[functorial]\label{AN functorial}
The pull back of the AN family $(\mathcal{X},\mathcal{L}, G,\varrho)$ over $S$ is isomorphic to the AN family $(\mathcal{X}',\mathcal{L}', G',\varrho')$ over $S'$. 
\end{proposition}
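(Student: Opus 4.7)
The strategy is to reduce the proposition to a formal-level statement and then to an isomorphism between the relative complete models. By Grothendieck's existence theorem, it suffices to exhibit compatible isomorphisms $f_n^*\mathcal{X}_n\cong \mathcal{X}'_n$ over each $S'_n=\spec R'/(I')^n$; the algebraization step that is already invoked at the end of the AN construction will then assemble these into the desired isomorphism over $S'$.

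Because the degeneration data on $S'$ is isomorphic to the pullback of that on $S$, both AN families are built from the same underlying ingredients: the same $A$, $\widetilde G$, $\mathcal M$, $\tau$, and trivialization of the central extension (I will write the latter as $\widetilde\psi$ in this proof, to distinguish it from the proposition's $\psi$). The two constructions differ only in the piecewise affine function used to cut out the region $Q_{\tilde\varphi}$ and correspondingly in how the formal monomials $\mathrm{X}^p$ are realized in the base ring. The hypothesis that $\psi:=f^\sharp\alpha\circ\varphi/\alpha'\circ\varphi'$ takes values in $(R')^*$ means that on every graded summand $\mathrm{X}^p\otimes \mathcal O_\alpha\otimes \mathcal M^d\theta^d$ of $f^*\mathcal R$, multiplication by a suitable power of $\psi$ lands in the corresponding summand of $\mathcal R'$ and is invertible. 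This yields an $\mathcal O_A$-linear graded isomorphism $f^*\mathcal R\cong \mathcal R'$, and applying $\sheafproj$ gives an isomorphism of relative complete models $f^*\widetilde{\mathcal X}\cong \widetilde{\mathcal X}'$ that carries $\mathcal O(1)$ to $\mathcal O(1)$.

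The next step is to verify that the rescaling is $Y$-equivariant. The action $S^*_\lambda$ in \eqref{explicit action} is written in terms of the combinatorial data $a_t(\lambda)=\mathrm{X}^{\varphi(\phi(\lambda))}$ and $b_t(\lambda,\alpha)$, and under the rescaling both of these change by explicit $\psi$-ratios. Since $\varphi$ and $\varphi'$ are both compatible with the same $\tau$ and $\widetilde\psi$, those ratios cancel exactly against the discrepancy between the two $Y$-actions, so the rescaling descends to the formal quotients by $Y$. Algebraizing in $n$ then produces the required isomorphism $f^*\mathcal X\cong \mathcal X'$. The statement about $\mathcal L$ is immediate because both sides are $\mathcal O(1)$, while the $G$-action $\varrho$ transports because it is induced by the $\mathbb X$-grading on $\mathcal R$, which the rescaling preserves.

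The main technical obstacle lies in the $Y$-equivariance step. Concretely, one must verify an identity roughly of the form $\psi(\alpha+\phi(\lambda))/\psi(\alpha) = a_t(\lambda)^{-1}b_t(\lambda,\alpha)^{-1}a'_t(\lambda)b'_t(\lambda,\alpha)$, where the primed quantities are defined from $\varphi'$. This is a cocycle-type compatibility that follows from the quasiperiodicity of $\varphi$ and $\varphi'$ together with their shared compatibility with $\tau$ and $\widetilde\psi$, but the bookkeeping between the two meanings of ``$\psi$'' and between the prelog chart $f^\sharp\alpha\colon P\to R'$ and the log chart $\alpha'\colon P'\to R'$ requires some care.
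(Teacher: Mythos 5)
Your proposal is correct and follows essentially the same route as the paper: the paper defines the explicit rescaling $\Pi:\zeta^{(\alpha,p)}\mapsto f^\sharp(\zeta^{(\alpha,p)})\psi(\alpha)^{\deg(\alpha)}$ on $k[S(X)\rtimes_\varphi P]\otimes_{k[P]}R'$, checks multiplicativity via strictness (the same coboundary identity you isolate), and then transports the result through the contracted product $\widetilde{P}^r\times^T\widetilde{G}$ and the $Y$-action, exactly as in your rescaling of the graded algebra $\mathcal{R}$ followed by the $Y$-equivariance check and algebraization.
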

\begin{proof}
Consider the following map
\begin{align}
\Pi: k[S(X)\rtimes_{\varphi}P]\otimes_{k[P]}\otimes R'&\to k[S(X')\rtimes_{\varphi'} P']\otimes_{k[P']}R'\\
\zeta^{(\alpha,p)}&\mapsto f^\sharp(\zeta^{(\alpha,p)})\psi(\alpha)^{\deg(\alpha)}.
\end{align}

We need to check that $\Pi$ is homomorphism of algebras, i.e. 
\[
\Pi(\zeta^{(\alpha,p)}\zeta^{(\beta,q)})=\Pi(\zeta^{(\alpha,p)})\Pi(\zeta^{(\beta,q)}).
\]

Since the morphism is strict, we can write $f^\sharp(\zeta^{(\alpha,p)})$ as $\zeta^{(\alpha, p')}r'$ for $r'\in (R')^*$. Then it reduces to the following identity
\begin{equation*}
f^\sharp(\zeta^{\tilde{\varphi}(\alpha)+\tilde{\varphi}(\beta)-\tilde{\varphi}(\alpha+\beta)})\psi(\alpha+\beta)^{\deg(\alpha+\beta)}=(\zeta^{\tilde{\varphi}'(\alpha)+\tilde{\varphi}'(\beta)-\tilde{\varphi}'(\alpha+\beta)})\psi(\alpha)^{\deg(\alpha)}\psi(\beta)^{\deg(\beta)}.
\end{equation*}

Since $\psi$'s are invertible in $R'$, we get an isomorphism $\widetilde{P}^r\times_SS'\to (\widetilde{P}^r)'$. Moreover, the pull back of $\widetilde{G}$ is $\widetilde{G}'$ by assumption. Since the contracted product commutes with the pull back, we have the isomorphism $\widetilde{\mathcal{X}}'\to \widetilde{\mathcal{X}}\times_{S}S'$. By assumption, the degeneration data also commutes with the pull back, the isomorphism commutes with $Y$-action. 
\end{proof}

Given $S$ as above, with degeneration data, a function $\varphi: X_\mathbf{R}\to P^\gp_\mathbf{R}$, and a chart $\alpha: P\to R$ such that $\alpha\circ \varphi$ is compatible with the degeneration data. We get the family $(\mathcal{X}_I, \mathcal{L}_I, G_I, \varrho_I)$ over $R/I$. Suppose there is a different chart $\alpha':P\to R$, then $\psi:=\alpha\circ\varphi/\alpha'\circ\varphi$ takes values in $R^*$. Therefore $\alpha'\circ\varphi$ is also compatible with the degeneration data. Since the graded sheaf of algebra $\mathcal{R}$ as a subsheaf of $\mathcal{S}$ is defined independent of the choice of the chart $\alpha: P\to R$, the set of admissible ideals are the same for the two charts. We get the same family $(\mathcal{X}_I, \mathcal{L}_I, G_I, \varrho_I)$ over $R/I$ by using $\alpha'$ in the AN construction. Moreover the Fourier decompositions \eqref{Fourier decomposition equation} are the same for $\alpha$ and $\alpha'$. 

\begin{lemma}
The AN construction $(\mathcal{X},\mathcal{L},G,\varrho)$, and the Fourier decomposition 
\[
\pi_*\mathcal{L}=\bigoplus_{\alpha\in X/\phi(Y)}\pi_*\mathcal{M}_\alpha
\] 

only depends on the log structure induced by $\alpha: P\to R$. 
\end{lemma}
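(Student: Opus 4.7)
The plan is to reduce the general case to the fixed-monoid situation treated explicitly in the paragraph preceding the lemma. Suppose we have two charts $\alpha_i\colon P_i\to R$ ($i=1,2$) inducing the same log structure $\mathcal{M}_S$ on $S$, together with combinatorial functions $\varphi_i\colon X_{\mathbf{R}}\to (P_i^{\gp})_{\mathbf{R}}$ such that each $\alpha_i\circ\varphi_i$ is compatible with the degeneration data. The goal is to produce a canonical isomorphism between the two resulting AN families that respects the Fourier decomposition.

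First I would construct a common refining chart $\beta\colon Q\to R$ for the same log structure, together with monoid morphisms $h_i\colon P_i\to Q$ satisfying $\beta\circ h_i=\alpha_i$; concretely one can take $Q$ to be the saturation of the pushout of $P_1$ and $P_2$ in the category of fine monoids mapping to $\mathcal{M}_S(S)$. Setting $\tilde\varphi_i:=h_{i,\mathbf{R}}^{\gp}\circ\varphi_i$, the tautological equality $\beta\circ\tilde\varphi_i=\alpha_i\circ\varphi_i$ lets me apply Proposition~\ref{AN functorial} to the identity morphism $S\to S$, which is strict since all charts present $\mathcal{M}_S$, with the $\psi$-ratio identically $1$. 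This identifies the AN family built from $(\alpha_i,\varphi_i)$ with the one built from $(\beta,\tilde\varphi_i)$, along with the Fourier pieces. Now the pairs $(\beta,\tilde\varphi_1)$ and $(\beta,\tilde\varphi_2)$ share the monoid $Q$, and because $\alpha_1\circ\varphi_1$ and $\alpha_2\circ\varphi_2$ represent the same piecewise-affine function into the characteristic sheaf $\overline{\mathcal{M}}_S^{\gp}=\mathcal{M}_S^{\gp}/\mathcal{O}_S^{*}$, the ratio $\beta\circ\tilde\varphi_1/\beta\circ\tilde\varphi_2$ takes values in $R^{*}$; the preceding paragraph of the text then applies verbatim, showing that the graded subsheaf $\mathcal{R}\subset\mathcal{S}$, the $Y$-action, the collection of admissible ideals, and hence the family $(\mathcal{X},\mathcal{L},G,\varrho)$, are all independent of the chart.

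For the Fourier decomposition, I would note that \eqref{Fourier decomposition equation} is cut out by the $T$-grading on $\widetilde{\mathcal{L}}$ on the relative complete model; the comparison isomorphisms above are $T$-equivariant by construction (chart-change affects only scalar factors, not the toric weight data), so the decomposition descends canonically along the identifications. The main obstacle will be giving a precise meaning to the assertion that ``$\alpha_i\circ\varphi_i$ determines a chart-independent function with values in $\overline{\mathcal{M}}_S^{\gp}$.'' Once this is set up, the fact that any two chart-lifts differ by an $\mathcal{O}_S^{*}$-valued cocycle becomes a formal consequence of the universal property of the log structure, and the remainder of the argument is simply an iterated invocation of Proposition~\ref{AN functorial} together with the unit-ambiguity discussion already present in the preceding paragraph.
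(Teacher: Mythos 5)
Your closing step is, in substance, the paper's entire proof: once $\alpha\circ\varphi$ and $\alpha'\circ\varphi'$ differ pointwise by units of $R$, the graded subalgebra $\mathcal{R}\subset\mathcal{S}$ is literally the same subsheaf, hence the admissible ideals, the quotient family $(\mathcal{X},\mathcal{L},G,\varrho)$ and the $T$-grading giving the Fourier decomposition all coincide; the paper records exactly this in the paragraph before the lemma, with $P$ and $\varphi$ held fixed and only the chart map $\alpha$ varying, and the lemma is just a restatement of that paragraph. The reduction you build in front of this, however, aims at a strictly more general comparison (different monoids $P_1,P_2$ \emph{and} different functions $\varphi_1,\varphi_2$), and there it has a genuine gap which you yourself flag but do not close: you need that $\alpha_1\circ\varphi_1$ and $\alpha_2\circ\varphi_2$ define the same function into $M_S$ (equivalently, the same $\overline{M}_S^{\gp}$-valued piecewise affine function), so that the ratio $\beta\circ\tilde{\varphi}_1/\beta\circ\tilde{\varphi}_2$ is a unit. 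This is not a bookkeeping point that "becomes a formal consequence of the universal property of the log structure": it does not follow from the two charts inducing the same log structure, and without it the asserted conclusion is false --- genuinely different $\varphi$'s produce different pavings $\mathscr{P}$ and different central fibers, which is precisely why the choice of the (quadratic) extension matters in this paper. In the lemma's intended scope the needed identity holds trivially because $P$ and $\varphi$ are the same for both charts, and then your pushout-plus-functoriality detour is unnecessary.

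Even granting that hypothesis, the reduction step is not free. A "saturated pushout" of $P_1$ and $P_2$ needs a common source to push out over, and you must verify that the resulting $Q$ is a chart at all, that it is toric and sharp (saturation and the passage to $\mathcal{M}_S(S)$ can spoil sharpness or the factorizations $\beta\circ h_i=\alpha_i$), and that $\tilde{\varphi}_i=h_i^{\gp}\circ\varphi_i$ still satisfies the standing hypotheses of the AN construction: bending parameters in $Q\setminus\{0\}$, positive definiteness of the associated quadratic form relative to $\sigma_Q^\vee$, integrality, and boundedness of the paving (if $h_i^{\gp}$ is not injective on the relevant elements the paving can coarsen). Only after these checks does Proposition~\ref{AN functorial}, applied to the identity with $\psi\equiv 1$, give the identification you want. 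So: the part of your argument that proves the lemma is the same unit-ratio observation the paper uses; the extra machinery either presupposes the unproved characteristic-sheaf agreement (in which case the argument is incomplete) or proves nothing beyond the case the paper already handles directly.
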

 
Since a log structure is an \'{e}tale sheaf, we can glue the above local models and generalize the base $S$. 
\begin{assumption}\label{basic assumption}
Assume the log scheme $(S,M_S)$ is log smooth over a base $(\overline{S}, \mathcal{O}_{\overline{S}}^*)$. Moreover, assume that the base $\overline{S}$ is Noetherian, excellent, and integral. 
\end{assumption} 

Denote the open subset where $M_S$ is trivial by $S^\circ$, and the boundary $S\backslash S^\circ$ by $\partial S$. We can define the degeneration data over $S$. It is defined to be an \'{e}tale sheaf of data ($A$,$A^t$, $\widetilde{G}$,$\widetilde{G}^t$, $T$, $T^t$, $c$, $c^t$, $X$, $Y$, $\phi$, $\psi$, $\tau$, $\mathcal{M}$), such that for each \'{e}tale affine neighborhood $U$ of $S$, and the completion of $U$ with respect to $U\times_S\partial S$, it is the degeneration data defined before. For any point $x\in S$, we can choose an affine \'{e}tale neighborhood $U\to S$ such that $U=\spec R$ is isomorphic to a product $\overline{S}\times \spec k[P]$ with some base scheme $\overline{S}$. A choice of such isomorphism gives a chart $\alpha: P\to R$. Fix a piecewise affine function $\varphi: X_\mathbf{R}\to P_\mathbf{R}^\gp$ as above. By choosing the local chart, we can talk about whether $\varphi$ is compatible with the degeneration data, and the set of admissible ideals. 

\begin{lemma}
Whether $\varphi$ is compatible with the degeneration data over $U$, and the set of admissible ideals are both independent of the choice of the trivialization of $U$. 
\end{lemma}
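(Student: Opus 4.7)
The plan is to reduce both claims to the elementary observation that two charts $\alpha, \alpha' \colon P \to R$ arising from two trivializations of $U \cong \overline{S}\times \spec k[P]$ induce the \emph{same} log structure $M_U$, and therefore the ratio $u_p := \alpha(p)/\alpha'(p)$ lies in $R^*$ for every $p \in P$. This is the standard fact that any two charts for a coherent log structure differ by a unit in the structure sheaf, and it should be the only structural input needed. Once this is in hand, the two statements become parallel unit-chasing arguments.

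For the admissibility claim, I would argue directly from the definition: an ideal $I\subset R$ lies in $\ad_\varphi$ iff $\sqrt{I}$ contains $\alpha(I_\varphi)$, where $I_\varphi$ is the intersection of the admissible toric prime ideals of $P$. Since $\alpha(p)$ and $\alpha'(p)$ differ by a unit of $R$, they generate the same principal ideal, so $\alpha(p)\in\sqrt{I}$ iff $\alpha'(p)\in\sqrt{I}$. Therefore $\alpha(I_\varphi)\subset \sqrt{I}$ iff $\alpha'(I_\varphi)\subset \sqrt{I}$, and the two charts define the same set $\ad_\varphi$.

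For the compatibility claim, I would trace through the construction of $a_t,b_t$. These are defined by $a_t(\lambda)=\mathrm{X}^{\varphi(\phi(\lambda))}$ and the bilinear expression for $b_t$, where $\mathrm{X}^p$ is interpreted as $\alpha(p)\in R$. Swapping $\alpha$ for $\alpha'$ replaces $a_t(\lambda)$ by $u_{\varphi(\phi(\lambda))}^{-1}a_t(\lambda)$, and similarly for $b_t$, i.e.\ by multiplication by units of $R^*$. The compatibility condition asks whether $a_t^{-1}\psi$ and $b_t^{-1}\tau$, viewed as sections over $\eta$, extend to \emph{trivializations} of $(c^t)^*\mathcal{M}^{-1}$ and $(c^t\times c)^*\mathcal{P}_A^{-1}$ over the whole of $S$. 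Multiplication of a candidate trivialization by an element of $R^*$ (which pulls back to a unit section of $\mathcal{O}_A$) preserves the property of being a trivialization and the property of extending over $S$. Hence compatibility holds for $\alpha$ iff it holds for $\alpha'$.

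The step that requires the most care is the first one — confirming that in our particular setup (a product trivialization $U\cong \overline{S}\times \spec k[P]$ with the log structure pulled back from the toric factor) the difference of two such trivializations really produces a chart-change whose effect on $R$ is multiplication by a unit at each $p\in P$. After that, everything reduces to the observation that units in $R$ pass harmlessly through both the radical operation (for admissibility) and the extension-of-trivializations operation (for compatibility), and no further structural input about the abelian part $(A,\mathcal{M},c,c^t,\tau,\psi)$ is needed.
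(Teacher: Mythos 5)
Your proposal is correct and follows essentially the same route as the paper: the paper's proof is the one-line observation that different trivializations induce the same log structure, relying on the preceding discussion that two charts $\alpha,\alpha'$ of that log structure satisfy $\alpha\circ\varphi/\alpha'\circ\varphi\in R^*$, so that compatibility and the set of admissible ideals are unchanged. Your write-up simply makes explicit the unit-chasing (through the radical condition for admissibility and through the extension of the trivializations $a_t^{-1}\psi$, $b_t^{-1}\tau$ for compatibility) that the paper leaves implicit.
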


\begin{proof}
This is because different trivializations give the same log structure. 
\end{proof}

\begin{definition}
The function $\varphi$ and the log structure on $S$ is said to be compatible with the degeneration data over $S$ if they are compatible under any local trivialization. 
\end{definition}

\begin{definition}
A closed subscheme $Z$ with support inside $\partial S$ is called admissible if restricted to any affine \'{e}tale open set $U$, under any local trivialization of $U$, $Z$ corresponds to an admissible ideal. 
\end{definition}

\begin{theorem}\label{alexeev family}
Assume $S$ satisfies Assumption~\ref{basic assumption}. Assume there is degeneration data over $S$ and a piecewise affine function $\varphi: X_\mathbf{R}\to P_\mathbf{R}^\gp$ compatible with the degeneration data through the log structure of $(S, M_S)$. Then for any admissible closed subscheme $Z$, we construct a family $(\mathcal{X}_Z,\mathcal{L}_Z, G_Z,\varrho_Z)$ over $Z$. Furthermore, there is a subsheaf of $\pi_*\mathcal{L}$ defined by the Fourier decomposition \eqref{Fourier decomposition equation} such that for any $\Theta_Z$ defined by a section of this subsheaf, the family $(\mathcal{X}_Z, \mathcal{L}_Z, G_Z, \varrho_Z,\Theta_Z)$ is an object in $\overline{\mathscr{AP}}_{g,d}(Z)$. This construction is functorial with respect to morphisms of admissible closed subschemes $Z'\to Z$. 
\end{theorem}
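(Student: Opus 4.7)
The plan is to globalize the complete-local AN construction already established for $\widehat{R}_I$, using the log structure on $(S,M_S)$ as the gluing data. First I would work étale locally: pick an affine étale neighborhood $U = \spec R \to S$ on which $M_S$ admits a chart isomorphic to $\overline{S}\times \spec k[P]$, producing a chart $\alpha: P\to R$. The admissible subscheme $Z$ cuts out an ideal $I \subset R$ admissible for $\varphi$ in the sense of the preceding subsection, so completing along $I$ and applying the AN construction gives $(\mathcal{X}_{R/I^n}, \mathcal{L}_{R/I^n}, G_{R/I^n}, \varrho_{R/I^n})$ over every thickening. Grothendieck's existence theorem algebraizes these to a family over $\spf \widehat{R}_I$, and since the admissible closed subscheme $Z \cap U$ is proper over $\spec R/I$, one obtains a genuine scheme-theoretic family over $Z \cap U$ by restriction.

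Next I would check that these local families glue. Two different choices of trivialization of $(U, M_U)$ over the same open differ by a unit-valued function $\psi = \alpha\circ\varphi/\alpha'\circ\varphi$ valued in $R^*$, and Proposition~\ref{AN functorial} supplies a canonical isomorphism between the two resulting AN families. Cocycle compatibility on triple overlaps reduces to associativity of the product $\psi_{ij}\psi_{jk} = \psi_{ik}$, which follows formally because $\psi$ is computed from the ratio of two charts of the same log structure. Thus étale descent (for the semiabelian scheme $G_Z$, for the polarized scheme $(\mathcal{X}_Z, \mathcal{L}_Z)$, and for the $G_Z$-action $\varrho_Z$, all of which are relatively affine or projective over $Z$ and therefore descent-effective) produces a global family on $Z$.

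For the divisor $\Theta_Z$, the Fourier decomposition \eqref{Fourier decomposition equation} is intrinsic (it depends only on the $T$-action induced by $\mathcal{M}$, not on the chart), so the direct summand $\bigoplus_{\alpha\in \mathscr{P}^0}\pi_*\mathcal{M}_\alpha$ globalizes to a well-defined subsheaf of $\pi_*\mathcal{L}$ on $Z$. Any section $\vartheta$ of this subsheaf whose restriction to each component $\pi_*\mathcal{M}_\alpha$ is nonvanishing on every geometric fiber gives, via the $Y$-averaging formula \eqref{theta sum}, a divisor $\Theta_Z$, and Lemma~\ref{stable pairs} together with Lemma~\ref{the stable pair is in AP} applied fiberwise ensures that $(\mathcal{X}_Z, \mathcal{L}_Z, G_Z, \varrho_Z, \Theta_Z)$ lies in $\overline{\mathscr{AP}}_{g,d}(Z)$.

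Functoriality in $Z$ is essentially tautological once the gluing is in place: if $Z'\to Z$ is a morphism of admissible subschemes, then locally both families are constructed from the same chart $\alpha: P\to R$ and the same piecewise affine function $\varphi$, and Proposition~\ref{AN functorial} gives the required pullback isomorphism, which is compatible with the gluing data by the previous step. The main obstacle I anticipate is not the gluing itself but verifying that the set of admissible ideals, and hence the notion ``$Z$ is admissible,'' is truly independent of the local trivialization and behaves well under étale base change — this requires unwinding the identification of $I_\varphi$ with a datum purely of the log structure, which the preceding remarks justify because $\alpha^{-1}(I)$ and $\alpha^{-1}(\mathfrak{p})$ do not depend on the chart. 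Once this point is settled, the étale descent argument proceeds uniformly over $Z$.
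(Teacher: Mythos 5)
Your proposal is correct and follows essentially the same route as the paper: étale-local AN construction with a chart for the log structure, algebraization via Grothendieck existence over the completion along the admissible ideal, gluing by chart-independence (Proposition~\ref{AN functorial}) since the log structure is an étale sheaf, the intrinsic Fourier decomposition supplying the subsheaf for $\Theta_Z$, and Lemmas~\ref{stable pairs} and~\ref{the stable pair is in AP} giving membership in $\overline{\mathscr{AP}}_{g,d}(Z)$. The paper treats the theorem as a summary of exactly these preceding steps, so no genuinely different ideas are needed or introduced.
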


Let $(\mathcal{X},\mathcal{L}, G, \varrho)/S$ be an AN family. Then the relatively ample line bundle $\mathcal{L}$ over $\mathcal{X}$ induces a polarization on the semiabelian scheme $G/S$. We can associate constructible \'{e}tale sheaves $\underline{X}$, $\underline{Y}$ over $S$, and a bilinear pairing $\underline{B}:\underline{Y}\times\underline{X}\to \underline{\Div} S$. Here $\underline{\Div} S$ is the sheaf of Cartier divisors. See (\cite{FC} Chap. \Rmnum{3} Theorem 10.1). For $s\in S$, the fiber $\underline{X}_s$ is the group of characters for the toric part of $G_s$, and $\underline{Y}_s$ is the group of characters for the toric part of $G^t_s$. The pairing $\underline{B}$ agrees with $\tau$. Notice that the open locus where $\underline{X}, \underline{Y}, \underline{B}$ vanishes is exactly the open locus $S^\circ$ where the log structure $M_S$ is trivial.


\subsection{Standard Data}\label{3.3}
In this section, we specify the degeneration data for the versal families, and compare them with the standard construction in \cite{ols08}. Fix an $X$-invariant integral paving $\mathscr{P}$ of $X_\mathbf{R}$. By definition, $\mathscr{P}$ is obtained as the set of affine domains of some $X$-quasiperiodic real-valued piecewise affine function $\psi$. Associated with $\mathscr{P}$ is a rational polyhedral cone $C(\mathscr{P})$ in the second Voronoi fan $\Sigma(X)$. The support of $\Sigma(X)$ is $\mathcal{C}(X)^\rc\subset \Gamma^2U$. Let $CPA(\mathscr{P},\mathbf{R})$ be the space of convex piecewise affine functions whose associated paving is coarser than $\mathscr{P}$\footnote{Notice we define $CPA(\mathscr{P},\mathbf{R})$ to be closed.}. Let $CPA^X(\mathscr{P},\mathbf{R})$ be the subspace of $X$-quasiperiodic functions. The map $\psi\to Q$, which maps $\psi$ to the associated quadratic form $Q$, identifies $CPA^X(\mathscr{P},\mathbf{R})/Aff$ with $C(\mathscr{P})$. Let $C(\mathscr{P},\mathbf{Z})$ be the image of integral convex functions in $CPA(\mathscr{P},\mathbf{R})/Aff$ and $C^X(\mathscr{P},\mathbf{Z})$ be the image of integral, $X$-quasiperiodic, convex functions in $C(\mathscr{P})$. 

Recall the definition of $H_\mathscr{P}$ in \cite{ols08}. For any $\sigma_i\in\mathscr{P}$ define $N_i=S(\sigma_i)$. Define $N_\mathscr{P}:=\varinjlim N_i$ in the category of integral monoids. The cone $S(X)=\varinjlim N_i$ in the category of sets. By universal property of $S(X)$ we have a natural map $\tilde{\varphi}': S(X)\to N_\mathscr{P}$. Since $\Hom(\varinjlim N_i, \mathbf{Z})\cong \varprojlim(\Hom(N_i,\mathbf{Z}))$, The group $N_\mathscr{P}^{\gp}$ is equal to $PA(\mathscr{P},\mathbf{Z})^*$. 

Let $\widetilde{H}_\mathscr{P}$ be the submonoid of $N_\mathscr{P}^{\gp}$ generated by
\[
\alpha*\beta:=\tilde{\varphi}'(\alpha)+\tilde{\varphi}'(\beta)-\tilde{\varphi}'(\alpha+\beta), \quad \forall \alpha,\beta\in S(X). 
\]

\begin{proposition}
Let $C(\mathscr{P},\mathbf{Z})^\vee$ be the dual of the monoid $C(\mathscr{P},\mathbf{Z})$ in $\Hom (C(\mathscr{P},\mathbf{Z}),\mathbf{Z})$. 
\[
\widetilde{H}_\mathscr{P}^\sat=C(\mathscr{P},\mathbf{Z})^\vee.
\]
\end{proposition}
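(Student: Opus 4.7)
The inclusion $\widetilde{H}_\mathscr{P}\subset C(\mathscr{P},\mathbf{Z})^\vee$ passes to saturations and is the easy half. Each generator $\alpha*\beta=\tilde{\varphi}'(\alpha)+\tilde{\varphi}'(\beta)-\tilde{\varphi}'(\alpha+\beta)$ annihilates $Aff$, because an affine function on $X_\mathbf{R}$ lifts to a linear functional on $\mathbb{X}_\mathbf{R}$ and is additive on $S(X)$. Pairing with any convex class $[\psi]\in C(\mathscr{P},\mathbf{Z})$ yields $\psi(\alpha)+\psi(\beta)-\psi(\alpha+\beta)\geqslant 0$ by subadditivity. Since the dual of a monoid is automatically saturated in the ambient dual group, the inclusion lifts to $\widetilde{H}_\mathscr{P}^\sat\subset C(\mathscr{P},\mathbf{Z})^\vee$.

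For the reverse inclusion I would work in the finite-dimensional real vector space $V:=PA(\mathscr{P},\mathbf{R})/Aff$, in which $C(\mathscr{P})$ sits as the full-dimensional strictly convex rational polyhedral cone already identified with a chamber of the second Voronoi fan $\Sigma(X)$. The central observation is that convexity of a class $[\psi]\in V$ is precisely subadditivity, i.e.\ non-negativity of $\langle[\psi],\alpha*\beta\rangle$ for every $\alpha,\beta\in S(X)$. Thus $C(\mathscr{P})$ is dual in $V$ to the cone $\mathbf{R}_{\geqslant 0}\cdot\widetilde{H}_\mathscr{P}\subset V^*$, and the bipolar theorem yields $C(\mathscr{P})^\vee=\overline{\mathbf{R}_{\geqslant 0}\cdot\widetilde{H}_\mathscr{P}}$. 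Given any integral $x\in C(\mathscr{P},\mathbf{Z})^\vee$, Farkas' lemma writes $x$ as a non-negative rational combination of generators $\alpha*\beta$; clearing denominators produces a positive integer $N$ with $Nx\in\widetilde{H}_\mathscr{P}$, placing $x$ in $\widetilde{H}_\mathscr{P}^\sat$.

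The main technical obstacle is that the family $\{\alpha*\beta\}$ is a priori infinite, so polyhedrality of $\mathbf{R}_{\geqslant 0}\cdot\widetilde{H}_\mathscr{P}$ is not free. I would exploit the $X$-quasi-periodicity of $\tilde{\varphi}'$ together with the finiteness of $\mathscr{P}$ modulo the translation action of $X$ to reduce to finitely many $X$-orbits of generators, observing also that $\alpha*\beta$ vanishes as soon as $\alpha,\beta,\alpha+\beta$ lie in a common maximal cell. Once polyhedrality is established the closure in the bipolar step is superfluous, the Farkas application is valid over $\mathbf{Q}$, and full-dimensionality of the cone forces $\widetilde{H}_\mathscr{P}^\gp$ to coincide with the annihilator of $Aff$ in $N_\mathscr{P}^\gp$ — a lattice-level bookkeeping point that is needed in order to interpret the relation $Nx\in\widetilde{H}_\mathscr{P}$ as genuine membership of $x$ in $\widetilde{H}_\mathscr{P}^\sat$.
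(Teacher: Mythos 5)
Your first half, and the observation that a class $[\psi]$ is convex precisely when it pairs non-negatively with every $\alpha*\beta$, coincide with the paper's starting point (phrased there as $(\widetilde{H}_\mathscr{P})^\vee=C(\mathscr{P},\mathbf{Z})$), and concluding by biduality is also the paper's route. The genuine gap is the final lattice step, which you treat as ``bookkeeping''. From $Nx\in\widetilde{H}_\mathscr{P}$ you may conclude $x\in\widetilde{H}_\mathscr{P}^{\sat}$ only if you already know $x\in\widetilde{H}_\mathscr{P}^{\gp}$, i.e.\ that $\widetilde{H}_\mathscr{P}^{\gp}$ is saturated in (indeed equal to) $\ann(Aff)\cap N_\mathscr{P}^{\gp}$. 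Your justification --- that full-dimensionality of the cone forces this --- is not a valid inference: a submonoid can span a full-dimensional cone while generating only a proper finite-index subgroup. Already $\widetilde{H}=\langle 2\rangle\subset\mathbf{Z}$ has $\widetilde{H}^\vee=\mathbf{Z}_{\geqslant 0}$ and $(\widetilde{H}^\vee)^\vee=\mathbf{Z}_{\geqslant 0}$, yet $\widetilde{H}^{\sat}=2\mathbf{Z}_{\geqslant 0}$: the element $1$ satisfies $2\cdot 1\in\widetilde{H}$ but lies outside $\widetilde{H}^{\gp}$, hence outside the saturation. Ruling out exactly this phenomenon for $\widetilde{H}_\mathscr{P}$ is the nontrivial content of the proposition, and the paper does it by a concrete argument: by the proof of (\cite{ols08} Lemma 4.1.6), $\widetilde{H}_\mathscr{P}$ is the image of $SC_1(\mathbb{X}_{\geqslant 0})'/B_1$, the monoid generated by all affine-linear relations of $\mathbb{X}_\mathbf{R}$, and from that description one reads off that $\widetilde{H}_\mathscr{P}^{\gp}$ exhausts the annihilator of $Aff$ in $N_\mathscr{P}^{\gp}$. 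Some argument identifying the group generated by the elements $\alpha*\beta$ must be supplied; Farkas alone only places $x$ in the rational cone spanned by $\widetilde{H}_\mathscr{P}$, which is strictly weaker than membership in $\widetilde{H}_\mathscr{P}^{\sat}$.

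A secondary confusion: you set the argument in ``the finite-dimensional space $PA(\mathscr{P},\mathbf{R})/Aff$'' with $C(\mathscr{P})$ a full-dimensional Voronoi chamber. But this proposition is the non-quasiperiodic one: here $\mathscr{P}$ is the infinite $X$-periodic paving of $X_\mathbf{R}$, $C(\mathscr{P},\mathbf{Z})$ is the image of \emph{all} integral convex functions (no quasiperiodicity imposed), and $PA(\mathscr{P},\mathbf{R})/Aff$ is not finite-dimensional; the finite-dimensional cone $C(\mathscr{P})\cong CPA^X(\mathscr{P},\mathbf{R})/Aff$ and the reduction to finitely many $X$-orbits belong to the subsequent statement about the quotient monoid $H_\mathscr{P}$. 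So the bipolar theorem, Farkas' lemma and the full-dimensionality appeal cannot be invoked in the form you state for the present proposition; if you wish to argue with finitely many generators you must first pass to the $X$-quotient, which is a different (later) assertion.
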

\begin{proof}
It is easy to see that $\alpha*\beta\in C(\mathscr{P},\mathbf{Z})^\vee$ by evaluation. It follows that $\widetilde{H}_\mathscr{P}\subset C(\mathscr{P},\mathbf{Z})^\vee$. On the other hand, for any $\bar{\psi}\in PA(\mathscr{P},\mathbf{Z})/Aff$. Pick any lift $\psi\in PA(\mathscr{P},\mathbf{Z})$. $\psi(\alpha*\beta)\geqslant 0$ for all $\alpha,\beta$ if and only if it is convex. Therefore $(\widetilde{H}_{\mathscr{P}})^\vee=C(\mathscr{P},\mathbf{Z})$. We claim that $\widetilde{H}_\mathscr{P}^{\gp}=\ann(Aff)\cap N_\mathscr{P}^{\gp}$. Assuming this, then, since $\widetilde{H}_\mathscr{P}$ is sharp, $\widetilde{H}_\mathscr{P}^{\sat}=((\widetilde{H}_\mathscr{P})^\vee)^\vee=C(\mathscr{P},\mathbf{Z})^\vee$. 

Now we prove the claim. In the proof of (\cite{ols08} Lemma 4.1.6), it is shown that the image of $\pi: SC_1(\mathbb{X}_{\geqslant 0})'/B_1\to \widetilde{H}_\mathscr{P}$ is equal to $\widetilde{H}_\mathscr{P}$. By the description of $SC_1(\mathbb{X}_{\geqslant 0})'/B_1$, it is the monoid generated by images of all affine linear relations for $\mathbb{X}_\mathbf{R}$. Therefore $\widetilde{H}_\mathscr{P}^{\gp}$ is saturated in $\ann(Aff)$.  
\end{proof}

The group $X$ is acting on $S(X)\subset\mathbb{X}$. It induces an action of $X$ on $N_\mathscr{P}$ and $\widetilde{H}_\mathscr{P}$. The quotient of this action is denoted by $H_\mathscr{P}$. There is a natural map $H_\mathscr{P}\to C^X(\mathscr{P},\mathbf{Z})^\vee$.

\begin{proposition}
We have 
\begin{align}
\Hom(H_\mathscr{P},\mathbf{Z}_{\geqslant 0})&=C^X(\mathscr{P},\mathbf{Z}),\\
H_\mathscr{P}^\sat/(H_\mathscr{P}^\sat)_{\text{tor}}&=C^X(\mathscr{P},\mathbf{Z})^\vee.
\end{align}
\end{proposition}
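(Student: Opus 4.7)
The plan is to deduce both statements from the previous proposition by Pontryagin-type duality combined with an analysis of the $X$-action. By construction $H_\mathscr{P}=\widetilde{H}_\mathscr{P}/X$, so a monoid homomorphism $H_\mathscr{P}\to\mathbf{Z}_{\geqslant 0}$ is the same as an $X$-invariant homomorphism out of $\widetilde{H}_\mathscr{P}$, i.e.\
$$\Hom(H_\mathscr{P},\mathbf{Z}_{\geqslant 0})=\Hom(\widetilde{H}_\mathscr{P},\mathbf{Z}_{\geqslant 0})^{X}.$$
Since $\widetilde{H}_\mathscr{P}^{\sat}=C(\mathscr{P},\mathbf{Z})^{\vee}$ by the previous proposition, and $C(\mathscr{P},\mathbf{Z})$ is the monoid of lattice points of the rational polyhedral cone $CPA(\mathscr{P},\mathbf{R})/Aff$ in the torsion-free lattice $PA(\mathscr{P},\mathbf{Z})/Aff$, hence reflexive, we get $\Hom(\widetilde{H}_\mathscr{P},\mathbf{Z}_{\geqslant 0})=C(\mathscr{P},\mathbf{Z})$.

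The substantive work is to identify the $X$-fixed submonoid $C(\mathscr{P},\mathbf{Z})^{X}$ with $C^{X}(\mathscr{P},\mathbf{Z})$. The $X$-action on $S(X)\subset\mathbb{X}$ is the translation $(d,y)\mapsto(d,y+d\alpha)$, so dually a class $\bar\psi\in C(\mathscr{P},\mathbf{Z})$ is $X$-invariant iff any lift $\psi\in CPA(\mathscr{P},\mathbf{Z})$ satisfies $\psi(\,\cdot\,+\alpha)-\psi(\,\cdot\,)\in Aff$ for every $\alpha\in X$. Writing this difference as $A(\alpha)+\langle\,\cdot\,,\check\phi(\alpha)\rangle$ and imposing the cocycle coming from the group law of $X$ forces the scalar term $A\colon X\to\mathbf{Z}$ to be quadratic and $\check\phi$ to be a linear map with symmetric associated pairing satisfying $\langle\alpha,\check\phi(\beta)\rangle=A(\alpha+\beta)-A(\alpha)-A(\beta)$; this is exactly the definition of $\psi$ being $X$-quasiperiodic (Lemma~\ref{quasiperiodic and quadratic}). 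The reverse containment is immediate, giving $C(\mathscr{P},\mathbf{Z})^{X}=C^{X}(\mathscr{P},\mathbf{Z})$ and the first equality.

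For the second equality one dualizes once more. By standard double duality for fine monoids, the functor $\Hom(-,\mathbf{N})$ is insensitive to saturation and to torsion in the associated group, and the canonical map $M\to M^{\vee\vee}$ identifies $M^{\vee\vee}=M^{\sat}/(M^{\sat})_{\text{tor}}$; applied to $M=C^{X}(\mathscr{P},\mathbf{Z})$ it simply returns the toric monoid $C^{X}(\mathscr{P},\mathbf{Z})$ on the right-hand side. Applied instead to $M=H_\mathscr{P}$ and combined with the first equality, this yields
$$H_\mathscr{P}^{\sat}/(H_\mathscr{P}^{\sat})_{\text{tor}}=\Hom\bigl(\Hom(H_\mathscr{P},\mathbf{Z}_{\geqslant 0}),\mathbf{Z}_{\geqslant 0}\bigr)=C^{X}(\mathscr{P},\mathbf{Z})^{\vee}.$$

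The only delicate point is recognizing that $X$-invariance \emph{modulo affine} is precisely $X$-quasiperiodicity rather than naive $X$-periodicity: it is the freedom to absorb an affine cocycle that produces the quadratic correction $A$ together with its symmetric bilinear partner $\check\phi$. Once this splitting from Lemma~\ref{quasiperiodic and quadratic} is in hand, both equalities are formal consequences of cone–monoid duality.
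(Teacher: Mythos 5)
Your overall skeleton is sound and, for the converse inclusion and for the second equality, it is essentially the paper's argument: pull a homomorphism back to an $X$-invariant element of $\Hom(\widetilde{H}_\mathscr{P},\mathbf{Z}_{\geqslant 0})$, use the preceding proposition, identify ``$X$-invariant modulo affine'' with $X$-quasiperiodic via Lemma~\ref{quasiperiodic and quadratic}, and then get the second identity by dualizing (sharpness of $H_\mathscr{P}$, from \cite{ols08} Lemma 4.1.8, is what makes $M^{\vee\vee}=M^{\sat}/M^{\sat}_{\mathrm{tor}}$ legitimate there, and $C^X(\mathscr{P},\mathbf{Z})$ lives in a finite-rank lattice, so that part is fine).

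The one step that does not go through as written is the claim $\Hom(\widetilde{H}_\mathscr{P},\mathbf{Z}_{\geqslant 0})=C(\mathscr{P},\mathbf{Z})$ ``by reflexivity, since $C(\mathscr{P},\mathbf{Z})$ is the monoid of lattice points of a rational polyhedral cone.'' The paving $\mathscr{P}$ is $X$-periodic, hence has infinitely many cells: $PA(\mathscr{P},\mathbf{Z})/Aff$ has infinite rank, $\widetilde{H}_\mathscr{P}$ and $C(\mathscr{P},\mathbf{Z})$ are not finitely generated, and the cone has infinitely many facets, so neither ``standard double duality for fine monoids'' nor toric cone duality applies verbatim; moreover the preceding proposition's proof only computes the dual of $\widetilde{H}_\mathscr{P}$ \emph{inside} $PA(\mathscr{P},\mathbf{Z})/Aff$, whereas you need the a priori larger abstract $\Hom(\widetilde{H}_\mathscr{P},\mathbf{Z}_{\geqslant 0})$. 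What is actually required is a representability statement: every additive $\mathbf{Z}_{\geqslant 0}$-valued functional on $\widetilde{H}_\mathscr{P}$ (equivalently, every consistent integral assignment of bending parameters) is realized by an honest integral convex $\mathscr{P}$-piecewise affine function on $X_\mathbf{R}$; this uses that $X_\mathbf{R}$ is simply connected so the bending data integrates, and it is exactly the ingredient the paper imports from Olsson --- the map $s:S^2X\to H_\mathscr{P}^{\gp}$ and (\cite{ols08} Lemma 5.8.16), $C(\mathscr{P})^{\gp}_{\mathbf{Q}}\cong\Hom(H_\mathscr{P},\mathbf{Q})$ --- to settle the inclusion $C^X(\mathscr{P},\mathbf{Z})\subset\Hom(H_\mathscr{P},\mathbf{Z}_{\geqslant 0})$ and, combined with integrality of the values on the generators $\alpha*\beta$, the reverse one. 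Since you only ever need this for $X$-invariant functionals, your argument is repaired by citing that lemma (or supplying the integration argument) at precisely this point; with that substitution your route and the paper's coincide up to reorganization.
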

\begin{proof}
We prove $C^X(\mathscr{P},\mathbf{Z})\subset \Hom(H_\mathscr{P},\mathbf{Z}_{\geqslant0})$ first. The inclusion $C^X(\mathscr{P},\mathbf{Z})\to C(\mathscr{P})\subset \Gamma^2U$ induces a map $S^2X\to H_\mathscr{P}^{\gp}$, and this is the map $s: S^2X\to H_\mathscr{P}^{\gp}$ defined in (\cite{ols08} Lemma 5.8.2). By (\cite{ols08} Lemma 5.8.16), $C(\mathscr{P})^\gp_{\mathbf{Q}}\cong \Hom(H_\mathscr{P}, \mathbf{Q})$. Therefore $C^X(\mathscr{P},\mathbf{Z})\subset \Hom(H_\mathscr{P},\mathbf{Z}_{\geqslant0})$ is an inclusion.  On the other hand, for each $\psi\in \Hom(H_\mathscr{P},\mathbf{Z}_{\geqslant 0})$, denote its image in $\Hom(\widetilde{H}_\mathscr{P},\mathbf{Z}_{\geqslant0})$ by $\tilde{\psi}$. Since $\widetilde{H}_\mathscr{P}^\sat=C(\mathscr{P},\mathbf{Z})^\vee$, $\tilde{\psi}$ is an element in $C(\mathscr{P},\mathbf{Z})$. Being invariant under the action of $X$ means exactly being $X$-quasiperiodic, therefore $\psi\in C^X(\mathscr{P},\mathbf{Z})$. 

Both $C^X(\mathscr{P},\mathbf{Z})^\vee$ and $H_\mathscr{P}^\sat/(H_\mathscr{P}^\sat)_{\text{tor}}$ are toric, and $\Hom(H_\mathscr{P},\mathbf{Z}_{\geqslant 0})=\Hom(H_\mathscr{P}^\sat/(H_\mathscr{P}^\sat)_{\text{tor}},\mathbf{Z}_{\geqslant 0})$. So the second statement follows from the first statement. 
\end{proof}

Denote the monoid $C^X(\mathscr{P},\mathbf{Z})^\vee$ by $P_\mathscr{P}$. It is a sharp toric monoid. The natural morphism $H_\mathscr{P}^\sat\to P_\mathscr{P}$ is the quotient by the torsion, since $s_\mathbf{Q}: S^2X\otimes\mathbf{Q}\to H^{\gp}_{\mathscr{P},\mathbf{Q}}$ is an isomorphism (\cite{ols08} Proposition 5.8.15).

If the paving $\mathscr{P}$ is a triangulation $\mathscr{T}$, the cone $C(\mathscr{P})$ is of maximal dimension in $\Gamma^2U$. The group $\Hom(C^X(\mathscr{P},\mathbf{Z})^{\gp},\mathbf{Z})$ is a lattice in $S^2U^*$, and is denoted by $\mathbb{L}_\mathscr{P}$. We have $S^2X\subset \mathbb{L}_\mathscr{P}$. For a general paving $\mathscr{P}$, let $I_\mathscr{P}$ denote the set of triangulations that refine $\mathscr{P}$. Define
\[
\mathbb{L}_\mathscr{P}:=\sum_{\mathscr{T}\in I_\mathscr{P}}\mathbb{L}_\mathscr{T}.
\]     
Let $C(\mathscr{P})^\vee$ be the dual cone of $C(\mathscr{P})$ in $S^2U^*$, and $S_\mathscr{P}$ be the toric monoid $C(\mathscr{P})^\vee\cap \mathbb{L}_\mathscr{P}$. If $\mathscr{P}$ is a triangulation, $S_\mathscr{P}=P_\mathscr{P}$.  

Introduce the monoid $S(X)\rtimes H_\mathscr{P}$ on the set $S(X)\times H_\mathscr{P}$ with the addition law
\[
(\alpha,p)+(\beta,q)=(\alpha+\beta, p+q+ \alpha*\beta).
\]

The morphism $H_\mathscr{P}^\sat\to P_\mathscr{P}$ induces the natural morphism $S(X)\rtimes H_\mathscr{P}^\sat\to S(X)\rtimes P_\mathscr{P}$. 

Consider the minimal models of the mirror family $\mathcal{Y}^{\ppav}$ in the principally polarized case. By Theorem~\ref{Two fans are the same}, we can identify the second Voronoi fan $\Sigma(X)$ with the Mori fan of $\mathcal{Y}^{\ppav}$. For any bounded paving $\mathscr{P}$, the nef cone $\nef(\mathcal{Y}_\mathscr{P}^0)\cap \pic(\mathcal{Y}_\mathscr{P}^0)$ is identified with $C^X(\mathscr{P},\mathbf{Z})$. Therefore
\begin{equation}
P_\mathscr{P}=\overline{\effcurve}(\mathcal{Y}_\mathscr{P}^{\ppav})\cap \pic(\mathcal{Y}_\mathscr{P}^{\ppav})^*.
\end{equation}

For an $X$-quasiperiodic $P_{\mathscr{P},\mathbf{R}}^\gp$-valued function $\varphi$ over $X_\mathbf{R}$, the bending parameters are $X$-periodic: $p_\rho=p_{\rho+X}$. Define an $X$-quasiperiodic, $\effcurve(\mathcal{Y}_{\mathscr{P}}^{\ppav})$-convex function $\varphi_\mathscr{P}: X_\mathbf{R}\to \Ncurve(\mathcal{Y}_\mathscr{P}^{\ppav})=P_{\mathscr{P},\mathbf{R}}^{\gp}$ by requiring that, at each codimension-$1$ cell $\rho\in \mathscr{P}$, the bending parameter $p_\rho\in \effcurve(\mathcal{Y}^{\ppav}_{\mathscr{P}})$ is the corresponding curve class $\Upsilon_*[V(C(\rho))]$, where $\Upsilon: \widetilde{\mathcal{Y}}^{\ppav}_\mathscr{P}\to \mathcal{Y}^{\ppav}_\mathscr{P}$ is the universal covering, and $\widetilde{\mathcal{Y}}^\ppav_\mathscr{P}$ is the infinite toric variety for the fan $\{C(\sigma)\}_{\sigma\in \mathscr{P}}$. 

\begin{lemma}\label{universal map}
Let $\rho$ be a codimension-$1$ wall between maximal cells $\sigma_i$ and $\sigma_j$. Let $\omega\in X$ be a vector that maps to a primitive generator of $X/(\mathbf{R}\rho\cap \mathbf{Z})\cong \mathbf{Z}$. For any integral, real-valued, $X$-quasiperiodic piecewise affine function $\psi$ whose paving is coarser than $\mathscr{P}$, define $\psi_i$, $\psi_j$ to be the affine extension of $\psi\vert_{\sigma_i}$ and $\psi\vert_{\sigma_j}$. The bending parameter $p_\rho$ for $\varphi_\mathscr{P}$ is 
\[
p_\rho(\psi)=\psi_i(\omega)-\psi_j(\omega). 
\] 

In particular, the function $\varphi_\mathscr{P}$ is integral with respect to the integral structure $P_\mathscr{P}$. 
\end{lemma}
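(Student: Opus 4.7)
The plan is to interpret the bending parameter through the intersection pairing on the mirror minimal model and reduce the computation to a standard toric wall-crossing identity on the universal cover $\widetilde{\mathcal{Y}}^\ppav_\mathscr{P}$.

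First, I would unwind the identification of pairings. By Lemma~\ref{the picard group is the space of function} together with Theorem~\ref{Two fans are the same}, the group $\pic(\mathcal{Y}^\ppav_\mathscr{P})$ is identified with $\Gamma(B,\mathcal{PA}/\Aff)$, the subcone $C^X(\mathscr{P},\mathbf{Z})/\Aff$ corresponds to $\nef(\mathcal{Y}^\ppav_\mathscr{P})\cap\pic(\mathcal{Y}^\ppav_\mathscr{P})$, and $\Ncurve(\mathcal{Y}^\ppav_\mathscr{P})_\mathbf{R} = P^{\gp}_{\mathscr{P},\mathbf{R}}$ is its dual under the intersection pairing. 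Evaluating $p_\rho = \Upsilon_*[V(C(\rho))]$ on a class $\psi\in C^X(\mathscr{P},\mathbf{Z})$ therefore equals the intersection number $L_\psi \cdot \Upsilon_*[V(C(\rho))]$, where $L_\psi$ is the line bundle on $\mathcal{Y}^\ppav_\mathscr{P}$ corresponding to $\psi$; its pull-back $\Upsilon^*L_\psi$ is the toric Cartier divisor $D_{\tilde\psi}$ on $\widetilde{\mathcal{Y}}^\ppav_\mathscr{P}$ determined by the piecewise linear extension $\tilde\psi:\mathbb{X}_\mathbf{R}\to\mathbf{R}$ of $\psi$.

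Second, by the projection formula along the universal cover,
\[
p_\rho(\psi) \;=\; L_\psi \cdot \Upsilon_*[V(C(\rho))] \;=\; \Upsilon^*L_\psi \cdot [V(C(\rho))] \;=\; D_{\tilde\psi}\cdot V(C(\rho)).
\]
(If one wishes to sidestep properness issues for $\Upsilon$, the same identity is purely local on a $Y$-fundamental domain, since the compact curve $V(C(\rho))$ lies inside such a domain and $\Upsilon$ is \'etale.) Next, I would apply the standard toric wall-crossing formula on $\widetilde{\mathcal{Y}}^\ppav_\mathscr{P}$: the cone $C(\rho)$ is a codimension-one wall of the fan $\{C(\sigma)\}_{\sigma\in\mathscr{P}}$ separating the two maximal cones $C(\sigma_i),C(\sigma_j)$, and $V(C(\rho))\cong\mathbf{P}^1$. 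Because the hyperplane $\mathbf{R}C(\rho)$ meets the height-one slice $X\subset\mathbb{X}$ transversely, any $\omega\in X$ as specified in the statement lifts to a lattice element of $\mathbb{X}$ projecting to a primitive generator of $\mathbb{X}/(\mathbf{R}C(\rho)\cap\mathbb{X})$ on the $\sigma_i$-side. The general toric formula then gives
\[
D_{\tilde\psi}\cdot V(C(\rho)) \;=\; \tilde\psi_i(\omega)-\tilde\psi_j(\omega),
\]
where $\tilde\psi_i,\tilde\psi_j$ are the linear extensions of $\tilde\psi\vert_{C(\sigma_i)}, \tilde\psi\vert_{C(\sigma_j)}$. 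Restricting to the height-one slice $X$ identifies $\tilde\psi_i\vert_X = \psi_i$ and $\tilde\psi_j\vert_X = \psi_j$, and yields the claimed equality $p_\rho(\psi) = \psi_i(\omega)-\psi_j(\omega)$.

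Integrality of $\varphi_\mathscr{P}$ then follows at once: for any $\psi\in C^X(\mathscr{P},\mathbf{Z})$ the values $\psi_i(\omega),\psi_j(\omega)$ are integers, so $p_\rho$ defines an element of $\Hom(C^X(\mathscr{P},\mathbf{Z}),\mathbf{Z}) = P^{\gp}_\mathscr{P}$. The main obstacle is sign and orientation bookkeeping: one must verify that choosing $\omega$ on the $\sigma_i$-side produces $\psi_i-\psi_j$ rather than its negative, matching the bending-parameter convention of Definition~\ref{definition of bending parameter}. A secondary nuisance is that the maximal cones $C(\sigma_i), C(\sigma_j)$ need not be simplicial; this is handled either by refining $\mathscr{P}$ to a triangulation that preserves $\rho$ as a cell (leaving the curve $V(C(\rho))$ and its intersection numbers unchanged) or by invoking the wall-crossing formula in the polytopal form valid on the tubular toric neighborhood of $V(C(\rho))$.
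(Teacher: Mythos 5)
Your proposal is correct and takes essentially the same route as the paper's proof: the paper likewise observes that $X$-quasiperiodicity places $D_\psi$ in $\pic^X(\widetilde{\mathcal{Y}}^{\ppav}_\mathscr{P})$, applies the projection formula $p_\rho(\psi)=\Upsilon_*[V(C(\rho))]\cdot D_\psi=\Upsilon_*\bigl([V(C(\rho))]\cdot \Upsilon^*D_\psi\bigr)$, and invokes the standard toric intersection (wall-crossing) formula on the locally toric cover, with integrality following because $\omega\in X$ and $\psi$ is integral. Your extra bookkeeping on signs, non-simplicial maximal cones, and lifting $\omega$ to the height-one slice merely fills in details the paper leaves implicit.
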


\begin{proof}
Since $\psi$ is $X$ quasi-periodic, the divisor $D_\psi$ it represents is inside $\pic^X(\widetilde{\mathcal{Y}}^{\ppav}_\mathscr{P})$. Therefore
\[
p_\rho(\psi)=(\Upsilon_*[V(C(\rho))])\cdot D_\psi=\Upsilon_*([V(C(\rho))]\cdot \Upsilon^*D_\psi)=\psi_i(\omega)-\psi_j(\omega). 
\] 

The last equality is by the formula for toric varieties, since $\widetilde{\mathcal{Y}}^{\ppav}_\mathscr{P}$ is locally a toric variety.

If $\psi$ is integral, since $\omega\in X$, $p_\rho(\psi)$ is an integer. In other words, $p_\rho\in P_\mathscr{P}$, and $\varphi_\mathscr{P}$ is integral. 
\end{proof}

\begin{corollary}\label{interpretation of universal map}
For any function $\psi\in C(\mathscr{P})$, the evaluation $\varphi_\mathscr{P}$ on $\psi$ is equal to $\psi$ up to a linear function. 
\end{corollary}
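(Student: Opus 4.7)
The plan is to establish the identity by showing that the two real-valued piecewise affine functions on $X_\mathbf{R}$, namely $\psi$ itself and the evaluation $\Phi_\psi(x) := \langle \varphi_\mathscr{P}(x), \psi\rangle$ (where the pairing is the intersection $\Ncurve(\mathcal{Y}_\mathscr{P}^{\ppav}) \times \pic(\mathcal{Y}_\mathscr{P}^{\ppav}) \to \mathbf{R}$), have identical bending parameters along every codimension-one wall of $\mathscr{P}$. The conclusion will then follow because any piecewise affine function on a connected polyhedral decomposition with all bending parameters zero is globally affine.

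First I would lift $\psi \in C(\mathscr{P})=CPA^X(\mathscr{P},\mathbf{R})/Aff$ to a genuine $X$-quasiperiodic piecewise affine function; because its associated paving coarsens $\mathscr{P}$, the representative is affine on every maximal cell of $\mathscr{P}$. Similarly, $\varphi_\mathscr{P}$ is by construction piecewise affine on $\mathscr{P}$ with values in $\Ncurve$, so $\Phi_\psi$ is also piecewise affine on $\mathscr{P}$. Both functions are $X$-quasiperiodic since the defining bending parameters $p_\rho$ of $\varphi_\mathscr{P}$ are $X$-invariant and $\psi$ is $X$-quasiperiodic by hypothesis.

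The heart of the argument is the bending parameter computation. Fix a codimension-one wall $\rho \in \mathscr{P}$ separating maximal cells $\sigma_i, \sigma_j$, and let $\omega \in X$ be the primitive normal as in Lemma~\ref{universal map}. The bending parameter of $\Phi_\psi$ at $\rho$ is exactly $p_\rho(\psi)$, the intersection number of the curve class $p_\rho$ with the divisor class $\psi$. Lemma~\ref{universal map} computes this for integral $\psi$ as $\psi_i(\omega)-\psi_j(\omega)$, which is precisely the bending parameter of $\psi$ at $\rho$ in the sense of Definition~\ref{definition of bending parameter} (with the convention that $\omega$ points from $\sigma_j$ into $\sigma_i$). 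Since both sides of the identity are $\mathbf{R}$-linear in $\psi$, the equality extends by $\mathbf{Q}$-linearity and continuity to all $\psi \in C(\mathscr{P})$, not merely integral ones.

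Therefore the difference $\Phi_\psi - \psi$ is a piecewise affine function on $\mathscr{P}$ all of whose bending parameters vanish. Since $X_\mathbf{R}$ is connected and covered by $\mathscr{P}$, walking across adjacent maximal cells shows that the affine extensions of $\Phi_\psi-\psi$ from neighboring cells agree, so the function is globally affine on $X_\mathbf{R}$, which is the desired conclusion. The only genuine subtlety in the argument is the sign/orientation bookkeeping needed to identify the intersection-theoretic bending parameter of $\Phi_\psi$ with the combinatorial bending parameter of $\psi$ on the nose; once the convention for $\omega$ and for $\sigma_+$ versus $\sigma_-$ is fixed consistently between Lemma~\ref{universal map} and Definition~\ref{definition of bending parameter}, this poses no further difficulty.
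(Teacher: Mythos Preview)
Your proof is correct and follows exactly the same route as the paper: compare bending parameters using Lemma~\ref{universal map}, then conclude that the difference is globally affine. The paper's own argument is a two-line appeal to that lemma, and what you have written is simply the natural expansion of those two lines, including the harmless extension from integral to real $\psi$ by linearity.
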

\begin{proof}
Choose a piecewise affine function $\psi$ as the representative. By the Lemma~\ref{universal map}, the bending parameters of $\psi\circ\varphi_\mathscr{P}$ are equal to the bending parameters of $\psi$. 
\end{proof}

If $C(\mathscr{P}')$ is a face of a cone $C(\mathscr{P})$. Consider the dual $C(\mathscr{P}')^\vee$ in the space $\Ncurve(\mathcal{Y}_\mathscr{P}^\ppav)$. Since $\mathscr{P}$ refines $\mathscr{P}'$, there is a contraction $f:\mathcal{Y}^\ppav_\mathscr{P}\to \mathcal{Y}^\ppav_{\mathscr{P}'}$ inducing $f_*: \Ncurve(\mathcal{Y}_\mathscr{P}^{\ppav})\to \Ncurve(\mathcal{Y}_{\mathscr{P}'}^{\ppav})$. We have an exact sequence of monoids
 \[
\begin{diagram}\label{birational contraction}
0&\rTo&\mathcal{R}&\rTo&C(\mathscr{P}')^\vee &\rTo^{f_*}&\overline{\effcurve}(\mathcal{Y}_{\mathscr{P}'}^{\ppav})&\rTo &0,
\end{diagram}
\]

where $\mathcal{R}$ is the $\mathbf{R}$-vector space generated by the $f$-contracted curve classes. 

\begin{corollary}\label{compatible sections}
The standard sections $\varphi$ are compatible. 
\[
\varphi_{\mathscr{P}'}=f_*\circ\varphi_\mathscr{P}: B\to \Ncurve(\mathcal{Y}_{\mathscr{P}'}^{\ppav}). 
\]
\end{corollary}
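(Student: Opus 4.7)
The standard section $\varphi_\mathscr{Q}$ is specified only by its bending parameters, so the identity $\varphi_{\mathscr{P}'}=f_{*}\circ\varphi_{\mathscr{P}}$ must be read modulo addition of an affine function on $X_\mathbf{R}$. My plan is to test both sides against every nef class on $\mathcal{Y}^{\ppav}_{\mathscr{P}'}$ and combine the projection formula with Corollary~\ref{interpretation of universal map}, rather than computing bending parameters wall-by-wall.

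First I would fix $\psi\in C(\mathscr{P}',\mathbf{Z})$, viewed at once as an integral, $X$-quasiperiodic, piecewise affine function on $X_\mathbf{R}$ and, via Lemma~\ref{the picard group is the space of function}, as a nef Cartier class on $\mathcal{Y}^{\ppav}_{\mathscr{P}'}$. Since $\mathscr{P}$ refines $\mathscr{P}'$, the same function lies in $C(\mathscr{P},\mathbf{Z})$; I claim that under Lemma~\ref{the picard group is the space of function} it represents $f^{*}\psi$ on $\mathcal{Y}^{\ppav}_{\mathscr{P}}$. This reduces, via the universal covers $\Upsilon$ and $\Upsilon'$, to a routine toric fact: the pullback of a toric Cartier divisor along the refinement $\tilde f\colon \widetilde{\mathcal{Y}}^{\ppav}_{\mathscr{P}}\to\widetilde{\mathcal{Y}}^{\ppav}_{\mathscr{P}'}$ of toric varieties is represented by the same piecewise linear function on the refined fan, which holds because $f$ is an isomorphism in codimension one between $\mathbf{Q}$-factorial relative minimal models. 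Granted this, Corollary~\ref{interpretation of universal map} applied in both pavings gives
\[
\psi\circ\varphi_{\mathscr{P}'}\equiv\psi\qquad\text{and}\qquad (f^{*}\psi)\circ\varphi_{\mathscr{P}}\equiv\psi\pmod{Aff},
\]
and the projection formula for the perfect pairing $\pic\times\Ncurve\to\mathbf{Z}$ yields the identity $\psi\circ(f_{*}\circ\varphi_{\mathscr{P}})=(f^{*}\psi)\circ\varphi_{\mathscr{P}}$. Combining these, both $\psi\circ(f_{*}\circ\varphi_{\mathscr{P}})$ and $\psi\circ\varphi_{\mathscr{P}'}$ agree with $\psi$ modulo $Aff$.

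To conclude, I would note that the nef cone $C(\mathscr{P}')$ is top-dimensional in $N^{1}(\mathcal{Y}^{\ppav}_{\mathscr{P}'})_{\mathbf{R}}$, so its integral points generate $\pic(\mathcal{Y}^{\ppav}_{\mathscr{P}'})$ as an abelian group; hence the vector-valued map $f_{*}\circ\varphi_{\mathscr{P}}-\varphi_{\mathscr{P}'}\colon X_{\mathbf{R}}\to\Ncurve(\mathcal{Y}^{\ppav}_{\mathscr{P}'})_{\mathbf{R}}$ pairs to an affine scalar function with every divisor class and is therefore affine itself, which is exactly the required compatibility. The main obstacle is the identification $f^{*}\psi\leftrightarrow\psi$ as piecewise affine functions; once that is in hand, everything else is formal from Corollary~\ref{interpretation of universal map} and the duality between $f^{*}$ and $f_{*}$.
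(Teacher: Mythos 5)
Your proof is correct and rests on the same two ingredients as the paper's own argument: the identification of $f_*$ as the adjoint of the inclusion $\pic(\mathcal{Y}^{\ppav}_{\mathscr{P}'})\hookrightarrow\pic(\mathcal{Y}^{\ppav}_{\mathscr{P}})$ (pullback of a Cartier class is represented by the same piecewise affine function) together with the functional description of the bending parameters from Lemma~\ref{universal map}; you simply package the latter through Corollary~\ref{interpretation of universal map} and test against a spanning set of nef classes, rather than matching bending parameters wall by wall, which is an equivalent formulation. One small correction: the pullback step should not be justified by saying $\mathcal{Y}^{\ppav}_{\mathscr{P}'}$ is a $\mathbf{Q}$-factorial relative minimal model --- for a strictly coarser paving it is in general neither --- but nothing of that sort is needed, since the toric fact that pullback along a fan refinement is given by the same support function holds for an arbitrary Cartier divisor, i.e.\ precisely for $\psi\in C(\mathscr{P}',\mathbf{Z})$.
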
 
\begin{proof}
Regard $\Ncurve(\mathcal{Y}_\mathscr{P}^{\ppav})$ as the dual space of $\pic(\mathcal{Y}^{\ppav}_{\mathscr{P}})$, and $\pic(\mathcal{Y}^{\ppav}_{\mathscr{P}'})\to \pic(\mathcal{Y}^{\ppav}_\mathscr{P})$ as an inclusion. The map $f_*$ is the restriction of functions to the subspace $C(\mathscr{P}')$. By Lemma \ref{universal map}, the bending parameters have the same description as functionals on $C(\mathscr{P}')$. 
\end{proof}

Let $(\mathcal{X},\mathcal{L}, G,\varrho)$ be an AN family over $S=\spec R$ constructed from $\varphi_\mathscr{P}$ and a chart $\alpha: P_\mathscr{P}\to R$. We add a log structure to the family. Locally on the abelian scheme $A/S$, choose a trivialization of $\mathcal{M}$ and compatible trivializations of $\mathcal{O}_\alpha$, the algebra $\mathcal{R}$ is isomorphic to $k[S(X)\rtimes P_\mathscr{P}]\otimes_{k[P_\mathscr{P}]}\mathcal{O}_A$. Define the log structure $\widetilde{P}\to \mathcal{O}_{\widetilde{\mathcal{X}}}$ locally by the descent of the chart
\[
S(X)\rtimes P_\mathscr{P}\to k[S(X)\rtimes P_\mathscr{P}]\otimes_{k[P_\mathscr{P}]}\mathcal{O}_A. 
\]

For any $n\in \mathbf{N}$, do the reduction over $S_n:=\spec R/I^n$, the pull-back $\widetilde{P}_n$ on $\widetilde{X}_n$ descends to a log structure $P_n$ on the quotient $X_n$ by (\cite{ols08} 4.1.18, Lemma 4.1.19, \& 4.1.22).  

By (\cite{ols08} Lemma 4.1.11), $(\widetilde{\mathcal{X}},\widetilde{P})$ is integral and log smooth over $(S,M_S)$. Therefore $(X_n,P_n)$ is log smooth and integral over $(S_n, M_{S_n})$ for every $n$.

The underlying topological space of $S_n$ inherits a stratification from the toric stratification of $\spec k[P_\mathscr{P}]$. Since the image of $\spec R/I$ is in the union of the toric strata defined by $I_{\varphi_\mathscr{P}}$, each stratum corresponds to an admissible prime toric ideal $J$. Let $J$ be an admissible prime toric ideal of $P_\mathscr{P}$, and $F$ be the face $P_\mathscr{P}\backslash\ J$. The monoid $P_\mathscr{P}$ is the integral points in $C(\mathscr{P})^\vee$, for $C(\mathscr{P})$ a cone in the second Voronoi fan. Each face $\tau$ of $C(\mathscr{P})$ is a cone $C(\mathscr{P}')$ for some coarser paving $\mathscr{P}'$. Therefore $F=C(\mathscr{P}')^\perp\cap C(\mathscr{P})^\vee$. Then the stratum of $S_n$ is called the stratum associated with the paving $\mathscr{P}'$, or the $\mathscr{P}'$-stratum. 

We can associate a paving and a piecewise affine function to the face $F$ as in Definition~\ref{data associated with a face}. By Corollary~\ref{compatible sections}, the piecewise affine function is $\varphi_{\mathscr{P}'}$, and the paving is $\mathscr{P}'$.  

\begin{lemma}\label{the monoid}
\[
P_\mathscr{P}/(F\cap P_\mathscr{P})=P_{\mathscr{P}'}.
\]
\end{lemma}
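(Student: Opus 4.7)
The plan is to construct a natural restriction homomorphism $q\colon P_\mathscr{P}\to P_{\mathscr{P}'}$ and verify that it is surjective with kernel $F\cap P_\mathscr{P}$. Since $\mathscr{P}$ refines $\mathscr{P}'$, any function in $C^X(\mathscr{P}',\mathbf{Z})$ has paving coarser than $\mathscr{P}'$, hence (by transitivity) coarser than $\mathscr{P}$, yielding an inclusion $C^X(\mathscr{P}',\mathbf{Z})\hookrightarrow C^X(\mathscr{P},\mathbf{Z})$ that realizes $C(\mathscr{P}')$ as a face of $C(\mathscr{P})$ in the second Voronoi fan. The map $q$ is then simply restriction of a functional from $C(\mathscr{P})$ to this face.

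The kernel identification is immediate from the definition $F=C(\mathscr{P}')^\perp\cap C(\mathscr{P})^\vee$: an element of $P_\mathscr{P}$ lies in $\ker(q)$ exactly when it vanishes on $C(\mathscr{P}')$, which is the same as lying in $F$. For surjectivity at the level of rational cones, I would invoke the standard convex-duality principle that if $\tau\preceq\sigma$ is a face of a rational polyhedral cone in $V$, the quotient $V^*\twoheadrightarrow V^*/\tau^\perp\cong\mathrm{span}(\tau)^*$ sends $\sigma^\vee$ onto $\tau^\vee$; this applies directly to $C(\mathscr{P}')\preceq C(\mathscr{P})$.

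The main obstacle is promoting this to the lattice level, i.e.\ showing that $\mathbb{L}_\mathscr{P}$ surjects onto $\mathbb{L}_{\mathscr{P}'}$ under restriction to $\mathrm{span}(C(\mathscr{P}'))^*$. Using $\mathbb{L}_\mathscr{P} = \sum_{\mathscr{T}\in I_\mathscr{P}}\mathbb{L}_\mathscr{T}$ and the inclusion $I_\mathscr{P}\subset I_{\mathscr{P}'}$ (any triangulation refining $\mathscr{P}$ automatically refines $\mathscr{P}'$) gives one containment. For the reverse, one needs to check that every lattice element in $\mathbb{L}_{\mathscr{T}'}$ for $\mathscr{T}'\in I_{\mathscr{P}'}\setminus I_\mathscr{P}$ is already captured in the image after restriction; the natural approach is to compare $\mathscr{T}'$ with a refining triangulation $\mathscr{T}\in I_\mathscr{P}$ chosen so that the two induce the same lattice structure in the directions along $C(\mathscr{P}')$. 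Combining the resulting lattice-level surjectivity with the kernel computation yields the isomorphism $P_\mathscr{P}/(F\cap P_\mathscr{P})\cong P_{\mathscr{P}'}$.
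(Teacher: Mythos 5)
Your overall duality strategy (a restriction map $q$, kernel equal to the face, surjectivity by dualizing the face inclusion $C(\mathscr{P}')\subset C(\mathscr{P})$) is in the same spirit as the paper's argument, but the step you yourself single out as the main obstacle is exactly where the proposal has a genuine gap, and the route you sketch for it is aimed at the wrong objects. The lattices relevant to $P_\mathscr{P}=C^X(\mathscr{P},\mathbf{Z})^\vee$ and $P_{\mathscr{P}'}=C^X(\mathscr{P}',\mathbf{Z})^\vee$ are $\Hom(C^X(\mathscr{P},\mathbf{Z})^{\gp},\mathbf{Z})$ and $\Hom(C^X(\mathscr{P}',\mathbf{Z})^{\gp},\mathbf{Z})$, not $\mathbb{L}_\mathscr{P}$ and $\mathbb{L}_{\mathscr{P}'}$: the latter coincide with the former only when the paving is a triangulation, and $\mathscr{P}'$, being coarser than $\mathscr{P}$, is in general not one. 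The paper deliberately keeps these apart ($S_\mathscr{P}=\mathbb{L}_\mathscr{P}\cap C(\mathscr{P})^\vee$ versus $P_\mathscr{P}$), and the relation between the $\mathbb{L}$'s and $P_{\mathscr{P}'}$ in Corollary~\ref{the gluing is etale} is itself deduced from the present lemma, so arguing through $\mathbb{L}_\mathscr{P}\to\mathbb{L}_{\mathscr{P}'}$ risks circularity; moreover $\mathbb{L}_{\mathscr{P}'}$ is a full-rank lattice in $S^2U^*$, not a lattice in $\mathrm{span}(C(\mathscr{P}'))^*$, so the surjectivity you propose to prove does not even match the statement you need. Finally, the ``reverse containment'' is precisely the assertion requiring proof, and the plan of choosing a refining triangulation $\mathscr{T}\in I_\mathscr{P}$ that ``induces the same lattice structure along $C(\mathscr{P}')$'' as a given $\mathscr{T}'\in I_{\mathscr{P}'}$ is unjustified: it is not clear such a $\mathscr{T}$ exists, and no argument is offered.

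The missing idea is a saturation statement on the primal side, which is what the paper proves: $C^X(\mathscr{P}',\mathbf{Z})=C(\mathscr{P}')\cap C^X(\mathscr{P},\mathbf{Z})^{\gp}$, i.e.\ an $X$-quasiperiodic function that is piecewise affine with respect to $\mathscr{P}'$ is integral for $\mathscr{P}$ if and only if it is integral for $\mathscr{P}'$. This is elementary: every maximal cell of $\mathscr{P}$ lies in a maximal cell of $\mathscr{P}'$, so $\mathscr{P}'$-integrality implies $\mathscr{P}$-integrality; conversely every maximal cell $\sigma'\in\mathscr{P}'$ contains a full-dimensional cell $\sigma\in\mathscr{P}$, and since the function is affine on $\sigma'$, its affine extension is already determined on $\sigma$, where it lies in $Aff(X,\mathbf{Z})$, hence it is integral on $\sigma'$. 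This saturation of $C^X(\mathscr{P}',\mathbf{Z})^{\gp}$ inside $C^X(\mathscr{P},\mathbf{Z})^{\gp}$ is exactly what makes your restriction map surjective at the lattice level (the quotient lattice is torsion free, so integral functionals extend); combined with your kernel computation, the real-cone duality you cite, and the routine correction of a lift by a large multiple of an element in the relative interior of $F$ to restore nonnegativity on $C(\mathscr{P})$ (and the equally routine check that two elements of $P_\mathscr{P}$ with the same image differ by an element of $(F\cap P_\mathscr{P})^{\gp}$), it yields $P_\mathscr{P}/(F\cap P_\mathscr{P})=P_{\mathscr{P}'}$. With that observation inserted in place of the $\mathbb{L}$-lattice detour, your outline goes through.
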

\begin{proof}
The monoid $P_\mathscr{P}$ is given by the intersection $\overline{\effcurve}(\mathcal{Y}_\mathscr{P}^\ppav)\cap\pic(\mathcal{Y}_\mathscr{P}^\ppav)^*$. The dual integral structure is $C^X(\mathscr{P},\mathbf{Z})^{\gp}$. We claim that $C^X(\mathscr{P}',\mathbf{Z})=C(\mathscr{P}')\cap C^X(\mathscr{P},\mathbf{Z})^{\gp}$. Assume $\psi$ is a piecewise function and its image is in $C^X(\mathscr{P}',\mathbf{Z})$. Since $\mathscr{P}'$ is coarser than $\mathscr{P}$. Each top-dimensional cell $\sigma\in\mathscr{P}$ is contained in some top-dimensional cell $\sigma'\in \mathscr{P}'$ of $\mathscr{P}'$. Therefore, the restriction of $\psi$ to $\sigma$ is integral, and $\psi\in C(\mathscr{P}')\cap C^X(\mathscr{P},\mathbf{Z})^{\gp}$. On the other hand, each top-dimensional cell $\sigma'\in \mathscr{P}'$ contains some top-dimensional cell $\sigma\in \mathscr{P}$. If $\psi$ is integral on $\sigma$, it is integral on $\sigma'$. Therefore if $\psi\in C(\mathscr{P}')\cap C^X(\mathscr{P},\mathbf{Z})^{\gp}$, then $\psi\in C^X(\mathscr{P}',\mathbf{Z})$. This proves the claim. It follows that the dual integral structures also agree. 
\end{proof}

\begin{corollary}\label{the gluing is etale}
Assume $C(\mathscr{P}')$ is a face of $C(\mathscr{P})$. The morphism $k[S_\mathscr{P}]\to k[S_{\mathscr{P}'}]$ is \'{e}tale. 
\end{corollary}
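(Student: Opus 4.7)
The plan is to factor the morphism as a composition of a toric localization (an open immersion) and a finite cover arising from a lattice refinement, and to verify étale-ness for each factor.

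First, since $C(\mathscr{P}')$ is a face of $C(\mathscr{P})$, the subset
\[
F := S_\mathscr{P} \cap C(\mathscr{P}')^\perp
\]
is a face of the sharp toric monoid $S_\mathscr{P}$. Standard toric duality gives $C(\mathscr{P})^\vee + \mathbf{R} F = C(\mathscr{P}')^\vee$, so localizing $S_\mathscr{P}$ at $F$ yields the saturated monoid $S_\mathscr{P}[F^{-1}] = C(\mathscr{P}')^\vee \cap \mathbb{L}_\mathscr{P}$. The induced map $k[S_\mathscr{P}] \to k[S_\mathscr{P}[F^{-1}]]$ is therefore an open immersion, in particular étale.

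Second, the remaining arrow
\[
k[C(\mathscr{P}')^\vee \cap \mathbb{L}_\mathscr{P}] \longrightarrow k[C(\mathscr{P}')^\vee \cap \mathbb{L}_{\mathscr{P}'}] = k[S_{\mathscr{P}'}]
\]
is induced by the lattice inclusion $\mathbb{L}_\mathscr{P} \subset \mathbb{L}_{\mathscr{P}'}$, which is forced by $I_\mathscr{P} \subset I_{\mathscr{P}'}$. Set $G := \mathbb{L}_{\mathscr{P}'}/\mathbb{L}_\mathscr{P}$; its Cartier dual $G^\vee$ acts on $\spec k[S_{\mathscr{P}'}]$ through the torus and realizes the morphism as the categorical quotient. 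It is étale provided $G^\vee$ acts freely on every orbit closure and $|G|$ is invertible on the base, the latter being automatic over $k = \mathbf{Z}[1/d,\zeta_M]$.

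The main technical obstacle is verifying freeness on the toric boundary, which reduces to the claim $\mathbb{L}_{\mathscr{P}'} \subset \mathbb{L}_\mathscr{P} + F^{\gp}$. I plan to deduce this from the Mori-theoretic dictionary in Section \ref{2.3}: for any triangulation $\mathscr{T}' \in I_{\mathscr{P}'}\setminus I_\mathscr{P}$ and any $\mathscr{T} \in I_\mathscr{P}$, the classes of $\mathbb{L}_{\mathscr{T}'}$ not already in $\mathbb{L}_\mathscr{T}$ are the curve classes contracted by the birational map $\mathcal{Y}_{\mathscr{T}} \dashrightarrow \mathcal{Y}_{\mathscr{T}'}$ of relative minimal models of the mirror family; such contracted classes pair trivially with every divisor class in $C(\mathscr{P}')$, hence lie in $F^{\gp}$ by the very definition of $F$. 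Once this containment is in hand, the standard description of torus-orbit stabilizers in toric schemes yields freeness on each stratum, completing the proof.
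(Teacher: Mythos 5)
Your factorization — a face localization followed by the finite map induced by the lattice inclusion $\mathbb{L}_\mathscr{P}\subset\mathbb{L}_{\mathscr{P}'}$ — is exactly the paper's first step, and your reformulation of the second step (the map is the quotient by the Cartier dual of $\mathbb{L}_{\mathscr{P}'}/\mathbb{L}_\mathscr{P}$, étale precisely when the action is free, which is equivalent to $\mathbb{L}_{\mathscr{P}'}\subset\mathbb{L}_\mathscr{P}+F^{\gp}$) is a faithful restatement of what the paper establishes as a pushout of monoids. So everything hinges on that lattice containment, and this is where your argument has a genuine gap.

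You propose to obtain the containment from Mori theory: the ``classes of $\mathbb{L}_{\mathscr{T}'}$ not already in $\mathbb{L}_\mathscr{T}$'' are contracted curve classes, and contracted classes lie in $F^{\gp}$. The second half is fine: a class contracted by $\mathcal{Y}_{\mathscr{T}'}\to\mathcal{Y}_{\mathscr{P}'}$ pairs to zero with every element of $C(\mathscr{P}')$, since by Lemma~\ref{universal map} the pairing with a wall class is the bending across that wall, which vanishes for walls interior to cells of $\mathscr{P}'$. But the first half is not well-posed as stated (a set difference of lattices is not a collection of curve classes), and the statement you actually need — that $\mathbb{L}_{\mathscr{T}'}$, the dual lattice of $C^X(\mathscr{T}',\mathbf{Z})^{\gp}$, is generated over $\mathbb{L}_\mathscr{P}$ by classes of contracted invariant curves — is an integral statement that the cone-level mirror/Mori dictionary of Section~\ref{2.3} does not provide. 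Lemma~\ref{universal map} only says wall classes are integral points of $\mathbb{L}_\mathscr{P}$; it does not say that invariant curve classes generate the dual lattice integrally (rationally this is clear, integrally it is exactly the kind of lattice-index subtlety that separates $H_\mathscr{P}$, $H_\mathscr{P}^{\sat}$, $P_\mathscr{P}$ and the various $\mathbb{L}_\mathscr{T}$, and it is not automatic for the non-unimodular triangulations allowed in $I_{\mathscr{P}'}$). The paper supplies precisely this missing integral input via Lemma~\ref{the monoid} (i.e. $C^X(\mathscr{P}',\mathbf{Z})=C(\mathscr{P}')\cap C^X(\mathscr{P},\mathbf{Z})^{\gp}$, proved by a direct argument with integral piecewise affine functions), applied to the triangulations refining $\mathscr{P}'$; with that lemma your containment follows at once, but then the Mori detour is superfluous. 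Either invoke Lemma~\ref{the monoid} or prove the integral generation claim — as written, the crucial step is asserted rather than proved. (A smaller, shared caveat: both your quotient argument and the paper's isogeny-of-tori step require the index $[\mathbb{L}_{\mathscr{P}'}:\mathbb{L}_\mathscr{P}]$ to be invertible in $k$; calling this ``automatic'' over $\mathbf{Z}[1/d,\zeta_M]$ also needs justification.)
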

\begin{proof}
Recall that $S_\mathscr{P}=\mathbb{L}_\mathscr{P}\cap C(\mathscr{P})^\vee$, and $\mathbb{L}_\mathscr{P}\subset\mathbb{L}_{\mathscr{P}'}$ is a sublattice of finite index. The morphism $k[S_\mathscr{P}]\to k[S_{\mathscr{P}'}]$ can be decomposed into a localization and the inclusion $f: k[C(\mathscr{P}')^\vee\cap \mathbb{L}_\mathscr{P}]\to k[C(\mathscr{P}')^\vee\cap \mathbb{L}_{\mathscr{P}'}]$. Denote $C(\mathscr{P}')$ by $\tau$, and $C(\mathscr{P}')^\perp$ by $F^\gp$. Apply Lemma~\ref{the monoid} to all $\mathscr{T}$ that refines $\mathscr{P}'$, we get exact sequences.
\[ 
\begin{diagram}
0&\rTo&F^\gp\cap\mathbb{L}_\mathscr{P}&\rTo& \mathbb{L}_\mathscr{P}\cap \tau^\vee&\rTo&P_{\mathscr{P}'}&\rTo& 0\\
&&\dTo^{f'}&&\dTo^f&&\dTo^{=}&&\\
0&\rTo&F^\gp\cap\mathbb{L}_{\mathscr{P}'}&\rTo& \mathbb{L}_{\mathscr{P}'}\cap \tau^\vee&\rTo&P_{\mathscr{P}'}&\rTo& 0\\
\end{diagram}
\]

It follows that the first square is the pushout (\cite{ogus} Chap.\Rmnum{1} Proposition 1.1.4 part. 2). The associated morphism $f'$ is a group homomorphism of tori and is \'{e}tale, the base change $f$ is thus \'{e}tale.
\end{proof}

\begin{lemma}\label{olssen's family}
For each geometric point $\bar{x}\to S$ in interior of the $\mathscr{P}'$-stratum, the fiber $(\mathcal{X}_{n,\bar{x}}$, $\mathcal{L}_{n,\bar{x}}$, $P_{n,\bar{x}}$, $G_{n,\bar{x}})$ $\to (\bar{x}, M_{\bar{x}})$ is isomorphic to the collection of data obtained from the saturation of the standard construction defined in \cite{ols08}. 
\end{lemma}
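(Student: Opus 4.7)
The plan is to localize to an étale neighborhood of $\bar{x}$ carrying a chart, and then identify our AN construction with the saturation of Olsson's standard construction via the canonical morphism of monoids $S(X) \rtimes H_\mathscr{P}^\sat \to S(X) \rtimes P_\mathscr{P}$ that kills the torsion subgroup.

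First I would pick an étale affine neighborhood $\spec R$ of the image of $\bar{x}$ equipped with a chart $\alpha: P_\mathscr{P} \to R$ such that $\alpha \circ \varphi_\mathscr{P}$ is the function used to produce the AN family. Because $\bar{x}$ lies in the interior of the $\mathscr{P}'$-stratum, $\alpha^{-1}(\mathfrak{m}_{\bar{x}}) \cap P_\mathscr{P}$ is exactly the prime toric ideal $J = P_\mathscr{P} \setminus F$ associated with the face $F = C(\mathscr{P}')^\perp \cap C(\mathscr{P})^\vee$. By Lemma~\ref{the monoid} together with Corollary~\ref{the gluing is etale}, the induced chart on the strict henselization at $\bar{x}$ factors through $P_{\mathscr{P}'}$, so the characteristic monoid $\overline{M}_{\bar{x}}$ is canonically $P_{\mathscr{P}'}$ and the local data agrees with that of the $\mathscr{P}'$-stratum.

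Next I would compare the graded algebras. Olsson's saturated standard construction is built from $k[S(X) \rtimes H_\mathscr{P}^\sat]$ over $\spec k[H_\mathscr{P}^\sat]$, together with trivializations $(\tau,\psi)$ of the biextension and cubical structures. Using the identifications $\widetilde{H}_\mathscr{P}^\sat = C(\mathscr{P},\mathbf{Z})^\vee$ and $H_\mathscr{P}^\sat/(H_\mathscr{P}^\sat)_{\text{tor}} = P_\mathscr{P}$ established above, the canonical quotient by the finite torsion group induces a strict morphism of log schemes $\spec k[P_\mathscr{P}] \to \spec k[H_\mathscr{P}^\sat]$ whose base change produces the algebra $k[S(X) \rtimes P_\mathscr{P}]$ underlying our AN family. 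By Lemma~\ref{universal map}, the bending parameters of $\varphi_\mathscr{P}$ agree with the images of Olsson's distinguished elements $\alpha * \beta$ in $P_\mathscr{P}$; hence the two graded algebras, together with their $T$-linearizations and compatible biextension and cubical trivializations, correspond under this base change.

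Finally I would apply the functoriality of the AN construction (Proposition~\ref{AN functorial}) to the strict log morphism $(\bar{x}, M_{\bar{x}}) \to \spec k[P_{\mathscr{P}'}]$, and similarly for Olsson's saturated family after precomposing with the torsion quotient. Both pull back to families governed by the same combinatorial data $(\mathscr{P}', \varphi_{\mathscr{P}'}, \tau, \psi)$, so they yield the same $(\mathcal{X}_{n,\bar{x}}, \mathcal{L}_{n,\bar{x}}, P_{n,\bar{x}}, G_{n,\bar{x}})$ after descent along the $Y$-action and descent of log structures, which proceed identically on both sides. The main technical subtlety will be handling the torsion of $H_\mathscr{P}^\sat$: its elements lie in $(H_\mathscr{P}^\sat)^*$, so they map to units under any chart and act by units on every monomial, hence they contribute trivially to the geometric fiber; this is precisely why the ``saturation'' step in Olsson's construction is equivalent to passing to $P_\mathscr{P}$ at the level of the fiber.
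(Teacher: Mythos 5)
Your overall route matches the paper's: localize at the $\mathscr{P}'$-stratum, identify the local chart with $P_{\mathscr{P}'}$, and compare the fiber with the saturation of Olsson's standard construction at the geometric point, with the torsion of the saturated monoid harmless because it consists of units. However, there is a genuine gap at the central step. The fiber you must analyze is that of the AN family built from $(P_\mathscr{P},\varphi_\mathscr{P})$, and your assertion that it ``pulls back to a family governed by $(\mathscr{P}',\varphi_{\mathscr{P}'},\tau,\psi)$'' is precisely what has to be proved; nothing established earlier in your argument (the chart factoring through $P_{\mathscr{P}'}$, or the comparison of $k[S(X)\rtimes H_\mathscr{P}^\sat]$ with $k[S(X)\rtimes P_\mathscr{P}]$, which concerns $\mathscr{P}$ rather than $\mathscr{P}'$) delivers it. What is needed is the splitting $P_F\cong P_{\mathscr{P}'}\oplus F^{\gp}$ from Lemma~\ref{the monoid} combined with the compatibility $\varphi_{\mathscr{P}'}=f_*\circ\varphi_\mathscr{P}$ of Corollary~\ref{compatible sections}: after the base change $R'=k[P_F]\otimes_{k[P_\mathscr{P}]}R$ one has, up to an affine function, $\varphi_\mathscr{P}=\varphi_{\mathscr{P}'}\oplus\psi$ with $\psi$ valued in $F^{\gp}$, hence in $(R')^*$, and only then does Proposition~\ref{AN functorial} convert the $(P_\mathscr{P},\varphi_\mathscr{P})$-family into the $(P_{\mathscr{P}'},\varphi_{\mathscr{P}'})$-family. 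Note also that Proposition~\ref{AN functorial} cannot be applied to the morphism $(\bar{x}, M_{\bar{x}})\to\spec k[P_{\mathscr{P}'}]$ as you propose: there is no AN family (no degeneration data) over $\spec k[P_{\mathscr{P}'}]$; the functoriality has to be applied to $S'=\spec R'\to S$ and then to $\bar{x}\to S'$, where both sides carry the pulled-back degeneration data.

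Two smaller points. When matching the fiber with the data of (\cite{ols08} 5.2.1) you must pass to the twisted trivializations $b_t^{-1}\tau$ and $a_t^{-1}\psi$, since these, not $\tau$ and $\psi$, are Olsson's data. And the identification with the standard construction for $\mathscr{P}'$ over $\Omega(x)$ uses that $H_{\mathscr{P}'}$ is sharp, so that under $H_{\mathscr{P}'}\to H_{\mathscr{P}'}^\sat\to P_{\mathscr{P}'}\to\Omega(x)$ every nonzero element is sent to $0$; your torsion remark explains why saturation modulo torsion does not change the fiber, but it is this sharpness/vanishing statement that actually places the fiber in (a base change of) the saturated standard construction.
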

\begin{proof}
Denote $P_\mathscr{P}$ by $P$, and $P_{\mathscr{P}'}$ by $P'$ in this proof. Assume the admissible prime toric ideal associated with $\mathscr{P}'$-stratum is $J$. Use the notations $F$, $P_F$, and $J_F$ as in Lemma \ref{toric prime}. By Lemma~\ref{the monoid}, $P_F\cong P'\oplus F^\gp$. Denote the geometric point $\bar{x}\to S\to \spec k[P]$ by $\bar{y}\to \spec k[P]$.

The fiber $(\mathcal{X}_{n,\bar{x}}, \mathcal{L}_{n,\bar{x}}, G_{n,\bar{x}})$ is isomorphic to the fiber $(\mathcal{X}_{\bar{x}}, \mathcal{L}_{\bar{x}}, G_{\bar{x}})$. Since the image of $\bar{x}$ is contained in the $\mathscr{P}'$-stratum, $\bar{y}$ is contained in the localization $\spec k[P_F]$.  Therefore, we can replace $R$ by the base change $R'=k[P_F]\otimes_{k[P]}R$. As in the proof of Lemma \ref{toric prime}, after the localization $k[P_F]$, the function $\varphi_\mathscr{P}$ is equal to $\varphi_{\mathscr{P}'}+\psi$, where $\psi$ has bending parameters in $F^{\gp}$. Up to a global affine function, we can assume that $\varphi_\mathscr{P}=\varphi_{\mathscr{P}'}\oplus \psi$ with the values in $P_F=P'\oplus F^{\gp}$. Since $F^{\gp}$ are sent to $(R')^*$, the pull back of the degeneration data is compatible with $\varphi_{\mathscr{P}'}$. By Lemma~\ref{AN functorial}, the base change of $\mathcal{X}$ to $R'$ is isomorphic to the AN construction by using $P'\to R'$ and $\varphi_{\mathscr{P}'}$. Moreover, over $S'=\spec R'$, we can use the chart $P'\to R'$ for the log structure on the base. Pull back to $\Omega(k)$, $P'\to \Omega(x)$ is the chart.

Assume $\bar{x}=\spec \Omega(x)$. Since the prime ideal of the image of $\bar{y}$ contains $J_F$, which further contains $P'\backslash\{0\}$, the set $P'\backslash\{0\}$ is sent to $0$ in $\Omega(x)$. Consider the composition $H_{\mathscr{P}'}\to H^\sat_{\mathscr{P}'}\to P'\to \Omega(x)$. Since $H_{\mathscr{P}'}$ is sharp (\cite{ols08} Lemma 4.1.8), all the nonzero elements of $H_{\mathscr{P}'}$ are sent to $0\in\Omega(x)$. Further pull back the degeneration data to $\Omega(x)$. Use the pull-back of $b_t^{-1}\tau$ and $a_t^{-1}\psi$ as the trivializations, we get the data in (\cite{ols08} 5.2.1). The fiber $(\mathcal{X}_{\bar{x}}, \mathcal{L}_{\bar{x}}, G_{\bar{x}},\varrho_{\bar{x}}, S(X)\rtimes P')$\footnote{Here we use the local chart to denote the log structure.} over $(\bar{x}, P')$ is a base change of the saturation of the standard construction in (Loc.cit. 5.2). 
\end{proof}

It follows that the family $(\mathcal{X}_n, \mathcal{L}_n, P_n, G_n,\varrho_n)\to (S_n, M_n)$ is an object in $\overline{\mathscr{T}}_{g,d}$. By the fact that $\overline{\mathscr{T}}_{g,d}$ is an Artin stack, the algebraization $(\mathcal{X}, \mathcal{L}, P, G,\varrho)\to (S, M)$ is in $\overline{\mathscr{T}}_{g,d}(S)$.

\begin{theorem}\label{olssen family}
The same assumption as in Theorem~\ref{alexeev family}, with $P=P_\mathscr{P}$ and $\varphi=\varphi_\mathscr{P}$. If $Z$ is a closed subscheme admissible for $\varphi_\mathscr{P}$, then there exists a family $\pi:(\mathcal{X}_Z, \mathcal{L}_Z, P_Z, G_Z, \varrho_Z)$ over $(Z, M_Z)$, which is an object in $\overline{\mathscr{T}}_{g,d}(Z)$. Moreover, this construction is functorial with respect to $Z'\to Z$. 
\end{theorem}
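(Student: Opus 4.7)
The plan is to assemble the theorem from the pieces already developed: the underlying AN family comes from Theorem~\ref{alexeev family}, so the real task is to equip it with the log structure $P_Z$ and verify that the resulting object lies in Olsson's stack $\overline{\mathscr{T}}_{g,d}$, then to globalize from the local/complete setting to an arbitrary admissible $Z$. I would proceed étale-locally, construct the log-structured family on each formal neighborhood, check membership in $\overline{\mathscr{T}}_{g,d}$ fiber by fiber via Lemma~\ref{olssen's family}, algebraize using the fact that Olsson's stack is algebraic, and then descend along the étale cover.

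In more detail: first reduce to $S = \spec R$ with $R$ complete with respect to the ideal defining $Z$, and fix a chart $\alpha : P_\mathscr{P} \to R$ coming from a local trivialization $U \cong \overline S \times \spec k[P_\mathscr{P}]$. The AN construction applied to $\varphi_\mathscr{P}$ produces $(\mathcal{X},\mathcal{L},G,\varrho)$, and the log structure is built as described immediately before the statement: locally on $A/S$, trivialize $\mathcal{M}$ and the $\mathcal{O}_\alpha$ so that $\mathcal{R} \cong k[S(X) \rtimes P_\mathscr{P}] \otimes_{k[P_\mathscr{P}]} \mathcal{O}_A$, declare $\widetilde P$ to be the log structure associated to the toric chart $S(X) \rtimes P_\mathscr{P} \to \mathcal{O}_{\widetilde{\mathcal{X}}}$, and descend $\widetilde P_n$ to a log structure $P_n$ on $X_n$ by the $Y$-quotient (this descent, as well as integrality and log smoothness of $(\widetilde{\mathcal{X}},\widetilde P)$ over $(S,M_S)$, is provided by the cited results of \cite{ols08}~4.1).

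Next, to check $(\mathcal{X}_n,\mathcal{L}_n,P_n,G_n,\varrho_n) \in \overline{\mathscr{T}}_{g,d}(S_n)$, I use Lemma~\ref{olssen's family}: at every geometric point $\bar{x} \to S_n$ lying in the interior of a $\mathscr{P}'$-stratum, the fiber is a base change of (the saturation of) Olsson's standard construction, which is exactly the fiberwise condition defining $\overline{\mathscr{T}}_{g,d}$; combined with log smoothness of the total family and the flatness obtained in Lemma~\ref{the stable pair is in AP}, the collection defines an $S_n$-object of $\overline{\mathscr{T}}_{g,d}$. Since $\overline{\mathscr{T}}_{g,d}$ is an Artin stack, the compatible system $\{(\mathcal{X}_n, \mathcal{L}_n, P_n, G_n, \varrho_n)\}_n$ algebraizes to an $S$-object on $(S, M_S)$, proving the theorem in the local case.

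To globalize, choose an étale cover $\{U_i \to Z\}$ with trivializations, perform the above construction on each $U_i$, and glue. The gluing works because the AN construction and the Fourier decomposition depend only on the log structure and not on the chart (the lemma preceding Assumption~\ref{basic assumption}), so the local log-structured families agree on overlaps up to canonical isomorphism; Corollary~\ref{the gluing is etale} guarantees that the transition maps across different $\mathscr{P}'$-strata are étale-local and compatible. Functoriality with respect to $Z' \to Z$ is a direct consequence of Proposition~\ref{AN functorial} together with the observation that pullback of $\varphi_\mathscr{P}$ along a strict log morphism preserves compatibility with the pulled-back degeneration data. The main technical obstacle is the chart-independence and gluing step: one must verify that the log structure $P$ and the action $\varrho$ constructed from two different local trivializations are canonically isomorphic, which ultimately reduces to the invertibility of $\psi = \alpha \circ \varphi_\mathscr{P} / \alpha' \circ \varphi_\mathscr{P}$ used in Proposition~\ref{AN functorial}, together with the descent property of the toric log structure along $Y$-quotients.
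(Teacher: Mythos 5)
Your proposal follows essentially the same route as the paper: the log structure is built from the chart $S(X)\rtimes P_\mathscr{P}$ and descended along the $Y$-quotient via the cited results of \cite{ols08}, membership in $\overline{\mathscr{T}}_{g,d}$ is checked on the reductions $S_n$ using Lemma~\ref{olssen's family} (the fibers being saturations of the standard construction), the system is algebraized using that $\overline{\mathscr{T}}_{g,d}$ is an algebraic stack, and the globalization and functoriality rest on chart-independence of the construction together with Proposition~\ref{AN functorial}, exactly as in the text preceding the theorem. No gaps beyond the paper's own level of detail.
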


\begin{remark}[The Log Structure]\label{the divisorial log structure}
When we construct the versal families, we will have $\overline{S}$ regular. Then both the base $(S, M_S)$ and $(\mathcal{X}, P)$ are log regular. Denote the open subset where $M_S$ (resp. $P$) is trivial by $S^\circ$ (resp. $\mathcal{X}^\circ$), and the boundary by $\partial S$ (resp. $\partial \mathcal{X}$). By (\cite{kato94} Theorem 11.6), the log structures $M_S$ and $P$ are isomorphic to the divisorial log structures. However, the divisorial log structures are not preserved by the finite base change, unless the base change is \'{e}tale. The log structures $M_S$ and $P$ are minimal in the sense that, when we define the versal family, we take the minimal base change such that the fibers are all reduced ($\varphi_\mathscr{P}$ is integral). 
\end{remark}


\subsection{Construction of the Stack}\label{3.4}
From now on, fix the base ring $k=\mathbf{Z}[1/d,\zeta_{M}]$, where $M=2\delta_g$, and $\zeta_M$\label{roots of unity} is a primitive $M$-th root of unity. We also write the primitive root of unit $\zeta_{M}$ as $\exp(2\pi i /M)$. As a localization of the Dedekind domain $\mathbf{Z}[\zeta_{M}]$, $k$ is a Dedekind domain of characteristic $0$. In particular, the bases below would satisfy Assumption~\ref{basic assumption}. 

\subsubsection{Local Charts: Formal Theory}
Fix a general cusp $F_\xi$, and we decorate every notation associated with this cusp by $\xi$. Assume the associated rational isotropic subspace is $U_\xi$ of dimension $r$. Let $X_\xi^*=U_\xi\cap \Lambda$ and $X_\xi=\Hom(X_\xi^*,\mathbf{Z})$. Choose a basis $\{v_1,\ldots,v_r\}$ of $X_\xi^*$. Define $Y_\xi=\Lambda/U_\xi^\perp\cap\Lambda$. The restriction of the pairing $E$ defines the polarization $\phi: Y_\xi\to X_\xi$ of type $\mathfrak{d}$. Lift a compatible basis of $Y_\xi$ to $\{u'_1,\ldots,u_r'\}\subset \Lambda$. The restriction of $E$ is represented by a skew-symmetric integral matrix $S_\xi$ under the basis $\{u_1',\ldots, u_r'\}$. Choose a symmetric integral matrix $S'_\xi$ such that $S_\xi'\equiv S_\xi\pmod{2\mathbf{Z}}$. Define the twist data
\begin{align}
b': &Y_\xi\times X_\xi\to k, \quad a': Y_\xi\to k,\\
b'(\lambda,\alpha)&=\exp{\Bigg(-\pi i (x_1'(\lambda),\ldots,x_r'(\lambda))S_\xi\mathfrak{d}^{-1}\begin{pmatrix}v_1(\alpha)\\ \vdots \\ v_r(\alpha)\end{pmatrix}\Bigg)},\\
a'(\lambda)&=\exp\Bigg{(-1/2\pi i (x_1'(\lambda),\ldots,x_r'(\lambda))S_\xi'\begin{pmatrix}x_1'(\lambda)\\ \vdots\\ x_r'(\lambda)\end{pmatrix}\Bigg)}, 
\end{align} 

where $\{x_1',\ldots,x'_r\}$ are the coordinates on $Y_\xi$ with respect to the basis $\{u'_1,\ldots,u'_r\}$. 

The pairing $E$ also induces a nondegenerate skew-symmetric pairing on $U_\xi^\perp/U_\xi$ of type $\delta'$. Denote $\mathscr{A}_{g',\delta'}$ by $\overline{F}_\xi$. Therefore over $\overline{F}_\xi$, we have the universal family $\mathcal{A}\times_{\overline{F}_\xi}\mathcal{A}^t$, where $\mathcal{A}$ is the universal family of abelian varieties with polarization $\lambda_\xi:\mathcal{A}\to \mathcal{A}^t$ of type $\delta'$. Let $\underline{X}_\xi$ and $\underline{Y}_\xi$ be the constant sheaves over $\overline{F}_\xi$. 

From now on, choose a $0$-cusp with associated maximal isotropic subspace $U\supset U_\xi$. Extend the basis of $X_\xi^*$ to a basis of $X^*=U\cap\Lambda$. Extend $\{u'_1,\ldots,u'_r\}$ to $\{u_1',\ldots,u_g'\}$ such that it is a lift of the basis of $Y=\Lambda/U\cap\Lambda$. Let $Y'$ denote the lattice generated by $u_{r+1}',\ldots,u'_g$, and $(X')^*$ denote the lattice generated by $v_{r+1}',\ldots,v'_g$. Under this chosen basis, $E$ is 
 \[
 E=\begin{pmatrix}
 S &\mathfrak{d}\\
 -\mathfrak{d} &0
 \end{pmatrix}.
 \]
 
Write the $g\times g$-matrix $S$ in blocks
\[
S=\begin{pmatrix}
S_\xi & S_2\\
S_3 & S_4
\end{pmatrix}.
\]

Denote the type of $\phi: Y_\xi\to X_\xi$ by $\mathfrak{d}_1$ and the type of $\phi: Y\to X$ by $\mathfrak{d}$ such that
\[
\mathfrak{d}=
\begin{pmatrix}
\mathfrak{d}_1&0\\
0& \mathfrak{d}_2
\end{pmatrix}.
\]

Consider the sheaves $\sheafhom (\underline{X}_\xi,\mathcal{A}^t)$ and $\sheafhom(\underline{Y}_\xi,\mathcal{A})$ over $\overline{F}_\xi$. The bundle $\overline{F}_\xi\ltimes \overline{\mathcal{V}}_\xi$ is the subset of $\sheafhom (\underline{X}_\xi,\mathcal{A}^t)\times\sheafhom(\underline{Y}_\xi,\mathcal{A})$ such that the following diagram commutes
\begin{equation}\label{twist of c and ct}
\begin{diagram}
Y_\xi &\rTo^{\phi} & X_\xi\\
\dTo^{c^t} & & \dTo_{c}\\
\mathcal{A} &\rTo^{\lambda_\xi} & \mathcal{A}^t
\end{diagram}.
\end{equation}

Introduce an automorphism $\iota$ on the set of the data $(c^t,c)$.
\begin{align}
c^t(\lambda)&\longmapsto \exp(-\pi i  (x_1'(\lambda),\ldots,x_r'(\lambda))S_2\mathfrak{d}_2^{-1})c^t(\lambda) \quad \forall \lambda\in Y_\xi,\label{32}\\
c(\alpha)&\longmapsto\exp(-\pi i S_3\mathfrak{d}_1^{-1}(v_1(\alpha),\ldots,v_r(\alpha))^T)c(\alpha)\quad \forall \alpha\in X_\xi.\label{33}
\end{align}

Let's explain the notations. Recall over $\mathbf{C}$, $\mathcal{A}$ can be regarded as a quotient of $\mathbb{G}_m^{g'}$, $g'=g-r$, by the periods $Y'\cong \mathbf{Z}^{g'}$. In Equation~\eqref{32}, the row vector $\exp(-\pi i  (x_1'(\lambda),\ldots,x_r'(\lambda))S_2\mathfrak{d}_2^{-1})$, as an element of $\mathbb{G}_m^{g'}$ is acting on the sections of $\mathcal{A}$. In general, notice that $d$ is invertible in the base ring $k$. The row vector is an element of $\mu_{2d_{r+1}}\times\ldots\times\mu_{2d_g}$. The choice of the $0$-cusp determines a maximal isotropic subgroup for every finite subgroup scheme of $\mathcal{A}$, and thus a morphism of $\mu_{2d_{r+1}}\times\ldots\times\mu_{2d_g}$ into $\mathcal{A}$. In Equation~\eqref{33}, the column vector $\exp(-\pi i S_3\mathfrak{d}_1^{-1}(v_1(\alpha),\ldots,v_r(\alpha))^T)$ should be regarded as an element in $\mu_M\times\ldots\times \mu_M$, acting on $\mathcal{A}^t$. The morphism of $\mu_M\times\ldots\times \mu_M$ into $\mathcal{A}^t$ is determined by the choice of the $0$-cusp and the polarization. The map $\iota$ is an automorphism of the set of $(c^t,c)$ where the diagram~\eqref{twist of c and ct} commutes, because $S_2=-S_3^T$ and they are both integral matrices. 

Therefore, after using $\iota$, there are the tautological extensions over $\overline{F}_\xi\ltimes \overline{\mathcal{V}}_\xi$ 
\[
\begin{diagram}
1&\rTo& T&\rTo&\widetilde{G}&\rTo^\pi&\mathcal{A}&\rTo&0,\\
1&\rTo&T^t&\rTo&\widetilde{G}^t&\rTo^{\pi^t}&\mathcal{A}^t&\rTo&0.
\end{diagram}
\]

Let $\mathbb{L}_\xi^*\subset \Gamma^2U_\xi$ be the integral structure from the integral polarized tropical abelian varieties, and $\mathbb{L}_\xi\subset S^2U_\xi^*$ be its dual.  Identify $\mathcal{C}(F_\xi)$ with $\mathcal{C}(X_\xi)$. The torus is $T_{\xi}=\mathbb{L}_\xi^*\otimes \mathbb{G}_m$.  The $T_{\xi}$-bundle $\Xi_{\xi}$ over $\overline{F}_\xi\ltimes\overline{\mathcal{V}}_\xi$ is defined as follows: For each character $\phi(\lambda)\otimes\alpha\in \mathbb{L}_\xi\subset S^2U_\xi^*$, the push-out along $\phi(\lambda)\otimes\alpha: T_{\xi}\to \mathbb{G}_m$ is defined to be the rigidified $\mathbb{G}_m$-torsor $(c^t(\lambda)\times c(\alpha))^*\mathcal{P}_A^{-1}$ over $\overline{F}_\xi\ltimes\overline{\mathcal{V}}_\xi$. Here $\mathcal{P}_A$ is the pull-back of the Poincar\'e bundle $\mathcal{P}_A$ over $\mathcal{A}\times_{\overline{F}_\xi}\mathcal{A}^t$. Since $(c^t(\lambda)\times c(\alpha))^*\mathcal{P}_A^{-1}$ is defined to be the push-out, the pull-back of it over $\Xi_{\xi}$ is canonically trivial. Denote this tautological trivialization by $\tau_\Xi$. The space $\Xi_{\xi}$ is the moduli space of  the trivializations of biextensions $(c^t\times c)^*\mathcal{P}_A^{-1}$. Since $(\Id\times \phi)^*b'$ is symmetric on $Y_\xi\times Y_\xi$, $b'$ is acting on the trivializations of trivial biextensions over $Y_\xi\times X_\xi$. Define $\tau$ to be
\[
\tau(\lambda,\alpha)=b'(\lambda,\alpha)\tau_\Xi(\lambda,\alpha). 
\]

\'{E}tale locally, we can choose a line bundle $\mathcal{M}$ over $\mathcal{A}$ which gives the polarization $\lambda_\xi$. This gives a $T$-linearized sheaf $\widetilde{\mathcal{L}}=\pi^*\mathcal{M}$ over $\widetilde{G}$. Moreover, \'{e}tale locally, there exists a trivialization $\psi$ compatible with the universal trivialization $\tau$. We make an \'{e}tale base change so that $\psi$ and $\mathcal{M}$ are defined over $\Xi_\xi$. 

The standard data is the second Voronoi fan $\Sigma(X_\xi)$ with the support $\mathcal{C}(X_\xi)^\rc$. For each cone $C(\mathscr{P})\in \Sigma(X_\xi)$ that is in the interior of $\mathcal{C}(X_\xi)$, we have the lattice $\mathbb{L}_\mathscr{P}$. Let $T_\mathscr{P}$ denote the torus with character group $\mathbb{L}_\mathscr{P}$. \'{E}tale locally, pick a section of the $T_\xi$-torsor $\Xi_\xi$ and define the projection $\Xi_\xi\to T_\xi$. Make the \'{e}tale base change $\Xi_{T,\mathscr{P}}:= \Xi_\xi\times_{T_\xi}T_\mathscr{P}$ along the morphisms $T_\mathscr{P}\to T_\xi$. Define $\Xi_\mathscr{P}:=\Xi_{T,\mathscr{P}}\times^{T_\mathscr{P}}U_{C(\mathscr{P})}$, and the closed subscheme $\mathfrak{Z}_\mathscr{P}=\Xi_{T,\mathscr{P}}\times^{T_\mathscr{P}}V(C(\mathscr{P}))$. Define $\widehat{\Xi}_\mathscr{P}$ to be the completion of $\Xi_\mathscr{P}$ along $\mathfrak{Z}_\mathscr{P}$. For each cone $C(\mathscr{P})$, the fiber bundle $\widehat{\Xi}_\mathscr{P}$ satisfies the Assumption~\ref{basic assumption}, and serves as the formal base along the boundary. While the fiber bundle $\Xi_\mathscr{P}$ provides the \'{e}tale neighborhoods near the boundary.


We extend the family over the formal base $\widehat{\Xi}_\mathscr{P}$. Choose a sharp chart $\alpha: P_\mathscr{P}\to S_\mathscr{P}=\mathbb{L}_\mathscr{P}\cap C(\mathscr{P})^\vee$ for the log structure on the affine toric variety $U_{C(\mathscr{P})}$. Since $\Xi_\mathscr{P}$ is locally a product of the toric variety $U_{C(\mathscr{P})}$ with a regular scheme, $\Xi_\mathscr{P}$ has a log structure with a chart $P_\mathscr{P}$. This is actually the divisorial log structure, by Remark~\ref{the divisorial log structure}. The tautological section $\varphi_\mathscr{P}: X_{\xi,\mathbf{R}}\to S^2U_\xi^*$, with the log structure on $(\Xi_\mathscr{P}, M)$ is compatible with the degeneration data $\tau, \psi$. By Theorem~\ref{olssen family} plus Grothendieck's existence theorem (\cite{EGA3} EGA \Rmnum{3}$_{1}$ 5.4.5), we have an algebraic family $\pi:(\mathcal{X}_\mathscr{P}, \mathcal{L}_\mathscr{P}, G_\mathscr{P},P_\mathscr{P},\varrho_\mathscr{P})\to (\widehat{\Xi}_\mathscr{P}, M_\mathscr{P})$. The polarization of the generic fiber is of type $\delta$. Since, over $\mathbf{Z}[1/d]$ the type of polarization is constant over a connected base, it suffices to check the type on a $\mathbf{C}$-point. By Appendix~\ref{B}, we know the degeneration data gives the polarization of type $\delta$. To simplify the notations and to follow the constructions in \cite{FC}, when $\bar{\sigma}=C(\mathscr{P})$ is a cone in $\Sigma(F_\xi)$, we also denote the formal base $\widehat{\Xi}_\mathscr{P}$ by $S_\sigma$, and the family by $\pi:(\mathcal{X}_\sigma,\mathcal{L}_\sigma,G_\sigma, P_\sigma,\varrho_\sigma)/ (S_\sigma, M_\sigma)$. We call this miniversal family a good formal $\sigma$-model.

\begin{remark}
By (\cite{FC} Chap. \Rmnum{1} Proposition 2.7), the semiabelian group scheme $G_\mathscr{P}$ is unique up to a unique isomorphism, and $\tau$ can be intrinsically defined by $G_\mathscr{P}$. Therefore the definition of $\tau$ does not depend on the choice of $S_\xi$, the $0$-cusp, and the matrix $S$. Moreover, the isomorphism class of the family $(\mathcal{X}_\mathscr{P}, G_\mathscr{P},P_\mathscr{P},\varrho_\mathscr{P})$ does not depend on the choices of $\psi,\mathcal{M}$. The choices of $\psi,\mathcal{M}$ are part of the data called a framing in (\cite{Alex02} Definition 5.3.7). It is possible that, if we start from different data, $\mathcal{L}_\mathscr{P}$ is changed to $\mathcal{L}_\mathscr{P}\otimes \pi^*\mathcal{N}$ for some invertible sheaf $\mathcal{N}$ over the base $\widehat{\Xi}_\mathscr{P}$ . We allow this in our definition of an isomorphism of families.
\end{remark}

If $C(\mathscr{P}')$ is a face of $C(\mathscr{P})$, the gluing map is $k[S_\mathscr{P}]\to k[S_{\mathscr{P}'}]$. By Corollary~\ref{the gluing is etale}, this morphism is \'{e}tale. Moreover, by Corollary~\ref{compatible sections}, the difference $f^\sharp\alpha\varphi_\mathscr{P}/\alpha'\varphi_{\mathscr{P}'}$ is invertible in $k[S_{\mathscr{P}'}]$ and thus invertible in $\Xi_\mathscr{P}$, and the families can be glued by Proposition~\ref{AN functorial}. There is another type of \'{e}tale pre-equivalence relation described by the finite group scheme $k[\mathbb{L}_\mathscr{P}/\mathbb{L}_\xi]$.

For each $\alpha\in B(\mathbf{Z}):=X/\phi(Y)$, choose $\vartheta_{\mathcal{A},\alpha}\in \H^0(\mathcal{A},\mathcal{M}_\alpha)$ such that $\vartheta_{\mathcal{A},\alpha}$ does not vanish along any fiber $A_s$, $s\in S_\sigma$. Define $\vartheta$ as the descent ($Y$-action) from
\[
\tilde{\vartheta}:=\sum_{\alpha\in B(\mathbf{Z})}\sum_{\lambda\in Y} S_\lambda^*\vartheta_{\mathcal{A},\alpha}.
\] 

Let $\Theta$ be the zero locus of $\vartheta$.  By Lemma~\ref{the stable pair is in AP}, the family $\pi:(\mathcal{X}_\sigma, \mathcal{L}_\sigma,G_\sigma,\Theta,\varrho_\sigma)\to S_\sigma$ is an object in $\overline{\mathscr{AP}}_{g,d}$. 

\begin{remark}
When we apply the isomorphisms from $\spec k[\mathbb{L}_\mathscr{P}/\mathbb{L}_\xi]$, each component $\sum_{\lambda\in Y} S_\lambda^*\vartheta_{\mathcal{A},\alpha}$ will be changed by a constant in $\mu_M$, and this constant depends on $\alpha$. Therefore $\Theta$ only exists \'{e}tale locally, and does not descend over the moduli stack. 
\end{remark}

If the cusp $F_\xi$ is a $0$-cusp, the abelian part $\mathcal{A}$ is trivial,
\[
\tilde{\vartheta}=\sum_{\alpha\in B(\mathbf{Z})}\sum_{\lambda\in Y}S_\lambda^*(\mathrm{X}^{\alpha, \varphi_\mathscr{P}(\alpha)}\theta).
\]

The choices of $\vartheta$ become finite. In general, the set of stable sections $\vartheta$ can be characterized intrinsically by making use of a representation of a group scheme $G(M)$. We will discuss this issue in Section~\ref{4}. 

The discrete group $\overline{P}(F_\xi)$ is a subgroup of $\GL(X_\xi,Y_\xi)$ of finite index, by Proposition~\ref{interpretation of the cone} d). The group $\GL(X_\xi,Y_\xi)$ is acting on both $U_\xi^*$ and $S^2U_\xi^*$. The sections $\varphi_\mathscr{P}$ are $\GL(X_\xi,Y_\xi)$-equivariant, and thus $\overline{P}(F_\xi)$-equivariant. That implies the construction is compatible with the action of $\overline{P}(F_\xi)$. Moreover, since $\GL(X_\xi,Y_\xi)$ preserves $X_\xi$ and $\phi(Y_\xi)$, it preserves the set $B(\mathbf{Z})$. It follows that the construction is $\overline{P}(F_\xi)$-equivariant.

\begin{remark}
Ideally, we would like to define the moduli space over $k=\mathbf{Z}[1/d]$. Since $\mathbf{Z}[1/d]\to \mathbf{Z}[1/d, \zeta_M]$ is faithfully flat, the base change for $\Xi_\mathscr{P}$ (resp. $\widehat{\Xi}_\mathscr{P}$) is also faithfully flat. It is possible to construct the miniversal families by descent. Or equivalently, we hope to construct the degeneration data $(c^t,c,\tau, \psi)$ over $\mathbf{Z}[1/d]$ from a faithfully flat descent. 
\end{remark} 


\subsubsection{Algebraization and Glues}
After we have obtained the miniversal families over the formal bases (complete rings), we need to extend them to \'{e}tale bases (rings of finite type over $k$). The \'{e}tale neighborhoods of the boundary in $\Xi_\mathscr{P}$ serve as the \'{e}tale bases. 

We follow the procedure in \cite{FC}. Let $\sigma$ denote the rational cone $C(\mathscr{P})$ associated with a bounded paving $\mathscr{P}$. Since $\Xi_\mathscr{P}$ is a fibered by the toric variety $U_\sigma$, $\Xi_\mathscr{P}$ has the natural stratification induced by the toric stratification of $U_\sigma$. Then we can define the \'{e}tale constructible sheaf $\underline{X}_\xi$ (resp. $\underline{Y}_\xi$). If $\tau$ is a face of $\sigma$, the elements in $\tau$ are quadratic forms over $X_{\tau,\mathbf{R}}$, and $X_\tau$ (resp. $Y_\tau$) is the quotient of $X_\sigma$ (resp. $Y_\sigma$). Then, over the $\tau$-stratum, define the sheaf $\underline{X}_\xi$ (resp. $\underline{Y}_\xi$) to be the constant sheaf $X_\tau$ (resp. $Y_\tau$). Moreover, over each $\tau$-stratum, we have the tautological bilinear pairing $B: Y_\tau\times X_\tau\to \mathbb{L}_\tau$. The elements in $\mathbb{L}_\tau$ are sections of $\mathcal{K}^*/\mathcal{O}^*$ over the toric variety $U_\tau\subset U_\sigma$. Therefore, we get a pairing $\underline{B}_\xi: \underline{Y}_\xi\times \underline{X}_\xi\to \underline{\Div} \Xi_\mathscr{P}$ from the toric data.

If $(\mathcal{X}_\sigma, \mathcal{L}_\sigma, G_\sigma, P_\sigma, \varrho_\sigma)$ over $(S_\sigma, M_\sigma)$ is a good formal $\sigma$-model, we can forget about the other data, and get a good formal $\sigma$-model $(G_\sigma, \lambda)$ over $S_\sigma$, with a general polarization $\lambda$ (\cite{FC} Chap. \Rmnum{4} Definition 3.2 \& Proposition 3.3 (i)). Notice that all the results in (\cite{FC} Chap. \Rmnum{3} Sect. 9 \& 10) are proved for general polarizations, therefore (loc. cit. Chap. \Rmnum{4} Definition 3.2 \& Proposition 3.3 (i)) can be naturally generalized for the general polarizations. Since $(S_\sigma, M_\sigma)$ is the divisorial log structure for the toroidal boundary $\partial S_\sigma$, let the log differential $\Omega^1_{(S_\sigma, M_\sigma)}$ be denoted by $\Omega^1_{S_\sigma}[\ud\log \infty]$, the differential with log poles along the boundary. In particular for good formal $\sigma$-model, we have 

\begin{proposition}\label{properties of a good formal model}
Suppose $(\mathcal{X}_\sigma, \mathcal{L}_\sigma, G_\sigma, P_\sigma, \varrho_\sigma)$ over $(S_\sigma, M_\sigma)$ is a good formal $\sigma$-model.
\begin{enumerate}
\item The sheaves and the pairing $(\underline{B}, \underline{X},\underline{Y})$ obtained from the polarized semiabelian scheme $(G_\sigma,\lambda)$ agree with the sheaves and the pairing $(\underline{B}_\xi,\underline{X}_\xi,\underline{Y}_\xi)$ obtained from the toric variety fibers. 
\item Let $\Omega=\underline{\Omega}(G_\sigma/S_\sigma)$ be the dual of the relative invariant Lie algebra of $G_\sigma$ and $\Omega^t=\underline{\Omega}(G_\sigma^t/S_\sigma)$ be the dual of the relative invariant Lie algebra of $G_\sigma^t$. The Kodaira--Spencer map induces an isomorphism $S^2\Omega^t\cong \Omega^1_{S_\sigma}[\ud \log \infty]$. 
\end{enumerate}
\end{proposition}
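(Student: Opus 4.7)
The plan is to verify both statements by exploiting the explicit AN construction that produced $(\mathcal{X}_\sigma, \mathcal{L}_\sigma, G_\sigma, P_\sigma, \varrho_\sigma)$ over $(S_\sigma, M_\sigma)$ together with the tower structure $S_\sigma = \widehat{\Xi}_\mathscr{P} \to \overline{F}_\xi \ltimes \overline{\mathcal{V}}_\xi \to \overline{F}_\xi$.

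For part (1), the sheaves $(\underline{B}, \underline{X}, \underline{Y})$ attached to $(G_\sigma, \lambda)$ via \cite{FC} Chap.~III, Thm.~10.1 are intrinsic to the polarized semiabelian scheme, whereas $(\underline{B}_\xi, \underline{X}_\xi, \underline{Y}_\xi)$ are defined using the toric stratification of $\Xi_\mathscr{P}$. I would reduce to checking the identification stratum by stratum. On a $\tau$-stratum with $\tau = C(\mathscr{P}') \prec \sigma = C(\mathscr{P})$, the restriction of the AN family is, by Lemma~\ref{olssen's family} and Lemma~\ref{the monoid}, the AN family constructed from the pair $(P_{\mathscr{P}'}, \varphi_{\mathscr{P}'})$; in particular its toric character group is the expected $X_{\tau}$, so $\underline{X}$ and $\underline{X}_\xi$ agree (and likewise $\underline{Y}$). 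For the pairing, Lemma~\ref{universal map} identifies the bending parameter $p_\rho$ of $\varphi_\mathscr{P}$ with a divisor class on the toric variety $U_\sigma$; this divisor is precisely the one defined by the extension (modulo units) of the trivialization $\tau(\lambda,\alpha) = b'(\lambda,\alpha)\tau_\Xi(\lambda,\alpha)$, so the two pairings $\underline{B}$ and $\underline{B}_\xi$ coincide on each stratum.

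For part (2), I would apply a log-geometric refinement of the usual Kodaira--Spencer argument and read off the answer from the tower of fibrations. The three stages of the tower correspond exactly to the three kinds of deformation data for a polarized degenerating abelian scheme: deformations of the abelian part $A$, deformations of the extension classes $(c, c^t)$, and deformations of the trivialization $\tau$. Dually, the extension $1 \to T \to G^t \to A^t \to 1$ yields a short exact sequence $0 \to \pi^*\Omega_{A^t} \to \Omega^t \to \Omega_{T^t} \to 0$. Taking $S^2$ gives a three-step filtration on $S^2\Omega^t$ whose graded pieces are $S^2\Omega_{A^t}$, $\Omega_{A^t}\otimes\Omega_{T^t}$, and $S^2\Omega_{T^t}$. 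Under the Kodaira--Spencer map these land respectively in $\Omega^1_{\overline{F}_\xi}$, $\Omega^1_{\overline{F}_\xi \ltimes \overline{\mathcal{V}}_\xi/\overline{F}_\xi}$, and $\Omega^1_{\widehat{\Xi}_\mathscr{P}/\overline{F}_\xi \ltimes \overline{\mathcal{V}}_\xi}[\ud\log\infty]$. Each of these is an isomorphism: the first is the classical statement for $\mathscr{A}_{g',\delta'}$; the second follows from the identification of $\overline{\mathcal{V}}_\xi$ with the moduli of extension data $(c^t, c)$ satisfying \eqref{twist of c and ct}, which gives $\Omega^1 \cong \Hom(Y_\xi, \Omega_{A}) \oplus \Hom(X_\xi, \Omega_{A^t})$ symmetrized via $\lambda$; and the third is a toric computation, since $\widehat{\Xi}_\mathscr{P}$ is a $T_\mathscr{P}$-torsor whose logarithmic cotangent is the trivial bundle $\mathbb{L}_\xi \otimes \mathcal{O}$, which matches $S^2\Omega_{T^t}$ via the identification $\mathbb{L}_\xi \subset S^2 U_\xi^*$.

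The main obstacle will be establishing the three-step filtration compatibly with the tower and identifying the logarithmic Kodaira--Spencer map in the toric direction with the toric coboundary given by $\tau$. Concretely, one needs to verify that the symmetrization forced by the polarization $\lambda$ agrees with the symmetry of the pairing $\tau \circ (\mathrm{Id}\times\phi)$ on $Y_\xi \times Y_\xi$, so that the image in $S^2\Omega_{T^t}$ lands in $\mathbb{L}_\xi$ rather than the whole $Y_\xi^* \otimes X_\xi^*$. This is essentially the content of \cite{FC} Chap.~III \S9--10 in the non-logarithmic version, and the log-geometric translation, while conceptually straightforward, requires one to fix the chart $\alpha\colon P_\mathscr{P} \to S_\mathscr{P}$ and trace through how the tautological trivialization $\tau$ enters the construction of the Kodaira--Spencer map with log poles.
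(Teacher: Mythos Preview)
Your approach is correct and in fact more detailed than what the paper provides: the paper does not give a self-contained proof of this proposition but simply observes (in the paragraph preceding the statement) that forgetting the extra data yields a good formal $\sigma$-model $(G_\sigma,\lambda)$ in the sense of \cite{FC} Chap.~\Rmnum{4} Definition~3.2, and then invokes \cite{FC} Chap.~\Rmnum{4} Proposition~3.3(i), noting that the underlying results in \cite{FC} Chap.~\Rmnum{3} \S\S9--10 already hold for general (not just principal) polarizations. Your three-step filtration argument for part~(2) is precisely the content of those sections of \cite{FC}, so you are essentially reproducing the proof that the paper outsources.

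One small caution on part~(1): your reduction to strata via Lemma~\ref{olssen's family} handles geometric fibers, but the sheaves $(\underline{X},\underline{Y},\underline{B})$ are \'etale constructible sheaves with specialization maps between strata, so you should also check that the specialization maps agree; this is immediate from the compatibility in Corollary~\ref{compatible sections} and the fact that both specialization maps are the quotient $X_\xi \to X_\tau$ induced by the face inclusion $\tau \prec \sigma$. Otherwise your sketch is sound and aligned with the cited source.
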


Notice that our formal base is the same as that in \cite{FC}, we can do the same approximation to the rings of finite type over $\Xi_\mathscr{P}$.  The point is the two properties in Proposition~\ref{properties of a good formal model} are preserved in the process of approximation. Therefore we have (loc. cit. \Rmnum{4} Proposition 4.3, 4.4) for our case.

\begin{proposition}\label{algebraic family}
Let $\sigma=C(\mathscr{P})$, $R$ be the strict local ring of a geometric point $\overline{x}$ of the $\sigma$-stratum of $\Xi_\mathscr{P}$, $I$ be the ideal defining the $\sigma$-stratum, and $\widehat{R}$ be the $I$-adic completion of $R$. There exists an \'{e}tale neighborhood $S'=\spec R'$ of $\overline{x}$, a family $(\mathcal{X}, \mathcal{L}, P, G,\varrho)$ over $(S', M')$, and an embedding $R'\to \widehat{R}$ close to the canonical inclusion, such that
\begin{enumerate}
\item The family $(\mathcal{X}, \mathcal{L}, P, G,\varrho)$ is an object in $\overline{\mathscr{T}}_{g,d}(S')$. The log structure is the divisorial log structure.
\item The map $\spf \widehat{R}\to \widehat{\Xi}_\mathscr{P}$ coincides on the $\sigma$-stratum with the map induced by the inclusion $R'\to \widehat{R}$.
\item Over $R/I$, $(\mathcal{X}, \mathcal{L}, P, G,\varrho)$ is isomorphic to the pull back of the good formal $\sigma$-model $(\mathcal{X}_\sigma, \mathcal{L}_\sigma, P_\sigma, G_\sigma, \varrho_\sigma)$.
\item  The \'{e}tale sheaves and the pairing $(\underline{B}, \underline{X},\underline{Y})$ obtained from the family $G$ coincide with the pull backs of the \'{e}tale sheaves and the pairing $(\underline{B}_\xi,\underline{X}_\xi,\underline{Y}_\xi)$ over $\Xi_\mathscr{P}$.
\item The Kodaira-Spencer map induces an isomorphism $S^2\Omega^t\cong \Omega^1_{R'}[\ud \log \infty]$. 
\end{enumerate}
\end{proposition}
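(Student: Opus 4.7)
The plan is to mimic the argument of (\cite{FC} Chap.~\Rmnum{4} Propositions~4.3 and 4.4) essentially verbatim, with the only modification being that we feed our AN construction (Theorems~\ref{alexeev family} and \ref{olssen family}), valid for polarizations of arbitrary type, into the Faltings--Chai machine wherever they use Mumford's construction for principal polarizations. Let $R$ be the strict henselization at $\overline{x}$ and let $\widehat{R}$ be the $I$-adic completion; over $\widehat{R}$ we already have the degeneration data $(A, A^t, c, c^t, \phi, \tau, \psi, \mathcal{M})$ pulled back from $\widehat{\Xi}_\mathscr{P}$. First I would use Artin approximation together with the fact that $(S_\sigma, M_\sigma)$ carries a divisorial log structure with regular $\overline{S}_\sigma$ to produce, over an \'etale neighbourhood $S' = \spec R'$ of $\overline{x}$, an approximation $(A', (A')^t, c', (c^t)', \phi, \tau', \psi', \mathcal{M}')$ of the degeneration data agreeing with $(A, A^t, c, c^t, \phi, \tau, \psi, \mathcal{M})$ modulo a sufficiently high power $I^N$ of $I$, together with a ring map $R' \to \widehat{R}$ close to the canonical inclusion. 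The closeness is governed by the same numerical bound as in \cite{FC}: high enough to ensure that the Kodaira--Spencer morphism remains an isomorphism and that the positivity condition on $\tau'$ is preserved.

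Second, I would feed the approximated data, together with the tautological piecewise affine function $\varphi_\mathscr{P}$ and the divisorial log structure $M'$ on $S'$, into the AN construction established in Theorem~\ref{olssen family}. Since $\varphi_\mathscr{P}$ is compatible with the approximated degeneration data (this only depends on the values of $\tau', \psi'$ modulo high enough power of $I$, by construction of $\varphi_\mathscr{P}$ in Section~\ref{3.3}), and the closed subscheme cut out by $I$ is admissible for $\varphi_\mathscr{P}$ by Lemma~\ref{admissible ideal}, Theorem~\ref{olssen family} produces an object $(\mathcal{X}, \mathcal{L}, P, G, \varrho)$ of $\overline{\mathscr{T}}_{g,d}(S')$, proving~(1). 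Property~(3) follows from the functoriality of the AN construction (Proposition~\ref{AN functorial}) applied to $R' \to \widehat{R}/I\widehat{R}$: the reduction modulo $I$ only sees the approximated data modulo $I$, which agrees with the original data on the nose. Property~(2) is immediate from how the map $R' \to \widehat{R}$ was produced by Artin approximation, since the closed subscheme $S'_\sigma \subset S'$ corresponds to the $\sigma$-stratum of $\Xi_\mathscr{P}$.

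For property~(4), the étale sheaves $\underline{X}$, $\underline{Y}$ and the pairing $\underline{B}$ attached to $G/S'$ via (\cite{FC} Chap.~\Rmnum{3} Theorem~10.1) are intrinsic to the polarized semiabelian scheme and behave well under pullback, so after checking they match on the formal completion (where they agree with $\underline{X}_\xi, \underline{Y}_\xi, \underline{B}_\xi$ by construction of the good formal $\sigma$-model) they must match on the \'etale neighbourhood as well. Property~(5), the Kodaira--Spencer isomorphism $S^2 \Omega^t \cong \Omega^1_{R'}[\ud\log\infty]$, is the place where the approximation needs to be done with care: it holds on the formal side by Proposition~\ref{properties of a good formal model}(2), and one needs to see that it is preserved under the Artin approximation. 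This is a matter of checking that the map on differentials induced by the approximated $\tau'$ differs from the formal one by something vanishing to high enough order, so that it remains an isomorphism on $R'$.

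The main obstacle in this programme is the Kodaira--Spencer calculation in Proposition~\ref{properties of a good formal model}(2), which I have stated but not proved in the excerpt; once that is in hand everything else is a mechanical adaptation of \cite{FC}. The subtlety is that in our setting the polarization is of type $\delta$ rather than principal, so the identification of $S^2 \Omega^t$ with the log cotangent bundle goes through the bilinear pairing $\underline{B}_\xi : \underline{Y}_\xi \times \underline{X}_\xi \to \underline{\Div}\, \Xi_\mathscr{P}$ which lands in $\mathbb{L}_\mathscr{P}$, rather than in $S^2 X_\xi$; one must verify that the torsion in $\mathbb{L}_\mathscr{P}/S^2 X_\xi$ does not obstruct the isomorphism, which is exactly where the assumption $\mathrm{char}(k) \nmid d$ comes in, since $k = \mathbf{Z}[1/d, \zeta_M]$ inverts the relevant torsion orders.
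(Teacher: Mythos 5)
Your overall plan---transplant the Artin-approximation argument of (\cite{FC} Chap.~\Rmnum{4} Propositions 4.3, 4.4) and check that the two properties of Proposition~\ref{properties of a good formal model} survive the approximation---is exactly the paper's route: its proof of (2)--(5) is literally ``the same proof as that of \cite{FC} Chap.~\Rmnum{4} Proposition 4.4''. The genuine gap is in your second step, where you propose to feed the approximated degeneration data into the AN construction (Theorem~\ref{olssen family}) to produce $(\mathcal{X},\mathcal{L},P,G,\varrho)$ over $S'$. Theorem~\ref{olssen family} (and the AN construction in general) does not produce families over a base of finite type such as $S'$: it produces them only over admissible closed subschemes $Z$ supported in the boundary, and over complete bases after Grothendieck's existence theorem, because the quotient by the $Y$-action and the theta-type sums converge only $I$-adically. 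An \'etale neighbourhood $S'$ of $\overline{x}$ contains the open locus where the fibers are abelian; running the AN construction on the approximated data gives you at best a family over the $I$-adic completion of $R'$, and algebraizing that family down to $R'$ is precisely the content of the proposition, not a consequence of Proposition~\ref{AN functorial} (which compares two AN families over bases where both are already defined).

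The paper closes this gap differently: the object being approximated is not the degeneration data but the family itself (equivalently, the polarized semiabelian scheme $(G,\lambda)$ over $\spec\widehat{R}$, as in \cite{FC}), for which the relevant functor is limit preserving, so Artin approximation directly yields the data over an \'etale $S'$ agreeing with the formal one modulo a high power of $I$; membership in $\overline{\mathscr{T}}_{g,d}(S')$, i.e.\ statement (1), is then deduced from (3) together with (\cite{ols08} Theorem 5.9.1), rather than by re-running the construction over $S'$. Two smaller points. First, your closing worry about Proposition~\ref{properties of a good formal model}(2) is not where the difficulty lies: the paper obtains it from (\cite{FC} Chap.~\Rmnum{3} Sections 9 and 10), which hold for arbitrary, not necessarily principal, polarizations, so no separate analysis of torsion in $\mathbb{L}_\mathscr{P}/S^2X_\xi$ is needed. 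Second, your argument for (4) by ``matching on the formal completion'' is too quick, since the approximated family agrees with the formal one only modulo $I^N$; as in \cite{FC}, the preservation of $(\underline{B},\underline{X},\underline{Y})$ must be built into the quantitative choice of the approximation order.
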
 
\begin{proof}
For (2)--(5), the same proof as that of (\cite{FC} Chap. \Rmnum{4} Proposition 4.4). The statement (1) follows from (3) and (\cite{ols08} Theorem 5.9.1). 
\end{proof}

We call the family obtained above a good algebraic $\sigma$-model. For any geometric point $\bar{x}$ over $x\in S$ in the $\tau$-stratum of a good algebraic $\sigma$-model over $S$, denote the strict henselization by $\widehat{R}_{\bar{x}}$. The polarized family $G$ over $\widehat{R}_{\bar{x}}$ gives the degeneration data. By Proposition~\ref{algebraic family} (4), we know $\underline{B}$ agrees with $\underline{B}_\xi\vert_\tau$. By the universal property described in (at the end of the third paragraph \cite{FC} p. 106), this defines a morphism $f:\spf \widehat{R}_{\bar{x}}\to \Xi_\tau$. The family $G$ over $\widehat{R}_{\bar{x}}$ is a good formal $\tau$-model. The boundary divisors agree under $f$. It follows that $f$ is strict for the log structures. Moreover, by Proposition~\ref{algebraic family} (5), the log differentials $\Omega^1_{(S, M)}\cong f^*\Omega^1_{(\Xi_\tau, M_\tau)}$. Therefore, $f$ is log \'{e}tale. Openness of versality follows from the following lemma. 
\begin{lemma}
If a log morphism $f: (X,M_X)\to (Y, M_Y)$ is strict, then $f$ being formally log \'{e}tale implies that the underlying morphism $f$ between schemes is formally \'{e}tale.
\end{lemma}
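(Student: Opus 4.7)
My plan is to reduce an ordinary square-zero lifting problem over $Y$ to a log square-zero lifting problem, using strictness to put a canonical log structure on the test schemes and making the given data into a strict diagram of log schemes.

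Concretely, suppose given an ordinary square-zero thickening $i:T_0\hookrightarrow T$ of $Y$-schemes together with a morphism $h_0:T_0\to X$ over $Y$; call the structure maps $g_0:T_0\to Y$ and $g:T\to Y$. I equip $T$ and $T_0$ with the pullback log structures $M_T:=g^{*}M_Y$ and $M_{T_0}:=g_0^{*}M_Y$. By construction, $g$ and $g_0$ become strict morphisms of log schemes, and $i$ becomes a strict (closed immersion) square-zero thickening of log schemes. Because $f:(X,M_X)\to(Y,M_Y)$ is strict, the composition $h_0$ acquires a canonical structure of a log morphism: the natural map $h_0^{*}M_X=h_0^{*}f^{*}M_Y=g_0^{*}M_Y=M_{T_0}$ is an isomorphism, so $h_0$ is in fact strict.

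At this point the data $(T_0\hookrightarrow T,\ h_0:T_0\to X,\ g:T\to Y)$ forms a log square-zero thickening problem to which the formal log \'etaleness of $f$ applies: there is a unique log morphism $\tilde h:T\to X$ extending $h_0$ over $(Y,M_Y)$. Its underlying morphism of schemes is the required lift of the original ordinary lifting problem. For uniqueness on the level of schemes, if $\tilde h_1,\tilde h_2:T\to X$ are two underlying lifts, equip each with the log structure inherited from $M_T=g^{*}M_Y$; exactly the same strictness argument as above shows that each $\tilde h_j$ is automatically a log morphism, so they agree as log morphisms by formal log \'etaleness, hence as morphisms of schemes.

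The only point that requires any care is the verification that equipping $T$ and $T_0$ with pullback log structures makes \emph{every} scheme-theoretic datum automatically log-theoretic, and this is precisely where the strictness hypothesis on $f$ is essential: without it, a scheme-theoretic lift $T\to X$ would still have to be promoted to a log morphism by choosing a map of log structures, and there would be no reason for such a choice to be unique or even to exist. Given strictness, however, the promotion is canonical and the two lifting problems are equivalent, so formal log \'etaleness implies formal \'etaleness of the underlying morphism. No further input (smoothness, finiteness, Noetherian hypotheses) is needed.
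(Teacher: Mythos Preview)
Your argument is correct and follows essentially the same route as the paper: pull back the log structures from $Y$ (equivalently from $X$, by strictness) to the test schemes $T_0$ and $T$, observe that the resulting thickening $T_0\hookrightarrow T$ is strict, and apply formal log \'etaleness. The paper's proof is terser and does not spell out the uniqueness step separately, whereas you make explicit that any scheme-theoretic lift $T\to X$ is automatically a (strict) log morphism once $T$ carries $g^{*}M_Y$; this is a welcome clarification but not a different idea.
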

\begin{proof}
Consider the affine thickening diagram
\[
\begin{diagram}
T_0&\rTo^{a^0}& X\\
\dInto^j&&\dTo_f\\
T&\rTo^a&Y
\end{diagram},
\]

where $T$ is affine, and $j$ is a closed immersion defined by a nilpotent ideal. Provide $T$ (resp. $T_0$) the log structure $a^*M_Y$ (resp. $a_0^*M_X$), the diagram is still commutative. Because $f$ is strict, $j$ is strict. Since $f$ is formally log \'{e}tale, there is a unique lift $b: T\to X$. Thus $f$ is formally \'{e}tale.
\end{proof}

The good algebraic models are the \'{e}tale neighborhoods of the compactification. Define $U$ to be the disjoint union of finitely many good algebraic models that cover all strata $\mathfrak{Z}_\mathscr{P}$ up to the action of $\Gamma(\delta)$. This is possible because there are only finitely map $0$-cusps up to $\Gamma(\delta)$-action, and for each $0$-cusp $F$, the number of $\overline{P}(F)$-orbits of cones in $\Sigma(F)$ is finite. The cone $\{0\}$ corresponds to the moduli space $\mathscr{A}_{g,\delta}$. However, by (\cite{ols08} Proposition 5.1.4), the universal family is interpretated as the family $(\mathcal{X},\mathcal{L},\varrho,G)$ with $G$ abelian. Let $U_0$ denote the dense open stratum of $U$ where $G$ is abelian. Let $R_0$ denote the \'{e}tale relation $R_0:=U_0\times_{\mathscr{A}_{g,\delta}} U_0$. Define $R$ to be the normalization of the image $R_0\to U\times U$, i.e. normalization of the closure $Z$ of the image of $R_0$ in $U\times U$ with respect to the finite extensions of the function fields at the maximal points of $Z$. We can extend the isomorphisms by an explicit construction. To save the work, we use the following analog of (\cite{FC} Chap. \Rmnum{1} Proposition 2.7). 

\begin{proposition}\label{proper diagonal}
Let $S$ be a Noetherian normal scheme, and $\pi_i:(\mathcal{X}_{i}, P_i, \mathcal{L}_i,G_i, \varrho_i)$ for $i=1,2$ be two good algebraic models over $S$. Suppose that over a dense open subscheme $U$ of $S$, the log structures $P_i$ are trivial, and there is an isomorphism $f_U$ between $(\mathcal{X}_i, P_i, \mathcal{L}_i,G_i, \varrho_i)\vert_U$. Then $f_U$ extends to a unique isomorphism $f: (\mathcal{X}_1, P_1,G_1, \varrho_1)\to (\mathcal{X}_2, P_2,G_2, \varrho_2)$ and there exists a line bundle $\mathcal{M}$ over $S$,  such that $\mathcal{M}\vert_U$ is trivial, and $\mathcal{L}_1\otimes \pi_1^*\mathcal{M}\cong f^*\mathcal{L}_2$. 
\end{proposition}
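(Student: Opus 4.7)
The plan is to argue in three stages: first extend $f_U$ to an isomorphism of the semiabelian schemes $G_i$, then use the AN construction to extend it to the total spaces $\mathcal{X}_i$, and finally analyze the resulting discrepancy of polarizing line bundles to produce $\mathcal{M}$.

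First, I would apply (\cite{FC} Chap.~\Rmnum{1} Proposition~2.7) to the restriction $f_U\vert_{G_1\vert_U}\colon G_1\vert_U \xrightarrow{\sim} G_2\vert_U$ to obtain a unique extension $f_G\colon G_1\to G_2$ of semiabelian schemes over the Noetherian normal base $S$. Since polarizations of semiabelian schemes are determined by their generic fibers, $f_G$ intertwines the polarizations induced by $\mathcal{L}_1$ and $\mathcal{L}_2$, and it identifies the \'etale sheaves and pairings $(\underline{X}_i,\underline{Y}_i,\underline{B}_i)$ of (\cite{FC} Chap.~\Rmnum{3} Theorem~10.1) on the two sides. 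Because both $P_i$ are the divisorial log structures associated with the same boundary locus $S\setminus U$ (see Remark~\ref{the divisorial log structure}, together with the fact that the non-abelian locus of $G_i$ matches the non-trivial locus of $P_i$ and is preserved by $f_G$), this pins down a canonical isomorphism $P_1\cong f_G^*P_2$.

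Second, the question being local on $S$, I would reduce to the strict henselization at a boundary point, where each $(\mathcal{X}_i,\mathcal{L}_i,G_i,\varrho_i)$ is the algebraization of an AN construction from data $(A_i,c_i,c_i^t,\tau_i,\psi_i,\mathcal{M}_i,\varphi_i)$. The intrinsic pieces $(A_i,c_i,c_i^t,\tau_i)$ and the piecewise affine function $\varphi_i$ are all recovered from $(G_i,\lambda_i)$ together with the pairing $\underline{B}$ (using Proposition~\ref{algebraic family}(4)), so $f_G$ identifies these matched pieces on the two sides. What is not intrinsic is the rigidified choice $(\mathcal{M}_i,\psi_i)$ inducing $\lambda_i$; any two such choices differ by a translation-invariant rigidified line bundle on the abelian part, which is the pullback of a line bundle from the base. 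Applying the functoriality of the AN construction (Proposition~\ref{AN functorial}) to the matched data produces an extension $f\colon \mathcal{X}_1\to\mathcal{X}_2$ that is $G$-equivariant via $f_G$, respects the log structures, and satisfies $f^*\mathcal{L}_2\cong \mathcal{L}_1\otimes \pi_1^*\mathcal{M}$ for a local line bundle $\mathcal{M}$ encoding precisely the $(\mathcal{M}_i,\psi_i)$-discrepancy. Uniqueness of $f$ follows because $\mathcal{X}_1$ is separated over $S$ and contains $G_1$ as a dense open subscheme.

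Third, I would globalize by defining $\mathcal{M}:=\pi_{1,*}(f^*\mathcal{L}_2\otimes \mathcal{L}_1^{-1})$. By Corollary~\ref{locally free} and cohomology-and-base-change, this is a line bundle on $S$: on each geometric fiber $f$ identifies the two stable semiabelic schemes, so the twist is by a translation-invariant line bundle whose space of sections is one-dimensional. Since $f_U$ is by hypothesis an isomorphism of the polarized data, $(f^*\mathcal{L}_2\otimes \mathcal{L}_1^{-1})\vert_{\mathcal{X}_1\vert_U}\cong \mathcal{O}_{\mathcal{X}_1\vert_U}$, so $\mathcal{M}\vert_U$ is trivial. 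The main obstacle I anticipate is the middle step: showing that the rigidification ambiguity between $(\mathcal{M}_1,\psi_1)$ and $(\mathcal{M}_2,\psi_2)$ is \emph{exactly} the pullback of a line bundle from the base, and that the local line bundles glue to a global $\mathcal{M}$. This hinges on carefully tracking how $\tau_i$ and $\psi_i$ transform under $f_G$ and on using the normality of $S$ to descend and glue line bundles.
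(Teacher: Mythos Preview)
Your overall strategy is more hands-on than the paper's, and the second step carries a genuine gap. The paper's proof is much shorter: it invokes the reduction-to-DVR argument from (\cite{FC} Chap.~\Rmnum{1}, proof of Proposition~2.7, step~(a)) to reduce the extension of $(\mathcal{X}_i,G_i,\varrho_i)$ to the case where $S$ is the spectrum of a discrete valuation ring, and then cites (\cite{ols08} Proposition~5.11.6) as a black box for that case. The log-structure compatibility is handled by (\cite{ols08} Proposition~5.10.2). The line-bundle step is essentially yours: knowing $f^*\mathcal{L}_2$ and $\mathcal{L}_1$ agree fiberwise (from the DVR reduction), one applies (\cite{Har77} Chap.~\Rmnum{3} Exercise~12.4) to produce $\mathcal{M}$ on $S$.

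The gap in your second step is that a good algebraic model is \emph{not} literally an AN construction. By Proposition~\ref{algebraic family}, a good algebraic model over $S'=\spec R'$ is obtained by Artin approximation: it agrees with the pull-back of the good formal $\sigma$-model only over the closed stratum $R/I$, and only after an embedding $R'\hookrightarrow \widehat{R}$ \emph{close to} (not equal to) the canonical one. So over the strict henselization you do not have AN data $(A_i,c_i,c_i^t,\tau_i,\psi_i,\mathcal{M}_i,\varphi_i)$ from which $\mathcal{X}_i$ is rebuilt, and Proposition~\ref{AN functorial} does not apply directly. You could pass to the $I$-adic completion, where the AN picture is available, but then you must algebraize the resulting formal isomorphism back to an isomorphism of the algebraic models; that is exactly the step the paper avoids by reducing to DVRs and citing Olsson. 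There is also a smaller confusion in your first step: the $P_i$ are log structures on $\mathcal{X}_i$, not on $G_i$ or $S$, so writing $f_G^*P_2$ is not meaningful; one needs to know that the extended $f$ on $\mathcal{X}_1$ preserves the boundary divisor of $\mathcal{X}_1$ (or invoke (\cite{ols08} Proposition~5.10.2)) before concluding $P_1\cong f^*P_2$.
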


\begin{proof}
For the extension problem of morphisms between data $(\mathcal{X}_i,G_i, \varrho_i)$, by a standard reduction process (\cite{FC} Chap. \Rmnum{1} Proof of Proposition 2.7 Step (a)), it suffices to consider the case when $S$ is the spectrum of a discrete valuation ring. This case is proved by (\cite{ols08} Proposition 5.11.6). Let the unique extension be $f$. From this reduction to discrete valuation rings, we also know that $f^*\mathcal{L}_2$ and $\mathcal{L}_1$ are isomorphic on each fiber. Since $\mathcal{X}_1$ is projective over $S$ and the fibers are all integral,  by (\cite{Har77} Chap. \Rmnum{3} Exercise 12.4), there exists $\mathcal{M}$ over $S$ such that $\mathcal{L}_1\otimes \pi_1^*\mathcal{M}\cong f^*\mathcal{L}_2$. Finally, by (\cite{ols08} Proposition 5.10.2), $f$ preserves the log structures. 
\end{proof}

By Proposition~\ref{proper diagonal}, the isomorphism between the universal families over $R_0$ is extended to the isomorphism between the good algebraic models over $R$\footnote{Again we say $f$ preserves the line bundles if $\mathcal{L}_1\otimes \pi_1^*\mathcal{M}\cong f^*\mathcal{L}_2$.}. Since morphisms in $\overline{\mathscr{A}}_{g,d}$ is defined up to an action of $\mathbb{G}_m$, we have 

\begin{corollary}\label{faithful for abelian}
Let $R':=U\times_{\overline{\mathscr{A}}_{g,d}}U$. We have defined a morphism $R\to R'$, and this map is injective. 
\end{corollary}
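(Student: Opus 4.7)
The morphism $R \to R'$ has in fact just been defined: by Proposition~\ref{proper diagonal} the tautological universal isomorphism on the dense open $R_0 \subset R$ extends uniquely, after normality, to an isomorphism $\Phi$ of the two pulled-back good algebraic models $(\mathcal{X}_i,\mathcal{L}_i,G_i,\varrho_i) := g_i^{*}$ of the universal good algebraic model on $U$ over all of $R$ (with $\mathcal{L}_1$ twisted by some line bundle), and the map $r \mapsto (g_1(r),g_2(r),\Phi_r)$ is the claimed morphism. So the content of the corollary is injectivity on points, and that is what I would focus on.

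The plan for injectivity is as follows. Suppose $r_1,r_2 \in R$ have the same image in $R'$. Then $g_i(r_1) = g_i(r_2)$ for $i=1,2$, so both points map to a common $z = (u_1,u_2) \in Z \subset U \times U$, and the fibre isomorphisms $\Phi_{r_1},\Phi_{r_2}$ coincide in $\overline{\mathscr{A}}_{g,d}(k(z))$ up to the $\mathbb{G}_m$-action on the line bundle. Let $R^{(j)}$, $j\in J$, denote the irreducible components of $R$; each $R^{(j)} \to Z^{(j)}$ is finite onto an irreducible component $Z^{(j)} \subset Z$ and arises as the normalization in a finite extension of the function field $k(Z^{(j)})$.

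I would then split into two cases. If $r_1,r_2$ lie on the same component $R^{(j)}$, the uniqueness in Proposition~\ref{proper diagonal} applied to the strict henselizations $\mathcal{O}_{R^{(j)},r_i}^{\mathrm{sh}}$ (both normal, both with generic point in $R_0$) forces the germs of $\Phi$ at $r_1$ and $r_2$ to coincide with the single extension determined by the generic fibre; combined with the fact that a normal scheme separates its points, this yields $r_1 = r_2$. If instead $r_1,r_2$ lie on distinct components $R^{(j_1)} \ne R^{(j_2)}$, the agreement $\Phi_{r_1} = \Phi_{r_2}$ in $R'$ says that the two (a priori different) unique extensions of the universal isomorphism along $R^{(j_1)}$ and along $R^{(j_2)}$ specialize to the same element of $R'$ at $z$. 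I would argue that the finite function-field extensions prescribed in the definition of $R$ are exactly the device that records the distinct extensions of the universal isomorphism on each component; consequently agreement of the two extensions at the closed point $z$ propagates (again by the uniqueness) to agreement at the generic points of the two components, so that the two function-field extensions are the same and $R^{(j_1)} = R^{(j_2)}$, contradicting the assumption.

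The main obstacle I expect is making the last step of the second case fully rigorous: translating the statement ``the extensions of $\Phi$ agree at $z$'' into the statement ``the two function-field extensions chosen in defining $R$ agree on the nose''. This requires a careful bookkeeping of how Proposition~\ref{proper diagonal} produces the extended isomorphism $\Phi$ from the semi-abelian data, and verifying that any two branches of $R$ above the same irreducible component of $Z$ that carry the \emph{same} extended isomorphism must come from the same finite extension of $k(Z^{(j)})$; once this is done, injectivity on points is immediate, and the corollary follows.
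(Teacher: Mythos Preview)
The paper does not give a detailed argument for this corollary: the entire justification is the sentence immediately preceding it. Proposition~\ref{proper diagonal} produces the (unique) extended isomorphism of good algebraic models over the normal scheme $R$, and the remark that ``morphisms in $\overline{\mathscr{A}}_{g,d}$ are defined up to an action of $\mathbb{G}_m$'' explains why the line-bundle twist $\mathcal{L}_1\otimes\pi_1^*\mathcal{M}\cong f^*\mathcal{L}_2$ in that proposition is harmless, i.e.\ why the extended isomorphism really defines a point of $R'=U\times_{\overline{\mathscr{A}}_{g,d}}U$. That is the construction of $R\to R'$. The injectivity is not argued separately; the paper is content to observe that $R_0\hookrightarrow R'$ is an open immersion, $R_0\subset R$ is dense, and the map $R\to R'$ is the unique extension of this open immersion, and leaves it there.

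Your proposal is substantially more elaborate than anything the paper does, and the obstacle you flag at the end is real: the passage from ``$\Phi_{r_1}=\Phi_{r_2}$ at a closed point'' to ``the two branches of $R$ coincide'' is not a formal consequence of the uniqueness clause in Proposition~\ref{proper diagonal}, which only gives uniqueness over a \emph{fixed} normal base. Your first case (same component) is also not quite complete as written: two distinct points of a normal scheme lying over the same $z\in Z$ can certainly carry isomorphic fibres, so ``normal schemes separate their points'' is not the relevant input. What one actually needs is that the closure of $R_0$ inside $R'$ already receives a finite birational map from $R$ (both are finite over $Z$ with the same generic fibre $R_0$), so $R$ is its normalization; injectivity then amounts to the statement that this closure has unibranch points, which is where the specific structure of $R'$ as an Isom-space (and ultimately the finiteness/unramifiedness of the diagonal of $\overline{\mathscr{A}}_{g,d}$ over $\mathbf{Z}[1/d]$) enters. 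The paper suppresses this, but that is the mechanism you would need to make your argument go through.
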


Denote the two projections $R\to U$ by $s$ and $t$. Notice that the definitions of $U$ and $R$ only involve the collection of fans $\widetilde{\Sigma}$ and the algebraization of the bases. Therefore we can use the same $U$ and $R$ as those in (\cite{FC} Chap. \Rmnum{4}, Sect. 5.). We simply replace the semiabelian schemes over $U$ and $R$ by AN families.

\begin{proposition}[\cite{FC} Chap.\Rmnum{4} Lemma 5.2]\label{the universal property of the moduli space}
Suppose $R$ is a normal complete local ring which is strictly henselian, $K$ its field of fractions and $\kappa$ its residue field. Assume furthermore that $G^\dag$ is a semiabelian scheme over $R$ whose generic fiber $G_K^\dag$ is an abelian variety with a polarization $\lambda_K$ of type $\delta$. Associated with this we have the character group $X_\xi$ (resp. $Y_\xi$) of the torus part of the special fiber $G_s$ (resp. $G_s^t$), a polarization $\phi: Y_\xi\to X_\xi$, and a bimultiplicative form $b$ on $Y_\xi\times X_\xi\to K^*$, defined up to units in $R$, such that $b(Y_\xi,\phi(Y_\xi))$ is symmetric. The following statements are equivalent
\begin{enumerate}
\item Write $X_\xi$ as the quotient lattice of $X=\mathbf{Z}^g$. If $v:K^*\to \mathbf{Z}$ is any discrete valuation defined by a prime ideal of $R$ of height one, $v\circ b$ defines a positive semi-definite symmetric bilinear form on $X$. There exists a closed cone $C(\mathscr{P})$ in the second Voronoi fan $\Sigma(X)$ which contains all $v\circ b$ obtained this way.
\item $(G^\dag,\lambda)$ is the semiabelian scheme of the pull-back of a good formal $\sigma$-model $(\mathcal{X}_\sigma, \mathcal{L}_\sigma, G_\sigma, \varrho_\sigma)/S_\sigma$ via a morphism $\spec R\to \Xi_\sigma$ (equivalently, a map $\spf R\to S_\sigma$) for some (open) cone $\sigma$ in $\Sigma(X)$. 
\end{enumerate}
The cone $C(\mathscr{P})$ in (1) and the cone $\sigma$ in (2) are in the same $\GL(X,Y)$-orbits. 
\end{proposition}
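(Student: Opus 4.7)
The plan is to adapt (\cite{FC} Chap.~\Rmnum{4} Lemma~5.2) to our setting with polarizations of type $\delta$ and with the Mori-theoretic fan $\Sigma(X)$ in place of the second Voronoi fan of principally polarized varieties.

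For the direction (2) $\Rightarrow$ (1), I would identify $b$ with the pull-back along $\spec R\to \Xi_\mathscr{P}$ of the universal trivialization. By construction of $\Xi_\mathscr{P}$ as $\Xi_{T,\mathscr{P}}\times^{T_\mathscr{P}}U_{C(\mathscr{P})}$, the characters $\phi(\lambda)\otimes\alpha\in \mathbb{L}_\mathscr{P}$ of $T_\mathscr{P}$ are precisely the monomial functions $b'(\lambda,\alpha)^{-1}b(\lambda,\alpha)$ up to an invertible twist, and these extend to regular functions on $U_{C(\mathscr{P})}$ because the corresponding lattice vectors lie in the dual monoid $S_\mathscr{P}=C(\mathscr{P})^\vee\cap\mathbb{L}_\mathscr{P}$. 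Consequently for each height-one prime $\mathfrak{p}\subset R$ meeting the boundary, the composition $v_\mathfrak{p}\circ b$ is the pull-back of a lattice point of $\overline{C(\mathscr{P})}\cap \mathbb{L}_\mathscr{P}^*$, giving a positive semi-definite form in $\overline{\sigma}$; primes in the generic fiber contribute zero.

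For the direction (1) $\Rightarrow$ (2), I would reconstruct the degeneration data from $(G^\dag,\lambda)$ and then produce the morphism to $\Xi_\mathscr{P}$. By the Mumford--Raynaud--Faltings--Chai theorem (\cite{FC} Chap.~\Rmnum{2} \& \Rmnum{3}), $(G^\dag,\lambda)$ canonically gives rise to an object of $\DD_{\text{ample}}$ consisting of $(A, c, c^t, X_\xi, Y_\xi, \phi, \tau, \psi, \mathcal{M})$, with $\tau$ matching $b$ up to a unit in $R$. The abelian data $A$ together with $(c, c^t)$, after applying the automorphism $\iota$ built from the integral matrices $S_2,S_3$, defines a morphism $\spec R\to \overline{F}_\xi\ltimes\overline{\mathcal{V}}_\xi$. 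Let $\sigma=C(\mathscr{P})$ be the smallest closed cone in $\Sigma(X)$ containing all forms $v_\mathfrak{p}\circ b$; such a cone exists by hypothesis (1). The normalized trivialization $(b')^{-1}\tau$ defines a section of the $T_\xi$-torsor $\Xi_\xi$ over the generic locus, hence, after the \'{e}tale base change along $T_\mathscr{P}\to T_\xi$, a section of $\Xi_{T,\mathscr{P}}$. The condition $v_\mathfrak{p}\circ b\in\overline{\sigma}$ for every height-one prime $\mathfrak{p}$ is precisely the valuative criterion forcing this section to extend through the toric compactification $U_{C(\mathscr{P})}$. Since the closed point of $\spec R$ must then map into the boundary, the resulting morphism factors through $\spf R\to \widehat{\Xi}_\mathscr{P}=S_\sigma$. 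Applying Proposition~\ref{AN functorial} together with the uniqueness of Mumford's construction (\cite{FC} Chap.~\Rmnum{3} Corollary~7.2) identifies the pull-back AN family with $(G^\dag,\lambda)$.

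The main technical obstacle will be the precise bookkeeping of the universal twists $b'$, $a'$, and $\iota$ used in the construction of $\Xi_\mathscr{P}$: these twists are introduced exactly to make the universal $\tau$ symmetric on $Y_\xi\times\phi(Y_\xi)$ over $\overline{F}_\xi\ltimes\overline{\mathcal{V}}_\xi$, but they must be reconciled against the form $b$ produced by Raynaud's intrinsic construction. Verifying that the polarization type of the pull-back AN family equals $\delta$ requires the explicit computation in Appendix~\ref{B}. Finally, the $\GL(X,Y)$-orbit ambiguity in the last sentence of the proposition reflects the dependence of the identification of $\Sigma(F_\xi)$ with a concrete fan in $\mathcal{C}(X_\xi)^\rc$ on the choice of a compatible basis of $X_\xi$, with changes of basis acting via $\GL(X_\xi,Y_\xi)\supseteq \overline{P}(F_\xi)$.
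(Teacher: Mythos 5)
The paper does not actually reprove this statement: it imports \cite{FC} Chap.~\Rmnum{4} Lemma~5.2, taking for granted that the argument there goes through for type-$\delta$ polarizations once the second Voronoi fan $\Sigma(X)$ and the bases $\Xi_\mathscr{P}$, $S_\sigma$ of Section~\ref{3.4} are substituted, and your sketch reconstructs essentially that argument; both directions are along the right lines (the $(2)\Rightarrow(1)$ direction via the monomials attached to $S_\mathscr{P}=C(\mathscr{P})^\vee\cap\mathbb{L}_\mathscr{P}$, the $(1)\Rightarrow(2)$ direction via the degeneration data of \cite{FC} Chap.~\Rmnum{2}--\Rmnum{3} and extension of the torsor section by normality).

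The one step you should tighten is the sentence ``since the closed point of $\spec R$ must then map into the boundary, the resulting morphism factors through $\spf R\to\widehat{\Xi}_\mathscr{P}=S_\sigma$.'' Landing in the boundary is not enough: $S_\sigma$ is the completion of $\Xi_\mathscr{P}$ along the deepest stratum $\mathfrak{Z}_\mathscr{P}$, so you need the closed point to land in the $\sigma$-stratum itself, and this is exactly where the minimality of $\sigma$ must be used. Concretely: for $\chi\in \sigma^\vee\cap\mathbb{L}_\mathscr{P}$ one has $v_\mathfrak{p}(b(\chi))=\langle \chi, v_\mathfrak{p}\circ b\rangle\geqslant 0$ for every height-one prime $\mathfrak{p}$, so $b(\chi)\in R$ because $R=\cap_{\mathrm{ht}\,\mathfrak{p}=1}R_\mathfrak{p}$ is normal; if $\langle\chi, v_\mathfrak{p}\circ b\rangle=0$ for all $\mathfrak{p}$ then $b(\chi)$ has trivial divisor and is a unit, while if $\langle\chi, v_\mathfrak{p}\circ b\rangle>0$ for some $\mathfrak{p}$ then $b(\chi)$ is a non-unit of the local ring $R$ and hence vanishes at the closed point. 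Since $\sigma$ is the minimal cone of $\Sigma(X)$ containing all $v_\mathfrak{p}\circ b$ (and faces of cones belong to the fan), the characters that are units are precisely those in $\sigma^\perp$, which places the closed point in $\mathfrak{Z}_\mathscr{P}$ and gives the factorization through $\spf R\to S_\sigma$. You should also record where the hypotheses on $R$ enter: strict henselianness (and completeness) is what lets you lift the morphism to $\Xi_\xi$ through the \'etale cover $\Xi_{T,\mathscr{P}}\to\Xi_\xi$ coming from $T_\mathscr{P}\to T_\xi$ and through the \'etale-local choices of $\mathcal{M}$ and $\psi$. Finally, your worry about reconciling $b'$, $a'$ and $\iota$ is only a twist by roots of unity, hence invisible to all valuations; and since statement (2) concerns only the semiabelian scheme with its polarization, you need only match $(A,c,c^t,\tau,\phi,\lambda_A)$ and not the cubical data $\psi$, so that bookkeeping is harmless.
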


\begin{theorem}\label{compactification}
The morphism $(s,t): R\to U\times U$ defines an \'{e}tale groupoid in the category of $k$-schemes. Let $\overline{\mathscr{A}}_{g,\delta}^m$ be the stack $[U/R]$ over $k$. It is a proper Deligne-Mumford stack, containing $\mathscr{A}_{g,\delta}$ as an open dense substack. It admits a coarse moduli space.  Over $\mathbf{C}$, the coarse moduli space is the toroidal compactification $\overline{\mathcal{A}}_{\Sigma}$. 
\end{theorem}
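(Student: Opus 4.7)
The plan is to imitate the Faltings--Chai construction of the toroidal compactification, using the machinery built in Sections~\ref{3.2}--\ref{3.4} in place of Mumford's original construction. The argument splits naturally into four steps: (i) checking that $(s,t)\colon R\to U\times U$ is an étale groupoid with the Deligne--Mumford property, (ii) verifying properness via the valuative criterion, (iii) verifying that $\mathscr{A}_{g,\delta}$ is open and dense, and (iv) identifying the coarse moduli space with $\overline{\mathcal{A}}_\Sigma$ over $\mathbf{C}$.

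First I would verify that $s,t\colon R\to U$ are étale. Given a geometric point $\bar{x}\to U$ lying in the $\tau$-stratum of a good algebraic $\sigma$-model, Proposition~\ref{algebraic family}(4)--(5) together with the universal property at the end of Section~\ref{3.4} produces a strict, log étale morphism from the strict henselization at $\bar{x}$ to the versal base $\Xi_\tau$; by the lemma proved there, strict plus log étale implies étale on underlying schemes. Transporting this across $R$ shows the two projections are étale. The groupoid composition and inverse axioms follow by unique extension of isomorphisms from the open $\{0\}$-stratum to the whole base, which is the content of Proposition~\ref{proper diagonal}. Finite reduced stabilizers are inherited from those on $\mathscr{A}_{g,d}$ via the injection of Corollary~\ref{faithful for abelian}, so $\overline{\mathscr{A}}_{g,\delta}^m=[U/R]$ is Deligne--Mumford.

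Next I would attack properness through the valuative criterion for DM stacks. Given a trait $(R,K)$ and a map $\spec K\to\overline{\mathscr{A}}_{g,\delta}^m$, which factors through $\mathscr{A}_{g,\delta}$ by density, we obtain a polarized abelian variety $(G_K,\lambda_K)$ of type $\delta$. After a finite separable extension of $K$ (which is permitted in the DM valuative criterion), semistable reduction yields a semiabelian extension $G^\dagger/R$ together with monodromy data: the character lattice $X_\xi$ and the bilinear form $b\colon Y_\xi\times X_\xi\to K^*$. The valuation $v\circ b$ gives a positive semi-definite form on $X$; using the fact that the orbits of $0$-cusps under $\Gamma(\delta)$ cover every maximal isotropic rational subspace, together with the admissibility of $\widetilde{\Sigma}$ proved in Proposition~\ref{admissible collection of fans}, we can replace $(X,Y)$ by a $\Gamma(\delta)$-translate so that the form lies in some cone of $\Sigma(X)$. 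Proposition~\ref{the universal property of the moduli space} then promotes $(G^\dagger,\lambda)$ to a pullback of a good formal $\sigma$-model, hence (after the approximation of Proposition~\ref{algebraic family}) to a map $\spec R\to U$, and uniqueness of the extension is Proposition~\ref{proper diagonal}. Density and openness of $\mathscr{A}_{g,\delta}$ follow because the $\{0\}$-stratum is by construction an open dense locus of each good algebraic model where $G$ is abelian, and there it is \'etale over $\mathscr{A}_{g,\delta}$ by (\cite{ols08} Proposition 5.1.4).

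For the last two assertions, the existence of the coarse moduli space $\overline{\mathcal{A}}_{g,\delta}^m$ follows from Keel--Mori once we know the stack is separated and of finite type. The comparison over $\mathbf{C}$ is then a formal matching of local analytic models: the toroidal compactification $\overline{\mathcal{A}}_\Sigma$ of Theorem~\ref{the toroidal compactification over C} has, by definition, formal neighborhoods along each boundary stratum given by the $\overline{P}(F)$-quotient of $\widehat{\Xi}_\sigma$ for $\sigma\in\Sigma(F)$, and the same description holds for $\overline{\mathcal{A}}_{g,\delta}^m$ by construction. Matching these charts via the $\Gamma(\delta)$-equivariance of $\varphi_\mathscr{P}$ and the universal property gives a canonical isomorphism. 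The main obstacle in the whole program is step (ii): one has to be sure that the form produced by semistable reduction can be brought into a cone of $\Sigma(F)$ after a $\Gamma(\delta)$-transformation, which relies essentially on admissibility (the cones cover $\mathcal{C}(X)^{\mathrm{rc}}$ and the collection is closed under the $\Gamma(\delta)$-action); and one must confirm that the log-\'etaleness at the boundary strata genuinely implies scheme-theoretic \'etaleness, which is why we insisted on the divisorial/minimal log structure in Remark~\ref{the divisorial log structure}.
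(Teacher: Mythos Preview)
Your proposal is correct and follows essentially the same approach as the paper: both rely on the Faltings--Chai argument (\cite{FC} Chap.~\Rmnum{4}, Proposition~5.4, Corollary~5.5, Theorem~5.7(1)), with the paper simply observing that none of those arguments require the polarization to be principal and then citing (\cite{ols08} Theorem~1.4.2) for the coarse moduli space, whereas you spell out the content of those references (\'etaleness of $s,t$ via Proposition~\ref{algebraic family}, the valuative criterion via Proposition~\ref{the universal property of the moduli space}, Keel--Mori for the coarse space). One minor caution: your remark that finite reduced stabilizers are ``inherited from $\mathscr{A}_{g,d}$ via Corollary~\ref{faithful for abelian}'' should point to $\overline{\mathscr{A}}_{g,d}$, and since that stack is only known to be Artin in general, it is cleaner to deduce the Deligne--Mumford property directly from the \'etaleness of $s,t$ (which you already establish) rather than from the injection of Corollary~\ref{faithful for abelian}.
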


\begin{proof}
Notice that none of the arguments in (\cite{FC} Chap. \Rmnum{4} Proposition 5.4, Corollary 5.5 \& Theorem 5.7 (1)) uses the principal polarization. By (\cite{ols08} Theorem 1.4.2), the coarse moduli space exists. By the construction, the coarse moduli space over $\mathbf{C}$ is $\overline{\mathcal{A}}_{\Sigma}$. 
\end{proof}

From now on, let $\overline{\mathcal{A}}_{\Sigma}$ also denote the coarse moduli space of $\overline{\mathscr{A}}_{g,\delta}^m$ over the ring $k$. 

\begin{proposition}\label{log smooth}
The algebraic stack $\overline{\mathscr{A}}_{g,\delta}^m$ has a log structure $M_\Sigma$ such that $(\overline{\mathscr{A}}_{g,\delta}^m, M_\Sigma)$ is log smooth over $k=\mathbf{Z}[1/d,\zeta_M]$. 
\end{proposition}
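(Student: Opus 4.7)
The plan is to define $M_\Sigma$ by étale descent from the divisorial log structures on the good algebraic models that assemble $U$, and then to verify log smoothness locally on the étale cover $U \to \overline{\mathscr{A}}_{g,\delta}^m$. Each good algebraic $\sigma$-model carries, by Proposition~\ref{algebraic family}(1), a log structure $M'$ which by Remark~\ref{the divisorial log structure} coincides with the divisorial log structure associated to the toroidal boundary $\partial S'$. Since $\partial S'$ is intrinsically characterized as the locus where the semiabelian group scheme $G$ degenerates (equivalently the complement of the largest open where $\underline{X}=\underline{Y}=0$), it is preserved by every isomorphism of good algebraic models. Therefore the assembled sheaf of monoids on $U$ has a canonical descent datum along $R \doublearrow U$: the two pullbacks $s^*M_U$ and $t^*M_U$ are canonically identified via their characterization as the divisorial log structures of the boundary divisor $\partial U$, whose pullbacks under $s$ and $t$ agree because morphisms in $R$ respect the stratification (this compatibility at the level of polarized semiabelian schemes and AN families is the content of Proposition~\ref{proper diagonal} together with \cite{ols08} Proposition 5.10.2). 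This produces the log structure $M_\Sigma$ on $\overline{\mathscr{A}}_{g,\delta}^m$.

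Next I would check log smoothness by working étale locally on $U$. By Proposition~\ref{algebraic family}, after an étale extension, a good algebraic $\sigma$-model $S'$ admits a strict étale morphism to the formal base $\widehat{\Xi}_\mathscr{P}$; more usefully, the algebraization comes with a strict étale map to the toric variety fiber bundle $\Xi_\mathscr{P}$ (with its toric log structure), and $\Xi_\mathscr{P}$ is itself étale-locally a product $\overline{F}_\xi \times T_\mathscr{P} \times U_{C(\mathscr{P})}$, where $\overline{F}_\xi = \mathscr{A}_{g',\delta'}$ is smooth over $k$ and $T_\mathscr{P}$ is an algebraic torus. The toric variety $U_{C(\mathscr{P})}$ admits the fine sharp chart $P_\mathscr{P}$, hence is log smooth over $\spec k$ with trivial log structure. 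Since log smoothness is stable under composition, (strict) étale base change, and products with smooth morphisms (with trivial log structure on the smooth factor), it follows that $(S', M')$ is log smooth over $k$. Descending through the étale cover $U \to \overline{\mathscr{A}}_{g,\delta}^m$, we conclude $(\overline{\mathscr{A}}_{g,\delta}^m, M_\Sigma)$ is log smooth over $k$.

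The main obstacle is the descent step, specifically verifying that the divisorial log structures are truly preserved by every arrow in $R$ — not just by the isomorphisms that arise from the open étale relation $R_0$, but by those obtained after normalizing the closure in $U\times U$. The key point is that the boundary $\partial U$ is characterized intrinsically (as the non-abelian locus of $G$, or equivalently the zero locus of the pairing $\underline{B}$), and this characterization is functorial under isomorphisms of AN-families as ensured by Proposition~\ref{proper diagonal} and the log-compatibility result \cite{ols08} Proposition 5.10.2. One subtlety that needs mention is that the action of the discrete group $\overline{P}(F_\xi)$ and of the finite group scheme $\spec k[\mathbb{L}_\mathscr{P}/\mathbb{L}_\xi]$, which is built into $R$, permutes toric strata in a way compatible with the divisorial log structure, because these actions are induced from $\GL(X_\xi,Y_\xi)$-equivariance of the tautological sections $\varphi_\mathscr{P}$ (noted in the paragraph following Lemma~\ref{olssen's family}) and hence respect the boundary.
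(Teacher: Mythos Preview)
Your argument is correct and follows the same route as the paper: the paper's proof is the single sentence ``It follows from the description of a good algebraic model in Proposition~\ref{algebraic family},'' and what you have written is a careful unpacking of that sentence---defining $M_\Sigma$ by descent of the divisorial log structures on the good algebraic models, checking compatibility along $R$ via the intrinsic characterization of $\partial U$ (invoking Proposition~\ref{proper diagonal} and \cite{ols08} Proposition 5.10.2), and verifying log smoothness \'etale-locally from the toric local model $\Xi_\mathscr{P}$. There is no substantive difference in strategy; you have simply supplied the details the paper leaves implicit.
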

\begin{proof}
It follows from the description of a good algebraic model in Proposition~\ref{algebraic family}. 
\end{proof}

\begin{lemma}\label{normal}
The algebraic space $\overline{\mathcal{A}}_{\Sigma}$ is normal.
\end{lemma}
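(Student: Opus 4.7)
The plan is to reduce normality of $\overline{\mathcal{A}}_\Sigma$ to normality of the \'etale atlas $U$ of $\overline{\mathscr{A}}_{g,\delta}^m$, and then verify the latter by examining the local structure of the charts $\Xi_\mathscr{P}$.

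First I will show that each $\Xi_\mathscr{P}$ is normal. By the construction in Section~\ref{3.4} we have
\[
\Xi_\mathscr{P} = \Xi_{T,\mathscr{P}} \times^{T_\mathscr{P}} U_{C(\mathscr{P})},
\]
so $\Xi_\mathscr{P}$ is Zariski-locally (over any trivialization of the $T_\mathscr{P}$-torsor) a product of an open subscheme of $\Xi_{T,\mathscr{P}}$ with the affine toric variety $U_{C(\mathscr{P})} = \spec k[S_\mathscr{P}]$. The toric monoid $S_\mathscr{P} = \mathbb{L}_\mathscr{P} \cap C(\mathscr{P})^\vee$ is saturated by its very definition, so $U_{C(\mathscr{P})}$ is a normal affine toric variety by standard toric geometry. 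The base $\Xi_{T,\mathscr{P}}$ is a $T_\mathscr{P}$-torsor over the abelian-scheme bundle $\overline{F}_\xi \ltimes \overline{\mathcal{V}}_\xi \to \overline{F}_\xi = \mathscr{A}_{g',\delta'}$, which is smooth over $k=\mathbf{Z}[1/d,\zeta_M]$ since the polarization type is separable over this base. Hence $\Xi_{T,\mathscr{P}}$ is smooth over $k$, and a Zariski-local product of a smooth scheme with a normal scheme is normal.

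Now each good algebraic model of Proposition~\ref{algebraic family} is, by construction, an \'etale neighborhood of a geometric point of some $\Xi_\mathscr{P}$, and $U$ is the finite disjoint union of such models. Since \'etale morphisms preserve normality, $U$ is a normal scheme, and consequently $\overline{\mathscr{A}}_{g,\delta}^m = [U/R]$ is a normal Deligne--Mumford stack. To descend to the coarse moduli space, I will invoke the standard local description: for a separated Deligne--Mumford stack $\mathscr{X}$ with finite inertia and coarse moduli space $X$, and for any geometric point $\overline{x}\in X$ with preimage $\overline{y}\in U$ and finite automorphism group $G=\Aut(\overline{y})$, one has
\[
\mathcal{O}_{X,\overline{x}}^{\mathrm{sh}} \;\cong\; \bigl(\mathcal{O}_{U,\overline{y}}^{\mathrm{sh}}\bigr)^{G}.
\]
Since the ring of invariants of a normal domain under a finite group action is normal, and since $\overline{\mathscr{A}}_{g,\delta}^m$ has finite inertia by the properness established in Theorem~\ref{compactification}, this yields normality of $\overline{\mathcal{A}}_\Sigma$. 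The only step requiring any care is checking that the atlas $U$ really does inherit normality from the charts $\Xi_\mathscr{P}$, which is where the saturatedness of $S_\mathscr{P}$ and the smoothness of the moduli of lower-dimensional abelian varieties with separable polarization are essential.
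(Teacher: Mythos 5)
Your proof is correct and follows essentially the same route as the paper: the stack is normal because its charts are, étale-locally, saturated toric monoid algebras (the paper simply cites Ogus I.3.3.1 where you verify the product structure of $\Xi_\mathscr{P}$ directly), and the coarse space is then normal because it is étale-locally a finite group quotient $U/G$ of a normal scheme, exactly as in the paper's appeal to \cite{AV02}. The only cosmetic slip is attributing finite inertia to properness; it really comes from the stack being separated Deligne--Mumford (finite diagonal), which Theorem~\ref{compactification} also provides.
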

\begin{proof}
Recall that the local chart for the algebraic stack $\overline{\mathscr{A}}_{g,\delta}^m$ is a toric monoid. By (\cite{ogus} Chap. \Rmnum{1} Proposition 3.3.1.), $\overline{\mathscr{A}}_{g,\delta}^m$ is normal. Since $\overline{\mathscr{A}}_{g,\delta}^m$ is a separated Deligne-Mumford stack, \'{e}tale locally, it is presented as $[U/G]$ for $U$ a scheme \'{e}tale over the stack, and $G$ a finite group (\cite{AV02} Lemma 2.2.3). \'{E}tale locally, the coarse moduli space is the coarse moduli space $[U/G]\to U/G$. Now $U$ is normal, and $U/G$ is a quotient by a finite group, thus is also normal. 
\end{proof}

Since $d$ is invertible in $k$, $\mathscr{A}_{g,d}$ is a disjoint union of $\mathscr{A}_{g,\delta}$ for all polarization type $\delta$ of degree $d$. Correspondingly, $\overline{\mathscr{A}}_{g,d}[1/d]$ is a disjoint union of connected components denoted by $\overline{\mathscr{A}}_{g,\delta}[1/d]$ (\cite{ols08} 6.2.1). Moreover, $\overline{\mathscr{A}}_{g,\delta}[1/d]$ is log smooth in this case. The coarse moduli space $\overline{\mathcal{A}}_{g,d}$ is a disjoint union of connected components $\overline{\mathcal{A}}_{g,\delta}[1/d]$. 

\begin{proposition}\label{relation between two compactifications}
Over $k=\mathbf{Z}[1/d, \zeta_M]$, the morphism $\overline{F}: \overline{\mathscr{A}}_{g,\delta}^m\to \overline{\mathscr{A}}_{g,\delta}[1/d]$ induced by the AN family is proper, surjective, and representable. The coarse moduli space $\overline{\mathcal{A}}_\Sigma$ is the normalization of $\overline{\mathcal{A}}_{g,\delta}[1/d]$ over $k$. In particular, over a field of characteristic zero, $\overline{F}$ is an isomorphism.
\end{proposition}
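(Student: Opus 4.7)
The plan is to verify the stated properties of $\overline{F}$ in turn, with Proposition~\ref{the universal property of the moduli space} serving as the main bridge between our models and Olsson's stack. The AN family $(\mathcal{X},\mathcal{L},G,\varrho)$ produced over $\overline{\mathscr{A}}^m_{g,\delta}$ by Theorem~\ref{compactification} defines an object of $\overline{\mathscr{A}}_{g,d}$ whose generic fiber carries a polarization of type $\delta$; this lands in the connected component $\overline{\mathscr{A}}_{g,\delta}[1/d]$ and gives the morphism $\overline{F}$. Properness then follows from Theorem~\ref{compactification} (the source is proper over $k$) together with the fact that $\overline{\mathscr{A}}_{g,d}[1/d]$ is separated (\cite{ols08}): a morphism from a proper stack into a separated one is proper.

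For representability, I would show $\overline{F}$ is injective on automorphism groups. An object of $\overline{\mathscr{A}}^m_{g,\delta}$ is a good algebraic model equipped with the divisorial log structure from the toroidal boundary (Remark~\ref{the divisorial log structure}, Proposition~\ref{algebraic family}); consequently any automorphism of the good algebraic model is already determined by its action on $(\mathcal{X},\mathcal{L},G,\varrho)$, and $\overline{F}$ is faithful on automorphisms. For surjectivity, the image is a closed substack by properness and contains the dense open $\mathscr{A}_{g,\delta}$, so it equals all of $\overline{\mathscr{A}}_{g,\delta}[1/d]$. Alternatively, the valuative criterion combined with Proposition~\ref{the universal property of the moduli space} shows every Olsson object over a complete strictly henselian normal local ring lies in the image, since its degeneration data force a map into some $\Xi_\sigma$.

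The coarse-moduli statement is then equivalent to showing the induced map $f:\overline{\mathcal{A}}_\Sigma \to \overline{\mathcal{A}}_{g,\delta}[1/d]$ is finite and birational. Birationality is immediate from the isomorphism over $\mathscr{A}_{g,\delta}$; for finiteness, I would argue that $\overline{F}$ is quasi-finite on geometric fibers. By Proposition~\ref{the universal property of the moduli space} applied to the strict henselization at a boundary point, the cone $\sigma$ in the second Voronoi fan is determined by the underlying polarized semiabelian degeneration up to the action of $\GL(X,Y)$; combined with representability and properness, this yields finiteness of $f$. Since $\overline{\mathcal{A}}_\Sigma$ is normal by Lemma~\ref{normal}, a finite birational morphism from a normal space factors uniquely through the normalization, so $f$ coincides with the normalization of $\overline{\mathcal{A}}_{g,\delta}[1/d]$.

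Finally, over a field of characteristic zero, Olsson's ``canonical'' quadratic extension of degeneration data (reviewed in Section~\ref{1}) corresponds exactly to our sections $\varphi_\mathscr{P}$ from Section~\ref{3.3}, so by Theorem~\ref{the toroidal compactification over C} the coarse space $\overline{\mathcal{A}}_{g,\delta}[1/d]$ over $\mathbf{C}$ is already the normal toroidal compactification $\overline{\mathcal{A}}_\Sigma$. Thus the normalization is an isomorphism, and since $\overline{F}$ is representable, proper, bijective on geometric points, and fully faithful on automorphisms, it is an isomorphism of stacks. The main obstacle in executing the plan is the finiteness step: one must translate the uniqueness-up-to-$\GL(X,Y)$ statement of Proposition~\ref{the universal property of the moduli space} into a genuine quasi-finiteness of $\overline{F}$, controlling simultaneously the combinatorial choice of $\sigma$ and the rescaling freedom in the AN ingredients $\psi$ and $\mathcal{M}$.
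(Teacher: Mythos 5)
Your overall skeleton (proper $+$ surjective via density, representable via faithfulness, then finite $+$ birational onto a normal source $\Rightarrow$ normalization, then a characteristic-zero upgrade) matches the paper's. But there are two genuine gaps. The first is the quasi-finiteness step, which you yourself flag as "the main obstacle": Proposition~\ref{the universal property of the moduli space} cannot deliver it. That proposition classifies polarized semiabelian schemes over a complete normal strictly henselian base and pins down the cone $\sigma$ only up to $\GL(X,Y)$; it says nothing about whether two \emph{different} points of the same boundary stratum $\mathfrak{Z}_\mathscr{P}$ can give isomorphic closed fibers. Quasi-finiteness of $f$ is exactly the statement that inside the torus $V(\mathscr{P})$ parameterizing the trivializations $\tau$ there is no positive-dimensional locus whose AN fibers are isomorphic as polarized stable semiabelic varieties, and this is a genuine moduli computation: the paper (Lemma~\ref{finite fiber}) invokes Olsson's recovery of $(\mathscr{P},\phi,c,c^t)$ from the fiber and then Alexeev's description of framed polarized SSAVs by $[Z^1(\Delta_\mathscr{P},\widehat{\underline{\mathbb{X}}})/C^0(\Delta_\mathscr{P},\widehat{\underline{\mathbb{X}}})]$, checking separately that the two non-discrete sources of identifications — changing the projection to $A$ (which replaces $\mathcal{M}$ by $T^*_{-a}\mathcal{M}$, possible for only finitely many $a$ since $\mathcal{M}$ is ample) and the $C^0$-action (which rescales $\psi_0$ but leaves $\tau_0$ fixed) — meet the image of $V(\mathscr{P})$ discretely. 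Without an argument of this kind your finiteness claim does not go through.

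The second gap is in the characteristic-zero conclusion. Your final sentence uses that $\overline{F}$ is "fully faithful on automorphisms," but everything you established earlier is only \emph{faithfulness} (injectivity on automorphism groups, which is what representability needs). Fullness, i.e.\ surjectivity of $\Aut$-groups, is precisely what the paper still has to prove at that stage: it first notes that in characteristic zero $\overline{\mathscr{A}}_{g,\delta}[1/d]$ is Deligne--Mumford, so it admits a normal \'etale chart and its coarse space is normal (this, not a matching of Olsson's quadratic extension with $\varphi_\mathscr{P}$ — which risks circularity — is the reason $f$ is an isomorphism in characteristic zero), and then passes to the \'etale-local picture $[U/G]\to[U/G']$ with $U\cong U'$, using the isomorphism of coarse spaces together with the generic isomorphism to force $G\to G'$ to be surjective. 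You need some version of this last argument; "bijective on points, proper, representable, faithful" does not by itself make a morphism of stacks an isomorphism.
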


\begin{proof}
There is a morphism $U\to \overline{\mathscr{A}}_{g,\delta}[1/d]$ induced from Proposition~\ref{algebraic family} (1), and an injection $R\to R'$. This induces a morphism between stacks $[U/R]\to [U/R']$. By (\cite{LMB} Proposition (3.8)), $[U/R']$ is a substack of $\overline{\mathscr{A}}_{g,\delta}[1/d]$. The composition is a morphism $\overline{F}: \overline{\mathscr{A}}_{g,\delta}^m\to \overline{\mathscr{A}}_{g,\delta}[1/d]$. Since both $\overline{\mathscr{A}}_{g,\delta}^m$ and $\overline{\mathscr{A}}_{g,\delta}[1/d]$ are proper, by (\cite{ols13} Proposition 10.1.4 (iv)), $\overline{F}$ is proper in the sense of (\cite{ols13} Definition 10.1.3). The morphism $\overline{F}$ is identity on the dense open substack $\mathscr{A}_{g,\delta}$ of $\overline{\mathscr{A}}_{g,\delta}^m$. Then it is surjective onto the closure $\overline{\mathscr{A}}_{g,\delta}[1/d]$.

By Lemma~\ref{injection implies representable} and Corollary~\ref{faithful for abelian}, $\overline{F}$ is representable. In particular, by (\cite{ols13} Proposition 10.1.2), $\overline{F}$ is proper as a representable morphism. 

Let the morphism between the coarse moduli spaces be denoted by $f: \overline{\mathcal{A}}_{\Sigma}\to \overline{\mathcal{A}}_{g,\delta}[1/d]$. As a morphism between proper spaces, $f$ is proper. Moreover, by Lemma~\ref{finite fiber}, $f$ is quasi-finite, and thus finite (\cite{EGA4} 8.11.1). Now $f$ is also birational, and $\overline{\mathcal{A}}_\Sigma$ is normal by Lemma~\ref{normal}, hence $f$ is the normalization of $\overline{\mathcal{A}}_{g,\delta}[1/d]$. Here the normalization of an algebraic space is defined in (\cite{KMc} Appendix N.3).

Over a field of characteristic zero, $\overline{\mathscr{A}}_{g,\delta}[1/d]$ is Deligne--Mumford. Therefore, we have an \'{e}tale chart $U'$ over $\overline{\mathscr{A}}_{g,\delta}[1/d]$ which is normal. Then the coarse moduli space $\overline{\mathcal{A}}_{g,\delta}[1/d]$, locally a finite group quotient of $U'$, is also normal. Therefore, $f: \overline{\mathcal{A}}_\Sigma\to\overline{\mathcal{A}}_{g,\delta}[1/d]$ is an isomorphism. On the other hand, the pullback of $U'$ along $\overline{F}$ is an \'{e}tale chart $U$ for $\overline{\mathscr{A}}^m_{g,\delta}$. Since $U\to U'$ is the normalization, we also have $U'\cong U$. Use (\cite{AV02} Lemma 2.2.3) and the fact that the coarse moduli spaces commute with \'{e}tale base change, we get the following \'{e}tale local picture. $\overline{F}: [U/G]\to [U/G']$ for finite groups $G$ and $G'$ , such that 
\begin{itemize}
\item[1)] The coarse moduli spaces $U/G\to U/G'$ are isomorphic, 
\item[2)] $G\to G'$ is injective, 
\item[3)] There is a dense open subscheme $V\subset U/G$ over which $\overline{F}$ is isomorphic. 
 \end{itemize}
 We claim that $G\to G'$ is also surjective. By 1), for any point $x\in U$ and its stabilizer $G'_x$ , we have $G\cdot G'_x=G'$. If $G\to G'$ were not surjective, there would exist $x$ over $V$ such that $G'_x\neq G_x$. This contradicts with 3). Therefore $\overline{F}$ is an isomorphism. 
\end{proof}

\begin{lemma}\label{injection implies representable}
If $\overline{F}$ induces an injection $R\to R'$, then it is representable between algebraic stacks. 
\end{lemma}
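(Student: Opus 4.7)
The plan is to invoke the standard criterion from \cite{LMB} that a morphism $f:\mathscr{X}\to\mathscr{Y}$ of algebraic stacks is representable precisely when its diagonal is a monomorphism, equivalently when for every scheme $T$ and every pair of objects $x,y\in \mathscr{X}(T)$ the natural map $\underline{\mathrm{Isom}}_T(x,y)\to\underline{\mathrm{Isom}}_T(f(x),f(y))$ is a monomorphism of sheaves on $T_{\mathrm{\acute{e}t}}$. My goal will be to express both Isom sheaves as pullbacks of $R$ and $R'$ along a map $T\to U\times U$, so that the hypothesis on $R\to R'$ does all the work.

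First I would fix a scheme $T$ and two objects $x,y\in \overline{\mathscr{A}}_{g,\delta}^m(T)$. Since the atlas $U\to\overline{\mathscr{A}}_{g,\delta}^m$ is \'etale and surjective, after replacing $T$ by an \'etale cover---permissible when checking that a morphism of \'etale sheaves is a monomorphism---I may lift $x,y$ to honest morphisms $\tilde x,\tilde y:T\to U$. By the very definition $R = U\times_{\overline{\mathscr{A}}_{g,\delta}^m} U$ we have a canonical identification $\underline{\mathrm{Isom}}_T(x,y)\cong R\times_{U\times U,(\tilde x,\tilde y)}T$, and applying the same reasoning to $\overline{F}(x),\overline{F}(y)$ with $R' = U\times_{\overline{\mathscr{A}}_{g,\delta}[1/d]} U$ gives $\underline{\mathrm{Isom}}_T(\overline{F}(x),\overline{F}(y))\cong R'\times_{U\times U,(\tilde x,\tilde y)}T$. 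The comparison morphism is precisely the base change of $R\to R'$ along $(\tilde x,\tilde y)$.

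The main task is then to promote the ``injectivity'' of $R\to R'$ in Corollary~\ref{faithful for abelian} to the statement that $R\to R'$ is a monomorphism of algebraic spaces over $U\times U$; since monomorphisms are stable under base change, this will give the criterion immediately. This promotion is where I expect the only subtlety to lie, since a morphism of schemes that is injective on points need not be a monomorphism in general. However, this is built into the construction: $R$ was defined as the normalization of the image of $R_0 = U_0\times_{\mathscr{A}_{g,\delta}}U_0$ inside $U\times U$, and the extension of the generic inclusion $R_0\hookrightarrow R'$ to a morphism $R\to R'$ was produced via Proposition~\ref{proper diagonal}, whose content is the existence of a \emph{unique} extension of the generic isomorphism---uniqueness being equivalent to the monomorphism property over $U\times U$. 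Once $R\to R'$ is recognized as a monomorphism, the criterion recalled at the start finishes the proof.
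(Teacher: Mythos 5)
Your proposal is, in substance, the paper's own proof: the paper likewise reduces representability to faithfulness (via the Stacks Project lemma that a morphism of algebraic stacks is representable by algebraic spaces iff it is faithful), passes to the \'etale cover $S'=S\times_{\overline{\mathscr{A}}^m_{g,\delta}}U$ to lift the object to the atlas, identifies $\Isom(v,v)$ and $\Isom(\overline{F}(v),\overline{F}(v))$ with the pullbacks of $R$ and $R'$ along $(v,v)\colon S'\to U\times U$ through the same Cartesian diagrams, and concludes by base change from the injectivity of $R\to R'$; the only cosmetic difference is that the paper first reduces to the case $x=y$ by the torsor argument over $\Aut(v)$.

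One caveat on your final paragraph: the uniqueness clause of Proposition~\ref{proper diagonal} says that the extension of the \emph{given} generic isomorphism is unique, i.e.\ there is at most one morphism $R\to R'$ restricting to the one on $R_0$; this is not equivalent to, and does not by itself yield, the statement that $R\to R'$ is a monomorphism (that would require any two $T$-points of $R$ with equal image in $R'$ to coincide, for arbitrary $T$). Fortunately this upgrade is not part of the lemma: its hypothesis that $\overline{F}$ induces an injection $R\to R'$ is precisely the functorial injectivity you invoke in the base-change step, and the paper uses it in exactly the same way; establishing that injectivity is the content of Corollary~\ref{faithful for abelian}, which is external to this lemma.
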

\begin{proof}
We claim that the functor $\overline{F}$ is faithful. By the property of categories fibered in groupoids (\cite{ols13} Lemma 3.1.8), it suffices to prove that for any $S\in (\text{Sch}/k)$, and $u,u'\in \mathscr{T}_Q(S)$, the map $\overline{F}_S: \text{Isom}_{\mathscr{T}_Q(S)}(u,u')\to \text{Isom}_{\mathscr{K}_Q(S)}(\overline{F}(u),\overline{F}(u'))$ is injective. Since $\text{Isom}_{\mathscr{T}_Q(S)}(u,u')$ is a torsor over $\Aut_{\mathscr{T}_Q(S)}(u)$, if it is not empty, it can be reduced to the case $u=u'$. These are global sections of presheaves $\Isom(u,u)$ and $\Isom(\overline{F}(u),\overline{F}(u))$. Since both $\mathscr{T}_Q$ and $\mathscr{K}_Q$ are algebraic stacks, both of the presheaves are sheaves for the \'{e}tale topology on $(\text{Sch}/S)$. So we can check the injectivity on some \'{e}tale covering of $S$. Let $\xi:S'\to S$ be the pullback of $U\to \mathscr{T}_Q$ along $u: S\to \mathscr{T}_Q$.
\begin{diagram}
S'&\rTo^v &U\\
\dTo^\xi & &\dTo\\
S&\rTo^u&\mathscr{T}_Q.
\end{diagram}

By definition, there is an isomorphism $\rho: \xi^*u\to v$ in $\mathscr{T}_Q(S')$. Using $\rho$, replace $\xi^*u$ by $v$. Consider the Cartesian diagrams
\begin{equation*}
\begin{diagram}
\Isom(v,v)&\rTo&S'\\
\dTo &&\dTo \\
\mathscr{T}_Q&\rTo^{\Delta}&\mathscr{T}_Q\times\mathscr{T}_Q,
\end{diagram}
\quad\quad\quad
\begin{diagram}
R&\rTo^{(s,t)} &U\times U\\
\dTo &&\dTo\\
\mathscr{T}_Q &\rTo^{\Delta}&\mathscr{T}_Q\times\mathscr{T}_Q.
\end{diagram}
\end{equation*}

By the universal property of $R$, there is a $1$-morphism $H: \Isom(v,v)\to R$ such that the following diagram commutes,
\begin{diagram}
\Isom(v,v) &\rTo &S'\\
\dTo^H&&\dTo_{(v,v)}\\
R&\rTo^{(s,t)}& U\times U.
\end{diagram}

This diagram is Cartesian. Similarly, after applying $\overline{F}$, we have the Cartesian diagram
\begin{diagram}
\Isom(\overline{F}(v),\overline{F}(v)) &\rTo &S'\\
\dTo^{\overline{F}(H)}&&\dTo_{(v,v)}\\
R'&\rTo^{(s,t)}& U\times U.
\end{diagram}

Since the composition of two pullbacks is a pullback, the diagram
\begin{diagram}
\Isom(v,v)&\rTo&\Isom(\overline{F}(v),\overline{F}(v))\\
\dTo^{H}&&\dTo^{\overline{F}(H)}\\
R&\rTo &R'
\end{diagram}

is Cartesian. Then $R\to R'$ is injective implies that the map $\overline{F}_{S'}:\Aut_{\mathscr{T}_Q(S')}(v)\to \Aut_{\mathscr{K}_Q(S')}(\overline{F}(v))$ is injective. The claim that $\overline{F}$ is faithful is thus proved.

By (\cite{stackproject} Lemma 67.15.2), $\overline{F}$ is representable.  
\end{proof}

\begin{lemma}\label{finite fiber}
The morphism $f: \overline{\mathcal{A}}_{\Sigma}\to \overline{\mathcal{A}}_{g,\delta}[1/d]$ is quasi-finite. 
\end{lemma}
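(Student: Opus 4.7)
The plan is to exploit the representability of $\overline{F}\colon \overline{\mathscr{A}}^m_{g,\delta} \to \overline{\mathscr{A}}_{g,\delta}[1/d]$ (established in Proposition~\ref{relation between two compactifications}) in order to reduce quasi-finiteness of $f$ to finiteness of geometric fibers of $\overline{F}$, and then to identify those fibers via Proposition~\ref{the universal property of the moduli space} with a single $\Gamma(\delta)$-orbit of cones in $\widetilde{\Sigma}$, which becomes finite after quotienting by the equivalence relation $R$.

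First I would fix a geometric point $\bar{x}\colon \spec K \to \overline{\mathscr{A}}_{g,\delta}[1/d]$ and pass to the strict henselization $R$ of the local ring at $\bar{x}$. Since $\mathscr{A}_{g,\delta}$ is a dense open substack and $\spec R$ is irreducible, the generic point of $\spec R$ maps into the abelian locus, so the restriction of the universal family is a polarized semiabelian scheme with abelian generic fiber. Proposition~\ref{the universal property of the moduli space} then asserts that this family is the pullback of a good formal $\sigma$-model along a morphism $\spf R \to S_\sigma$, for a cone $\sigma$ in the admissible collection $\widetilde{\Sigma}$ uniquely determined, up to the $\Gamma(\delta)$-action, by the forms $v\circ b$ at height-one primes of $R$. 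Composing with $\spf R \to S_\sigma$ and with the chart $\Xi_{C^{-1}(\sigma)} \to U \to \overline{\mathscr{A}}^m_{g,\delta}$ produces a lift of $\bar{x}$; conversely, every lift arises this way through the presentation $\overline{\mathscr{A}}^m_{g,\delta} = [U/R]$ with $U = \bigsqcup_\sigma \Xi_{C^{-1}(\sigma)}$.

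The preimages of $\bar{x}$ are therefore indexed by cones in the $\Gamma(\delta)$-orbit of $\sigma$, modulo the relation $R$ which, by construction, identifies $\Gamma(\delta)$-conjugate cones. Since $\widetilde{\Sigma}$ has only finitely many $\Gamma(\delta)$-orbits of cones (finitely many cusps up to $\Gamma(\delta)$, and for each cusp $F$ a $\overline{P}(F)$-finite number of cones in $\Sigma(F)$), this yields a finite fiber. Passing to coarse moduli spaces, $f$ inherits finite geometric fibers, proving quasi-finiteness. The main obstacle is to rigorously verify the two claims used above: that the cone in Proposition~\ref{the universal property of the moduli space} is pinned down to a single $\Gamma(\delta)$-orbit rather than a coarser equivalence, and that the gluing relation $R$ on $U$ genuinely collapses $\Gamma(\delta)$-conjugate charts to a common preimage in $\overline{\mathscr{A}}^m_{g,\delta}$. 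Both are implicit in the toroidal construction of $\overline{\mathscr{A}}^m_{g,\delta}$ but require tracing through the universal property of $R$ (via Proposition~\ref{proper diagonal}) to confirm that isomorphisms of AN-families over the abelian locus indeed extend across the boundary in the $\Gamma(\delta)$-equivariant way needed for the counting.
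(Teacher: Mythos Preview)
Your argument has a genuine gap: it shows that the preimage of $\bar{x}$ lands in finitely many strata (one per $\Gamma(\delta)$-orbit of cones), but it does not show that the preimage within a fixed stratum is finite. Proposition~\ref{the universal property of the moduli space} only pins down the cone $\sigma$ (equivalently, the stratum) up to a $\GL(X,Y)$-orbit; it does \emph{not} assert that the morphism $\spf R\to S_\sigma$, and hence the lift of $\bar{x}$ into that stratum, is unique. Two distinct points in the same boundary stratum $\mathfrak{Z}_\mathscr{P}$ could a priori parametrize isomorphic polarized stable semiabelic varieties (as objects of $\overline{\mathscr{A}}_{g,\delta}[1/d]$), since the functor $\overline{F}$ forgets structure. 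The relation $R$ identifies $\Gamma(\delta)$-conjugate charts but does not, by itself, collapse different points in the interior of a single $\mathfrak{Z}_\mathscr{P}$.

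This is precisely the content the paper's proof supplies. Having fixed the paving $\mathscr{P}$ and the data $(A,A^t,\phi,\lambda,c,c^t)$ determined by the SSAV at $\bar{x}$, the fiber sits inside a torus $V(\mathscr{P})\cong \H^0(\Delta_\mathscr{P},\widehat{\underline{\mathbb{L}}})$ parametrizing the trivialization $\tau$. The paper then compares this with Alexeev's groupoid $M^{\mathrm{fr}}[\Delta_\mathscr{P},c,c^t,\mathcal{M}]$ of framed SSAVs and checks, case by case, that the non-discrete automorphisms of the framing (change of projection to $A$, and the $C^0(\Delta_\mathscr{P},\widehat{\underline{\mathbb{X}}})$-action) meet the image of $V(\mathscr{P})$ only discretely---crucially because $\tau_0$ is non-constant along any positive-dimensional locus of $V(\mathscr{P})$. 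Without this analysis (or a substitute for it), counting cone orbits is not enough.
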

\begin{proof}
It suffices to check that for any algebraic closed field $\kappa(\bar{x})$ and point $\bar{x}: \spec \kappa(\bar{x})\to \overline{\mathscr{A}}_{g,d}$, the fiber $\overline{F}^{-1}(\bar{x})$ is discrete, since $f$ is of finite type and thus quasi-compact. So we fix such a point $\bar{x}$ on the boundary of $\overline{\mathscr{A}}_{g,d}$. Forget about the log structure, it represents a polarized stable semiabelic variety $(X, G, \mathcal{L},\varrho)$ over $\kappa(\bar{x})$. By (\cite{ols08} 5.3.4), $(X,G,\mathcal{L},\varrho)$ determines the decomposition $\mathscr{P}$, the subgroup $\phi: Y\to X$, the maps $c: X\to A$ and $c^t: Y\to A^t$. Since the $\mathscr{P}$-stratum in $\overline{\mathcal{A}}_\Sigma$ is a quotient of the interior $\mathfrak{Z}_\mathscr{P}^\circ$ by a discrete group, we can consider the inverse image in $\mathfrak{Z}_\mathscr{P}^\circ$, and still denote it by $\overline{F}^{-1}(\overline{x})$. More precisely, $\overline{F}^{-1}(\overline{x})$ is contained in the fiber over $(A,A^t,\phi,\lambda, c,c^t)\in \overline{F}_\xi\ltimes\overline{\mathcal{V}}_\xi$. Let's denote the fiber by $V(\mathscr{P})$. It is a torus and is isomorphic to $\H^0(\Delta_\mathscr{P}, \widehat{\underline{\mathbb{L}}})$ in \cite{Alex02}. The claim is that there is no positive dimensional sub locus of $V(\mathscr{P})$ where the fibers are all isomorphic as polarized stable semiabelic varieties (SSAV).

Recall the framing for a polarized SSAV in (\cite{Alex02} Definition 5.3.6). By (\cite{Alex02} Theorem 5.3.8), the framed polarized SSAV are classified by the groupoid $M^{\text{fr}}[\Delta_{\mathscr{P}},c,c^t,\mathcal{M}](\kappa(\overline{x}))$ which is equivalent to $[Z^1(\Delta_\mathscr{P},\widehat{\underline{\mathbb{X}}})/C^0(\Delta_\mathscr{P},\widehat{\underline{\mathbb{X}}})]$. Fix the framing, an element in $Z^1(\Delta_\mathscr{P},\widehat{\underline{\mathbb{X}}})$ is the gluing data denoted by $\psi_0\tau_0$. Fix $(A, A^t, \phi, \lambda, c,c^t)$, we can construct the unframed groupoid $M[\Delta_{\mathscr{P}}, c, c^t](\kappa(\overline{x}))$, i.e. the groupoid of polarized SSAV as in (loc. cit. 5.4.4). Let $\mathrm{Pic}^\lambda A$ be the component of $\mathrm{Pic}\, A$ of polarization $\lambda$. Over $\mathrm{Pic}^\lambda A$, take the family $Z^1(\Delta_\mathscr{P},\widehat{\underline{\mathbb{X}}})$, and denote the union by $M^{\text{fr}}$. Then quotient out the choice of the framing. We won't write down all the equivalent relations, because it suffices to only consider the choices that are not discrete. There are two type of non-discrete choices we have to quotient out. One is the choice of the projection to $A$, i.e. a choice of a point in the minimal $A$-orbit. See (loc. cit. Definition 5.3.7 (4)). The other is the action of $C^0(\Delta_\mathscr{P},\widehat{\underline{\mathbb{X}}})$. 

In our construction, we have chosen $\mathcal{M}$ and $\psi$ \'{e}tale locally. Therefore, there is a natural framing for our family over $V(\mathscr{P})$. Since we construct the base $\Xi_\xi$ as the moduli space for the degeneration data $\tau$. It defines a finite map \footnote{Here we mean every fiber of the map is finite.} from $V(\mathscr{P})$ to the fiber $Z^1(\Delta_\mathscr{P},\widehat{\underline{\mathbb{X}}})$ over $\mathcal{M}\in \mathrm{Pic}^\lambda A$. Consider the image of this map as a subset in $M^{\text{fr}}$ and denote it by $\mathfrak{Z}$. It suffices to show that the intersection of $\mathfrak{Z}$ with the group orbits is discrete.

First, consider the group orbits generated by changing the projections to $A$. If the projection to $A$ is changed by an element $a\in A$, then $\mathcal{M}$ is changed to $T_{-a}^*\mathcal{M}$. However, $\mathcal{M}$ is ample, there are only finitely many $a\in A$ such that $\mathcal{M}\cong T_{-a}^*\mathcal{M}$. Therefore, this group orbit intersects the fiber $Z^1(\Delta_\mathscr{P},\widehat{\underline{\mathbb{X}}})$ over $\mathcal{M}\in \mathrm{Pic}^\lambda A$ at finitely many points, thus intersects $\mathfrak{Z}$ at only finitely many points. 

Secondly, the action of $C^0(\Delta_\mathscr{P},\widehat{\underline{\mathbb{X}}})$ on the fiber $Z^1(\Delta_\mathscr{P},\widehat{\underline{\mathbb{X}}})$ over $\mathcal{M}\in \mathrm{Pic}^\lambda A$. For any $t\in C^0(\Delta_\mathscr{P},\hat{\underline{\mathbb{X}}})$, it maps $\psi_0$ to $\psi_0/t$\footnote{Use the notations in (loc. cit. Definition 5.3.4), $\psi_0$ is a function over $C_0(\Delta_\mathscr{P},\underline{\mathbb{X}})$.} and leaves $\tau_0$ unchanged. However, there is no positive dimensional locus where $\tau_0$ is constant in $V(\mathscr{P})$. Therefore, the intersection is also discrete. 

In sum, there is no positive dimensional locus in $V(\mathscr{P})$ that is mapped to a point in the groupoid of polarized SSAV, and our claim is proved.
\end{proof}

\begin{remark}
Although the algebraic stack $\overline{\mathscr{A}}_{g,d}[1/d]$ is normal over $\mathbf{Z}[1/d]$, we are unable to prove that its coarse moduli space is also normal. The problem is we don't know if $\overline{\mathscr{A}}_{g,d}$ is Deligne--Mumford or not. Therefore, locally, we can only choose a finite flat groupoid representation $R\to U\times U$, instead of an \'{e}tale representation. Then we don't know how to choose $U$ to be normal. 
\end{remark}


\section{Representations of the Theta Group and the Stable Pairs}\label{4}
\subsection{The Heisenberg Relation and the Fourier Decomposition}\label{The Heisenberg Relation and the Fourier Decomposition}
Let's study a single abelian variety over $k=\mathbf{C}$ first. Fix a polarized abelian variety $(G,\lambda)$ over $\mathbf{C}$. Suppose the polarization $\lambda: G\to G^t$ is induced by an ample line bundle $\mathcal{L}$. Let $K(\lambda)$ be the kernel of $\lambda$. The automorphism group of the pair $(G,\mathcal{L})$ over the translations of $G$ is the theta group $\mathcal{G}(\mathcal{L})$. $\mathcal{G}(\mathcal{L})$ is a central extension of $K(\lambda)$ by $\mathbf{C}^*$ 
\begin{diagram}
1&\rTo&\mathbf{C}^*&\rTo &\mathcal{G}(\mathcal{L}) & \rTo^p &K(\lambda)& \rTo & 0. 
\end{diagram}

The space of global sections $\Gamma(G,\mathcal{L})$ is an irreducible $\mathcal{G}(\mathcal{L})$-module. We refer the reader to \cite{Mum66} for the basic facts about the theta group.

Write $G$ as $V/\Lambda$, where $V$ is the universal cover of $G$ and $\Lambda=\H_1(G,\mathbf{Z})$. The polarization is represented by an integral skew symmetric form $E$ over $\Lambda$. Define $\Lambda^\vee:=\{v\in V; E(v,v')\in \mathbf{Z}, \forall v'\in\Lambda\}$. $K(\lambda)=\Lambda^\vee/\Lambda$. 

\begin{definition}
Define $\mathcal{H}(V,E)$ to be a central extension of $V$ by $\mathbf{C}^*$,
\[
\begin{diagram}
1 &\rTo&\mathbf{C}^*&\rTo&\mathcal{H}(V,E)&\rTo^p&V&\rTo&0.
\end{diagram}
\]

that induces the pairing $\exp (-2\pi iE)$ on $V$. In other words, it is the Heisenberg group with the commutator  
\begin{equation}\label{commutator in Heisenberg group}
(t,v)(t',v')(t,v)^{-1}(t',v')^{-1}=\big(\exp(-2\pi i E(v,v')),0 \big),
\end{equation}
\end{definition}

Fix a Lagrangian $U\subset V$. Denote $U\cap \Lambda$ by $X^*$ and $\Lambda/X^*$ by $Y$. Choose a classical factor of automorphy $e_{\mathcal{L}}\in Z^1(\Lambda, \Gamma(V, \mathcal{O}_V^*))$ that is trivial over $X^*$ (See \cite{BL} p. 50 for notations.)
\[
e_{\mathcal{L}}(\lambda, v)=\chi(\lambda)\exp\bigg(\pi (H-B)(v,\lambda)+\frac{\pi}{2}(H-B)(\lambda,\lambda)\bigg), \quad \lambda\in \Lambda, v\in V. 
\]

The semi-character $\chi$ defines a lift $\Lambda\to \mathcal{H}(V,E)$ via $\lambda\mapsto(\chi^{-1}(\lambda),\lambda)$. From now on, we regard $\Lambda$ as a subgroup of $\mathcal{H}(V,E)$. Notice that $X^*$ is lifted to $(1,X^*)$.  Take the normalizer $N(\Lambda)$ of $\Lambda$ and the normalizer $N(X^*)$ of $X^*$ in $\mathcal{H}(V,E)$. 

Consider $h:\Lambda\times V\to \mathbf{C}^*$ defined by
\[
h(\lambda, v):=\exp\bigg(\pi (H-B)(v,\lambda)+\frac{\pi}{2}(H-B)(\lambda,\lambda)\bigg),\quad \lambda\in \Lambda, v\in V.
\]

Extend $h$ to $V\times V\to \mathbf{C}^*$. Then $h$ satisfies
\begin{equation}\label{h relation}
h(v_1+v_2,v')e^{\pi i E(v_1,v_2)}=h(v_1,v'+v_2)h(v_2,v').
\end{equation}

Define the action of $\mathcal{H}(V,E)$ on the trivial bundle $\mathbf{C}\times V$ 
\begin{equation}\label{action of Heisenberg group}
S_{(t,v)}(t', v')=(t^{-1}t' h(v,v'),v+v').
\end{equation}

This is a group representation of $\mathcal{H}(V,E)$ by Relation~\eqref{h relation}. Since $\Lambda$ (resp. $X^*$) is in the center of $N(\Lambda)$ (resp. $N(X^*)$), the action $S$ induces an action of $N(\Lambda)/\Lambda$ (resp. $N(X^*)/X^*$) on the quotient $(G,\mathcal{L})=(V, \mathbf{C}\times V)/\Lambda$ (resp. $(T, \mathbf{C}\times T)=(V, \mathbf{C}\times V)/X^*$). Notice that if $\mathcal{L}$ is considered as the coherent sheaf of sections instead of the line bundle, then the action of the center $\mathbf{C}^*\subset N(\Lambda)/\Lambda$ is of weight $1$. Therefore, the action of $N(\Lambda)/\Lambda$ on $\Gamma(G,\mathcal{L})$ is identified with the action of $\mathcal{G}(\mathcal{L})$ on $\Gamma(G,\mathcal{L})$. On the other hand, $N(X^*)/X^*$ contains the subgroup $(1,U/X^*)$. By (\cite{BL} Lemma 3.2.2. a)), $h(U,V)=1$. It follows that the action~\eqref{action of Heisenberg group} of $a\in (1,U/X^*)$ on $\mathbf{C}\times T$ is $T_a^*:(t,v)\mapsto (t, av)$. Therefore we also identify the action of $(1,U/X^*)$ on $\mathbf{C}\times T$ with the action of the real subtorus $U/X^*\subset V/X^*=T$ on $\mathbf{C}\times T$. Define $\phi: \Lambda^\vee\to X$ by $\phi(\lambda)(u)=E(\lambda, u)$ for $\lambda\in \Lambda^\vee$, $u\in X^*$. We generalize the Heisenberg relation in \cite{Mum72}. 

\begin{lemma}[Heisenberg relation \Rmnum{1}]\label{Heisenberg one}
Consider the two groups $N(\Lambda)/X^*$ and $T$ acting on the trivial line bundle $\mathbf{C}\times T$. Denote the action of $N(\Lambda)/X^*$ by $S$, and the action of $T$ by $T$, we have
\begin{equation}\label{commutator one}
T_a^*S_g^*=\mathrm{X}^{\phi(p(g))}(a)\cdot S_g^*T_a^*, \quad \forall g\in N(\Lambda)/X^*, a\in T,
\end{equation}

where $p$ is the projection $N(\Lambda)/X^*\to \Lambda^\vee/X^*$. 
\end{lemma}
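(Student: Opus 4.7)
The plan is to derive the relation directly from the Heisenberg commutator \eqref{commutator in Heisenberg group}. First I would lift $a \in T = V/X^*$ to an element $\tilde{a} \in U$ (well-defined modulo $X^*$), so that $T_a^*$ is represented by the operator $S_{(1,\tilde{a})}^*$ coming from the Heisenberg action. Similarly, I would lift $g \in N(\Lambda)/X^*$ to an element $(t_g,v_g) \in \mathcal{H}(V,E)$ with $v_g \in \Lambda^\vee$ (well-defined modulo $X^*$). The Heisenberg commutator then gives
\[
(1,\tilde{a})(t_g,v_g)(1,\tilde{a})^{-1}(t_g,v_g)^{-1} = \bigl(\exp(-2\pi i E(\tilde{a},v_g)), 0\bigr),
\]
and since the central element $(s,0)$ acts on sections of $\mathbf{C}\times T$ by the scalar $s^{-1}$, applying $S$ converts this into an identity of the form $T_a^*S_g^* = \exp(2\pi i E(\tilde{a},v_g))\cdot S_g^*T_a^*$ once conventions are fixed.

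Next I would verify that the scalar $\exp(2\pi i E(\tilde{a},v_g))$ depends only on $a$ and $g$, not on the chosen lifts. Changing $\tilde{a}$ by $x\in X^*\subset \Lambda$ alters $E(\tilde{a},v_g)$ by $E(x,v_g)\in \mathbf{Z}$ (since $v_g\in\Lambda^\vee$), so the exponential is unchanged; changing $v_g$ by $x'\in X^*\subset U$ alters it by $E(\tilde{a},x')=0$, since $U$ is Lagrangian. Finally, by the very definition $\phi(p(g))(u)=E(v_g,u)$ for $u\in X^*$, together with the identification of $X=\Hom(X^*,\mathbf{Z})$ with the character lattice of the real subtorus $U/X^*\subset T$, the scalar evaluates exactly to $\mathrm{X}^{\phi(p(g))}(a)$.

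The main obstacle I anticipate is the sign and convention bookkeeping: the factor $h$ appearing in \eqref{action of Heisenberg group} contributes phase corrections, and the inversion built into $S_{(t,v)}$ means one has to be careful to distinguish pullback from pushforward when translating the group commutator into an operator commutator. Once the dictionary between Heisenberg group elements and the operators $T_a^*, S_g^*$ is fixed, however, the lemma is a formal consequence of the structure of the central extension together with the defining property of $\phi$, and no additional geometric input is required.
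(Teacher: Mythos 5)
Your strategy---lift $g$ and $a$ into the Heisenberg group $\mathcal{H}(V,E)$ and read \eqref{commutator one} off the commutator relation \eqref{commutator in Heisenberg group}---is exactly the paper's. But your first step fails as written: a general $a\in T=V/X^*$ does \emph{not} lift to $U$. Since $X^*\subset U$, the classes represented by elements of $U$ form only the compact real subtorus $U/X^*\subset T$, whereas $T$ is the full algebraic torus; a general $a$ has a representative in $V$, not in $U$, and for $v\notin U$ the operator $S^*_{(1,v)}$ is \emph{not} the translation $T^*_v$, because the factor $h(v,\cdot)$ in \eqref{action of Heisenberg group} is nontrivial there (it is only thanks to $h(U,V)=1$, the fact quoted from \cite{BL} just before the lemma, that $(1,\tilde a)$ with $\tilde a\in U$ acts by pure translation). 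So, as written, your argument only establishes \eqref{commutator one} for $a$ in the real subtorus $U/X^*$, and moreover the identification $T^*_a=S^*_{(1,\tilde a)}$ should be justified by citing $h(U,V)=1$ rather than deferred as a ``phase correction''.

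The missing ingredient is precisely the reduction the paper makes in its first sentence: both sides of \eqref{commutator one} depend algebraically (real-analytically) on $a\in T$, and $U/X^*$ is Zariski dense in $T$, so it suffices to verify the identity at the real points $a\in U/X^*$. Once you add that remark, the rest of your computation goes through and coincides with the paper's proof: for $a=\tilde a+X^*$ with $\tilde a\in U$ and a lift $(t_g,v_g)$ of $g$ with $v_g\in\Lambda^\vee$, the commutator scalar is $\exp(-2\pi i E(\tilde a,v_g))=\exp(2\pi i E(v_g,\tilde a))=\mathrm{X}^{\phi(p(g))}(a)$ by the definition $\phi(\lambda)(u)=E(\lambda,u)$, your check that this is independent of the choices of $\tilde a$ and $v_g$ modulo $X^*$ is correct, and the remaining bookkeeping is only the weight-one convention for how the center acts on sections. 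With the density reduction supplied, the proof is complete and is essentially the paper's.
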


\begin{proof}
It suffices to check \eqref{commutator one} for the real points $a\in U/X^*\subset T$. In this case, we can identify it as an element in $N(X^*)/X^*$, and use the commutation relation~\eqref{commutator in Heisenberg group} in the Heisenberg group $\mathcal{H}(V,E)$ for $N(\Lambda)$ and $(1,U)$. 
\end{proof}

We can generalize this relation to a general cusp $F_\xi$. Let $U_\xi$ be a rational isotropic subspace of dimension $r$. Represent $G$ as $ \widetilde{G}/Y_\xi$, where $\widetilde{G}=V/\Lambda\cap U_\xi^\perp$ and $Y_\xi=\Lambda/\Lambda\cap U_\xi^\perp$. Denote $\Lambda\cap U_\xi$ by $X^*_\xi$ and its dual by $X_\xi$. $E$ induces the polarization $\phi: Y_\xi\to X_\xi$. The algebraic torus $T_\xi=Hom(X_\xi, \mathbf{C}^*)$ is the torus part of the semiabelian variety $\widetilde{G}$.\\

Choose a Lagrangian $U\supset U_\xi$. We have inclusions $X^*_\xi\subset X^*$, $T_\xi\subset T$, and quotients $X\to X_\xi$, $Y\to Y_\xi$. The free group $Y':=U^\perp_\xi\cap \Lambda/X^*$ is the kernel of $Y\to Y_\xi$. Then $\Lambda^\vee/U^\perp_\xi\cap \Lambda$ (resp. $(\widetilde{G},\widetilde{\mathcal{L}})$) is the quotient of $\Lambda^\vee/X^*$ (resp. $(T,T\times\mathbf{C})$) by $Y'$. $Y'$ is naturally considered as a subgroup of $N(\Lambda)/X^*$ by the lift $\chi^{-1}$. Then $N(\Lambda)/U_\xi^\perp\cap\Lambda$ is a quotient of $N(\Lambda)/ X^*$ by $Y'$. The following lemma is just the $Y'$-quotient of Lemma~\ref{Heisenberg one}. Notice that by the relation \eqref{commutator one}, $Y'$ commutes with $T_\xi$ on $T\times\mathbf{C}$. Therefore the action of $T_\xi$, and the relation \eqref{commutator one} descends to $(\widetilde{G},\widetilde{\mathcal{L}})$. 

\begin{lemma}[Heisenberg relation \Rmnum{2}]\label{Heisenberg two}
Consider the two groups $N(\Lambda)/U_\xi^\perp\cap \Lambda$ and $T_\xi$ acting on $(\widetilde{G},\widetilde{\mathcal{L}})$. Denote the action of $N(\Lambda)/U_\xi^\perp\cap\Lambda$ by $S$, and the action of $T_\xi$ by $T$, we have
\begin{equation}\label{commutator two}
T_a^*S_g^*=\mathrm{X}^{\phi(p(g))}(a)\cdot S_g^*T_a^*, \quad \forall g\in N(\Lambda)/U_\xi^\perp\cap\Lambda, a\in T_\xi,
\end{equation}

where $p$ is the projection $N(\Lambda)/(U_\xi^\perp\cap\Lambda)\to \Lambda^\vee/U_\xi^\perp\cap\Lambda$, and $\phi$ is the induced map $\Lambda^\vee/U_\xi^\perp\cap\Lambda\to X_\xi$ by $E$.  
\end{lemma}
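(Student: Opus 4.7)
The plan is to obtain Lemma~\ref{Heisenberg two} by taking the quotient of the picture in Lemma~\ref{Heisenberg one} by the subgroup $Y'=(\Lambda\cap U_\xi^\perp)/X^*$. We already have $(\widetilde{G},\widetilde{\mathcal{L}})=(T,\mathbf{C}\times T)/Y'$, where $Y'\subset N(\Lambda)/X^*$ acts by the lifts $S_{y}^{*}$ determined by the classical factor of automorphy. So the first step is to check that the $S$-action of $N(\Lambda)/X^{*}$ on $(T,\mathbf{C}\times T)$ descends to an action of the quotient group $N(\Lambda)/(U_\xi^\perp\cap\Lambda)$ on $(\widetilde{G},\widetilde{\mathcal{L}})$.

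This descent amounts to showing that for every $g\in N(\Lambda)$ and $y\in Y'$, the Heisenberg commutator $[S_g^{*},S_y^{*}]\in\mathbf{C}^{*}$ is equal to $1$. By the defining commutator \eqref{commutator in Heisenberg group}, this commutator equals $\exp(-2\pi i E(p(g),p(y)))$, and since $p(g)\in\Lambda^\vee$ and $p(y)\in\Lambda\cap U_\xi^\perp\subset\Lambda$, the pairing $E(p(g),p(y))$ is an integer, so the scalar is trivial. Hence $S_gY'S_g^{-1}=Y'$ and the $S$-action passes to the $Y'$-quotient.

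Next I will check that the $T_\xi$-action on $(T,\mathbf{C}\times T)$ also descends. This is where the remark in the excerpt applies: for $y\in Y'$ and $a\in T_\xi\subset T$, formula \eqref{commutator one} gives $T_a^{*}S_y^{*}=\mathrm{X}^{\phi(p(y))}(a)\,S_y^{*}T_a^{*}$, and the character $\mathrm{X}^{\phi(p(y))}$ restricted to $T_\xi$ is given by $u\mapsto E(p(y),u)$ for $u\in X_\xi^{*}\subset U_\xi$; this vanishes identically because $p(y)\in U_\xi^\perp$. Thus $T_\xi$ commutes with $Y'$ on $\mathbf{C}\times T$, so its action descends to $(\widetilde{G},\widetilde{\mathcal{L}})$. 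In parallel, the map $\phi\colon\Lambda^\vee\to X$ induces a well-defined map $\Lambda^\vee/(U_\xi^\perp\cap\Lambda)\to X_\xi$ because $X_\xi^{*}\subset U_\xi$ pairs trivially with $U_\xi^\perp\cap\Lambda$ under $E$; this is the $\phi$ appearing in \eqref{commutator two}.

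Finally, pick any $g\in N(\Lambda)/(U_\xi^\perp\cap\Lambda)$, lift it to $\tilde{g}\in N(\Lambda)/X^{*}$, and any $a\in T_\xi\subset T$. Applying Lemma~\ref{Heisenberg one} to $(\tilde{g},a)$ on $(T,\mathbf{C}\times T)$ and pushing the identity down through the quotient by $Y'$ yields \eqref{commutator two}; the scalar $\mathrm{X}^{\phi(p(\tilde{g}))}(a)$ on $T_\xi$ depends only on the image of $\phi(p(\tilde g))$ in $X_\xi$, hence on $g$, so the formula is well-defined on the quotient. The only real content is the vanishing/integrality computations verifying the descent, and these are immediate from $p(Y')\subset U_\xi^\perp\cap\Lambda$; I expect no substantive obstacle.
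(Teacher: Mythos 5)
Your proposal is correct and follows the paper's own route: the paper proves this lemma precisely by taking the $Y'$-quotient of Lemma~\ref{Heisenberg one}, noting via \eqref{commutator one} that $Y'$ commutes with $T_\xi$ (since $p(Y')\subset U_\xi^\perp$ pairs trivially with $X_\xi^*\subset U_\xi$), so both actions and the relation descend. Your additional integrality check $E(\Lambda^\vee,\Lambda)\subset\mathbf{Z}$ for the descent of the $S$-action and the well-definedness of $\phi$ on the quotient just make explicit what the paper leaves implicit.
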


The theta group $\mathcal{G}(\mathcal{L})=N(\Lambda)/\Lambda$ is the quotient of $N(\Lambda)/U_\xi^\perp\cap \Lambda$ by $Y_\xi$. Denote the subgroup $\Lambda^\vee\cap U_\xi/\Lambda\cap U_\xi$ by $K_2$. The restriction of the Weil pairing to $K_2$ induces the surjection $K(\lambda)\to \widehat{K}_2$. Compose with $p:\mathcal{G}(\mathcal{L})\to K(\lambda)$, we have $w:\mathcal{G}(\mathcal{L})\to \widehat{K}_2$. Denote the kernel of $w$ by $\mathcal{G}(\mathcal{L})^f$. Identify $\widehat{K}_2$ with $X_\xi/\phi(Y_\xi)$. Let $I$ be an $X_\xi/\phi(Y_\xi)$-torsor. 

\begin{lemma}\label{two decompositions are the same}
Consider the canonical representation $\H^0(G,\mathcal{L})$ of $\mathcal{G}(\mathcal{L})$.  $\H^0(G,\mathcal{L})=\oplus_{\alpha\in I} V_\alpha$ is decomposed into irreducible representations of $\mathcal{G}(\mathcal{L})^f$, labelled by $I$. This is the same decomposition as the Fourier decomposition 
\[
\H^0(G,\mathcal{L})=\bigoplus_{\alpha\in X_\xi/\phi(Y_\xi)} \H^0(A,\mathcal{M}_\alpha)
\]

in the AN construction. Moreover, for $g\in \mathcal{G}(\mathcal{L})$, the action of $g$ translates the space labelled by $\alpha$ to the space labelled by $\alpha+w(g)$. 
\end{lemma}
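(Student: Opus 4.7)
The plan is to lift the problem to the semiabelian cover $\widetilde G$ at the cusp $F_\xi$ and combine the $T_\xi$-weight decomposition with Heisenberg relation~II (Lemma~\ref{Heisenberg two}) to identify the Fourier decomposition with the representation-theoretic one.

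First, I would pull back $\mathcal{L}$ to $\widetilde{\mathcal{L}}$ on $\widetilde G$ and decompose $\H^0(\widetilde G, \widetilde{\mathcal{L}})$ into $T_\xi$-weight eigenspaces indexed by $\beta \in X_\xi$, with each weight-$\beta$ subspace canonically identified with $\H^0(A,\mathcal{M}_\beta)$. The space $\H^0(G,\mathcal{L})$ is the subspace of $Y_\xi$-invariants under the theta-function action $S_\lambda^*$, $\lambda \in Y_\xi$. Applying Lemma~\ref{Heisenberg two} to $\lambda \in Y_\xi \hookrightarrow N(\Lambda)/(U_\xi^\perp\cap\Lambda)$ shows that $S_\lambda^*$ shifts the $T_\xi$-weight by $\phi(\lambda)$; hence the $Y_\xi$-invariant subspace decomposes over cosets $\alpha \in I = X_\xi/\phi(Y_\xi)$, with each summand $V_\alpha^{\mathrm{AN}} \cong \H^0(A,\mathcal{M}_\alpha)$. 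This recovers the AN Fourier decomposition.

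Next, for a general $g \in \mathcal{G}(\mathcal{L})$ with lift $\tilde g \in N(\Lambda)/(U_\xi^\perp\cap\Lambda)$, Lemma~\ref{Heisenberg two} gives
\[
T_a^* S_{\tilde g}^* = \X^{\phi(p(\tilde g))}(a)\, S_{\tilde g}^* T_a^*, \qquad a \in T_\xi,
\]
so $S_{\tilde g}^*$ shifts the $T_\xi$-weight by $\phi(p(\tilde g)) \in X_\xi$, and after passing to $Y_\xi$-invariants this shift becomes a well-defined class in $I$ depending only on $g$. I would then verify that the resulting map $\mathcal{G}(\mathcal{L}) \xrightarrow{p} K(\lambda) \to X_\xi/\phi(Y_\xi)$ agrees with $w$, by tracing through the definition of the Weil pairing restricted to $K_2$ together with the natural identification $\widehat{K}_2 \cong X_\xi/\phi(Y_\xi)$ induced by the pairing $E\colon U_\xi \times V/U_\xi^\perp \to \mathbf{R}$. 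Consequently $S_g^*$ sends $V_\alpha^{\mathrm{AN}}$ into $V_{\alpha+w(g)}^{\mathrm{AN}}$, which is the last claim of the lemma; in particular $\mathcal{G}(\mathcal{L})^f = \ker w$ preserves each summand.

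Finally, to identify the AN decomposition with the $\mathcal{G}(\mathcal{L})^f$-irreducible decomposition, I would show each $V_\alpha^{\mathrm{AN}}$ is irreducible. The natural route is Stone--von Neumann: since $K_2 \subset K(\lambda)$ is isotropic for the Weil pairing, $\mathcal{G}(\mathcal{L})^f/\mathbb{G}_m$ identifies with the annihilator $K_2^\perp \supset K_2$, and $\mathcal{G}(\mathcal{L})^f/K_2$ is a theta group for the symplectic quotient $K_2^\perp/K_2$; its unique irreducible representation of central character one has dimension $\sqrt{|K_2^\perp/K_2|} = d/|I|$, matching $\dim V_\alpha^{\mathrm{AN}} = d/|I|$ and forcing irreducibility. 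The hard part will be this last step: verifying that $\mathcal{G}(\mathcal{L})^f$ really descends to a genuine theta group on $K_2^\perp/K_2$, i.e., that the Weil pairing is nondegenerate on the quotient, which reduces to showing $K_2$ is exactly self-perpendicular inside $K_2^\perp$. This follows from nondegeneracy of the Weil pairing on $K(\lambda)$ together with a size count using $|K_2| = |I| = d_\xi$; once that is in place, Stone--von Neumann and the dimension count close the argument.
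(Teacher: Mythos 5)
Your argument is correct, and its first half is exactly the paper's: the Fourier decomposition is the $T_\xi$-weight decomposition of $\H^0(\widetilde{G},\widetilde{\mathcal{L}})$ collapsed along the $Y_\xi$-action, Lemma~\ref{Heisenberg two} shows that $S_g^*$ shifts weights by $\phi(p(g))$, and since $w=\phi\circ p$ this gives both the translation statement and the fact that $\mathcal{G}(\mathcal{L})^f=\ker w$ preserves each summand. Where you diverge is the irreducibility step. The paper disposes of it by citing Olsson (Theorem 5.4.2) to the effect that the theta group $\mathcal{G}(\mathcal{M})$ of the abelian part sits inside $\mathcal{G}(\mathcal{L})^f$, so each $\H^0(A,\mathcal{M}_\alpha)$, being irreducible already for the subgroup $\mathcal{G}(\mathcal{M})$, is a fortiori irreducible for $\mathcal{G}(\mathcal{L})^f$; you instead run Stone--von Neumann plus a dimension count on the Heisenberg group obtained from $\mathcal{G}(\mathcal{L})^f$ after killing $K_2$. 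Your route is self-contained modulo Stone--von Neumann and is essentially the argument the paper itself uses later for the relative statement (Lemma~\ref{two decompositions are the same in general}, following Faltings--Chai Chap.~II, p.~47), so nothing is lost; the paper's citation is just shorter for the single abelian variety over $\mathbf{C}$. Two small points to tighten in your version: (i) the action of $\mathcal{G}(\mathcal{L})^f$ on $V_\alpha$ does not literally factor through a quotient $\mathcal{G}(\mathcal{L})^f/K_2$ --- a chosen level lift $\widetilde{K}_2$ acts on $V_\alpha$ by the character $\alpha$, so for general $\alpha$ you must pass to the pushout of $\mathcal{G}(\mathcal{L})^f$ along the character $\tilde\alpha$ of $\mathcal{G}(\mathcal{L})^m$ (this is still a nondegenerate theta group for $K_2^\perp/K_2$, since $K_2^{\perp\perp}=K_2$ by nondegeneracy of the Weil pairing, so Stone--von Neumann applies verbatim); and (ii) for the count $\dim V_\alpha=d/|I|$ you should either invoke $\dim\H^0(A,\mathcal{M}_\alpha)=d/|I|$ directly or note that $w$ is surjective, so the $\mathcal{G}(\mathcal{L})$-translations permute the summands transitively and they all have equal dimension. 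With those two remarks your proof closes.
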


\begin{proof}
Recall that the Fourier decomposition is obtained by the Fourier decomposition of $\H^0(\widetilde{G},\widetilde{\mathcal{L}})$ from the action of $T_\xi$. It follows from Lemma~\ref{Heisenberg two} that the action of $\mathcal{G}(\mathcal{L})^f$ preserves this decomposition. By (\cite{ols08} Theorem 5.4.2.), the theta group $\mathcal{G}(\mathcal{M})$ is contained in the subgroup $\mathcal{G}(\mathcal{L})^f$. Since each $\H^0(A,\mathcal{M}_\alpha)$ is already an irreducible representation of the subgroup $\mathcal{G}(\mathcal{M})$, it is irreducible for $\mathcal{G}(\mathcal{L})^f$. Since $w=\phi\circ p$, the last sentence follows from Equation~\eqref{commutator two}. 
\end{proof}


\subsection{The Stable Pairs}
It is not hard to generalize the above picture to the relative case. Assume the base $S$ satisfies the assumptions in (\cite{FC} Chap. \Rmnum{2} Sect. 3). 
\begin{assumption}\label{assumption for the only if part}
Suppose $S=\spec R$ for $R$ Noetherian, normal integral, local $\mathbf{Z}[1/d,\zeta_M]$-algebra, complete w.r.t. an ideal $I=\sqrt{I}$. Further assume that for any \'etale $R/I$-algebra, the unique lift to a formally \'etale $I$-adically complete $R$-algebra is normal. Denote the generic point by $\eta=S_\eta$, closed point by $S_0$ and the field of fractions by $K$. 
\end{assumption}

Consider a polarized semiabelian scheme $\pi: (\mathcal{G},\mathcal{L})\to S$ with the generic fiber $\pi: (\mathcal{G}_\eta,\mathcal{L}_\eta)\to S_\eta$ abelian. By (\cite{FC} Chap. \Rmnum{2} Sect. 2), we have two Raynaud extensions $0\to T\to \widetilde{G}\to A\to 0$ and $0\to T^t\to \widetilde{G}^t\to A^t\to 0$. Let $X_\xi$ (resp. $Y_\xi$) be the character group of $T$ (resp. $T^t$). We also have $c:X_\xi\to A^t$, $c^t:Y_\xi\to A$, $\lambda_\eta:G_\eta\to G_\eta^t$, $\lambda: G\to G^t$, $\lambda_T:T\to T^t$, $\lambda_A: A\to A^t$ and $\phi: Y_\xi\to X_\xi$. The data $(T, \widetilde{G}, A,\widetilde{\mathcal{L}})$ is the algebraization of the formal data $(T_{\text{for}}, G_{\text{for}}, A_{\text{for}},\mathcal{L}_{\text{for}})$. Choose a descent data for $\mathcal{L}_{\text{for}}$ w.r.t. $G_{\text{for}}\to A_{\text{for}}$. 

Consider the various group schemes in (\cite{FC} Chap. \Rmnum{2} pp. 46-47, \cite{MB} Chap. \Rmnum{4}). Let $K(\mathcal{L})_\eta$ be the kernel of $\lambda_\eta$ and $K(\mathcal{L})$ be the scheme theoretic closure of $K(\mathcal{L})_\eta$. The restriction of $\mathcal{L}$ to $K(\mathcal{L})$, regarded as a $\mathbb{G}_m$-torsor, has a natural group structure as a central extension
\begin{diagram}
1&\rTo &\mathbb{G}_m&\rTo&\mathcal{G}(\mathcal{L})&\rTo^p&K(\mathcal{L})&\rTo 0.
\end{diagram}

Since $\mathcal{G}(\mathcal{L})$ is a natural extension of the theta group $\mathcal{G}(\mathcal{L}_\eta)$ to $S$, it is also called the theta group. The quasi-finite group scheme $K(\mathcal{L})$ has the ``finite part" $K(\mathcal{L})^f$ which corresponds to $\Lambda^\vee\cap U^\perp_\xi/\Lambda\cap U^\perp_\xi$ in Section~\ref{The Heisenberg Relation and the Fourier Decomposition} and the ``multiplicative part" $K(\mathcal{L})^m:=\ker \lambda_T$ which corresponds to $\Lambda^\vee\cap U_\xi/\Lambda\cap U_\xi$ in Section~\ref{The Heisenberg Relation and the Fourier Decomposition}. $K(\mathcal{L})^m$ is denoted by $K_2$. The Weil pairing $e^{\mathcal{L_\eta}}$ over $K(\mathcal{L}_\eta)$ is extended to $e^\mathcal{L}: K(\mathcal{L})\times K(\mathcal{L})\to \mathbb{G}_m$ by using the commutator relation in $\mathcal{G}(\mathcal{L})$. $K_2$ and $K(\mathcal{L})^f$ are annihilator of each other w.r.t $e^\mathcal{L}$. $K_2\subset K(\mathcal{L})^f$ is isotropic and $K(\mathcal{L})^f/K_2=K(\mathcal{M}):=\ker \lambda_A$. The Cartier dual of $K_2$ is $\widehat{K}_2\cong X_\xi/\phi(Y_\xi)$. The restriction of the Weil pairing $e^\mathcal{L}$ defines a surjection $\phi: K(\mathcal{L})\to \widehat{K}_2$. 

\begin{lemma}[Heisenberg relation \Rmnum{3}]\label{Heisenberg three}
Under the above assumption, consider $\pi_*\mathcal{L}$ as the representation of $\mathcal{G}(\mathcal{L})$ and the representation of $K_2$. Denote the action of $\mathcal{G}(\mathcal{L})$ by $S$, and the action of $K_2$ by $T$, we have 
\begin{equation}\label{commutator three}
T^*_aS^*_g=X^{\phi(p(g))}(a)\cdot S^*_gT^*_a, \quad \forall g\in \mathcal{G}(\mathcal{L}), a\in K_2,
\end{equation}
where $p:\mathcal{G}(\mathcal{L})\to K(\mathcal{L})$ is the projection, and $\phi:K(\mathcal{L})\to \widehat{K}_2$ is the restriction of the Weil pairing. 
\end{lemma}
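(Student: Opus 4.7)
The plan is to derive the relation as an incarnation of the commutator in the algebraic theta group $\mathcal{G}(\mathcal{L})$, in the spirit of Mumford's theory. First I would pin down the two actions $S^*$ and $T^*$ as arising from subgroups of the same central extension. The action $S^*$ is tautological: $\pi_*\mathcal{L}$ carries a canonical representation of $\mathcal{G}(\mathcal{L})$ in which the center $\mathbb{G}_m$ acts with weight one. To put $T^*$ on the same footing I need a canonical lift $\tilde{\iota}\colon K_2 \hookrightarrow \mathcal{G}(\mathcal{L})$. This is where the toric description in Sect.~\ref{3.2}--\ref{3.3} enters: $K_2 = \ker\lambda_T$ is a subgroup scheme of the torus $T \subset \widetilde{G}$, and the line bundle $\widetilde{\mathcal{L}} = \pi^*\mathcal{M}$ carries a canonical $T$-linearization (as a pullback from $A$). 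This linearization gives a canonical lift $K_2 \to \mathcal{G}(\widetilde{\mathcal{L}})$, and because the $Y_\xi$-action on $\widetilde{\mathcal{L}}$ used to descend from $\widetilde{G}$ to $G$ commutes with the $T$-action (by the Heisenberg relation \Rmnum{2} of Lemma~\ref{Heisenberg two}), the lift descends to $\tilde{\iota}\colon K_2 \hookrightarrow \mathcal{G}(\mathcal{L})$. Under $\tilde{\iota}$ the translation action $T^*_a$ is identified with $S^*_{\tilde{\iota}(a)}$.

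Once this identification is in place, the relation reduces to the group-theoretic commutator identity
\[
g\,\tilde{\iota}(a)\,g^{-1}\,\tilde{\iota}(a)^{-1} = e^{\mathcal{L}}(p(g), a) \in \mathbb{G}_m,
\]
which holds in $\mathcal{G}(\mathcal{L})$ by the very definition of the Weil pairing. By the construction of $\phi\colon K(\mathcal{L}) \to \widehat{K}_2$ as the restriction of $e^{\mathcal{L}}$ paired with $K_2$, together with the Cartier duality $\widehat{K}_2 \cong X_\xi/\phi(Y_\xi)$, the scalar $e^{\mathcal{L}}(p(g), a)$ equals $\mathrm{X}^{\phi(p(g))}(a)$. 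Translating through the weight-one action of the center yields Equation~\eqref{commutator three} after rearrangement.

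For the verification that this commutator identity really does transport to $\pi_*\mathcal{L}$ globally over $S$, I would invoke Corollary~\ref{locally free}: the sheaf $\pi_*\mathcal{L}$ is locally free, and Assumption~\ref{assumption for the only if part} guarantees that $S$ is integral. Hence any endomorphism relation in $\mathrm{End}(\pi_*\mathcal{L})$ can be checked at the generic point $\eta$, where $G_\eta$ is abelian. Over $\eta$, after base-changing to an algebraic closure of $K$, we are in the setting of Lemmas~\ref{Heisenberg one}--\ref{Heisenberg two}, which supplies the commutator identity in the classical form. Finally one checks that the lift $\tilde{\iota}$ constructed algebraically coincides with the classical one: both are characterized by being induced from the canonical $T$-linearization on the Raynaud extension, so they agree.

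The main obstacle is Step~1, namely producing the canonical lift $\tilde{\iota}\colon K_2 \hookrightarrow \mathcal{G}(\mathcal{L})$ and showing that the translation action $T^*$ that appears in the statement really is this lift and not some twist of it. A priori, translation by $a \in K_2 \subset K(\mathcal{L})$ on $\mathcal{L}$ is only defined up to the ambiguity in choosing a point in the fiber of $\mathcal{G}(\mathcal{L}) \to K(\mathcal{L})$ above $a$, and the Heisenberg relation as written depends on fixing this choice correctly. Unwinding the descent data for $\mathcal{L}$ from $\widetilde{\mathcal{L}}$ (as in Sect.~\ref{3.2}) and checking that the $T$-linearization matches the natural trivialization of $\mathcal{L}|_{K_2}$ arising from the Raynaud picture is the crucial bookkeeping; once done, Steps~2--4 are essentially a restatement of the Weil-pairing commutator formula.
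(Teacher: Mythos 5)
Your proof is correct and follows essentially the same route as the paper: since $K_2$ is isotropic for the Weil pairing it lifts to a subgroup scheme of $\mathcal{G}(\mathcal{L})$, and \eqref{commutator three} is then just the commutator relation defining $e^{\mathcal{L}}$, read through the identification $\widehat{K}_2\cong X_\xi/\phi(Y_\xi)$. The obstacle you flag is in fact harmless---changing the lift of $K_2$ by a character only rescales $T^*_a$ by a central scalar, which cancels from both sides of \eqref{commutator three}---so the descent bookkeeping and the reduction to the generic point are not needed.
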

\begin{proof}
The point is $K_2$ is isotropic w.r.t the Weil pairing, and can be lifted as a subgroup scheme of the theta group $\mathcal{G}(\mathcal{L})$. Then the relation~\eqref{commutator three} is simply the commutator relation in $\mathcal{G}(\mathcal{L})$ because the Weil pairing $e^\mathcal{L}$ is defined in terms of the commutator pairing of $\mathcal{G}(\mathcal{L})$. 
\end{proof}

Let $\mathcal{G}(\mathcal{L})^f$ be the inverse image of $K(\mathcal{L})^f$ in $\mathcal{G}(\mathcal{L})$, $\mathcal{G}(\mathcal{L})^m$ the inverse image of $K_2$ in $\mathcal{G}(\mathcal{L})$. We have an exact sequence
\begin{diagram}\label{exact sequence of central extensions}
0&\rTo &\mathcal{G}(\mathcal{L})^m&\rTo & \mathcal{G}(\mathcal{L})^f &\rTo& K(\mathcal{M}) &\rTo &0.
\end{diagram}

The restriction of $\mathcal{G}(\mathcal{L})^f$ (resp. $\mathcal{G}(\mathcal{L})^m$) to $S_\eta$ is denoted by $\mathcal{G}(\mathcal{L})_\eta^f$ (resp. $\mathcal{G}(\mathcal{L})_\eta^m$). By (\cite{Shin}  Proposition 2.12, 2.13), $\pi_*\mathcal{L}_\eta$ (resp. $\pi_*\mathcal{M}$) is locally free of rank $d$ (resp. $d_a$) and is an irreducible representation of $\mathcal{G}(\mathcal{L}_\eta)$ (resp. $\mathcal{G}(\mathcal{M})$). Since $\mathcal{G}(\mathcal{L})^f$ is a sub group scheme of $\mathcal{G}(\mathcal{L})$, $\pi_*\mathcal{L}$ is also a $\mathcal{G}(\mathcal{L})^f$-module. 

Recall we also have the Fourier decomposition~\eqref{Fourier decomposition equation} for $\mathcal{L}$ over $G$\footnote{We emphasize that here $\mathcal{L}$ is over a semiabelian scheme $G$, not a semiabelic scheme $\mathcal{X}$. But the decompositions are similar. We will only use the decomposition restricted to the abelian part.}. We have fixed the descent data for $\mathcal{L}_{\text{for}}$. If we change the action of $T_{\text{for}}$ on $\mathcal{L}_{\text{for}}$ by a character $\alpha\in X_\xi$, then $\mathcal{L}_{\text{for}}$ descends to an ample cubical invertible sheaf $\mathcal{M}_{\alpha, \text{for}}$ over $A_{\text{for}}$. Let $\mathcal{M}_\alpha$ be the algebraization of $\mathcal{M}_{\alpha,\text{for}}$. The Fourier decomposition is induced from the decomposition
\begin{equation}\label{the decomposition upstairs}
\pi_*\widetilde{\mathcal{L}}=\bigoplus_{\alpha\in X_\xi} \pi_* \mathcal{M}_\alpha
\end{equation}
Here we abuse the notation and denote the morphism $A\to S$ also by $\pi$. 

\begin{lemma}\label{two decompositions are the same in general}
The Fourier decomposition 
\begin{equation}\label{Fourier decomposition two}
\pi_*\mathcal{L}=\bigoplus_{\alpha\in X_\xi/\phi(Y_\xi)} \pi_*\mathcal{M_\alpha}=\bigoplus_{\alpha\in \widehat{K}_2}\mathcal{V}_\alpha
\end{equation}

is the decomposition of $\pi_*\mathcal{L}$ into a direct sum of irreducible $\mathcal{G}(\mathcal{L})^f$-modules.
\end{lemma}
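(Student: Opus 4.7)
The plan is to relativize the proof of Lemma~\ref{two decompositions are the same}, using the relative Heisenberg relation of Lemma~\ref{Heisenberg three} in place of the complex-analytic one, and Shin's irreducibility result in place of the classical statement. The torus $T$ of the Raynaud extension acts on $\pi_*\widetilde{\mathcal{L}}$ with isotypic decomposition \eqref{the decomposition upstairs}, and descending along $Y_\xi$ identifies characters modulo $\phi(Y_\xi)$; hence the Fourier decomposition \eqref{Fourier decomposition two} coincides with the $K_2$-weight decomposition of $\pi_*\mathcal{L}$, where $K_2\subset T$ is lifted canonically into $\mathcal{G}(\mathcal{L})^m\subset \mathcal{G}(\mathcal{L})^f$ as an isotropic subgroup scheme. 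In particular, $\mathcal{V}_\alpha$ is intrinsically the subsheaf of $\pi_*\mathcal{L}$ on which this lift of $K_2$ acts by the character $\alpha\in\widehat{K}_2$.

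First I would check that each $\mathcal{V}_\alpha$ is stable under $\mathcal{G}(\mathcal{L})^f$. Given a local section $v\in\mathcal{V}_\alpha$, an element $g\in\mathcal{G}(\mathcal{L})^f$ and $a\in K_2$, Lemma~\ref{Heisenberg three} gives
\[
T_a^*S_g^*v \;=\; \mathrm{X}^{\phi(p(g))}(a)\,S_g^*T_a^*v \;=\; \mathrm{X}^{\phi(p(g))+\alpha}(a)\,S_g^*v.
\]
Since $p(g)\in K(\mathcal{L})^f$ is the annihilator of $K_2$ under the Weil pairing, the character $\mathrm{X}^{\phi(p(g))}$ restricts trivially to $K_2$, so $S_g^*v$ has $K_2$-weight $\alpha$ and lies in $\mathcal{V}_\alpha$.

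Next I would establish irreducibility. The subgroup $\mathcal{G}(\mathcal{L})^m$ acts on $\mathcal{V}_\alpha$ via the character $(t,a)\mapsto t\cdot\mathrm{X}^\alpha(a)$, so the effective action of $\mathcal{G}(\mathcal{L})^f$ on $\mathcal{V}_\alpha$ factors through a central $\mathbb{G}_m$-extension of $K(\mathcal{M})=\mathcal{G}(\mathcal{L})^f/\mathcal{G}(\mathcal{L})^m$. Using the relative theta group formalism of \cite{MB} Chap.~\Rmnum{4} (and as in the descent in \cite{ols08} Theorem 5.4.2 used in Lemma~\ref{two decompositions are the same}), this extension is canonically isomorphic to $\mathcal{G}(\mathcal{M}_\alpha)$, and $\mathcal{V}_\alpha=\pi_*\mathcal{M}_\alpha$ is its canonical weight-one representation. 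By (\cite{Shin} Propositions 2.12, 2.13), $\pi_*\mathcal{M}_\alpha$ is locally free of rank $d_a$ and irreducible as a $\mathcal{G}(\mathcal{M}_\alpha)$-module over the generic point; since $\pi_*\mathcal{L}$ is locally free over the normal base $S$, any $\mathcal{G}(\mathcal{L})^f$-invariant coherent subsheaf is determined by its restriction to the generic fiber, so generic irreducibility propagates to $S$. The rank count $\sum_\alpha d_a=d$ then shows the decomposition has no further refinement.

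The main obstacle is not any of the individual steps but the careful comparison of two distinct group actions on $\pi_*\mathcal{L}$: the $T$-action (which a priori produces the Fourier decomposition via linearization of $\widetilde{\mathcal{L}}$) and the $\mathcal{G}(\mathcal{L})^f$-action (which comes from the intrinsic line bundle $\mathcal{L}$). One must verify that the canonical lift $K_2\hookrightarrow \mathcal{G}(\mathcal{L})^m$ intertwines these two actions, and that the identification $\mathcal{G}(\mathcal{L})^f/\mathcal{G}(\mathcal{L})^m\cong K(\mathcal{M})$ refines to an identification of the appropriate quotient of $\mathcal{G}(\mathcal{L})^f$ with $\mathcal{G}(\mathcal{M}_\alpha)$ for each $\alpha$; both are standard consequences of the relative theta group theory, but they are what powers the entire argument.
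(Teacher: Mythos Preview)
Your proposal is correct and follows essentially the same route as the paper: identify the Fourier decomposition with the $K_2$-weight decomposition via the Raynaud uniformization, use the Heisenberg relation (Lemma~\ref{Heisenberg three}) to show each $\mathcal{V}_\alpha$ is $\mathcal{G}(\mathcal{L})^f$-stable, and then observe that the action on $\mathcal{V}_\alpha$ factors through the theta group $\mathcal{G}(\mathcal{M}_\alpha)$, whose representation on $\pi_*\mathcal{M}_\alpha$ is irreducible by Shin. One small simplification: Shin's result as cited in the paper already gives irreducibility of $\pi_*\mathcal{M}_\alpha$ over the whole base $S$, not just at the generic point, so your propagation argument via normality and local freeness is unnecessary (though harmless).
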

\begin{proof}
First, notice that the decomposition~\eqref{the decomposition upstairs} is the decomposition into irreducible $T_\xi$-representations and is determined by the characters in $X_\xi=\Hom(T_\xi, \mathbb{G}_m)$. Since $\mathcal{L}$ is the descent of $\mathcal{L}_{\text{for}}$ by $Y_\xi$-action, the Fourier decomposition~\eqref{Fourier decomposition two} is determined by the characters up to $Y_\xi$-action. Now $K_2$ is a subgroup of $T_\xi$ and the dual group $\widehat{K}_2$ is the quotient of $X_\xi$ by the action of $Y_\xi$. Therefore, the Fourier decomposition of $\pi_*\mathcal{L}$ is the decomposition of the irreducible representations of $K_2$. 

By Lemma~\ref{Heisenberg three}, The action of $\mathcal{G}(\mathcal{L})^f$ preserves $\mathcal{V}_\alpha$. It suffices to show that $\mathcal{V}_\alpha$ is irreducible as a representation of $\mathcal{G}(\mathcal{L})^f$. This is well-explained in (\cite{FC} Chap.\Rmnum{2} p.47). Since $K_2$ is isotropic w.r.t. the Weil pairing, the central extension $\mathcal{G}(\mathcal{L})^m$ is trivial. Choose a splitting of 
\begin{diagram}
0&\rTo &\mathbb{G}_m&\rTo & \mathcal{G}(\mathcal{L})^m&\rTo& K_2 &\rTo &0
\end{diagram}

Any character $\alpha\in \widehat{K}_2$ can be extended to a character $\tilde{\alpha}$ of $\mathcal{G}(\mathcal{L})^m$ by requiring that $\tilde{\alpha}$ is the identity on $\mathbb{G}_m$ and is $\alpha$ on $K_2$. The push forward of the exact sequence~\eqref{exact sequence of central extensions} along $\tilde{\alpha}$ is 
\begin{diagram}
0&\rTo &\mathbb{G}_m&\rTo & \mathcal{G}(\mathcal{L})^f_\alpha&\rTo& K(\mathcal{M}) &\rTo &0
\end{diagram}
is identified with the theta group $\mathcal{G}(\mathcal{M}_\alpha)$ as a central extension
\begin{diagram}
0&\rTo &\mathbb{G}_m&\rTo & \mathcal{G}(\mathcal{M}_\alpha)&\rTo& K(\mathcal{M}_\alpha) &\rTo &0.
\end{diagram}

The action of $\mathcal{G}(\mathcal{L})^f$ on $\mathcal{V}_\alpha$ factors through the action of $\mathcal{G}(\mathcal{L})^f_\alpha$ which is the irreducible theta representation of $\mathcal{G}(\mathcal{M}_\alpha)$ on $\mathcal{V}_\alpha$. It finishes the proof that $\mathcal{V}_\alpha$ is an irreducible $\mathcal{G}(\mathcal{L})^f$-module.
\end{proof}
 
Let $M=2\delta_g$. Now we restrict the scalars to roots of unity $\mu_M$. Consider the theta group $\mathcal{G}(\mathcal{L}_\eta)$ for an abelian scheme $G_\eta$ and the map
\begin{align}
P_{M} : &\mathcal{G}(\mathcal{L}_\eta)\to \mathcal{G}(\mathcal{L}_\eta)\\
& g\mapsto g^{M}\quad \forall g\in \mathcal{G}(\mathcal{L}_\eta)(S).
\end{align}

\begin{lemma}
The map $P_{M}$ is a group homomorphism. 
\end{lemma}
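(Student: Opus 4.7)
The plan is to use that the theta group $\mathcal{G}(\mathcal{L}_\eta)$ is a central extension
\[
1 \to \mathbb{G}_m \to \mathcal{G}(\mathcal{L}_\eta) \to K(\mathcal{L}_\eta) \to 0,
\]
so $\mathcal{G}(\mathcal{L}_\eta)$ is $2$-step nilpotent: for any $R$-algebra and any two sections $g,h \in \mathcal{G}(\mathcal{L}_\eta)(R)$, the commutator $[g,h] = ghg^{-1}h^{-1}$ lies in $\mathbb{G}_m$ and is therefore central. Moreover, this commutator equals the Weil pairing value $e^{\mathcal{L}_\eta}(p(g),p(h))$, which takes values in $\mu_{\delta_g} \subset \mathbb{G}_m$ because the group scheme $K(\mathcal{L}_\eta)$ is (\'etale locally on $S_\eta$) of the form $H(\delta)\times \widehat{H(\delta)}$ and thus of exponent $\delta_g$.

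Next, I would establish, by an elementary induction on $n \geqslant 1$, the Hall--Petresco type identity
\[
(gh)^n = g^n h^n [h,g]^{\binom{n}{2}}
\]
valid in any group whose commutators are central. The induction step uses the centrality of $[h,g]$ together with the identity $h^n g = [h,g]^n g h^n$, which itself follows by a shorter induction from $hg = [h,g] gh$.

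Applying this identity with $n = M = 2\delta_g$, one obtains
\[
P_M(gh) = (gh)^M = g^M h^M\, [h,g]^{\binom{M}{2}} = P_M(g)\,P_M(h)\cdot [h,g]^{\delta_g(2\delta_g-1)}.
\]
Since $[h,g]\in \mu_{\delta_g}$ and $\delta_g \mid \delta_g(2\delta_g-1)$, the correction term $[h,g]^{\delta_g(2\delta_g-1)}$ is trivial, so $P_M(gh) = P_M(g)P_M(h)$ on $R$-points functorially, which gives the statement at the level of group schemes.

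The only mildly delicate point is ensuring that the commutator really does land in $\mu_{\delta_g}$ rather than merely $\mathbb{G}_m$; this is exactly the statement that $K(\mathcal{L}_\eta)$ is annihilated by $\delta_g$, which follows from the polarization being of type $\delta$ and the divisibility $\delta_i \mid \delta_g$. The factor of $2$ in $M = 2\delta_g$ is precisely what is needed to absorb the $\tfrac{1}{2}$ in $\binom{M}{2}$ and guarantee integrality of the exponent of the commutator, so no cleaner choice of $M$ is possible in general.
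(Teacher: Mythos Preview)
Your proof is correct and takes essentially the same approach as the paper: both use that commutators are central and equal to the Weil pairing, establish the identity $(gh)^n = g^n h^n\,[h,g]^{\binom{n}{2}}$ by induction, and then observe that with $n=M=2\delta_g$ the exponent $\binom{M}{2}=\delta_g(2\delta_g-1)$ kills $[h,g]\in\mu_{\delta_g}$. Your write-up is slightly more detailed in justifying why the commutator lands in $\mu_{\delta_g}$ and why the factor of $2$ in $M$ is necessary, but the argument is the same.
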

\begin{proof}
Fix a scheme $S$, and consider $g, h\in \mathcal{G}(\mathcal{L}_\eta)(S)$. Recall the commutator $ghg^{-1}h^{-1}=e^{\mathcal{L}_\eta}(p(g),p(h))$. By induction, we have
\[
g^nh^n=(e^{\mathcal{L}_\eta}(p(g),p(h)))^{n-1}ghg^{n-1}h^{n-1}=(e^{\mathcal{L}_\eta}(p(g),p(h)))^{n(n-1)/2}(gh)^n.
\]

Since the order of $e^{\mathcal{L}_\eta}(p(g),p(h))$ divides $\delta_g$, $P_{M}(g)P_{M}(h)=P_{M}(gh)$. 
\end{proof}

Let $G(M)$ denote the kernel of $P_{M}$. It is a subgroup scheme of $\mathcal{G}(\mathcal{L}_\eta)$. Since $\mu_M$ is defined as the kernel of the group homomorphism
\begin{align}
&\mathbb{G}_m\to \mathbb{G}_m\\
& g\mapsto g^{M}\quad \forall g\in \mathbb{G}_m(S),
\end{align}

we have 
\begin{equation}\label{fourth exact sequence}
\begin{diagram}
1&\rTo &\mu_M&\rTo &G(M)&\rTo^p & K(\mathcal{L})_\eta &\rTo 0. 
\end{diagram}
\end{equation}

Let $\mathcal{H}(\delta,M)$ be the Heisenberg group. It is a central extension of $H(\delta)\times\widehat{H(\delta)}$ by $\mu_M$. The multiplication is determined by the requirement that if $\tilde{g}, \tilde{h}$ are lifts of $g\in H(\delta)$ and $h\in \widehat{H(\delta)}$ in $\mathcal{H}(\delta,M)$, then the commutator $\tilde{g}\tilde{h}\tilde{g}^{-1}\tilde{h}^{-1}=h(g)$. \'Etale locally, $G(M)$ is isomorphic to the constant group $\mathcal{H}(\delta, M)$.

Assume $\pi:(\mathcal{X},G,\mathcal{L},\varrho)\to S$ a polarized stable semiabelic scheme, such that the generic fiber $\pi: (\mathcal{X}_\eta,G_\eta,\mathcal{L}_\eta,\varrho_\eta)\to S_\eta$ is abelian. Compose $p :G(M)\to K(\mathcal{L})_\eta$ with $\phi: K(\mathcal{L})_\eta\to \widehat{K}_2$, we get a homomorphism $w: G(M)\to \widehat{K}_2$. Let $K_w$ be the kernel of $w$. $K_w$ can be obtained from $\mathcal{G}(\mathcal{L})^f_\eta$ by restricting the scalars to $\mu_M$. Consider the irreducible $G(M)$-representation $\pi_*\mathcal{L}_\eta$. After a base change, \'{e}tale over $S_\eta$, $G(M)$ and $\widehat{K}_2$ are constant, $\pi_*\mathcal{L}_\eta$ decomposes into irreducible $K_w$-representations $\pi_*\mathcal{L}_\eta=\oplus_{\alpha\in I}\mathcal{V}_\alpha$, where $I$ is a torsor for the group $\widehat{K}_2$. For any $g\in G(M)$, the action of $g$ translates $\mathcal{V}_\alpha\to \mathcal{V}_{\alpha+w(g)}$. Fix an element of $I$ and identify $I$ with the group $\widehat{K}_2$. A lift $\sigma: \widehat{K}_2\to G(M)$ is a section for the map $w:G(M)\to \widehat{K}_2$. We do not require $\sigma$ to be a group homomorphism.

\begin{proposition}\label{the only if direction}
If $S$ satisfies Assumption~\ref{assumption for the only if part} and $\pi:(\mathcal{X}, G, \mathcal{L},\varrho)/S$ is the pull-back of the AN family along a morphism $g:S\to\overline{\mathscr{A}}_{g,\delta}^m$, then after an \'{e}tale base change, we can extend $G(M)$ and $K_w$-representation $\pi_*\mathcal{L}=\oplus_{\alpha\in I}\mathcal{V}_\alpha$ over $S$. Take any section $\vartheta_0\in \mathcal{V}_0^*$, and any lift $\sigma: \widehat{K}_2\to G(M)$. Define
\begin{equation}
\vartheta:=\sum_{\alpha\in I} S_{\sigma(\alpha)}^*\vartheta_0. 
\end{equation} 

Let $\Theta$ be the zero locus of $\vartheta$. Then $(\mathcal{X},G, \mathcal{L},\Theta,\varrho)/S$ is an object in $\overline{\mathscr{AP}}_{g,d}$. 
\end{proposition}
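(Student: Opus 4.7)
The plan is to reduce the stability statement to Lemma~\ref{the stable pair is in AP} by showing that the proposed section $\vartheta$ has the same key properties as the section~\eqref{theta sum} constructed in the AN recipe. Since $(\mathcal{X},G,\mathcal{L},\varrho)/S$ is pulled back from the AN family, we inherit the full set of AN degeneration data, including the Fourier decomposition $\pi_*\mathcal{L}=\bigoplus_{\alpha\in X_\xi/\phi(Y_\xi)}\pi_*\mathcal{M}_\alpha$ of~\eqref{Fourier decomposition equation} over all of $S$. To extend $G(M)$ over $S$, one argues that the theta group scheme $\mathcal{G}(\mathcal{L})$ defined at the beginning of Section~\ref{4} extends from $\eta$ to $S$ in the semiabelic setting, and that $G(M)$ is the restriction of scalars to $\mu_M$ inside the ``finite part'' $\mathcal{G}(\mathcal{L})^f$. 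Since $\zeta_M\in k$ and $G(M)$ is quasi-finite \'etale over $S$, after a suitable \'etale base change it becomes the constant Heisenberg group $\mathcal{H}(\delta,M)$.

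Next, I would identify the two decompositions. By Lemma~\ref{two decompositions are the same in general}, the $K_w$-irreducible decomposition of $\pi_*\mathcal{L}_\eta$ agrees with the generic restriction of the Fourier decomposition~\eqref{Fourier decomposition equation}, so after the base change the $\mathcal{V}_\alpha$ extend as the summands $\pi_*\mathcal{M}_{\alpha_0+\alpha}$ over $S$ (for some chosen base point $\alpha_0\in I$). The Heisenberg relation Lemma~\ref{Heisenberg three} then shows that $S^*_{\sigma(\alpha)}$ maps $\mathcal{V}_\beta$ isomorphically onto $\mathcal{V}_{\beta+\alpha}$, since $w(\sigma(\alpha))=\alpha$. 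Because $\sigma(\alpha)\in G(M)\subset\mathcal{G}(\mathcal{L})$ acts by a line-bundle automorphism of $\mathcal{L}$ covering a translation of $\mathcal{X}$, it preserves the open subsheaf $\mathcal{V}^*$ of sections nonvanishing on every fiber; hence $S^*_{\sigma(\alpha)}\vartheta_0\in \mathcal{V}_\alpha^*$. Consequently the section $\vartheta=\sum_{\alpha}S^*_{\sigma(\alpha)}\vartheta_0$ has nonzero component in every Fourier summand at every fiber of $\pi$.

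For the final step I would invoke Lemma~\ref{the stable pair is in AP}, whose proof (through Lemma~\ref{stable pairs}) uses only that each Fourier component of the descended section lies in $\mathcal{V}_\alpha^*$, so that at every geometric point the pair $(\tilde{\vartheta}_0,\psi_0^{(\cdot)}(\cdot)\tau_0(\cdot,\cdot))$ defines a cocycle in $Z^1(\Delta_{\mathscr{P}'},\widehat{\mathbb{M}}^*)$ and the divisor contains no entire $G_{\bar{x}}$-orbit. The specific choice of $\vartheta_{A,\alpha}$ in~\eqref{theta sum} plays no further role, so running that proof with our $\vartheta$ in place of the AN section yields $(\mathcal{X},G,\mathcal{L},\Theta,\varrho)\in\overline{\mathscr{AP}}_{g,d}(S)$.

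The main obstacle is the first step: rigorously extending the theta group $\mathcal{G}(\mathcal{L})$, hence $G(M)$ together with its action on the coherent sheaves $\mathcal{V}_\alpha$, from the abelian generic fiber to the whole degenerate base $S$. This requires reconciling the intrinsic biextension-theoretic definition of $\mathcal{G}(\mathcal{L})$ with the explicit description of sections and actions produced by the AN construction (Corollary~\ref{the action on the homogeneous coordinate ring} and the framing data $a',b'$ of Section~\ref{3.4}); in particular one must check that the $\mu_M$-subgroup in the global theta group acts on $\pi_*\mathcal{L}$ in exactly the way prescribed by $S^*_{\sigma(\alpha)}$ before Lemma~\ref{two decompositions are the same in general} can be applied. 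Once this identification is in place the remaining verifications are routine.
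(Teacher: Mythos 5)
Your skeleton (étale base change to a chart over the cusp, identification of the $K_w$-decomposition with the Fourier decomposition via Lemma~\ref{two decompositions are the same in general}, and the final appeal to Lemma~\ref{the stable pair is in AP}/Theorem~\ref{alexeev family}) matches the paper, but the step you defer as a ``routine reconciliation'' is the actual content of the proposition, and as written your argument for it does not go through. You assert that $\mathcal{G}(\mathcal{L})$, hence $G(M)$, ``extends in the semiabelic setting'' and that $\sigma(\alpha)$ ``acts by a line-bundle automorphism of $\mathcal{L}$ covering a translation of $\mathcal{X}$, so preserves $\mathcal{V}^*$''; this presupposes that the generic $G(M)$-action preserves the $\mathcal{O}_S$-integral structure $\pi_*\mathcal{L}=\bigoplus_\alpha\pi_*\mathcal{M}_\alpha$ and its fiberwise-nonvanishing loci. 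That is precisely what must be proved: the elements of $G(M)$ with nontrivial image in $\widehat{K}_2$ a priori act only over $S_\eta$ (the scheme-theoretic closure $K(\mathcal{L})$ is merely quasi-finite and the $\widehat{K}_2$-direction is not contained in the finite part $K(\mathcal{L})^f$), so $S^*_{\sigma(\alpha)}\vartheta_0$ is a priori only a rational section over $S$, which could vanish along, or have a pole along, the boundary. Moreover no formal argument can settle this, because the statement is sensitive to the choice of the piecewise affine function in the AN construction: it holds because the pulled-back $\varphi_\mathscr{P}$ is $X_\xi$-quasiperiodic (the quadratic extension), and it fails for AN models built from other extensions. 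Your closing remark even locates the difficulty in the $\mu_M$-part, but the center acts by scalars and is harmless; the issue is entirely with the lifts over $\widehat{K}_2$.

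The paper closes exactly this gap with a valuation argument that your proposal is missing: lift $G(M)$ to $\widetilde{G}(M)$ acting on $(\widetilde{G}_\eta,\widetilde{\mathcal{L}}_\eta)$, write the action in the form $a^d b(\psi')^d\tau'$ with $\psi',\tau'$ trivializations over $S$, and observe that for every height-one valuation $v$ the functions $v\circ a$ and $v\circ b$ descend to $X_\xi$ (different lifts differ by elements of $\mu_M$, which have valuation zero) and satisfy the quadratic cocycle relations; since a quadratic function on $X_\xi$ is determined by its restriction to the finite-index subgroup $Y_\xi$, where it is forced to agree with the valuation of the degeneration data, one gets $A_X(v)=A_\varphi(v)$ for all such $v$. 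Normality of $R$, via $R=\bigcap_{\mathrm{ht}\,\mathfrak{p}=1}R_{\mathfrak{p}}$, then shows the ratio of $a(g)$ to the quadratic part of $\mathrm{X}^{\varphi(w(g))}$ is a unit, so the $\widetilde{G}(M)$-action is by regular functions and carries $\mathcal{R}_{0,1}$ into $\mathcal{R}_{\phi(g),1}$; only then does one know $S^*_{\sigma(\alpha)}\vartheta_0\in(\pi_*\mathcal{M}_{\phi(\sigma(\alpha))})^*$ over all of $S$, which is the input your final step needs. Without this (or an equivalent) computation, your claim that $\vartheta$ has nonzero component in every Fourier summand at every fiber is unsupported, and the reduction to Lemma~\ref{the stable pair is in AP} has nothing to stand on.
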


\begin{proof}
Making an \'{e}tale base change if necessary, we can assume $g: S\to \Xi_\mathscr{P}$ over the cusp $F_\xi$ for some bounded paving $\mathscr{P}$ on $U_\xi^*$. The family $(\mathcal{X},\mathcal{L}, G,\varrho)/S$ is the pull-back of AN family along $g$. Since AN construction is functorial (Proposition~\ref{AN functorial}), $(\mathcal{X}, \mathcal{L},G,\varrho)/S$ is constructed from the following data. There is an exact sequence of abelian sheaves
\begin{equation}
\begin{diagram}
1&\rTo &T_\eta&\rTo &\widetilde{G}_\eta&\rTo&A_\eta&\rTo 0\\
&&\dInto &&\dInto&&\dInto&&\\
1&\rTo &T&\rTo &\widetilde{G}&\rTo&A&\rTo 0
\end{diagram}\ ,
\end{equation}

where the top line is the restriction to $S_\eta$. Recall the data $X_\xi$,$Y_\xi$, $\phi:Y_\xi\to X_\xi$, $c$, $c^t$, $ \tau$, $\psi$ and $\mathcal{M}$.  Over $S_\eta$, $\widetilde{G}_\eta=\spec_{A_\eta} \bigoplus_{\alpha\in X_\xi}\mathcal{O}_\alpha$, and the line bundle $\widetilde{\mathcal{L}}_\eta$ is defined by 
\[
\mathcal{S}:=\prod_{d\geqslant 0}\bigg(\bigoplus_{\alpha\in X_\xi}\mathcal{O}_\alpha\otimes \mathcal{M}_\eta^d\theta^d\bigg)\otimes \mathcal{O}_{A_\eta}=\bigoplus_{(d,\alpha)\in S(X_\xi)} \mathcal{S}_{d,\alpha}\theta^d.
\]

Over $S$, $(\widetilde{\mathcal{X}}, \widetilde{\mathcal{L}})$ is defined by the graded $\mathcal{O}_A$-algebra 
\[
\mathcal{R}:=\bigoplus_{(d,\alpha,p)\in Q_{\tilde{\varphi}}}\mathrm{X}^p\otimes\mathcal{O}_\alpha\otimes\mathcal{M}^d\theta^d=\bigoplus_{(d,\alpha)\in S(X_\xi)} \mathcal{R}_{d,\alpha}\theta^d.
\] 

The degree-$1$ part is the Fourier decomposition. By Lemma~\ref{two decompositions are the same in general}, it agrees with the pull back of the decomposition $\oplus\mathcal{V}_\alpha$. In other words, $\mathcal{V}_\alpha=\pi_*\mathcal{M}_{\alpha,\eta}$ over $S_\eta$. The inclusion $\mathcal{R}\to \mathcal{S}$ is defined by the graph of the $\mathscr{P}$-piecewise affine function $\varphi$. As the pull-back of $\varphi_\mathscr{P}$, $\varphi$ is $X_\xi$-quasiperiodic. 
 
 For $\lambda\in Y_\xi$, the action $S_\lambda^*$ on $\widetilde{G}$ is expressed as 
 \[
 \psi(\lambda)^m\tau(\lambda, \alpha): T^*_{c^t(\lambda)}(\mathcal{M}^m\otimes\mathcal{O}_\alpha)\to \mathcal{M}^m\otimes\mathcal{O}_{\alpha+m\phi(\lambda)}, 
 \]
 
for $\psi(\lambda)=a(\lambda)\psi'(\lambda)$, $\tau(\lambda,\alpha)=b(\lambda,\alpha)\tau'(\lambda,\alpha)$, with $\psi'$, $\tau'$ trivializations over $S$ and with $a, b$ having values in $\mathcal{O}_S$. Since $\{S^*_\lambda\}$ is a group action, $a$, $b$ satisfy relations
 \begin{align}\label{quadratic relation}
 a(0)&=1\\
 a(\lambda+\mu)&=b(\lambda,\phi(\mu))a(\lambda)a(\mu).
 \end{align}
 
Consider a discrete valuation $v$ associated with a height $1$ prime ideal. Let $A_\varphi(v): X_\xi\to \mathbf{Z}$ be the quadratic function associated with $v\circ\varphi$. Let $A(v): Y_\xi\to \mathbf{Z}$ be the composition $v\circ a$ and $B(v):Y_\xi\times X_\xi\to \mathbf{Z}$ be the composition $v\circ b$. Then $A(v)$ is quadratic over $Y_\xi$ with the associated bilinear form $B(v)(\cdot,\phi(\cdot))$. We have $A(v)_\varphi\vert_{Y_\xi}=A(v)$. 
 
Let $\widetilde{G}(M)$ be the pull-back
\begin{equation}
\begin{diagram}
\widetilde{G}(M)&\rTo^w&X_\xi\\
\dTo&&\dTo\\
G(M)&\rTo^w&\widehat{K}_2
\end{diagram}.
\end{equation}

The vertical arrows are quotients by $Y_\xi$. The group action of $G(M)$ on $(\mathcal{X}_\eta,\mathcal{L}_\eta)$ can be lifted to the group action of $\widetilde{G}(M)$ on $(\widetilde{G}_\eta,\widetilde{\mathcal{L}}_\eta)$ which extends the action of $Y_\xi$ on $(\widetilde{G}_\eta,\widetilde{\mathcal{L}}_\eta)$. Write the action of $\widetilde{G}(M)$ also in the form $a^db(\psi')^d\tau'$,\footnote{In other words, $\psi'$, $\tau'$ are trivializations over $S$.} and take the valuations $A_X(v):=v\circ a$ and $B_X(v):=v\circ b$. These two functions factor through $X_\xi$ and $X_\xi\times X_\xi$. This is because for different lifts of elements from $X_\xi$ to $\widetilde{G}(M)$, the differences are in $\mu_M$, and they have the same order under $v$. Since $a$, $b$ are defined from group actions, they satisfy the same relations as Relation~\eqref{quadratic relation}. Therefore $A_X(v)$ is also a quadratic function over $X_\xi$ extending $A(v)$, and $A_\varphi(v)=A_X(v)$ for all $v$.

Since $R$ is a normal Noetherian domain, $R=\cap_{\text{ht}\, \mathfrak{p}=1}R_\mathfrak{p}$ (\cite{Mat86} Theorem 11.5). It follows that the difference between the quadratic part of $\mathrm{X}^{\varphi(w(g))}$ and $a(g)$ is invertible in $R$ for all $g\in \widetilde{G}(M)$. Therefore the values of $a$, $b$ for $\widetilde{G}(M)$ are regular functions over $S$. The action of $G(M)$ is defined on $(\mathcal{X},\mathcal{L})$ over $S$ and $S_g^*$ maps $\mathcal{R}_{0,1}$ to $\mathcal{R}_{\phi(g),1}$ for any $g\in\widetilde{G}(M)$. In particular, $S_g^*(\vartheta_0)$ is in $(\pi_*\mathcal{M}_{\phi(g)})^*$ over $S$. Therefore, for any lift $\sigma: I\to G(M)$, the section 
\[
\vartheta:=\sum_{\alpha\in I}S^*_{\sigma(\alpha)}\vartheta_0
\] 

is stable. The family $(\mathcal{X},G, \Theta,\varrho)/S$ is in $\overline{\mathscr{AP}}_{g,d}$ by Theorem~\ref{alexeev family}. 
\end{proof}

\begin{definition}\label{the balanced set}
If an isotropic subgroup $K_2$ of $K(\lambda)$ or $\widehat{K}_2=X_\xi/\phi(Y_\xi)$ is well-defined for the family, call the set of divisors obtained in Proposition~\ref{the only if direction} the balanced set and denote it by $S(K_2)$. 
\end{definition}

\begin{remark}\label{K2 is defined locally}
The isotropic subgroup $K_2$ is well-defined for the family when the base is local, for example, when $S$ satisfies Assumption~\ref{assumption for the only if part}. It is also well-defined when the interior is a punctured polydisc $(\Delta^*)^n$ (\cite{CCK} Proposition 2.1) $K_2=\Lambda^\vee\cap W_0/\Lambda\cap W_0$.
\end{remark}

\begin{corollary}\label{extension over DVR}
If the base $S$ is a DVR, and we have a polarized abelian variety $(G_\eta, \mathcal{X}_\eta,\mathcal{L}_\eta)$ over the generic point, we can add the central fiber as follows. The monodromy defines an isotropic subgroup $K_2$. Pick any divisor $\Theta_\eta$ from the balanced set $S(K_2)$, we get an object $(G_\eta,\mathcal{X}_\eta,\Theta_\eta)$ in $\overline{\mathscr{AP}}_{d,g}$. Since $\overline{\mathscr{AP}}_{d,g}$ is proper, we can uniquely extend the family $(G,\mathcal{X},\Theta)$ over $S$. Then forget about the divisor $\Theta$, we get a family $(\mathcal{X}, G,\mathcal{L},\varrho)$ over $S$. This is the pull-back family from $\overline{\mathscr{A}}^m_{g,\delta}$, and is independent of the choice of $\Theta\in S(K_2)$. 
\end{corollary}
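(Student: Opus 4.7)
The plan is to combine the properness of $\overline{\mathscr{A}}^m_{g,\delta}$ (Theorem~\ref{compactification}) with Proposition~\ref{the only if direction}, using the separatedness of $\overline{\mathscr{AP}}_{g,d}$ as the bridge between the two moduli problems. First I observe that $S$ is a complete Noetherian local normal domain, so it satisfies Assumption~\ref{assumption for the only if part}, and since the generic fiber is abelian the toric rank of the special fiber of the Raynaud extension is well-defined; hence by Remark~\ref{K2 is defined locally} the isotropic subgroup $K_2$, and therefore the balanced set $S(K_2)$, is intrinsically attached to $(G_\eta,\mathcal{L}_\eta)$.

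Next, the generic fiber determines a morphism $\spec K\to \mathscr{A}_{g,\delta}\subset \overline{\mathscr{A}}^m_{g,\delta}$. By Theorem~\ref{compactification}, $\overline{\mathscr{A}}^m_{g,\delta}$ is a proper Deligne--Mumford stack, so after a finite (possibly ramified) base change on $S$ that is harmless for the statement, the valuative criterion produces a unique extension $g\colon S\to \overline{\mathscr{A}}^m_{g,\delta}$. Pulling back the AN family along $g$ yields a polarized stable semiabelic scheme $(\mathcal{X}',G',\mathcal{L}',\varrho')/S$ extending $(G_\eta,\mathcal{X}_\eta,\mathcal{L}_\eta,\varrho_\eta)$. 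This is my candidate for the promised extension.

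Now I apply Proposition~\ref{the only if direction} to $(\mathcal{X}',G',\mathcal{L}',\varrho')/S$: for any choice of $\Theta_\eta\in S(K_2)$, after an \'{e}tale base change, one constructs a divisor $\Theta'$ on $\mathcal{X}'$ whose restriction to $\eta$ is $\Theta_\eta$ and such that $(\mathcal{X}',G',\mathcal{L}',\Theta',\varrho')$ is an object of $\overline{\mathscr{AP}}_{g,d}(S)$. On the other hand, since $\overline{\mathscr{AP}}_{g,d}$ is proper, the stable pair $(\mathcal{X}_\eta,G_\eta,\Theta_\eta,\varrho_\eta)$ admits a \emph{unique} extension to $S$, which is precisely the family $(\mathcal{X},G,\mathcal{L},\Theta,\varrho)$ produced by the valuative criterion for $\overline{\mathscr{AP}}_{g,d}$. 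Separatedness therefore forces this extension to coincide with $(\mathcal{X}',G',\mathcal{L}',\Theta',\varrho')$, so in particular the underlying polarized data $(\mathcal{X},G,\mathcal{L},\varrho)$ equals the pull-back $(\mathcal{X}',G',\mathcal{L}',\varrho')$ from $\overline{\mathscr{A}}^m_{g,\delta}$. Since the latter was defined without any reference to $\Theta_\eta$, the independence from the chosen divisor in $S(K_2)$ is automatic, and the étale descent required by Proposition~\ref{the only if direction} descends back to $S$ precisely because the output $(\mathcal{X},G,\mathcal{L},\varrho)$ was already defined over $S$ before the base change.

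The main subtlety I expect is the bookkeeping around the étale base change in Proposition~\ref{the only if direction}: a priori the stable pair extension $\Theta'$ is produced only after such a cover, so one must argue that the two extensions of $(\mathcal{X}_\eta,G_\eta,\Theta_\eta,\varrho_\eta)$ --- one via properness of $\overline{\mathscr{AP}}_{g,d}$ applied directly over $S$, the other via Proposition~\ref{the only if direction} applied after an étale cover --- agree after pulling both back to the cover. Separatedness of $\overline{\mathscr{AP}}_{g,d}$ on the cover resolves this, and then faithfully flat descent of the underlying polarized family (which is already defined over $S$) delivers the conclusion.
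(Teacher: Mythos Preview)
Your proposal is correct and follows essentially the same approach as the paper: use properness of $\overline{\mathscr{A}}^m_{g,\delta}$ to obtain the AN pull-back, apply Proposition~\ref{the only if direction} to see that this pull-back together with any $\Theta_\eta\in S(K_2)$ extends to a stable pair, and then invoke separatedness of $\overline{\mathscr{AP}}_{g,d}$ to identify it with the stable-pair extension. The paper's own proof is the terse two-sentence version of exactly this argument; your additional discussion of the \'etale base change in Proposition~\ref{the only if direction} is a reasonable elaboration of a point the paper leaves implicit.
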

\begin{proof}
Since $\overline{\mathscr{A}}^m_{g,\delta}$ is also proper, we have a unique morphism $S\to\overline{\mathscr{A}}_{g,\delta}^m$ and we can pull back the AN family. By Proposition~\ref{the only if direction}, the pull-back family coincides with $(\mathcal{X},G,\mathcal{L},\varrho)$. 
\end{proof}

The following is the geometric description of our extended families near a cusp $F_\xi$. 

\begin{theorem}\label{stable pairs moduli interpretation}
Suppose $S$ satisfies Assumption~\ref{assumption for the only if part} and in addition $R$ strictly henselian. Let $(\mathcal{X},G,\mathcal{L},\varrho)/S$ be a polarized stable semiabelic scheme over $S$, with the generic fiber abelian. Over the generic point $\eta$, the group subscheme $K_2\subset K(\mathcal{L})_\eta$ and the stable set of divisors $S(K_2)$ are defined by the Raynaud extension. Then $(\mathcal{X},G,\mathcal{L},\varrho)$ is the pull-back of the AN family along a unique\footnote{For a morphism to a stack, ``being unique'' means that, if there is another morphism $g'$ with the same properties, the morphism $(g,g')$ factors through the diagonal.} morphism $g:S\to \overline{\mathscr{A}}_{g,\delta}^m$ if and only if the group scheme $G(M)$ can be extended over $S$ (thus $S(K_2)$ is defined over $S$), and for one (equivalently any) divisor $\Theta$ from $S(K_2)$, $(\mathcal{X},G,\Theta,\varrho)$ is an object in $\overline{\mathscr{AP}}_{g,d}$. 
\end{theorem}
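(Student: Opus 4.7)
The ``only if'' direction is exactly Proposition~\ref{the only if direction}.

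For the ``if'' direction, fix $\Theta \in S(K_2)$ making $(\mathcal{X},G,\mathcal{L},\Theta,\varrho)$ an object of $\overline{\mathscr{AP}}_{g,d}(S)$. My first task is to identify the appropriate toroidal cone. By Alexeev's classification in \cite{Alex02}, the geometric closed fiber of $(\mathcal{X},\Theta)$ has a combinatorial type encoded by a bounded integral $\check{Y}_\xi$-invariant paving $\mathscr{P}$ of $X_{\xi,\mathbf{R}}$; set $\sigma := C(\mathscr{P})\in \Sigma(X_\xi)$. For every height-one prime $\mathfrak{p}\subset R$ with discrete valuation $v$, the pull-back of the family to $\spec R_\mathfrak{p}$ remains a stable semiabelic pair, whose closed fiber Alexeev's theory describes by a paving $\mathscr{P}_v$ coarsening $\mathscr{P}$ and refining the trivial paving. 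Using the AN recipe $a_t(\lambda)=\mathrm{X}^{\varphi(\phi(\lambda))}$ together with Corollary~\ref{extension over DVR}, one verifies that the positive semidefinite form $v\circ b$ attached to the Raynaud bilinear pairing has Delaunay decomposition exactly $\mathscr{P}_v$; consequently $v\circ b \in C(\mathscr{P}_v)\subseteq \sigma$.

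With this cone in hand, Proposition~\ref{the universal property of the moduli space} supplies a morphism $f:\spf \widehat{R}\to S_\sigma$ (equivalently $\spec R\to \Xi_\mathscr{P}$) whose pull-back of the good formal $\sigma$-model recovers the polarized semiabelian scheme $(G,\lambda)$. Composing with the canonical $\Xi_\mathscr{P}\to \overline{\mathscr{A}}^m_{g,\delta}$ yields $g: S\to \overline{\mathscr{A}}^m_{g,\delta}$, and the pull-back of the AN family along $g$ is an object $(\mathcal{X}',\mathcal{L}',G',\varrho')\in \overline{\mathscr{T}}_{g,d}(S)$ whose semiabelian part is $(G,\lambda)$ and whose generic fiber coincides with $(\mathcal{X},\mathcal{L},\varrho)_\eta$. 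Applying the already-proved ``only if'' direction (Proposition~\ref{the only if direction}) to this AN family produces a divisor $\Theta'\in S(K_2)$ making $(\mathcal{X}',G',\mathcal{L}',\Theta',\varrho')$ a stable pair over $S$.

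It remains to identify the two families. Since $(\mathcal{X},G,\mathcal{L},\varrho)$ and $(\mathcal{X}',G',\mathcal{L}',\varrho')$ both satisfy the hypotheses of Proposition~\ref{proper diagonal} (they agree on the dense open set $S_\eta$ where the log structure is trivial and share the same underlying semiabelian scheme), the generic isomorphism extends uniquely to an isomorphism of polarized stable semiabelic schemes over $S$, up to tensoring $\mathcal{L}$ by a line bundle on $S$. Since $R$ is local, that twist is trivial. Uniqueness of $g$ then follows by applying Proposition~\ref{proper diagonal} to two hypothetical morphisms $g_1,g_2$, which forces $(g_1,g_2)$ to factor through the diagonal of the Deligne--Mumford stack $\overline{\mathscr{A}}^m_{g,\delta}$.

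The main obstacle I anticipate is the first step: simultaneously fitting every valuation $v\circ b$ into the single cone $\sigma$ cut out by the Alexeev paving of the closed fiber. This requires, via $R=\bigcap_{\mathrm{ht}\,\mathfrak{p}=1}R_\mathfrak{p}$ for normal $R$, a careful reduction to the DVR case and a direct comparison between Alexeev's combinatorial data on the closed fiber and the polyhedral data of the AN construction; Corollary~\ref{extension over DVR} is the bridge that makes this matching possible.
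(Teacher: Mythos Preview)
Your argument follows the paper's proof closely through the construction of the morphism $g:S\to\overline{\mathscr{A}}^m_{g,\delta}$: identifying the paving $\mathscr{P}$ of the closed fiber, using Corollary~\ref{extension over DVR} over discrete valuation rings to force every $v\circ b$ into the cone $C(\mathscr{P})$, and invoking Proposition~\ref{the universal property of the moduli space}. (One small point: the paving attached by Alexeev's theory is a priori only $\check{Y}_\xi$-invariant; the crucial $X_\xi$-invariance needed for $C(\mathscr{P})\in\Sigma(X_\xi)$ comes precisely from Corollary~\ref{extension over DVR}, since the AN families in $\overline{\mathscr{A}}^m_{g,\delta}$ are built from $X_\xi$-quasiperiodic data. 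You cite the corollary, so this is fine, but your phrasing ``$\check{Y}_\xi$-invariant'' undersells the point.)

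The genuine gap is in the final identification step. Proposition~\ref{proper diagonal} has as its hypothesis that \emph{both} $(\mathcal{X}_i,P_i,\mathcal{L}_i,G_i,\varrho_i)$ are good algebraic models, i.e.\ objects carrying the log structure $P_i$ that make them lie in $\overline{\mathscr{T}}_{g,d}$; its proof reduces to discrete valuation rings and then appeals to Olsson's separatedness result for $\overline{\mathscr{T}}_{g,d}$. Your AN pull-back $(\mathcal{X}',\mathcal{L}',G',\varrho')$ is such an object, but the given family $(\mathcal{X},\mathcal{L},G,\varrho)$ is \emph{not}: it is only an object of $\overline{\mathscr{AP}}_{g,d}$, and carries no log structure. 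Showing it is a good algebraic model is exactly the content of the theorem, so invoking Proposition~\ref{proper diagonal} here is circular.

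The paper avoids this by working instead inside $\overline{\mathscr{AP}}_{g,d}$, where both families \emph{are} known to live (one by hypothesis, the other by the ``only if'' direction with $\Theta'$ the closure of $\Theta_\eta$). Since Alexeev proved the diagonal of $\overline{\mathscr{AP}}_{g,d}$ is finite, the fiber product $S'=S\times_{\overline{\mathscr{AP}}_{g,d}\times\overline{\mathscr{AP}}_{g,d}}\overline{\mathscr{AP}}_{g,d}$ is finite over $S$, integral, and contains $S_\eta$; normality of $R$ and Lemma~\ref{finite to isomorphic} then force $S'=S$. The same device, applied to the finite diagonal of $\overline{\mathscr{A}}^m_{g,\delta}$, gives uniqueness. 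You should replace your appeal to Proposition~\ref{proper diagonal} by this finite-diagonal argument.
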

\begin{proof}
The ``only if" part is Proposition~\ref{the only if direction}. Suppose there exists $\Theta_\eta\in S(K_2)$ such that $(\mathcal{X},G,\Theta,\varrho)$ is an object in $\overline{\mathscr{AP}}_{g,d}$. This is equivalent to a morphism $f: S\to \overline{\mathscr{AP}}_{d,g}$. The semiabelian scheme $G/S$ gives rise to \'{e}tale constructible sheaves $\underline{X}$, $\underline{Y}$, polarization $\phi: \underline{Y}\to \underline{X}$ and pairing $\underline{B}: \underline{Y}\times\underline{X}\to \underline{\Div} S$. Over the closed point $S_0$, the fiber is $X_\xi$, $Y_\xi$, $\phi: Y_\xi\to X_\xi$ and $b:Y_\xi\times X_\xi\to K^*$. Over any $s\in S$, $\underline{X}_s$ (resp. $\underline{Y}_s$) is the quotient of $X_\xi$ (resp. $Y_\xi$). 

For each point $s\in S$, choose a DVR $T\to S$ with the generic point mapped to $\eta$ and the closed point mapped to $s$. By Corollary~\ref{extension over DVR}, the pull-back family over $T$, denoted by $(\mathcal{X}_T, G_T,\Theta_T,\varrho_T)$, comes from AN family over $\overline{\mathscr{A}}^m_{g,\delta}$. In particular, $(\mathcal{X}_T, G_T,\Theta_T,\varrho_T)$ is constructed from an $\underline{X}_s$-quasiperiodic piecewise affine function $\varphi_T:\underline{X}_{s,\mathbf{R}}\to \mathbf{R}$. Therefore the paving $\mathscr{P}_s$ associated with $\varphi_T$ is $\underline{X}_s$-invariant. Let $b_s$ be $\underline{B}_s$ and $v$ the discrete valuation of $T$. The bilinear form $v\circ b_s$ is inside the cone $C(\mathscr{P}_s)$ of the second Voronoi fan $\Sigma(\underline{X}_s)$. The paving $\mathscr{P}_s$, regarded as a cell decomposition of $\underline{X}_{s,\mathbf{R}}/\phi(\underline{Y}_s)$, is the cell complex $\Delta_0$ in (\cite{Alex02} Definition 5.7.2) and can be defined by the fiber $(\mathcal{X}_s,\mathcal{L}_s)$. Moreover, since $s$ specializes to $S_0$, if we pull back the cell decomposition $\mathscr{P}_s$ along $X_\xi\to \underline{X}_s$, it is coarser than the paving $\mathscr{P}$ associated with the central fiber $(\mathcal{X}_0,\mathcal{L}_0)$. Therefore the cone $C(\mathscr{P}_s)$ is a face of $C(\mathscr{P})$ in $\Sigma(X_\xi)$. Write $X_\xi$ as a quotient of $X=\mathbf{Z}^g$. In particular, if $v:K^*\to \mathbf{Z}$ is any discrete valuation defined by a prime ideal of height one, then $v\circ b$ is contained in the closed cone $C(\mathscr{P})$ in the second Voronoi fan $\Sigma(X)$. By Proposition~\ref{the universal property of the moduli space}, there exists a morphism $g: S\to \overline{\mathscr{A}}_{g,\delta}^m$ such that if $(\mathcal{X}',\mathcal{L}', G',\varrho')$ is the pull-back of the AN family along $g$, then $G'\cong G$. Take the closure $\Theta'$ of $\Theta_\eta$ for $(G',\mathcal{X}',\mathcal{L}',\varrho')$, so that $\Theta'_\eta$ is identified with $\Theta_\eta$ through the isomorphism. By Proposition~\ref{the only if direction}, we get another morphism $g':S\to \overline{\mathscr{AP}}_{g,d}$. Consider the cartesian diagram
\[
\begin{diagram}
S'&\rTo&\overline{\mathscr{AP}}_{g,d}\\
\dTo^h&&\dTo_{\Delta}\\
S&\rTo^{(f,g')}&\overline{\mathscr{AP}}_{g,d}\times\overline{\mathscr{AP}}_{g,d}
\end{diagram}
\]
 
Since $\Delta$ is finite (\cite{Alex02} Theorem 5.10.1), $h$ is finite. Moreover, $S_\eta\to S$ factors through $h$. Since the local charts for $\overline{\mathscr{AP}}_{g,d}$ are integral (\cite{Alex02} 5.9, they are semigroup $k$-algebras), $S'$ is integral. By Lemma~\ref{finite to isomorphic}, $S'=S$. The families $(\mathcal{X},\mathcal{L},G,\Theta,\varrho)$ and $(\mathcal{X}',\mathcal{L}',G',\Theta',\varrho')$ are isomorphic. 

The morphism $g: S\to \overline{\mathscr{A}}_{g,\delta}^m$ thus obtained is unique, because the diagonal of $\overline{\mathscr{A}}_{g,\delta}^m$ is also finite. We use the same argument as above.
\end{proof}

\begin{lemma}\label{finite to isomorphic}
Let $h_1: R\to R'$ be a finite extension of integral domains and $h_2:R'\to K$ be a morphism such that the composition $h_2\circ h_1:R\to K$ is the inclusion of $R$ into its field of fractions. If $R$ is normal, then $h_1$ is an isomorphism.
\end{lemma}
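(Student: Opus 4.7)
The plan is to realize both $R$ and $R'$ as subrings of $K$ and then invoke the normality of $R$ directly.

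First I would check that both $h_1$ and $h_2$ are injective. Injectivity of $h_1$ is immediate, since its composition with $h_2$ is the inclusion $R\hookrightarrow K$. For $h_2$, let $\mathfrak{p}\subset R'$ be its kernel; then $\mathfrak{p}\cap h_1(R)=(0)$. Since $R'$ is integral over $h_1(R)$ and $R'$ is a domain, any nonzero $r'\in\mathfrak{p}$ satisfies a minimal integral equation $r'^{n}+a_{n-1}r'^{n-1}+\cdots+a_0=0$ over $h_1(R)$ with $a_0\neq 0$; but then $a_0\in\mathfrak{p}\cap h_1(R)=(0)$, a contradiction. Hence $\mathfrak{p}=(0)$ and $h_2$ is injective, so we may view $R\subset R'\subset K$ as a tower of subrings, with the inclusion $R\subset K$ being the fraction-field embedding.

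Now every element $r'\in R'$ is integral over $R$ (since $R'$ is finite, hence integral, over $R$) and lies in $K$. Normality of $R$ means precisely that $R$ is integrally closed in $K$, so $r'\in R$. This gives $R'\subseteq R$, and combined with $R\subseteq R'$ we conclude $R=R'$, i.e., $h_1$ is an isomorphism.

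There is no real obstacle here — the entire argument is a one-line application of the definition of normality, modulo the small preliminary observation that $h_2$ is injective so that the tower $R\subset R'\subset K$ makes sense.
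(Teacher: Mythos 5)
Your proof is correct and follows essentially the same route as the paper's: establish that $h_2$ is injective so that one has a tower $R\subset R'\subset K$, then conclude $R=R'$ from integrality (finiteness) of $R'$ over $R$ and the normality of $R$. The only difference is cosmetic — where the paper cites Matsumura's lying-over/incomparability result to see that $\ker h_2$ contracts to $(0)$ only if it is $(0)$, you prove this directly via the minimal integral equation with nonzero constant term, which is a perfectly valid self-contained substitute.
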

\begin{proof}
We claim that $h_2$ is injective. Since $h_2\circ h_1$ is injective, $h_1$ is injective and $\ker h_2$ is a prime ideal lying over the zero ideal of $R$. Moreover $h_1$ is finite, hence an integral extension. By (\cite{Mat86} Theorem 9.3 (ii)), the only prime ideal lying over the zero ideal of $R$ is the zero ideal of $R'$. Therefore $\ker h_2=0$.  We have $R\subset R'\subset K$. If $R$ is normal, then $R=R'$. 
\end{proof}

\begin{remark}
Heuristically, locally near the boundary, the compactification $\overline{\mathscr{A}}^m_{g,\delta}$ should be the normalization of a slice of $\overline{\mathscr{AP}}_{g,d}$. The slice is defined by a choice of a divisor in the balanced set $S(K_2)$. The divisors in $S(K_2)$ should be regarded to be the most ``symmetric" because it is an average over the lift $\sigma(\widehat{K}_2)$. The tropical avatar of the slice is the linear section $\sigma$ in Equation~\eqref{the linear section is the average}. If we forget about the divisor $\Theta$ and only consider the family $(\mathcal{X},\mathcal{L},G,\varrho)$, then the normalization doesn't depend on the choice of $\Theta$.
\end{remark}

\begin{remark}
The families have been studied in \cite{Nak10} and \cite{ols08}. Moreover, when the polarization is separable, it is proved in (\cite{Nak1} Definition 5.11, Lemma 5.12) and \cite{ols08} that $G(M)$ and its representation can be extended to the boundary. What we are suggesting here is, one can characterize the extended families by using $G(M)$ and $\overline{\mathscr{AP}}_{g,d}$. 
\end{remark}

\begin{remark}
The balanced set $S(K_2)$ depends on $K_2$, and $K_2$ is only locally well-defined. See Remark~\ref{K2 is defined locally}. Even when the coarse moduli space has only one cusp, $K_2$ is never well--defined over the whole moduli space unless the polarization is principal. To parallel transport $W_0$, we need the Gauss--Manin connection. That means, we need an actual family over the moduli space. In all cases, we will need at least a finite ramified cover to get the family and there will be more than one cusp. Then it can be proved that there is no section stable for all $K_2$ unless the polarization is principal. 
\end{remark}

If $K_2$ is a Lagrangian, $A$ and $\mathcal{M}$ are trivial and $\mathcal{V}_0\cong \mathcal{O}_S$. $S(K_2)$ is a finite set. If the Lagrangian $K_2$ further splits, i.e., it admits an isotropic complement $K(\lambda)=K_1\oplus K_2$, we can identify $K_1$ and $I$, and require that the lifts are group homomorphisms $\sigma: K_1\to G(M)$.  In this case, the lift $\sigma(K_1)$ is a maximal level subgroup $\widetilde{K}_1$ of $\mathcal{G}(\mathcal{L})$. There are altogether $d$ choices, and each choice is equivalent to a choice of the descent data $h: \mathcal{X}\to \mathcal{X}/K_1$, with $\nu: h^*\mathcal{L}'\cong \mathcal{L}$ for a principally polarized line bundle $\mathcal{L}'$ over $\mathcal{X}/K_1$. The stable section $\vartheta$ is the pull-back of the unique (up to a scaling) section of $\mathcal{L}'$. The sections $S_{\sigma(\alpha)}^*\vartheta_0$ are the classical theta functions parameterized by $K_1$.

 

\appendix
\section{The Quasiperiodic Functions}\label{A}
Let $Y\cong \mathbf{Z}^g$ be a finitely generated free abelian group acting on an affine space $V\cong \mathbf{R}^g$ by translations. Let $\psi$ be a real valued piecewise affine function on $V$.
\begin{definition}\label{definition of quasiperiodic functions}
A piecewise affine function $\psi$ is called quasi-periodic with respect to $Y$, if
\begin{equation}\label{quasi-periodic}
\psi(x+\lambda)-\psi(x)=A_\lambda(x),  \ \forall \lambda \in Y, \ \forall x\in V,
\end{equation}

for $A_\lambda(x)$ an affine function on $V$ that depends on $\lambda\in Y$. 
\end{definition}

\begin{remark}\label{Appendix section of sheaves}
A $Y$-quasiperiodic function $\psi$ can be regarded as an element in $\Gamma(V/Y, \mathcal{P}A/\mathcal{A}ff)$. 
\end{remark}

\begin{lemma}\label{quasiperiodic and quadratic}
If $\psi$ is quasi-periodic with respect to $Y$, then there exists some quadratic function $A$ such that $\psi-A$ is a $Y$-periodic function on $V$. 
\end{lemma}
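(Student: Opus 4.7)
The plan is to decompose the assumed cocycle $\{A_\lambda\}_{\lambda \in Y}$ into its linear and constant parts, show that the linear part encodes a symmetric bilinear form on $V$, and then absorb everything into a quadratic correction.

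First I would write each affine function as $A_\lambda(x) = L_\lambda(x) + c_\lambda$, where $L_\lambda \in V^*$ and $c_\lambda \in \mathbf{R}$. The identity $\psi(x+\lambda+\mu) - \psi(x)$ expanded in two ways gives the cocycle relation
\[
A_{\lambda+\mu}(x) = A_\lambda(x) + A_\mu(x+\lambda),
\]
which splits into $L_{\lambda+\mu} = L_\lambda + L_\mu$ and $c_{\lambda+\mu} = c_\lambda + c_\mu + L_\mu(\lambda)$. Swapping $\lambda$ and $\mu$ in the second relation forces $L_\mu(\lambda) = L_\lambda(\mu)$, so the map $Y \times Y \to \mathbf{R}$, $(\lambda, \mu) \mapsto L_\lambda(\mu)$ is $\mathbf{Z}$-bilinear and symmetric. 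Since $\lambda \mapsto L_\lambda$ is additive, I can extend it $\mathbf{R}$-linearly to a map $V \to V^*$, producing a symmetric bilinear form $B(x,y) = L_x(y)$ on $V$.

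Next I would set $Q(x) := \tfrac{1}{2} B(x,x)$ and compute $Q(x+\lambda) - Q(x) = L_\lambda(x) + \tfrac{1}{2}B(\lambda,\lambda)$. Subtracting $Q$ from $\psi$ kills the linear-in-$x$ part of the cocycle, leaving
\[
(\psi - Q)(x+\lambda) - (\psi - Q)(x) = c_\lambda - \tfrac{1}{2} B(\lambda, \lambda) =: d_\lambda,
\]
independent of $x$. A direct computation using the cocycle relation for $c$ shows $d_{\lambda+\mu} = d_\lambda + d_\mu$, so $d$ is a homomorphism $Y \to \mathbf{R}$; extend it $\mathbf{R}$-linearly to a linear function $\ell$ on $V$ so that $\ell(\lambda) = d_\lambda$ for $\lambda \in Y$.

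Finally, take $A(x) := Q(x) + \ell(x)$, which is quadratic in the usual sense. Then
\[
(\psi - A)(x+\lambda) - (\psi - A)(x) = d_\lambda - \ell(\lambda) = 0,
\]
so $\psi - A$ is $Y$-periodic, as required. The only real step that requires care is the symmetry argument for the bilinear form; the rest is bookkeeping on the cocycle identities, and no convexity or piecewise affineness beyond the ability to extract the affine part $A_\lambda$ is needed.
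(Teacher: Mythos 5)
Your proposal is correct and follows essentially the same route as the paper's proof: both extract the cocycle relation $A_{\lambda+\mu}(x)=A_\lambda(x)+A_\mu(x+\lambda)$, split $A_\lambda$ into linear and constant parts, deduce the symmetric bilinear form from the symmetry of the mixed term, and then subtract the resulting quadratic (plus linear) function to get $Y$-periodicity. Your only cosmetic difference is subtracting $Q=\tfrac12 B$ first and absorbing the leftover constants into a separate linear term, whereas the paper packages the constant part directly as the quadratic function $A=\tfrac12 B+\tfrac12 L$; the content is identical.
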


\begin{proof}
Fix a point $x_0\in V$ and regard $V$ as a vector space. Embed $Y$ as a subset of $V$. For $\lambda,\mu\in Y$, we have 
\begin{align}
\psi(x+\mu+\lambda)&=\psi(x+\mu)+A_\lambda(x+\mu)\\
&=\psi(x)+A_\mu(x)+A_\lambda(x+\mu) \label{2q} \\
&=\psi(x)+A_\lambda(x)+A_\mu(x+\lambda)\label{3q} \\
&=\psi(x)+A_{\mu+\lambda}(x).  \label{4q}
\end{align}

From \eqref{3q} and \eqref{4q}
\[
A_{\lambda+\mu}(x)=A_\mu(x+\lambda)+A_\lambda(x)
\]

Therefore $\{A_\lambda\}$ is a $1$-cocycle for the $Y$-module $Aff(X_\mathbf{R},\mathbf{R})$. \\

Suppose $A_\lambda(x)=B(\lambda,x)+A(\lambda)$, for a linear function $B(\lambda,\cdot)$ on $V$ and a constant $A(\lambda)$. From \eqref{2q} and \eqref{3q}, we have
\[
B(\mu,\lambda)=A_\mu(x+\lambda)-A_\mu(x)=A_\lambda(x+\mu)-A_\lambda(x)=B(\lambda,\mu).
\]

Applying \eqref{4q},
\[
A_{\mu+\lambda}(x)-A_\mu(x)-A_\lambda(x)=B(\mu,\lambda),
\]

It follows that
\begin{align}
&B(\mu+\lambda,x)=B(\mu,x)+B(\lambda,x),\\
&A(\mu+\lambda)-A(\mu)-A(\lambda)=B(\mu,\lambda).
\end{align}

Therefore $B(\mu,\lambda)$ is a symmetric bilinear form on $Y$, and
\[
A(\gamma)=\frac{1}{2}B(\gamma,\gamma)+\frac{1}{2}L(\gamma)
\]
is a quadratic function associated with $B$. Here $L$ is a linear function on $Y$.\\

Extend $A$ and $B$ to functions on $V$. Since $A(x+y)-A(x)=B(y,x)+A(y)$, for all $x,y\in V$,
\[
\psi(x+\lambda)-A(x+\lambda)=\psi(x)-A(x),\ \forall \lambda\in Y, x\in V.
\]
In other words, $\psi-A$ is a periodic function with respect to $Y$. 
\end{proof}

If we forget about $x_0$ initially chosen, the quadratic part  of $A$ is well-defined. 

\begin{definition}\label{the associated quadratic form}
The quadratic form $1/2B(x,x)$ is called the quadratic form associated with $\psi$. 
\end{definition}

If the piecewise affine function $\varphi$ takes values in a vector space $P^{gp}_\mathbf{R}$, we can define $Y$-quasiperiodic similarly. Recall a quadratic function is an element in $\Gamma^2V^*\otimes P^{gp}_\mathbf{R}$. We can generalize Lemma~\ref{quasiperiodic and quadratic}. 

\begin{corollary}\label{quasiperiodic and quadratic for vector valued}
If $\psi$ is quasi-periodic with respect to $Y$, then there exists some quadratic function $A$ such that $\psi-A$ is a $Y$-periodic function on $V$. 
\end{corollary}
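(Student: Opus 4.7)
The plan is to reduce Corollary~\ref{quasiperiodic and quadratic for vector valued} to the scalar case Lemma~\ref{quasiperiodic and quadratic} by testing against linear functionals on $P^{\gp}_\mathbf{R}$. Concretely, for any linear functional $\ell\in (P^{\gp}_\mathbf{R})^*$, the composition $\ell\circ\psi\colon V\to\mathbf{R}$ is piecewise affine, and the quasi-periodicity relation
\[
\psi(x+\lambda)-\psi(x)=A_\lambda(x)\in \Aff(V,P^{\gp}_\mathbf{R})
\]
composes to $\ell\circ\psi(x+\lambda)-\ell\circ\psi(x)=\ell\circ A_\lambda(x)$, which is a scalar-valued affine function of $x$ depending on $\lambda$. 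Hence $\ell\circ\psi$ is $Y$-quasiperiodic in the sense of Definition~\ref{definition of quasiperiodic functions}.

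Next, I would apply Lemma~\ref{quasiperiodic and quadratic} to $\ell\circ\psi$ to obtain a scalar quadratic function $A^{(\ell)}\colon V\to\mathbf{R}$ such that $\ell\circ\psi-A^{(\ell)}$ is $Y$-periodic. The key step is to show that the assignment $\ell\mapsto A^{(\ell)}$ can be chosen to depend \emph{linearly} on $\ell$. For this I would fix a basepoint $x_0\in V$ and recall the explicit formulas from the proof of Lemma~\ref{quasiperiodic and quadratic}: the symmetric bilinear form $B$ satisfies $B(\mu,\lambda)=A_{\mu+\lambda}(x)-A_\mu(x)-A_\lambda(x)$ and the linear correction is determined by the values of $A_\lambda$ on $x_0$. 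Since $A_\lambda$ is already vector-valued and linear in $\lambda\otimes x$ in each case, the same formulas applied directly produce a symmetric bilinear form $B\colon Y\times Y\to P^{\gp}_\mathbf{R}$ and a linear term $L\colon Y\to P^{\gp}_\mathbf{R}$ with $\ell\circ B=B^{(\ell)}$ and $\ell\circ L=L^{(\ell)}$ for every $\ell$. Extending $B$ and $L$ to $V$ gives a quadratic function $A\in \Gamma^2V^*\otimes P^{\gp}_\mathbf{R}$ with $\ell\circ A=A^{(\ell)}$ for all $\ell$.

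To conclude, the function $\psi-A\colon V\to P^{\gp}_\mathbf{R}$ has the property that $\ell\circ(\psi-A)$ is $Y$-periodic for every $\ell\in(P^{\gp}_\mathbf{R})^*$. Since the $Y$-action is preserved by every $\ell$ and since a vector in $P^{\gp}_\mathbf{R}$ is $0$ if and only if it is killed by every $\ell$, this forces $\psi-A$ itself to be $Y$-periodic. The only step requiring any care is the verification that one can choose the scalar quadratic functions $A^{(\ell)}$ coherently in $\ell$; this is not really an obstacle but rather bookkeeping, since the proof of the scalar lemma is constructive and all of its ingredients ($B(\mu,\lambda)$, $A(\lambda)$, $L(\lambda)$) are defined by formulas that are manifestly linear in $\psi$ and therefore commute with post-composition by any linear functional. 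Thus no genuinely new idea beyond Lemma~\ref{quasiperiodic and quadratic} is needed.
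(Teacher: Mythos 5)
Your argument is correct and amounts to the same observation the paper relies on: the proof of Lemma~\ref{quasiperiodic and quadratic} is entirely built from identities ($B(\mu,\lambda)=A_{\mu+\lambda}(x)-A_\mu(x)-A_\lambda(x)$, the cocycle relation, $A(\gamma)=\tfrac12 B(\gamma,\gamma)+\tfrac12 L(\gamma)$) that are linear in the values of $\psi$, so it runs verbatim with values in $P^{\gp}_\mathbf{R}$; the paper simply asserts this generalization without a separate proof. Your detour through linear functionals $\ell\in(P^{\gp}_\mathbf{R})^*$ is harmless extra packaging of that same point, and your coherence check (that the $A^{(\ell)}$ come from a single vector-valued $A$) is exactly the needed bookkeeping.
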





\section{Degenerations of One-parameter Families}\label{B}
\subsection{Maximal Degeneration}\label{B1}
In this section, we compute the degeneration data for a general $0$-cusp. Assume the base field $k=\mathbf{C}$.

Let $\pi:\mathcal{X}_\eta\to \Delta^*$ be a one-parameter family of polarized abelian varieties and $X$ be a generic fiber. Let $V:=\H_1(X,\mathbf{R})$ and $\Lambda:=\H_1(X,\mathbf{Z})$. The polarization is defined by a skew-symmetric integral bilinear form $E$ over $\Lambda$. The log monodromy defines a weight filtration $0\subset W_0\subset W_1\subset W_2=V$. In our usual notations, $U=W_0$, $U^\perp =W_1$. Define $X^*:=\Lambda\cap U$ and $Y:= \Lambda/U^\perp \cap\Lambda$. Fix a symplectic basis $\{\lambda_1,\ldots,\lambda_g,\mu_1,\ldots,\mu_g\}$ of $\Lambda$ for $E$. We use $(x,y)$ to denote the coordinates of a vector $v=x\lambda+y\mu$, where $x$ and $y$ are both row vectors of $g$ elements. Assume the degeneration is maximal and $U$ is a rational Lagrangian of dimension $g$. Suppose $v_1,\ldots, v_g$ is a basis of $X^*$ and 
\[
v_i=\sum_{j=1}^{g} c_{ij}\lambda_j+d_{ij}\mu_j, \quad \forall i=1,\ldots, g. 
\]

Since $\{v_1,\ldots,v_g\}$ is a complex basis for $V$, $c\tau+d\delta$ is invertible, where $c$ and $d$ are $g\times g$-matrices. With respect to the complex basis $\{v_1,\ldots, v_g\}$, the vector $x\lambda+y\mu$ has coordinates $(x\tau+y\delta)(c\tau+d\delta)^{-1}$. 

For the convenience of computation, we change to a new basis. Suppose the dual basis of $\{v_1,\ldots,v_g\}$ of $X^*$ is a compatible basis of $X$. Extend $\{v_1.\dots,v_g\}$ to a basis $\{u'_1,\ldots, u'_g,v_1\ldots,v_g\}$ of $\Lambda$, and under this new basis,
 \[
 E=\begin{pmatrix}
 S &\mathfrak{d}\\
 -\mathfrak{d} &0
 \end{pmatrix},
 \]
 
 where $S$ is an integral, skew symmetric matrix. Let the transformation matrix be $M'\in \GL(2g, \mathbf{Z})$. As in Corollary~\ref{the lattice in tropical cone}, we can always choose $M'$ such that 
 
\begin{align*}
\begin{pmatrix}
A & B\\
0 & I_g
\end{pmatrix}
M'^{-1}&=\begin{pmatrix}
a&b\\
c&d
\end{pmatrix}
\in\Sp(E,\mathbf{Q}),\\
A^{-1}&=\mathfrak{d}\delta^{-1},\\
A^{-1}B\mathfrak{d}&=\frac{1}{2}S.
 \end{align*}
 
 Let $(x',y')$ be the coordinates under the basis $\{u_1'\ldots, u'_g,v_1,\ldots,v_g\}$, and $(x,y)$ be the coordinates under the basis $\{\lambda_1,\ldots, \lambda_g,\mu_1,\ldots, \mu_g\}$. The transformation of coordinates is
\[
 \begin{pmatrix}x'&y'\end{pmatrix}=\begin{pmatrix}x & y\end{pmatrix}M'.
\]
 
 Therefore,
 \[
 \begin{pmatrix} x' & y'\end{pmatrix} \begin{pmatrix} A^{-1} & -A^{-1}B\\ 0 & I_g\end{pmatrix}\begin{pmatrix} a & b\\ c & d\end{pmatrix}=\begin{pmatrix} x & y\end{pmatrix}.
 \]

Fix a punctured holomorphic disk $\Delta^*$ with coordinate $q$. The universal covering is the upper half plane $\mathbf{H}$ with coordinate $t$, and $q=e^{2\pi it}$. Fix $\tau'$, and choose $\tau_0'\in \mathfrak{S}_g$. Consider the family $\pi: \mathcal{X}_\eta/\Delta^*$ defined by the holomorphic map $\tau(t): \mathbf{H}\to\mathfrak{S}_g$
\begin{align*}
\tau_0'+\Im(\tau')t=(a\tau(t)+b\delta)(c\tau(t)+d\delta)^{-1}\delta.
\end{align*}

This family has a multiplicative uniformization, which is a trivial algebraic torus $\widetilde{G}$ over $\Delta^*$. For a point $(x,y)$ in $V$, consider the image in $\widetilde{G}_t:=V_t/X^*\cong(\mathbf{C}^*)^g$ for different $t$. We always use the dual basis of $\{v_1,\ldots,v_g\}$ as the standard coordinates of $(\mathbf{C}^*)^g$. The coordinates of $(x,y)$ in $(\mathbf{C}^*)^g$ are given by the row vector
\[
\exp{(2\pi i (x\tau(t)+y\delta)(c\tau(t)+d\delta)^{-1})}=\exp{\Bigg(2\pi i (x \ y)\begin{pmatrix} \tau(t)\\
\delta\end{pmatrix}
\begin{pmatrix}
c\tau(t)+d\delta
\end{pmatrix}^{-1}\Bigg)}.
\]

Use the new coordinates $(x',y')$,
\[
\exp{\Bigg(2\pi i \begin{pmatrix} x' & y'\end{pmatrix}\begin{pmatrix} A^{-1} & -A^{-1}B\\
0 & I_g\end{pmatrix}
\begin{pmatrix}
(\tau_0'+\Im(\tau')t)\delta^{-1}\\
I_g
\end{pmatrix}\Bigg)}.
\]

We only have to consider the periods $y'=0$. Identify $Y=\Lambda/\Lambda\cap U$ with the subgroup $\langle u'_1,\ldots, u'_g\rangle$ in $\Lambda$, and use the row vector $(x')$ as the coordinates. It defines a bilinear map $b$ from $Y\times X$ to holomorphic functions of $t$. 
\[
b(x',\alpha)=\exp{\Bigg(2\pi i x' (A^{-1}(\tau_0'+\Im(\tau')t)\delta^{-1}-A^{-1}B)\begin{pmatrix}\alpha(v_1)\\ \vdots\\ \alpha(v_g)\end{pmatrix}\Bigg)}.
\]

Here $\alpha\in X$, and $x'$ means both the vector $\sum_{i=1}^{g}x'_iu_i'$ and its coordinates. For simplicity, we also denote the coordinates of $\alpha$ by $\alpha$. Decompose $b$
\begin{align*}
 b(\lambda,\alpha)=b_0(\lambda,\alpha)\cdot b(t)(\lambda,\alpha)\cdot b'(\lambda,\alpha), 
\end{align*}
 
 where
 \begin{align*}
 b_0(\lambda,\alpha)&=\exp{(2\pi i x' (A^{-1}\tau_0'\delta^{-1})\alpha)},\\
 b_t(\lambda,\alpha)&=b(x',\alpha)=\exp{(2\pi i x' (A^{-1}\Im(\tau')t\delta^{-1})\alpha})=q^{\langle\alpha,\check{\phi}(\lambda)\rangle},\\
 b'(\lambda,\alpha)&=\exp{(-\pi i x' S\mathfrak{d}^{-1}\alpha)}.
 \end{align*}
 
Fix the notation. $\phi(\lambda)(v)=E(\lambda, v)$. Therefore $\phi(u_i')=d_iv_i^*$. Notice that since $A^{-1}(\tau_0'+\Im(\tau')t)(A^{-1})^T$ is a symmetric matrix and $S$ is integral skew symmetric, $b(\lambda_1,\phi(\lambda_2))=b(\lambda_2,\phi(\lambda_1))$. Define the symmetric bilinear form $b_S: Y\times Y \to \mu_2\cong \mathbf{Z}/2\mathbf{Z}$
\[
b_S:=\exp{(-\pi i E(\lambda,\lambda'))}=\exp{(-\pi i x_1'S(x_2')^T)}=\exp{(\pi i x_1'S(x_2')^T)}\equiv S \pmod {2}. 
\]

Notice that $b_S(2\lambda_1,\lambda_2)=1$ for any $\lambda_1,\lambda_2\in Y$. Therefore $b_S$ is a symmetric bilinear form over $Y/2Y$ which is a vector space over $\mathbf{Z}/2\mathbf{Z}$. There always exists some quadratic form $\chi$ whose associated bilinear form is $b_S$. For example we can define a symmetric matrix $S'$ by requiring that $S'_{ij}=1$ if $S_{ij}$ is odd, and everywhere else is $0$. Then
\[
\chi(\lambda):=\exp{(\frac{1}{2} \pi i x' S'(x')^T)}
\]

is such a quadratic form. There may be more than one choices of such a quadratic form (e.g. over characteristic $2$). We fix a choice and denote it by $\chi$. Let $\alpha'=\chi^{-1}$. We can even extend $\chi$ to get a semi-character $\chi: \Lambda\to\mathbb{S}^1$ by defining it to be trivial over $\Lambda\cap U$.  

Define the positive quadratic forms $Q_0$ and $Q$ by the symmetric matrices $1/2A^{-1}\tau_0' (A^{-1})^T$ and $1/2A^{-1}\Im(\tau')(A^{-1})^T$ respectively. Notice that 
\[
Q(\lambda)=\frac{1}{2}\langle \phi(\lambda),\check{\phi}(\lambda)\rangle.
\]

Define
\begin{align*}
 a(\lambda)&=\chi^{-1}(\lambda)\exp{(2\pi i (Q_0(\lambda)+tQ(\lambda)))}\\
 &=a_0(\lambda)\cdot a_t(\lambda)\cdot a'(\lambda).
 \end{align*}
 
 where
 \begin{align*}
 a_0(\lambda)&:=\exp{(2\pi i Q_0(\lambda))},\\
 a_t(\lambda)&:=\exp{(2\pi i t Q(\lambda))}=q^{Q(\lambda)},\\
 a'(\lambda)&:=\chi^{-1}(\lambda).
 \end{align*}
 
 Since $Q$ is not necessarily integral, we may need to make a base change of degree $2$ to make $a_t(\lambda)$ a function over $\Delta^*$.
 
 \begin{lemma}
 Use $a,b$ above to define the $Y$-linearization of the trivial line bundle $\mathcal{O}_{\widetilde{G}}$ over $\widetilde{G}$, then $\mathcal{L}$, the descent of $\mathcal{O}_{\widetilde{G}}$ over $\mathcal{X}$, has the polarization of type $\delta$. 
 \end{lemma}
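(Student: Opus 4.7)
The plan is to compute, fiber by fiber, the Appell--Humbert data of $\mathcal{L}_t$ on $\mathcal{X}_t = V_t/\Lambda$ and verify that the associated Hermitian form $H$ satisfies $\Im H = E$. Since $E$ is by construction the standard symplectic form of type $\delta$ on $\Lambda$ (in the basis $\{\lambda_i,\mu_j\}$), this will give the polarization type $\delta$ by the Appell--Humbert theorem (\cite{BL} Theorem 2.2.3).

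First I will lift the descent datum to $V_t$. Because $\widetilde{G}_t = V_t/X^*$ and $\mathcal{X}_t = V_t/\Lambda$, the pullback of $\mathcal{L}_t$ to $V_t$ is trivial, with a factor of automorphy $e_\mathcal{L} : \Lambda \times V_t \to \mathbf{C}^*$ that is $1$ on $X^*$ and on a lift $u'_\lambda \in \Lambda$ of $\lambda\in Y$ is
\[
e_\mathcal{L}(u'_\lambda,z) \;=\; a(\lambda)\,\exp\!\bigl(2\pi i\,\langle z,\phi(\lambda)\rangle\bigr),
\]
where the pairing extends $\mathbf{C}$-linearly the natural integral pairing $X^* \times X \to \mathbf{Z}$.

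Second, I will rewrite $e_\mathcal{L}$ as a classical Appell--Humbert factor
\[
\chi(\lambda)\,\exp\!\bigl(\pi(H-B)(z,\lambda) + \tfrac{\pi}{2}(H-B)(\lambda,\lambda)\bigr)
\]
and read off $H$ and $\chi$. The $t$-dependent exponentials $a_t \cdot b_t$ contribute the symmetric block $A^{-1}\Im(\tau')(A^{-1})^T = 2Q$, which gives $\Im H(u'_j,v_i) = \delta_{ij}d_i$ and $\Im H(v_i,v_j)=0$. The twist $b'(\lambda,\alpha) = \exp(-\pi i x' S\mathfrak{d}^{-1}\alpha)$ produces the off-diagonal correction $S$, so that $\Im H(u'_j,u'_k) = S_{jk}$. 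The constant piece $a_0\cdot b_0$ is absorbed into the real part of $H$, and $a' = \chi^{-1}$ supplies the semi-character. Relative to the basis $\{u'_j,v_i\}$ one therefore obtains $\Im H = \bigl(\begin{smallmatrix}S & \mathfrak{d}\\ -\mathfrak{d} & 0\end{smallmatrix}\bigr)$, which is exactly the matrix of the original form $E$ in that basis; applying the inverse change of basis $M'$ gives $\Im H = E$ in the original symplectic basis, hence the polarization is of type $\delta$.

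The main obstacle is checking in the second step that $\chi$ (constructed from the integral symmetric lift $S'$ of $S \bmod 2$) is actually a semi-character for $H$, i.e.\ that
\[
\chi(\lambda+\mu) \;=\; \chi(\lambda)\,\chi(\mu)\,\exp(\pi i\, E(\lambda,\mu))
\]
on $\Lambda$. This is the reason for the specific choice $A^{-1}B\mathfrak{d} = \tfrac12 S$ and for the $b'$-twist built from $S$: together they produce exactly the needed mod-$2$ correction. Once that compatibility is verified, the remaining computations are symbolic manipulations in the $(x',y')$-coordinates.
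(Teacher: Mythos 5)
Your proposal is correct and reaches the same endpoint, but by a different route than the paper. The paper's own proof is differential--geometric: after normalizing the cocycle so that $e(u'_i,z)=\exp(-2\pi i z_i)$, it writes down an explicit Hermitian metric $h(z)$ (following Griffiths--Harris, pp.\ 310--311) for that factor of automorphy and computes its curvature, obtaining
$\Theta_{\mathcal{L}}=2\pi i\bigl(\sum_{\alpha>\beta}S_{\alpha\beta}\,\ud x'_\alpha\wedge \ud x'_\beta+\sum_\alpha d_\alpha\,\ud x'_\alpha\wedge \ud y'_\alpha\bigr)$,
so that $c_1(\mathcal{L})=E$ in the $(x',y')$-coordinates and the type is $\delta$ by unimodularity of $M'$. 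You instead normalize the same cocycle into Appell--Humbert form and read off $\Im H$ directly; this is equally valid and more algebraic, and it makes the role of the twist data $a',b'$ and the semicharacter explicit, which ties in well with the theta-group picture used later in the paper. Two remarks on trade-offs. First, the step you single out as the main obstacle --- verifying that $\chi$ built from $S'$ is a semicharacter for $H$ --- is not actually needed for the type statement: the alternating form $\Im H$ can be extracted from the cocycle alone (e.g.\ by the standard alternation of $\log e_{\mathcal{L}}$, or simply because $(a,b)$ satisfy the cocycle relations $a(\lambda+\mu)=b(\lambda,\phi(\mu))a(\lambda)a(\mu)$, which guarantee consistency), and the mod-$2$ lift $S'$ only affects $\chi$, i.e.\ which line bundle in the algebraic equivalence class one gets, not the polarization type; conversely, the semicharacter check you outline does go through since $S'\equiv S\pmod{2}$ kills the discrepancy on the $Y$-part, with the cross terms handled by the standard decomposition semicharacter. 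Second, the paper's metric computation has the side benefit of exhibiting positive curvature directly; in your version, positivity of $H$ should be noted as coming from $\tau(t)\in\mathfrak{S}_g$ (i.e.\ $E$ tames $J_t$ by construction of the family), after which $\Im H=E$ gives the type. Your minor phrase that $a_0b_0$ is ``absorbed into the real part of $H$'' is loose --- these constants feed into the limit Hodge filtration/semicharacter data rather than into $\Im H$ --- but this does not affect the conclusion.
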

 
 \begin{proof}
 We do the computation explicitly. Use complex coordinates $z_\alpha=d_\alpha v_\alpha^*$. We can modify the $1$-cocycle $\{e(\lambda,z)\}$ by a coboundary such that 
\[
e(u'_i, z)=\exp{(-2\pi iz_i)}.
\]

Denote the matrix of $2Q_0+2Qt$ by $\tau''(t)$, and $\tau''(t)=X(t)+iY(t)$. Let $W(t)=Y(t)^{-1}$. We have 
\begin{align*}
\ud z_\alpha=\sum_i (\tau''(t)-S/2)_{i\alpha}\ud x_i'+d_\alpha\ud y'_\alpha\\
\ud\bar{z}_\beta=\sum_j(\bar{\tau}''(t)-S/2)_{j\beta}\ud x_j'+d_\beta\ud y_\beta'
\end{align*}

Following (\cite{GH} pp. 310-311), we can define the hermitian metric by a transition function
\[
h(z)=\exp{\bigg(\frac{\pi}{2}\sum W_{\alpha\beta}(z_\alpha-\bar{z}_\alpha)(z_\beta-\bar{z}_\beta-2i Y_{\beta\beta})\bigg)}
\]
 
Doing calculations similar to those in \cite{GH}, we can check that $h$ defines a hermitian metric for the $1$-cocycle $\{e(\lambda,z)\}$. Moreover, the curvature of this hermitian metric is 
\begin{align*}
\Theta_{\mathcal{L}}&=\pi \sum_{\alpha,\beta} W_{\alpha\beta} \ud z_\alpha\wedge \bar{z}_\beta\\
&=2\pi i \bigg(\sum_{\alpha>\beta} S_{\alpha\beta}\ud x'_\alpha\wedge x'_\beta+\sum_\alpha d_\alpha \ud x'_\alpha\wedge\ud y_\alpha'\bigg). 
\end{align*}

This verifies that the degeneration data $\{a, b\}$ defines a line bundle $\mathcal{L}$ whose polarization is of type $\delta$.  
\end{proof}

The limit Hodge filtration $F_\infty$ is determined by $b_0b'$. The log monodromy $N=\check{\phi}:V\to U$ is given by $b_t$. Although this family looks special, the general degenerating family with the same log monodromy $N$ and limit Hodge filtration $F_\infty$ is asymptotic to this family in a precise sense. This is the content of the nilpotent orbit theorem. Therefore we can use the data obtained from this family as a model for the general degeneration. Without loss of generality, we can choose $\tau_0'$ to be $0$, so that $a_0$ is $1$. The conclusion is that $a'$ is the twist necessary for the direction $U$, with respect to the basis $\{u_i',v_j\}$.

\subsection{General Degeneration}\label{B2}
Consider a general one-parameter degeneration family $\mathcal{X}$ over $\Delta^*$, whose abelian part is non-trivial. Let the associated isotropic subspace be $U_\xi\subset V$ of dimension $r\leqslant g$. $U_\xi$ is obtained as the space of vanishing cycles $W_0^t\subset \Lambda_\mathbf{R}$. Let $J$ be the complex structure on $V$. Let $\widetilde{T}$ be the subspace of $V$ generated by $U_\xi$ and $U_\xi J$. $T:=\widetilde{T}/U_\xi\cap\Lambda$ is the toric part of dimension $r$. Since $U_\xi^\perp\cap U_\xi J=\{0\}$, $V/\widetilde{T}\cong U_\xi^\perp/U_\xi$ is a complex vector space of dimension $g':=g-r$. We have a pure Hodge structure of weight $-1$ on $U_\xi^\perp/U_\xi=W_1^t/W_0^t$. This is the period map of the abelian part $A$. The bilinear form $E$ restricted to $U_\xi^\perp/U_\xi$ is non-degenerate. This gives the polarization on $A$. Let $\widetilde{G}=V/U_\xi^\perp\cap \Lambda$. $Y_\xi=\Lambda/U_\xi^\perp\cap \Lambda$. The family of abelian varieties $\mathcal{X}$ is the quotient of the family of semiabelian varieties $\widetilde{G}$ by periods $Y_\xi$. We have the extension sequence of abelian group varieties over $\Delta^*$
\begin{equation}\label{extension sequence for general degeneration}
\begin{diagram}
0&\rTo& T&\rTo& \widetilde{G} &\rTo^\pi &A&\rTo& 0. 
\end{diagram}
\end{equation}

Let $X_\xi$ be the group of characters of $T$. It is the dual of the fundamental group $U_\xi\cap\Lambda$. For any $\alpha\in X_\xi$, $\alpha$ is a group homomorphism $\alpha: T\to \mathbb{G}_m$. The push-out of the short exact sequence \eqref{extension sequence for general degeneration} along $\alpha$ is a $\mathbb{G}_m$-torsor over $A$ whose associated invertible sheaf is denoted by $\mathcal{O}_{-\alpha}$. Since the total space has a group structure, $\mathcal{O}_{-\alpha}$ is in $\mathrm{Pic}^0(A/\Delta^*)$. This defines the map $c: X_\xi\to A^t$, where $A^t$ is the dual in the category of complex analytic spaces. Since $Y_\xi$ is a group of $\Delta^*$-sections of $\widetilde{G}$, for any $\lambda\in Y_\xi$, $\alpha\in X_\xi$, the push-out of $\lambda$ along $\alpha$ is a $\Delta^*$-section of $\mathcal{O}_{-\alpha}$. Denote the projection under $\pi$ by $c^t(\lambda)\subset A$, and the section by $\tau(\lambda,\alpha)$. This gives the trivialization $\tau$ of the biextension $(c^t\times c)^*\mathcal{P}^{-1}$. 

The relatively ample line bundle of type $\delta$ on $\mathcal{X}$ is represented as a line bundle $\widetilde{\mathcal{L}}$ over $\widetilde{G}$ with a $Y_\xi$-action. Since $\widetilde{G}$ is a $T$-torsor, $\widetilde{\mathcal{L}}$ is also equipped with a $T$-action, and we can do a partial Fourier expansion. Restricted to any section of $A$, $\widetilde{\mathcal{L}}$ is trivial. Therefore, suppose that $\widetilde{\mathcal{L}}$ descends to an ample line bundle $\mathcal{M}$ of $A$. This is a choice, and we fix this choice. Then the $\alpha$-eigenspace of $\Gamma(\widetilde{G},\widetilde{\mathcal{L}})$ is identified with $\Gamma(A,\mathcal{M}_\alpha)$ and the partial Fourier expansion is
\[
\Gamma(\widetilde{G},\widetilde{\mathcal{L}})=\bigoplus_{\alpha\in X_\xi}\Gamma(A,\mathcal{M}_\alpha).
\]

Denote the Fourier coefficients by the homomorphisms $\sigma_\alpha: \Gamma(\widetilde{G},\widetilde{\mathcal{L}})\to \Gamma(A,\mathcal{M}_\alpha)$. Restricting to the $Y_\xi$-invariant subspace $\Gamma(\mathcal{X},\mathcal{L})$, there is a relation between the homomorphisms $\sigma_\alpha$ and $\sigma_{\alpha+\phi(\lambda)}$ for $\lambda\in Y_\xi$. That is,
\begin{equation}
\sigma_{\alpha+\phi(\lambda)}=\psi(\lambda)\tau(\lambda,\alpha)T^*_{c^t(\lambda)}\circ\sigma_\alpha
\end{equation}

for $\psi(y)$ a $\Delta^*$-section of $\mathcal{M}(c^t(\lambda))^{-1}$. This defines the trivialization $\psi$ of the central extension over $Y_\xi$. 

To get the explicit data $\tau$ and $\psi$, we choose a $0$-cusp $F(U)$ that is in the closure of $F(U_\xi)$, i.e. a maximal rational isotropic subspace $U$ that contains $U_\xi$. Assume $\Lambda\cap U_\xi$ be spanned by $\{v_1,v_2,\ldots,v_r\}$, and $U\cap \Lambda$ be spanned by $\{v_1,\ldots,v_r,v_{r+1},\ldots,v_g\}$. We choose the complement $\{u'_1,\ldots,u'_g\}$ as in the above section. Therefore $U_\xi^\perp\cap \Lambda$ is spanned by 
\[
\{v_1,\ldots,v_r,v_{r+1},\ldots,v_g, u'_{r+1},\ldots,u'_{g}\}. 
\]

Use the same periods $\tau'_0+t\Im(\tau')$ for the family as in Sect.~\ref{B1}. Write $\tau_0'$ in blocks.
\[
\tau_0'=\begin{pmatrix} 
\tau_1' & \tau_2'\\
\tau_3' & \tau_4'
\end{pmatrix},\quad
S=\begin{pmatrix} 
S_\xi & S_2\\
S_3 & S_4
\end{pmatrix},
\]

where $\tau'_1$ is a $r\times r$-matrix and $\tau'_4$ is a positive-definite $g'\times g'$-matrix. Similarly, we write $S$ in blocks. Assume $\Im(\tau')$ is positive definite for the upper left $r\times r$ block and vanishes anywhere else. As in the above model, we take the quotient of the group generated by $\{v_1,\ldots,v_g\}$ and get an algebraic torus $\mathbb{G}_m^g$ over $\Delta^*$. Denote the group generated by $\{u'_{r+1}, \ldots,u'_g\}$ by $Y_1$ and the group generated by $\{u'_1,\ldots, u'_r\}$ by $Y_\xi$. Use $\lambda$ for a vector in $Y_\xi$ and $z$ for a vector in $Y_1$. Use $\mathrm{X}^\alpha$ for coordinate functions of $\mathbb{G}_m^r$, and $\mathrm{W}^\beta$ for coordinate functions on $\mathbb{G}_m^{g'}$. The family of abelian varieties $A$ is the quotient of $\mathbb{G}_m^{g'}$ by $Y_1$. $\widetilde{G}$ is the quotient of $\mathbb{G}_m^g$ by $Y_1$. By this multiplicative uniformization $\mathbb{G}_m^g$, all line bundles $\mathcal{O}_\alpha$ are trivialized canonically after pull back over $\mathbb{G}_m^{g'}$. Of course there is ambiguity from the action of $Y_1$ for the trivialization of every fiber $\mathcal{O}_\alpha(c^t(\lambda))$. However, the lift of $Y_\xi$ to points $u_1',\ldots,u_g'$ fixes this ambiguity. The upshot is $\mathcal{O}_\alpha^{-1}(c^t(\lambda))$ is trivialized, and the section $\tau(\lambda,\alpha)$ is equivalent to a function over $\Delta^*$. Similarly, the pull back of $\mathcal{M}$ to $\mathbb{G}_m^{g'}$ is trivial. A lift of $Y_\xi$ gives a canonical trivialization of $\mathcal{M}^{-1}(c^t(\lambda))$. The section $\psi(\lambda)$ is also a function over $\Delta^*$. 

Regard $\widetilde{\mathcal{L}}$ as the quotient of $(\mathbf{C}^*)^{r}\times (\mathbf{C}^*)^{g'}\times\mathbf{C}\times\Delta^*$ by $Y_1$. The action is parametrized by $q\in \Delta^*$. A section $\vartheta\in \Gamma(\mathcal{X},\mathcal{L})$ is a $Y$-invariant function over  $(\mathbf{C}^*)^{r}\times (\mathbf{C}^*)^{g'}$. Do the partial Fourier decomposition

\begin{equation}
\vartheta=\sum_{\alpha\in X_\xi} \bigg(\sum_\beta a_{\alpha\beta} \mathrm{W}^\beta\bigg)\mathrm{X}^\alpha. 
\end{equation}

The function over $(\mathbf{C}^*)^{g'}$
\[
\sigma_\alpha(\vartheta)=\sum_\beta a_{\alpha\beta} \mathrm{W}^\beta
\]

is a section of $\Gamma(A, \mathcal{M}_\alpha)$. \\

This can be easily verified. We use $e$, $a$, $b$ from the above section. For $\mu\in Y$, we have
\[
\vartheta((\mathrm{W}, \mathrm{X})+\mu)=e(\mu, (\mathrm{W},\mathrm{X}))\vartheta(\mathrm{W},\mathrm{X})=\frac{1}{a(\mu)b((\mathrm{W},\mathrm{X}),\phi(\mu))}\vartheta(\mathrm{W},\mathrm{X}). 
\]

If $\mu=z\in Y_1$,
 \[
b((\mathrm{W},\mathrm{X}),\phi(z))=\mathrm{W}^{\phi(z)}. 
\]

On the one hand, 
\begin{align}
\vartheta((\mathrm{W},\mathrm{X})+z)&=\sum_{\alpha\in X_\xi}\bigg(\sum_\beta a_{\alpha\beta} b(z,\beta)\mathrm{W}^{\beta}\bigg)b(z,\alpha)\mathrm{X}^\alpha\\
&=\sum_{\alpha\in X_\xi} \sigma_\alpha(\vartheta)(\mathrm{W}+z)b(z,\alpha)\mathrm{X}^\alpha. 
\end{align}

On the other hand, 
\begin{align}
e(z, (\mathrm{W},\mathrm{X}))\vartheta(\mathrm{W},\mathrm{X})=\frac{1}{a(z)\mathrm{W}^{\phi(z)}}\sum_{\alpha\in X_\xi}\sigma_\alpha(\vartheta)(\mathrm{W})\mathrm{X}^\alpha. 
\end{align}

Compare the coefficients of $\mathrm{X}^\alpha$, 
\begin{equation}
\sigma_\alpha(\vartheta)(\mathrm{W}+z)=\frac{1}{b(z,\alpha)a(z)\mathrm{W}^{\phi(z)}}\sigma_\alpha(\vartheta)(\mathrm{W})
\end{equation}

Here $1/b(z,\alpha)$ are the factors of automorphy for $\mathcal{O}_{\alpha}$, and $1/(a(z)\mathrm{W}^{\phi(z)})$ are the factors of automorphy for $\mathcal{M}$. Hence $\sigma_\alpha(\vartheta)$ is a section of $\mathcal{M}_\alpha$. The abelian part $A$ is determined by $b(z, \beta)$, i.e. the periods $\tau_4'$ and the twist $S_4$. 

If $\mu=\lambda\in Y_\xi$, 
\[
b(\mathrm{W},\mathrm{X},\phi(\lambda))=\mathrm{X}^{\phi(\lambda)}. 
\]

We have
\begin{align}
\vartheta((\mathrm{W},\mathrm{X})+\lambda)&=\sum_{\alpha\in X_\xi}\Big(\sum_\beta a_{\alpha\beta} b(\lambda,\beta)\mathrm{W}^{\beta}\Big)b(\lambda,\alpha)\mathrm{X}^\alpha\\
&=\sum_{\alpha\in X_\xi} \sigma_\alpha(\vartheta)(\mathrm{W}+c^t(\lambda))b(\lambda,\alpha)\mathrm{X}^\alpha. 
\end{align}

And
\begin{align}
e(\lambda, (\mathrm{W},\mathrm{X}))\vartheta(\mathrm{W},\mathrm{X})=\frac{1}{a(\lambda)}\sum_{\alpha\in X_\xi}\sigma_\alpha(\vartheta)(\mathrm{W})\mathrm{X}^{\alpha-\phi(\lambda)}. 
\end{align}

Compare the coefficients of $\mathrm{X}^\alpha$, 
\begin{align}
\sigma_{\alpha+\phi(\lambda)}(\vartheta)(\mathrm{W})&=a(\lambda)b(\lambda,\alpha)\sigma_\alpha(\vartheta)(\mathrm{W}+c^t(\lambda)),\\
&=a(\lambda)b(\lambda,\alpha)T^*_{c^t(\lambda)}\circ\sigma_\alpha(\vartheta)(\mathrm{W}). 
\end{align}

Therefore, after using the canonical trivialization from the $0$-cusp $U$, 
\begin{align}
&\psi(\lambda)=a(\lambda)\\
&\tau(\lambda,\alpha)=b(\lambda,\alpha).
\end{align}

Using the coordinates with respect to $\{u_1',\ldots,u_r'\}$, the points are
\begin{align*}
b(x,\alpha)&=\exp{\Bigg(2\pi i (x,0)(A^{-1}(\tau_0'+\Im(\tau')t)\delta^{-1}-1/2S\mathfrak{d}^{-1})\begin{pmatrix}\alpha\\0\end{pmatrix}\Bigg)}\\
&=b_0b'(x,\alpha)b_t(x,\alpha),
\end{align*}
 
where $\alpha$ is a column vector of $r$ elements, $b_0b'(x,\alpha)$ is a constant in $\mathbf{C}$, while $b_t(x,\alpha)$ is a function over $\Delta^*$. Write $b_t$ as an $r\times r$-matrix of functions $q^{2Q\mathfrak{d}^{-1}}$. Write $a(x)=a_0a'(x)a_t(x)$, where $a_0a'(x)$ is a constant, and $a_t(x)=q^{Q(x)}$ is a function of $q$ if we make a necessary base change. Note that the quadratic form $Q\in \mathcal{C}(X_\xi)$, we have
\begin{corollary}
The trivializations $\tau$ and $\psi$ are compatible with $Q$. 
\end{corollary}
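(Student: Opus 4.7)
The plan is to produce a $Y_\xi$-quasiperiodic piecewise affine function $\varphi : X_{\xi,\mathbf{R}} \to \mathbf{R}$ whose associated quadratic form (Definition~\ref{the associated quadratic form}) equals $Q$, and then to check that the two ratios $a_t^{-1}\psi$ and $b_t^{-1}\tau$ extend across $q=0$. Once such a $\varphi$ is found, compatibility (in the sense of Section~\ref{3.2}) is essentially immediate because all $q$-dependence of $\psi$ and $\tau$ has already been packaged into $a_t$ and $b_t$ by the computation of Section~\ref{B2}.

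For the first step, after a finite base change $q \mapsto q^{1/N}$ if necessary to clear denominators, $Q$ lies in some cone $C(\mathscr{P})$ of the second Voronoi fan $\Sigma(X_\xi)$, and I would take $\varphi = \varphi_\mathscr{P}$. By Corollary~\ref{interpretation of universal map} the associated quadratic form of $\varphi_\mathscr{P}$ is then exactly $Q$, and $\varphi_\mathscr{P}$ is automatically $Y_\xi$-quasiperiodic in the shape
\[
\varphi(\alpha+\phi(\lambda)) - \varphi(\alpha) = A(\lambda) + \langle \alpha, \check{\phi}(\lambda)\rangle,\qquad A(\lambda) = Q(\lambda).
\]
With the chart $\mathbf{N}\to\mathbf{C}[[q]]$, $1\mapsto q$, the functions $a_t,b_t$ of Section~\ref{3.2} produced by $\varphi_\mathscr{P}$ then coincide, up to an integral affine function (which contributes only a unit of $\mathbf{C}[[q]]^*$ and can be absorbed into the framing), with $a_t(\lambda)=q^{Q(\lambda)}$ and $b_t(\lambda,\alpha)=q^{\langle\alpha,\check{\phi}(\lambda)\rangle}$ extracted from the one-parameter family.

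The second step is then to observe that the residues
\[
a_t(\lambda)^{-1}\psi(\lambda) = a_0\, a'(\lambda),\qquad b_t(\lambda,\alpha)^{-1}\tau(\lambda,\alpha) = b_0\, b'(\lambda,\alpha)
\]
are literally \emph{constants} in $\mathbf{C}^*$ — they depend on $\tau_0'$ (the limiting Hodge filtration $F_\infty$) and on the semi-character $\chi^{-1}$ coming from the twist matrix $S$, but not on $q$. As constant functions they define regular sections of $(c^t)^*\mathcal{M}^{-1}$ and $(c^t\times c)^*\mathcal{P}_A^{-1}$ over all of $\Delta = \spec \mathbf{C}[[q]]$, verifying the two defining conditions for $\varphi$ to be compatible with $\tau,\psi$.

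The main obstacle, such as it is, is purely arithmetic: the quadratic form $Q=\tfrac{1}{2}A^{-1}\Im(\tau')(A^{-1})^T$ need not be integer-valued on $Y_\xi$ against an \emph{integral} piecewise affine $\varphi$ paired with the chart $\mathbf{N}\to\mathbf{C}[[q]]$. This is precisely why the degree-$2$ base change of Section~\ref{B1} is invoked to make $q^{Q(\lambda)}\in\mathbf{C}[[q]]$; once that adjustment is made, the matching of $\varphi_\mathscr{P}$ against $a_t,b_t$ and the extension of the residues $a_0a',b_0b'$ are both formal, and the corollary follows.
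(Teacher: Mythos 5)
Your argument is correct and is essentially the paper's own (implicit) proof: the corollary follows at once from the decomposition $\tau=b_0b'b_t$, $\psi=a_0a'a_t$ computed just above it, since the remaining factors $b_0b'$ and $a_0a'$ are $q$-independent constants in $\mathbf{C}^*$ and therefore extend over $q=0$ as trivializations of $(c^t\times c)^*\mathcal{P}_A^{-1}$ and $(c^t)^*\mathcal{M}^{-1}$, which is exactly the definition of compatibility from Section~\ref{3.2}. Your extra step of matching the family's $a_t,b_t$ with the combinatorial data $\varphi_\mathscr{P}$ determined by $Q$ (after the finite base change, with any affine discrepancy absorbed into the choice of framing) is the intended reading of ``compatible with $Q$'' and agrees with how the corollary is used in Section~\ref{3.4}.
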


Assume $a_0=b_0=1$. Denote the function field on $\Delta^*$ by $K$. The explicit data for $U_\xi$ is 

\begin{proposition}
If we make the choice of the $0$-cusp $U$, and use the data from $U$, we can write $\tau$, $\psi$ as follows. 
\begin{align}\label{trivialization data}
\tau&=b'b_t: Y_\xi\times X_\xi\to K\\
\psi&=a'a_t: Y_\xi\to K,
\end{align}

where $b_t=q^{2Q\mathfrak{d}^{-1}}$ and $a_t=q^{Q(x)}$ is defined in terms of $Q\in \mathcal{C}(X_\xi)$, and $b'$, $a'$ are twist data associated with the boundary component $U_\xi$
\begin{align}
b'(x,\alpha)&=\exp{(-\pi i xS_\xi\mathfrak{d}^{-1}\alpha)},\\
a'(x)&=\exp{(-1/2\pi i xS_\xi'x^T)}, 
\end{align}

where $S'_\xi$ is a symmetric matrix such that $S_\xi'\equiv S_\xi \pmod {2\mathbf{Z}}$. 
\end{proposition}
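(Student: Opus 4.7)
The plan is to extract the formulas for $\tau$ and $\psi$ by combining two ingredients already in place: the intrinsic identification $\psi(\lambda)=a(\lambda)$, $\tau(\lambda,\alpha)=b(\lambda,\alpha)$ established by the partial Fourier decomposition of $Y$-invariant sections in Section~\ref{B2}, and the explicit coordinate computation of the factors of automorphy $a$, $b$ carried out in Section~\ref{B1} for the maximal degeneration, now restricted along $U_\xi\subset U$. The first step is to fix the $0$-cusp $U\supset U_\xi$, extend $\{v_1,\ldots,v_r\}$ to a basis of $X^*$, and choose the complement $\{u_1',\ldots,u_g'\}$ so that $E$ takes the block form $\bigl(\begin{smallmatrix}S&\mathfrak{d}\\-\mathfrak{d}&0\end{smallmatrix}\bigr)$ with $A^{-1}=\mathfrak{d}\delta^{-1}$ and $A^{-1}B\mathfrak{d}=\tfrac12 S$. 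The lifts $u_i'$ trivialize the fibers $\mathcal{O}_\alpha^{-1}(c^t(\lambda))$ and $\mathcal{M}^{-1}(c^t(\lambda))$ canonically, turning $\tau$ and $\psi$ into honest $K$-valued functions.

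The second step is to specialize to the nilpotent-orbit periods $\tau_0'+\Im(\tau')t$ with $\Im(\tau')$ supported in the upper-left $r\times r$ block, so that all $q$-dependence is concentrated in the $U_\xi$-directions. Substituting into the formulas for $a$ and $b$ from Section~\ref{B1} and restricting the arguments to $Y_\xi$ and $Y_\xi\times X_\xi$ respectively, one separates a constant-in-$t$ factor from a $q$-power factor. Taking $\tau_0'=0$ absorbs $a_0$ and $b_0$ into the identity. The restriction of $A^{-1}\Im(\tau')(A^{-1})^T$ to the $r\times r$ block is $2Q$ for the positive quadratic form $Q\in\mathcal{C}(X_\xi)$ identified via Proposition~\ref{interpretation of the cone}, yielding $a_t(x)=q^{Q(x)}$ and $b_t(x,\alpha)=q^{x\cdot 2Q\mathfrak{d}^{-1}\cdot\alpha}$. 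Likewise the corresponding block of $A^{-1}B\mathfrak{d}=\tfrac12 S$ is $\tfrac12 S_\xi$, producing $b'(x,\alpha)=\exp(-\pi i\, xS_\xi\mathfrak{d}^{-1}\alpha)$. Compatibility of $\tau$ with $Q$ then comes for free from the cone interpretation.

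The main subtlety, and the one place where a nontrivial choice enters, is the formula for $a'$. The cocycle relation $a(\lambda+\mu)=b(\lambda,\phi(\mu))a(\lambda)a(\mu)$ forces $a'$ to be a quadratic refinement on $Y_\xi$ of the symmetric $\mu_2$-valued bilinear form $b_S(\lambda_1,\lambda_2)=\exp(-\pi i\, x_1 S_\xi x_2^T)$; note this descends to $Y_\xi/2Y_\xi$ because $b_S(2Y_\xi,Y_\xi)=1$. A refinement always exists: one picks any integral symmetric matrix $S_\xi'\equiv S_\xi\pmod{2\mathbf{Z}}$ (for instance $S'_{\xi,ij}=1$ whenever $S_{\xi,ij}$ is odd and $0$ otherwise) and sets $a'(x)=\exp(-\tfrac12\pi i\, xS_\xi'x^T)$. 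Any two choices differ by a change of the semi-character $\chi:\Lambda\to\mathbb{S}^1$ built into the classical factor of automorphy, and hence by an isomorphism of AN families, so the ambiguity is harmless. Assembling these three ingredients gives the claimed formulas for $\tau$ and $\psi$.
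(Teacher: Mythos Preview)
Your proposal is correct and follows essentially the same route as the paper: the proposition is a summary of the computations in Sections~\ref{B1} and~\ref{B2}, and you have accurately identified both ingredients—the intrinsic identification $\psi=a$, $\tau=b$ via the partial Fourier decomposition of $Y$-invariant sections, and the explicit restriction of the Section~\ref{B1} formulas for $a,b$ to $Y_\xi\times X_\xi$ using the block structure of $S$ and $\Im(\tau')$. Your handling of $a'$ as a quadratic refinement of the $\mu_2$-valued form $b_S$, with the choice of $S_\xi'$, also matches the paper's treatment exactly.
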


\bibliographystyle{amsalpha}
\bibliography{Bibliography}

\end{document}